\date{}
\newcommand{\address}[1]{\gdef\@address{#1}}
\newcommand{\email}[1]{\gdef\@email{\url{#1}}}
\newcommand{\@endstuff}{\par\vspace{\baselineskip}\noindent\small
\begin{tabular}{@{}l}\scshape\@address\\\textit{E-mail address:} \@email\end{tabular}}
\title{Invariants of annular links, cobordisms and transverse links from combinatorial link Floer complex}
\author{Apratim Chakraborty}
\address{Stat-Math Unit, Indian Statistical Institute, Bangalore Centre,\\  8th Mile Mysore Road, Bangalore, 560059}
\email{apratimn@gmail.com}
\newtheorem{prop}{Proposition}[section]
\newtheorem{lemma}{Lemma}[section]
\newtheorem{proposition}{Proposition}[section]
\newtheorem{theorem}{Theorem}[section]
\newtheorem{cor}{Corollary}[section]
\newtheorem{definition}{Definition}[section]
\newtheorem*{remark}{Remark}
\begin{document}    
\maketitle

\begin{abstract}

We define an annular concordance invariant and study its properties. When specialized to braids, this invariant gives bounds on band rank. We introduce a modified chain complex to reformulate the invariant. Then, by focusing on a special case, we give a refinement of the transverse invariant $\hat{\theta}$. We also study the relationship of this invariant with transverse and braid monodromy properties.   

\end{abstract}
\section{Introduction}
An annular link $L$ can be thought of as a link sitting inside a thickened annulus $ A \times I$. Asaeda-Przytycki-Sikora \cite{asaeda} and L.Roberts \cite{lroberts} defined a version of Khovanov homology for annular links which is known as annular Khovanov complex (or sutured Khovanov complex). In the annular Khovanov complex, one can extract an extra filtration for annular links. Grigsby, Wehrli and Licata \cite{annular} made use of $\mathbb{Z} \oplus \mathbb{Z}$ filtered nature of annular Khovanov complex to define an annular link invariant. The invariant is a piece-wise linear function which gives genus bounds for annular cobordisms. \\

In the same vein, we will define an invariant $\mathscr{A}_{L}(t)$ (for $t \in [0,2]$) of annular links using link Floer complex. Recently, Truong and Zhang \cite{zhang} also introduced a similar invariant in Sarkar-Seed-Szabo\'{o} complex. In our case, we will think of the annular link as an union $U\cup L$. We will define the invariant using Alexander filtrations of the the filtered complex $\hat{\mathcal{GC}}$. Assume $\mathcal{F}_{t}$ is the weighted filtration  $\frac{t}{2}A_{U}+(1-\frac{t}{2})A_{L}$ where $A_{U}$ and $A_{L}$ are Alexander filtrations for $U$ and $L$ respectively. Then, $\mathscr{A}_{L}(t)$ is defined to be the minimum $\mathcal{F}_{t}$ filtration level where homology at Maslov grading $0$ is non-trivial. The invariant has properties similar to Grigsby, Wehrli and Licata invariant. It gives a lower bound on annular cobordisms.

\begin{theorem} \label{theorem1}
If $\Sigma$ is a strong annular cobordism of genus $g$ between two annular links $L_{1}$ and $L_{2}$  then
$| \mathscr{A}_{L_{1}}(t)-\mathscr{A}_{L_{2}}(t) | \leq g(1-\frac{t}{2})$.
\end{theorem}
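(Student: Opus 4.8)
The plan is to realise $\Sigma$ as a composition of elementary cobordisms and to track how each one moves the $\mathcal F_{t}$-filtration level at which Maslov-grading-$0$ homology first appears. Choosing a generic height function on $\Sigma$ with respect to the last coordinate of $(A\times I)\times[0,1]$, write $\Sigma$ as a composition of births, saddles and deaths; because $\Sigma$ is a \emph{strong} annular cobordism, these are disjoint from the braid axis (and, by the definition of strong cobordism, amount to a sequence of $2g$ saddles, possibly together with some filtration-neutral births and deaths which add or remove an axis-disjoint split unknot). Adjoining the product cobordism $U\times[0,1]$ of the axis to each elementary piece gives elementary cobordisms of annular links $U\cup L_{1}=U\cup L^{(0)},\dots,U\cup L^{(N)}=U\cup L_{2}$, each realised on suitably chosen grid diagrams by one of the standard combinatorial cobordism maps in the $\widehat{\phantom{x}}$-flavour, i.e.\ a chain map $\hat{\mathcal{GC}}(L^{(j-1)})\to\hat{\mathcal{GC}}(L^{(j)})$; the first thing to verify is that each of these is a map of $A_{U}$- and $A_{L}$-filtered complexes, and to record its filtration and Maslov degree shifts.

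The decisive input is the analysis of a single saddle. Since the band move is coherent and, by strongness, disjoint from $U$, it fixes the homology class $[L]\in H_{1}(A\times I)$, so the associated merge/split map is $A_{U}$-filtered with shift $0$. A direct computation of the merge and split maps on grid diagrams (in the spirit of the cobordism maps of grid homology) shows that such a map is $A_{L}$-filtered with shift at most $\tfrac12$ and has Maslov degree shift $0$ or $-1$ according as it merges or splits link components; within each genus-one handle a split saddle is followed by a merge saddle, so the two Maslov shifts cancel. Hence a single saddle moves the minimal $\mathcal F_{t}=\tfrac t2 A_{U}+(1-\tfrac t2)A_{L}$ filtration level carrying Maslov-$0$ homology by at most $\tfrac12\bigl(1-\tfrac t2\bigr)$, and a genus-one handle by at most $\bigl(1-\tfrac t2\bigr)$. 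The birth and death maps, being inclusions of and projections onto a direct summand $\otimes V$ with $V$ concentrated in Maslov degrees $\{0,-1\}$ and in a single band of both Alexander filtrations, do not move that level at all.

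Composing over the $g$ handles, the induced map $F_{\Sigma}\colon\hat{\mathcal{GC}}(L_{1})\to\hat{\mathcal{GC}}(L_{2})$ shifts the $\mathcal F_{t}$-filtration by at most $g\bigl(1-\tfrac t2\bigr)$, has net Maslov degree $0$, and is nonzero on the homology class realising $\mathscr A_{L_{1}}(t)$; this last point follows, as in the proof of the four-ball genus bound for $\tau$, from the TQFT/adjunction properties of the elementary cobordism maps together with the behaviour of $\widehat{GH}$ in Maslov degree $0$. This yields $\mathscr A_{L_{2}}(t)\le\mathscr A_{L_{1}}(t)+g\bigl(1-\tfrac t2\bigr)$; running the same argument for the reversed cobordism $\overline\Sigma$ from $L_{2}$ to $L_{1}$ gives the opposite inequality, and the two together give $|\mathscr A_{L_{1}}(t)-\mathscr A_{L_{2}}(t)|\le g\bigl(1-\tfrac t2\bigr)$.

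The main obstacle is the middle step: establishing that $\hat{\mathcal{GC}}$ is functorial under these elementary cobordisms \emph{as a bifiltered complex} and pinning down the exact $(A_{U},A_{L},M)$-degree shifts — in particular that axis-disjointness forces the $A_{U}$-shift to be $0$, that the birth/death maps are honestly filtration-preserving on the Maslov-$0$ part, and that a single saddle costs at most $\tfrac12$ in the $A_{L}$-filtration and no more. This calls for grid presentations adapted to the annular structure and careful bookkeeping of how the basepoints feed into the two Alexander gradings; it is there, rather than in the Morse decomposition or the final summation, that the real work lies.
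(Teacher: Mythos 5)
Your overall strategy coincides with the paper's: put $\Sigma$ in Morse position, realise each elementary annular cobordism by a grid chain map, check these maps are $(A_{U},A_{L})$-bifiltered with zero $A_{U}$-shift because everything stays off the axis, add up the $\mathcal{F}_{t}$-shifts to $g(1-\frac{t}{2})$, use nontriviality on Maslov-grading-$0$ homology, and repeat with the reversed cobordism. However, your bookkeeping of the elementary pieces has a genuine gap. Since $\Sigma$ is strong, no component is closed or capped off at one end, so an Euler characteristic count forces $\#\{\text{saddles}\}=2g+\#\{\text{births}\}+\#\{\text{deaths}\}$; your claimed normal form ``$2g$ saddles together with some filtration-neutral births and deaths adding or removing split unknots'' cannot occur unless there are no births or deaths at all, and you give no cancellation argument eliminating them. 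With your cost estimates (each saddle $\leq\frac{1}{2}(1-\frac{t}{2})$, births and deaths cost $0$), the extra saddles forced by births and deaths inflate the total to $\bigl(g+\frac{b+d}{2}\bigr)(1-\frac{t}{2})$, strictly larger than the claimed bound whenever $b+d>0$. The paper closes this by normalizing a strong cobordism into $g$ torus cobordisms plus birth-followed-by-merge and split-followed-by-death pairs, and by computing that the annular death map has $\mathcal{F}_{t}$-degree exactly $-1+\frac{t}{2}$ (obtained by dualizing the birth--merge composite), so each split--death pair costs $0$. The strict negativity of the death shift, which you assert away by declaring births and deaths neutral, is precisely what makes the count close.

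The per-piece degrees you commit to also clash with the conventions used to define $\mathscr{A}_{L}(t)$. With $A_{L}=\sum_i A_i$ normalized per component as in the paper, the $\mathcal{J}$-computation gives $A_{L}$-shift $1$ for a split and $0$ for a merge (and $A_{U}$-shift $0$ for both), not ``at most $\frac{1}{2}$ per saddle''; only the per-handle total of $1-\frac{t}{2}$ agrees with yours. The remark that the Maslov shifts ``$0$ or $-1$ \dots cancel'' within a handle also does not parse; in the paper the merge/split maps are identity-on-grid-states chain isomorphisms of $\widetilde{\mathcal{GC}}$ (Maslov does not see the $X$-markings), and the rank change is absorbed into tensor factors $W$, which is also what guarantees — more directly than your appeal to TQFT/adjunction — that the composite is a quasi-isomorphism and hence nonzero on $\widehat{\mathcal{GH}}_{0}\cong\mathbb{F}_{2}$. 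Since the final inequality is exactly an accounting identity over the elementary pieces, these degrees (split $1-\frac{t}{2}$, merge $0$, birth $0$, death $-1+\frac{t}{2}$) must be pinned down before the composition argument can be run; as written, your proposal does not yet establish them.
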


In a different direction, Baldwin, Vela-Vick and V\'{e}rtesi \cite{lossbraid} showed the equivalence of LOSS and GRID transverse invariants for transverse knots in $\mathbb{S}^{3}$ with the standard contact structure. Their work involved passing to the knot Floer complex $CFK^{-,2}(-U \cup \beta)$ (Here $\beta$ is a braid representing a transverse link) which comes with an extra filtration $\mathcal{F}^{-U}$. A key proposition in their work showed that the distinguished class $[x_{4}]$ generated $H_{top}(\mathcal{F}^{-U}_{bot})$. In view of that, it is natural to ask if the filtered quasi-isomorphism itself contains any information about the transverse knot represented by $\beta$. From a different perspective, we define a complex ${\mathcal{C}}_{U \cup \beta}$  turns out to be isomorphic to $CFK^{-,2}(-U \cup \beta)$. We can independently show that filtered quasi-isomorphism type of that complex is an invariant of the braid conjugacy class. Moreover, by studying the crossing change moves we study how it changes under positive stabilization. We can then extract a numerical invariant $\eta(\beta)$ from the complex that is similar to the braid conjugacy class invariant $\kappa$ defined by Hubbard and Saltz \cite{saltz} in Khovanov homology.

\begin{theorem}\label{theorem2}
$\eta(\beta)$ is a braid conjugacy class invariant. Also $ \eta(\beta) \leq \eta(\beta_{+stab}) \leq \eta(\beta)+\frac{1}{2}$. 
\end{theorem}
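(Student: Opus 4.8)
The plan is to treat the two assertions in turn: first that $\eta$ depends only on the conjugacy class of $\beta$, and then the sandwich inequality governing its behaviour under a positive Markov stabilization. Throughout I use the identification $\mathcal{C}_{U\cup\beta}\cong CFK^{-,2}(-U\cup\beta)$ and the fact, established when $\eta$ is defined, that $\eta(\beta)$ is read off from the $\mathcal{F}^{-U}$-filtration on this complex (the least filtration level at which the distinguished class survives in Maslov grading $0$). Consequently it suffices to control the \emph{filtered} quasi-isomorphism type of $\bigl(\mathcal{C}_{U\cup\beta},\mathcal{F}^{-U}\bigr)$.

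For conjugacy invariance, I would use the standard dictionary between braid conjugation and grid moves. Any two grid diagrams whose associated braids are conjugate are connected by a finite sequence of commutations, (de)stabilizations and cyclic reindexings that do not disturb the column and row carrying the braid axis $U$. Each such move already carries a chain homotopy equivalence of the combinatorial link Floer complex; the point is to check that, after the identification with $\mathcal{C}_{U\cup\beta}$, these maps respect the Alexander filtration $A_U$ (hence $\mathcal{F}^{-U}$) up to the fixed overall normalization, so that they are filtered quasi-isomorphisms. This reduces to inspecting the local models for the moves away from the axis, where $A_U$ is locally constant, so the filtration is preserved on the nose there; this is the same bookkeeping already carried out for the unfiltered invariance statement, now tracked one filtration finer. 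Since $\eta$ is a filtered invariant of the pair, it is unchanged.

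For the stabilization bounds I would fix grid presentations $G$ of $U\cup\beta$ and $G'$ of $U\cup\beta_{+stab}$ agreeing outside a small region where the extra positive crossing is inserted, and analyse the two complexes locally. The inequality $\eta(\beta)\le\eta(\beta_{+stab})$ would follow from a filtration-nonincreasing chain map from a model of $\mathcal{C}_{U\cup\beta}$ into $\mathcal{C}_{U\cup\beta_{+stab}}$ sending the distinguished class to the distinguished class — essentially the naturality of the LOSS/GRID class under positive stabilization (recall that positive stabilization does not change the transverse type, so the class persists) combined with the observation that passing to the larger grid can only raise the $\mathcal{F}^{-U}$-level. For the upper bound $\eta(\beta_{+stab})\le\eta(\beta)+\tfrac12$, I would exhibit a filtered quasi-isomorphism, or a mapping-cone decomposition, realising $\mathcal{C}_{U\cup\beta_{+stab}}$ in terms of $\mathcal{C}_{U\cup\beta}$ with a filtration shift of at most $\tfrac12$: the single new generator introduced near the axis by the positive crossing has $A_U$-value larger by exactly $1$, which after the weighting defining $\mathcal{F}^{-U}$ contributes $\tfrac12$. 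Combining the two maps confines $\eta(\beta_{+stab})$ to the interval $[\eta(\beta),\eta(\beta)+\tfrac12]$.

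The main obstacle is the sharp upper bound. One must carry out the explicit local computation in the stabilization region, tracking the Maslov grading together with $A_L$ and $A_U$ simultaneously, and verify both that the distinguished generator of $G'$ lies at $\mathcal{F}^{-U}$-level at most $\tfrac12$ above that of $G$ and that no cancellation in homology pushes the surviving class higher. Because the commutation and stabilization homotopies, while filtered, can move classes around \emph{within} a filtration level, the difficulty is not any single conceptual point but the care needed to ensure the distinguished class is not displaced by these homotopies beyond the claimed bound; this is where the bulk of the argument lies, and it parallels the analysis of $[x_4]$ and $H_{top}(\mathcal{F}^{-U}_{bot})$ in \cite{lossbraid}.
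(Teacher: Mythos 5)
The core of your stabilization argument runs in the wrong direction. In this setting, if $f\colon\mathcal{C}_{U\cup\beta_1}\to\mathcal{C}_{U\cup\beta_2}$ is a $V$-equivariant chain map that is $\mathcal{F}_U$-filtered of degree $d$ and satisfies $f_*[x_4]=[x_4]$, then membership in the $V$-image pushes \emph{forward}: from $[x_4]=\sum_i V_i[w_i]$ in $H_*({\mathcal{F}_U}^k)$ of the source one gets $[x_4]=\sum_i V_i f_*[w_i]$ in $H_*({\mathcal{F}_U}^{k+d})$ of the target, hence $\eta(\beta_2)\le\eta(\beta_1)+d$ (this is exactly how the paper deduces $\eta(\beta^+)\ge\eta(\beta^-)$ from the degree-$0$ crossing-change map $c_-\colon\mathcal{C}_{U\cup\beta^+}\to\mathcal{C}_{U\cup\beta^-}$). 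Consequently your proposed filtration-nonincreasing map from $\mathcal{C}_{U\cup\beta}$ into $\mathcal{C}_{U\cup\beta_{+stab}}$ carrying distinguished class to distinguished class would prove $\eta(\beta_{+stab})\le\eta(\beta)$, i.e.\ the \emph{opposite} of the lower bound, and combined with your degree-$\frac{1}{2}$ comparison it would confine $\eta(\beta_{+stab})$ to $[\eta(\beta)-\frac{1}{2},\eta(\beta)]$ rather than $[\eta(\beta),\eta(\beta)+\frac{1}{2}]$. (I suspect the slip comes from your restatement of the definition: $\eta$ is the least filtration level at which $[x_4]$ falls into the $V$-image, not a survival level; but the monotonicity direction is the same for either reading.) What the theorem needs is a degree-$0$ filtered map in the \emph{destabilization} direction and a degree-$\frac{1}{2}$ filtered map in the stabilization direction, each compatible with the distinguished classes; in the paper these are the pentagon-counting maps $PS_-$ and $PS_+$, which the hexagon homotopies show are mutually inverse up to homotopy, so both are filtered quasi-isomorphisms of the stated degrees. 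Note also that the $\frac{1}{2}$ is not a "weighting" of $A_U$: on $\mathcal{C}_{U\cup\beta}$ the filtration is $\mathcal{F}_U=-A_U$ itself, and the half appears because $A_U$ of grid states (computed by winding numbers) shifts by half-integers when a strand is added.

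There is a second, smaller gap in the reduction you make at the outset and use for conjugacy invariance. Since $\eta$ is defined through the particular cycle $x_4$ of a particular diagram, the statement "it suffices to control the filtered quasi-isomorphism type" is not automatic: an abstract filtered quasi-isomorphism between the complexes of two diagrams need not match up the distinguished cycles. The paper resolves this by invoking the result of \cite{lossbraid} that $[x_4]$ generates $H_{top}({\mathcal{F}_U}^{bot}(\mathcal{C}_{U\cup\beta}))$, so that $\eta(\beta)$ can be rewritten as the least $k$ for which the natural map $H_{top}({\mathcal{F}_U}^{bot})\to H_*({\mathcal{F}_U}^{k})/V$ vanishes; in this form $\eta$ is manifestly an invariant of the filtered quasi-isomorphism type, and conjugacy invariance follows from the separately proved invariance of that filtered type (Section 5, the $t=0$ case of Theorem \ref{invarianceTALF}). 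You allude to the $H_{top}({\mathcal{F}_U}^{bot})$ analysis only at the very end; either make this intrinsic reformulation the first step, or else check explicitly that every commutation/cyclic-permutation comparison map you use sends $[x_4]$ to $[x_4]$. Finally, when the two diagrams have different grid numbers the value of $bot$ changes (from $-\frac{N}{2}$ to $-\frac{N+1}{2}$ under stabilization), so even with the reformulation one must track the filtered degrees and the induced map on $H_{top}({\mathcal{F}_U}^{bot})$ rather than appeal to filtered invariance alone.
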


Since it only increases under positive stabilization, one can get a transverse invariant by taking minimum over all braid representatives. We also prove the following relationship with the $\theta$ invariant.

\begin{theorem}\label{theorem3}
Let $\beta$ be a $N$ braid and $\mathcal{T}$ be the transverse link represented by $\beta$. If $\hat{\theta}(\mathcal{T})\neq 0$ then $\eta(\beta)=\infty$ and  $-\frac{N}{2} \leq \eta(\beta)\leq \frac{N}{2}$ otherwise.
\end{theorem}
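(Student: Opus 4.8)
The plan is to deduce both halves of the statement from the behaviour, across the $\mathcal{F}^{-U}$-filtration, of the distinguished transverse cycle carried by $\mathcal{C}_{U\cup\beta}\cong CFK^{-,2}(-U\cup\beta)$. Write $\theta_\beta$ for this cycle, and recall that $\eta(\beta)$ is the $\mathcal{F}^{-U}$-level at which the class of $\theta_\beta$ dies, with $\eta(\beta)=\infty$ when $[\theta_\beta]$ survives to $H_*(\mathcal{C}_{U\cup\beta})$. The first step is to locate $\theta_\beta$ in the filtration: just as in the Baldwin, Vela-Vick and V\'{e}rtesi picture, $\theta_\beta$ is supported on the generator(s) extremising the axis Alexander grading $A_U$, so it is a cycle of the extremal subquotient $\mathcal{F}^{-U}_{bot}$; and, transporting the statement that $[x_{4}]$ generates $H_{top}(\mathcal{F}^{-U}_{bot})$ through the isomorphism $\mathcal{C}_{U\cup\beta}\cong CFK^{-,2}(-U\cup\beta)$, the class of $\theta_\beta$ in $H_*(\mathcal{F}^{-U}_{bot})$ is precisely $\hat{\theta}(\mathcal{T})$.

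Granting this, the case $\hat{\theta}(\mathcal{T})\neq 0$ amounts to showing $[\theta_\beta]\neq 0$ in $H_*(\mathcal{C}_{U\cup\beta})$, equivalently that $\theta_\beta$ is a boundary at no finite filtration level. If $\theta_\beta=\partial y$ with $y\in\mathcal{F}^{-U}_{j}$, then decomposing $y$ and $\partial$ along the $A_U$-filtration and comparing the pieces of extremal $A_U$ exhibits $\theta_\beta$, up to filtration-dropping contributions, as a boundary in $\mathcal{F}^{-U}_{bot}$; excluding those contributions is where the $H_{top}$-generator statement enters, since such a contribution would have to emanate from a class of the appropriate Maslov grading, and the fact that $[\theta_\beta]$ generates the top of $H_*(\mathcal{F}^{-U}_{bot})$, together with the truncation $U_\beta^2=0$ limiting the available arrows, forces it to vanish. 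One then gets $\hat{\theta}(\mathcal{T})=0$, a contradiction, so $\eta(\beta)=\infty$.

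For the case $\hat{\theta}(\mathcal{T})=0$, the vanishing of $[\theta_\beta]=\hat{\theta}(\mathcal{T})$ already in $H_*(\mathcal{F}^{-U}_{bot})$ --- or, after the short spectral-sequence argument relating $\widehat{HFK}$ of the braid closure to the extremal $A_U$-slice of $\mathcal{C}_{U\cup\beta}$, only finitely far above it --- gives $[\theta_\beta]=0$ in $H_*(\mathcal{C}_{U\cup\beta})$, so $\eta(\beta)<\infty$. The two-sided bound is then an \emph{a priori} estimate on filtration levels: for an $N$-braid, every generator of the combinatorial complex of $U\cup\widehat{\beta}$ has $\mathcal{F}^{-U}$-value --- equal to $\tfrac12 A_U$ in the normalisation used to define $\eta$ --- lying in $[-\tfrac N2,\tfrac N2]$, because the axis winding number along the diagram assumes at most $N+1$ consecutive integer values; since $\eta(\beta)$ is realised as the filtration value of an element of this complex, $-\tfrac N2\le\eta(\beta)\le\tfrac N2$. (The lower bound alternatively follows from $\eta(\beta)$ being at least the minimal $\mathcal{F}^{-U}$-value, which the same count bounds below by $-\tfrac N2$.)

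The main obstacle is the implication $\hat{\theta}(\mathcal{T})\neq 0\Rightarrow\eta(\beta)=\infty$: the hypothesis is a hat-level, single-slice statement, whereas the conclusion concerns $[\theta_\beta]$ in the finer $CFK^{-,2}$-homology, where the $A_U$-filtration and the $U_\beta$-power filtration underlying the $CFK^{-,2}$ structure interact. Controlling that interaction --- precisely, excluding a \emph{diagonal} differential that would make $\theta_\beta$ exact in $\mathcal{C}_{U\cup\beta}$ while leaving it essential in $\mathcal{F}^{-U}_{bot}$ --- is the heart of the matter, and is where one unwinds the explicit combinatorial differential near the generator carrying $\theta_\beta$, using the structural facts about $H_{top}(\mathcal{F}^{-U}_{bot})$ recalled above. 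Everything else --- the $\hat{\theta}(\mathcal{T})=0$ direction and the $\pm N/2$ bound --- is bookkeeping with gradings and the braid-index bound on the axis Alexander grading.
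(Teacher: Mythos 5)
Your proposal is built on a misreading of what $\eta(\beta)$ measures, and the false intermediate claims that result from it are fatal to the argument. In the paper, $\eta(\beta)$ is the minimal filtration level $k$ at which $[x_{4}]$ lies in the $V$-image of $H_{*}(\mathcal{F}_{U}^{k}(\mathcal{C}_{U\cup\beta}))$ (equivalently, becomes trivial in the quotient $H_{*}(\mathcal{F}_{U}^{k})/V$); it is \emph{not} the level at which $x_{4}$ becomes a boundary, and $\eta(\beta)=\infty$ is \emph{not} equivalent to $[x_{4}]\neq 0$ in $H_{*}(\mathcal{C}_{U\cup\beta})$. Indeed, the Baldwin--Vela-Vick--V\'ertesi map $I\colon HFK^{-}(m(\beta))\rightarrow HFK^{-,2}(\mathcal{H}_{4})\cong H_{*}(\mathcal{C}_{U\cup\beta})$ is an inclusion carrying $\theta^{-}(\mathcal{T})$ to $[x_{4}]$, and $\theta^{-}$ is always nonzero, so $[x_{4}]$ is never zero in $H_{*}(\mathcal{C}_{U\cup\beta})$: under your reading $\eta$ would always be $\infty$, contradicting the second half of the theorem, and your claim that $\hat{\theta}(\mathcal{T})=0$ forces $[\theta_{\beta}]=0$ in $H_{*}(\mathcal{C}_{U\cup\beta})$ is simply false. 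Similarly, your identification of the class of $x_{4}$ in $H_{*}(\mathcal{F}_{U}^{bot})$ with $\hat{\theta}(\mathcal{T})$ cannot be correct: the cited result says precisely that $[x_{4}]$ \emph{generates} $H_{top}(\mathcal{F}_{U}^{bot})$, so it is nonzero there even when $\hat{\theta}=0$. Consequently the boundary analysis in your second paragraph, the exclusion of ``diagonal differentials,'' and the appeal to a truncation $U_{\beta}^{2}=0$ (not a structural feature of $CFK^{-,2}$ here) are all arguments for the wrong statement.

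The paper's proof is short and hinges on the $V$-image throughout: $\hat{\theta}(\mathcal{T})=0$ if and only if $\theta^{-}(\mathcal{T})$ lies in the $V$-image of $HFK^{-}(m(\beta))$, and pushing this through $I$ shows that $[x_{4}]$ lies in the $V$-image of $H_{*}(\mathcal{C}_{U\cup\beta})=H_{*}(\mathcal{F}_{U}^{N/2})$ exactly when $\hat{\theta}=0$. This simultaneously gives $\eta(\beta)=\infty$ when $\hat{\theta}\neq 0$ and $\eta(\beta)\leq \frac{N}{2}$ when $\hat{\theta}=0$; the missing key input in your write-up is exactly this equivalence between vanishing of $\hat{\theta}$ and $V$-divisibility of $\theta^{-}$, transported by $I$. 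Your range estimate (all $\mathcal{F}_{U}$-values of grid states lie in $[-\frac{N}{2},\frac{N}{2}]$ by the winding-number count) does agree with the paper's computation that $bot=A_{U}(x^{+})=-\frac{N}{2}$ and that $\mathcal{F}_{U}^{N/2}(\mathcal{C}_{U\cup\beta})=\mathcal{C}_{U\cup\beta}$, and the lower bound $\eta(\beta)\geq -\frac{N}{2}$ follows because no filtration level below $bot$ contains $x_{4}$ at all; but as written, your justification of the upper bound presupposes the boundary-based reading of $\eta$ and so does not stand on its own.
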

The above theorem shows that $\eta$ is a refinement of the transverse invariant $\hat{\theta}$.\\

For an alternate formulation of $\mathscr{A}_{L}(t)$, we define a modification $t\mathbf{C}$ of the usual $GC^{-}$ grid link complex for every $t \in [0,2]$. For $t=0$, we recover the complex ${\mathcal{C}}_{U \cup \beta}$. We prove that $\mathscr{A}_{L}(t)$ can also be interpreted as a max grading of a non-torsion element $\alpha$ in a t-modified complex $t\mathbf{C}$. 

\begin{theorem}\label{theorem4}
 $\mathcal{A}_{L}(t) = - \mathscr{F}_{t}([\alpha])$. 
\end{theorem}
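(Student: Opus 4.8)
The plan is to adapt, to the $t$-weighted annular setting, the standard equivalence between the two descriptions of the concordance invariant $\tau$ --- as the least sub-complex filtration level at which the hat homology is hit, and as (minus) the top grading of a non-torsion class in the minus homology --- with $t$ playing the role it plays in the $\Upsilon$ invariant. Since $\hat{\mathcal{GC}}$ and $t\mathbf{C}$ arise from a common grid diagram for $U\cup L$, I would first set up the chain-level bridge between them: $t\mathbf{C}$ carries the action of a formal variable $v$ (the analogue of $U$), which lowers the $t$-grading $\mathscr{F}_{t}$, and $\hat{\mathcal{GC}}$ equipped with its weighted filtration $\mathcal{F}_{t}=\tfrac{t}{2}A_{U}+(1-\tfrac{t}{2})A_{L}$ is recovered, up to filtered quasi-isomorphism, by setting $v=0$ in $t\mathbf{C}$. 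For $t=0$ this is exactly the identification $\mathcal{C}_{U\cup\beta}\cong CFK^{-,2}(-U\cup\beta)$ already established, now carried out for all $t\in[0,2]$; along the way one checks that under this quotient $\mathscr{F}_{t}$ on $t\mathbf{C}$ restricts, up to a shift independent of $t$, to $M-\mathcal{F}_{t}$ on $\hat{\mathcal{GC}}$, so that ``Maslov grading $0$ at $\mathcal{F}_{t}$-filtration level $\le s$'' downstairs corresponds to ``$\mathscr{F}_{t}$-grading $\ge -s$'' upstairs.

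With the bridge in place I would feed it into the long exact sequence in homology of the short exact sequence $0\to t\mathbf{C}\xrightarrow{v}t\mathbf{C}\to\hat{\mathcal{GC}}\to 0$ (the first map being multiplication by $v$, with the attendant grading shift), giving $\cdots\to H_{*}(t\mathbf{C})\xrightarrow{v}H_{*}(t\mathbf{C})\to H_{*}(\hat{\mathcal{GC}})\to\cdots$. Granting that the relevant non-torsion summand of $H_{*}(t\mathbf{C})$ is a free tower with top generator $[\alpha]$, multiplication by $v$ sends $[\alpha]$ one step down the tower, so the image of the tower in $H_{*}(\hat{\mathcal{GC}})$ is (after the grading shift) a single nonzero class in Maslov grading $0$, and the least $\mathcal{F}_{t}$-filtration level carrying a representative of it is exactly $-\mathscr{F}_{t}([\alpha])$; this gives $\mathscr{A}_{L}(t)\le -\mathscr{F}_{t}([\alpha])$. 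For the reverse inequality, a cycle in $\hat{\mathcal{GC}}$ representing the Maslov-$0$ generator of $H_{*}(\hat{\mathcal{GC}})$ at filtration level $s$ lifts, through the connecting homomorphism, to a cycle in $t\mathbf{C}$ that is not divisible by $v$ and has $\mathscr{F}_{t}$-grading at least $-s$; such a cycle represents a non-torsion class, forcing $\mathscr{F}_{t}([\alpha])\ge -s$, i.e.\ $-\mathscr{F}_{t}([\alpha])\le\mathscr{A}_{L}(t)$. Combining the two inequalities gives the asserted equality; note that all the extrema range over the finite set of filtration jumps, so they are attained even though $\mathscr{A}_{L}$ and $\mathscr{F}_{t}$ take real values.

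The homological algebra above is routine; I expect the real work to lie in three pieces of bookkeeping. First, making the quotient $t\mathbf{C}\to\hat{\mathcal{GC}}$ and the identification of gradings precise for \emph{every} $t$, not just $t=0$: for $t\ne 0$ the powers of $v$ occurring in the differential are fractional convex combinations of the two Alexander drops, and one must verify that $\mathscr{F}_{t}$ descends to $M-\mathcal{F}_{t}$ on the nose up to a $t$-independent constant. Second, the sign: one must check that multiplying by $v$ moves \emph{down} the $\mathcal{F}_{t}$-filtration downstairs, so that the bottom of the range in which the image of the tower is realized equals minus its top grading upstairs --- this is where the minus sign in the statement comes from. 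Third, and I expect this to be the crux, one must know that $[\alpha]$ really sits at the top grading of the non-torsion summand of $H_{*}(t\mathbf{C})$; if this is not immediate from the way $[\alpha]$ was introduced, one proves it by exhibiting $[\alpha]$ as the lift of the maximal-grading Maslov-$0$ class downstairs --- a mild extension of the LOSS-type fact recalled in the introduction that the distinguished class generates the relevant top homology. Without that, the argument only yields $\mathscr{A}_{L}(t)\le -\mathscr{F}_{t}([\alpha])$.
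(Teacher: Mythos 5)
Your plan hinges on a chain-level bridge that is not available: you claim that setting $v=0$ in $t\mathbf{C}$ recovers $\widehat{\mathcal{GC}}$ together with its weighted filtration $\mathcal{F}_{t}$ up to filtered quasi-isomorphism, and you then run the long exact sequence of $0\to t\mathbf{C}\xrightarrow{v}t\mathbf{C}\to\widehat{\mathcal{GC}}\to 0$. But the differential $\partial_{t}$ of $t\mathbf{C}$ only counts rectangles with $r\cap\mathbb{X}=\emptyset$, so it (and every quotient of it by the variables) never contains the filtration-dropping components of $\widehat{\partial}$, which count rectangles crossing $\mathbb{X}$-markings; setting all $V_{i}=0$ yields (essentially) the fully blocked complex $\widetilde{GC}$, i.e.\ an associated-graded object in which both Alexander functions are gradings preserved by the differential. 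The quantity ``least $\mathcal{F}_{t}$-level at which the Maslov-$0$ class is hit'' computed there is a statement about link Floer homology gradings, not about the filtered invariant $\mathscr{A}_{L}(t)$/$\mathcal{A}_{L}(t)$, so both inequalities you extract from the long exact sequence are about the wrong object. (A secondary mismatch: $[\alpha]$ lies in the $[x^{+}]$ tower, whose image is the unique \emph{bottom} Maslov class, so the theorem compares with $\mathcal{A}_{L}(t)$, equivalently $-\mathscr{A}_{m(L)}(t)$, not with the Maslov-$0$ generator of $\widehat{\mathcal{GH}}(U\cup L)$ that you use. The point you flag as the likely crux --- that $[\alpha]$ is at the top of the non-torsion tower --- is not an issue at all: that is how $[\alpha]$ is defined.)

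The specialization that does work, and is what the paper uses, is $V=1$ rather than $V=0$: the quotient $t\mathscr{C}/(V-1)$ is identified with $\widetilde{\mathcal{GC}}(-(U\cup L))$, where the $\mathscr{F}_{t}$-grading upstairs becomes a filtration downstairs via the explicit conversion $\mathcal{F}_{t}(x)=-\mathscr{F}_{t}(x)-\tfrac{t}{2}-(n-l-2)(1-\tfrac{t}{2})$, and a class of $tH(\mathscr{C})$ is non-torsion exactly when its image under this specialization is nonzero. Since $\widetilde{\mathcal{GH}}(-(U\cup L))$ has a unique generator in Maslov grading $1-n$ and $[\alpha]$ maps to it, one gets $\widetilde{\mathcal{A}}\leq\mathcal{F}_{t}$ of the image of $[\alpha]$ (giving $\mathcal{A}_{L}(t)\leq-\mathscr{F}_{t}([\alpha])$), and conversely any cycle realizing $\widetilde{\mathcal{A}}$ represents that same class, giving the reverse inequality; the passage between $\widetilde{\mathcal{A}}$ and $\mathcal{A}$ and the orientation-reversal/mirror bookkeeping are handled by the propositions already proved in Sections 2.5 and 3.2--3.3. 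If you want to salvage your outline, replace the $v=0$ quotient and its long exact sequence by this $V=1$ specialization argument; as written, the central identification fails and the proof does not go through.
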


We study the annular invariant under crossing change and stabilizations in this complex. We give a slice-Bennequin like lower bound for band rank using these properties of this complex.

\begin{theorem}\label{theorem5}
Let $L$ be an annular link with $l$ components and $\mathcal{L}$ be the Legendrization of $L$. Then we have the following inequality,
\[ \mathscr{A}_{L}(t) \geq \frac{lk(U,L)t}{4}+ (1-\frac{t}{2})\frac{tb(\mathcal{L})+ |rot(\mathcal{L})|+l+1-lk(U,L)}{2}\]

holds for all $t \in [0,2]$.
\end{theorem}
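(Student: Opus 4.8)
The plan is to deduce the inequality from Theorem~\ref{theorem4} by exhibiting a single explicit cycle in $t\mathbf{C}$ that realizes the Legendrian data of $\mathcal L$. First I would use Theorem~\ref{theorem4} to replace $\mathscr A_L(t)$ by $-\mathscr F_t([\alpha])$, where $[\alpha]$ is the non-torsion class of $t\mathbf C$. With the grading conventions of that theorem, the level $\mathscr F_t([\alpha])$ is bounded above by $\mathscr F_t(\mathbf z)$ for any cycle $\mathbf z$ representing $[\alpha]$ (the level of a homology class is attained by one of its representatives), so it suffices to produce \emph{one} cycle representing $[\alpha]$ whose $\mathcal F_t$-level is at most the negative of the right-hand side. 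Thus the statement reduces to locating a distinguished representative of the non-torsion class and computing its two Alexander gradings.

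Next I would build an adapted grid diagram $G$ for the annular link $U\cup L$: present $L$ by the grid obtained in the standard way from the Legendrian front of $\mathcal L$ (rotate the front by $\pi/4$ and replace cusps and crossings by the appropriate corner configurations), and arrange $U$ as a standard meridional sub-diagram linking this part $lk(U,L)$ times. In $G$ there are the canonical corner generators $x^{+}(G)$ and $x^{-}(G)$. I would show that (a) the one with the smaller $A_L$-grading---the choice that produces the $+|rot(\mathcal L)|$ term---is a cycle in $t\mathbf C$, and (b) its class is the non-torsion class $[\alpha]$ up to a unit. Point (b) is the $t$-modified analogue of the Ozsv\'ath--Szab\'o--Thurston fact that $[x^{\pm}]$ generates a free summand of $HFK^{-}$, and of the statement from \cite{lossbraid} that the distinguished class generates $H_{top}(\mathcal F^{-U}_{bot})$; I expect to obtain it from the (de)stabilization chain maps on $t\mathbf C$ that reduce $G$ to a minimal grid, carried out while keeping track of both Alexander filtrations, or else from the crossing-change and stabilization formulas for $\mathscr A_L$ in $t\mathbf C$ established earlier in the paper.

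Then I would compute $A_U(\mathbf x)$ and $A_L(\mathbf x)$ for this corner generator $\mathbf x$ directly from $G$. The $A_U$-grading of a corner generator records the signed winding of $L$ about $U$, so $-A_U(\mathbf x)=\tfrac12\,lk(U,L)$ up to a universal constant; the $A_L$-grading, for a grid coming from a Legendrian front, is dictated by the usual front-to-grid translation and gives $-A_L(\mathbf x)=\tfrac12\bigl(tb(\mathcal L)+|rot(\mathcal L)|\bigr)$ plus a constant depending only on the number of components and on the normalization of the Alexander grading of the two-component sublink $U\sqcup L$ inside one grid; this last constant is what supplies the $l+1-lk(U,L)$ correction. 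Substituting into $\mathscr F_t(\mathbf x)=\tfrac t2A_U(\mathbf x)+(1-\tfrac t2)A_L(\mathbf x)$ and using $\mathscr A_L(t)=-\mathscr F_t([\alpha])\ge -\mathscr F_t(\mathbf x)$ yields the claimed bound for every $t\in[0,2]$; since the two corner generators differ in $A_L$ by $rot(\mathcal L)$, choosing the right one is exactly how the absolute value enters, just as in the knot case.

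The hard part will be step (b): showing that the front-adapted corner generator represents the non-torsion class of $t\mathbf C$ \emph{itself}, rather than a power of the formal variable times it, or a class that could be represented at a strictly higher $\mathcal F_t$-level. Proving this cleanly needs a $t$-modified version of the ``corner generator generates a free summand'' theorem that is compatible with the weighted filtration $\mathcal F_t$, which I would establish by adapting the commutation and (de)stabilization maps of $t\mathbf C$ and checking that they send the corner generator of $G$ to that of a minimal grid, filtration levels included. A secondary, purely bookkeeping obstacle is pinning down the additive constants---the $+l+1$ and the two appearances of $lk(U,L)$---which amounts to normalizing the Alexander gradings of $U$ and of the $l$-component $L$ consistently inside one grid and comparing with the standard Thurston--Bennequin, rotation and linking data of the Legendrization of $U\cup L$.
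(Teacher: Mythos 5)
Your reduction to Theorem \ref{theorem4} is the right starting point, but the step you yourself flag as the hard part --- showing that the front-adapted corner generator represents the top non-torsion class $[\alpha]$ ``up to a unit'' --- is not merely hard, it is false in general, so the proposal as written cannot be completed. If $[x^{\pm}]$ always generated the top of the $V$-tower, then Theorem \ref{theorem4} would give the \emph{equality} $\mathscr{A}_{L}(t)=\mathcal{F}_{t}(x^{\pm})$ for every annular link and every choice of Legendrization, i.e.\ the invariant would be computed by the classical data ($tb$, $rot$, linking number) of an arbitrary Legendrian representative; this already fails because negatively stabilizing $\mathcal{L}$ changes that classical data while $\mathscr{A}_{L}(t)$ is fixed, and concretely the class $[x^{+}]$ of a negatively stabilized representative becomes $V$-divisible (the same phenomenon behind the computation of $\eta(\beta_{-stab})$ in Section 4). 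In the paper the top-of-tower property is established only for quasi-positive links (Proposition \ref{quasi1}), where it yields the exact formula of Proposition \ref{generalquasi}; for a general $L$ one only has $[x^{\pm}]=V^{k}[\alpha]$ modulo torsion with some $k\geq 0$.

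The fix is that you do not need that step at all, and the reason you think you do is a sign/mirror slip: Theorem \ref{theorem4} is stated for $\mathcal{A}_{L}$ (the Maslov grading $-l$ version), and one converts via $\mathscr{A}_{L}(t)=-\mathcal{A}_{m(L)}(t)$, so that $\mathscr{A}_{L}(t)=\mathscr{F}_{t}([\alpha])$ with $[\alpha]$ the maximal non-torsion class of the $[x^{+}]$-tower in $ct\mathbf{C}(m(L))$. With this convention the desired bound is a \emph{lower} bound on the grading of $[\alpha]$, and it follows immediately from the weaker fact, already proved before Theorem \ref{theorem4}, that $[x^{+}]$ and $[x^{-}]$ are non-torsion elements of that tower: since multiplication by $V$ lowers $\mathscr{F}_{t}$, $\mathscr{A}_{L}(t)=\mathscr{F}_{t}([\alpha])\geq \mathcal{F}_{t}(x^{\pm})$ regardless of $k$. (In your unmirrored convention $\mathscr{A}_{L}(t)=-\mathscr{F}_{t}([\alpha])$ one would need an \emph{upper} bound on $\mathscr{F}_{t}([\alpha])$, which is exactly why you were forced to demand that the corner cycle represent $[\alpha]$ itself.) After this correction the rest of your outline collapses to the paper's computation: $A_{U}(x^{\pm})=\tfrac{lk(U,L)}{2}$ from the winding number formula (Equation \ref{windingformula}), $A(x^{\pm})$ from Proposition \ref{legcomplemma} applied to $\mathcal{L}\cup U$, and taking the better of the $x^{+}$ and $x^{-}$ bounds produces the $|rot(\mathcal{L})|$; no front-adapted grid, no $t$-modified destabilization argument tracking both filtrations is needed.
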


Then, we turn our attention to braids. We find that the invariant is linear for quasi-positive braids.\\

\begin{prop} \label{generalquasi}
If $\beta$ is a quasi-positive braid of index $n$ with $l$ componenents. Then, $\mathscr{A}_{\beta}(t)=  t \frac{-wr(\beta)-l+n}{4}+\frac{wr(\beta)+l}{2}$.
\end{prop}

Braided cobordisms are the most interesting examples to investigate in this context. We specialize  the annular invariant for braids and braided cobordisms in which case it gives a lower bound on band rank. 
\begin{theorem}\label{theorem6}
Let $\beta$ be an $n$-braid with $l$ components and ${Id}_{n}$ be the identity $n$-braid. Then
$| \mathscr{A}_{{\beta}}(t)-\mathscr{A}_{{Id}_{n}}(t) | \leq \frac{rk_{n}(\beta)+l-n}{2} (1-\frac{t}{2})$.
\end{theorem}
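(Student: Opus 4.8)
The plan is to realize Theorem \ref{theorem6} as a special case of the cobordism bound of Theorem \ref{theorem1}, applied to a concrete annular cobordism built from a band presentation. Given an $n$-braid $\beta$ with $l$ components, write $\beta$ as a product of $rk_n(\beta)$ band generators times a pure braid, or more directly use the definition of $rk_n(\beta)$ as the minimal number of bands needed to pass from $Id_n$ to $\beta$ through braided band moves (birth/death discs contributing to the genus count in the strong annular category). First I would fix such a minimal band presentation and trace through it as a sequence of elementary annular cobordisms inside $A \times I$, each adding a band. The standard count shows that attaching $k=rk_n(\beta)$ bands between braid closures with the appropriate number of components yields a strong annular cobordism $\Sigma$ from $\hat{Id_n}$ to $\hat\beta$ whose genus is $g = \frac{rk_n(\beta)+l-n}{2}$ (this is exactly the Euler-characteristic bookkeeping: $n$ components at the start, $l$ at the end, $rk_n(\beta)$ bands, and $\chi(\Sigma) = n + l - rk_n(\beta) - 2\cdot(\text{births})$, with the genus-minimizing configuration having no closed components). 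The key point is that this cobordism is \emph{strong} in the sense required by Theorem \ref{theorem1}, because the braid axis $U$ is disjoint from all the bands — band generators of the braid group are supported away from the axis — so $U \times I$ can be taken as the product part of the annular cobordism.

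Second, I would simply invoke Theorem \ref{theorem1} with $L_1 = \hat{Id_n}$, $L_2 = \hat\beta$, and this $\Sigma$ of genus $g = \frac{rk_n(\beta)+l-n}{2}$, to conclude
\[
|\mathscr{A}_{\beta}(t) - \mathscr{A}_{Id_n}(t)| \le g\Bigl(1-\tfrac{t}{2}\Bigr) = \frac{rk_n(\beta)+l-n}{2}\Bigl(1-\tfrac{t}{2}\Bigr),
\]
which is the claimed inequality. The remaining care is in the edge cases: when $rk_n(\beta)+l-n$ is odd one must check the band move bookkeeping still produces an integral-genus surface (it does, by parity of the permutation — the number of bands has the same parity as $l-n$ when the bands realize the right permutation, since each band generator is a transposition), and when $\beta = Id_n$ itself both sides vanish.

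I expect the main obstacle to be the bookkeeping that identifies the minimal band-rank surface with a \emph{strong} annular cobordism of the stated genus, rather than merely an annular cobordism — one must verify that band generators can always be isotoped off the braid axis simultaneously and that the minimal-band configuration can be taken to have no closed components (otherwise the genus would be larger and the bound weaker but still true, so this direction is safe; the subtle direction is getting the genus as small as $\frac{rk_n(\beta)+l-n}{2}$). A secondary technical point is confirming that the definition of $rk_n(\beta)$ used here counts band moves in a way compatible with the Euler characteristic formula above; if $rk_n$ is defined via words in band generators in $B_n$, the passage to an embedded surface in $A\times I$ is standard but should be spelled out, tracking how each band generator $\sigma_{i,j}$ contributes one $1$-handle to the movie. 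Once the surface is in hand with the right genus and strength, Theorem \ref{theorem1} does the rest with no further work.
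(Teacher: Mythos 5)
There is a genuine gap: the band-presentation cobordism you construct cannot be fed into Theorem \ref{theorem1}, because it is not a \emph{strong} annular cobordism. In the paper's definition, a strong cobordism is one each of whose connected components is a knot cobordism between a single component of $L_{1}$ and a single component of $L_{2}$; this forces $L_{1}$ and $L_{2}$ to have the same number of components and the surface to pair them up bijectively. Here $\hat{Id}_{n}$ has $n$ components while $\hat{\beta}$ has $l\leq n$, so for $l\neq n$ no strong cobordism between them exists at all, and even when $l=n$ the surface obtained by attaching the $rk_{n}(\beta)$ bands to $n$ product annuli is typically connected (or at least has fewer than $n$ components), hence not strong. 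Your remark that the bands miss the braid axis only shows the cobordism is \emph{annular} (braided), which is a different condition; "strong" is not about the axis. Also, the quantity $\frac{rk_{n}(\beta)+l-n}{2}$ is not the genus of this surface in general: with $c$ surface components one gets $g=c+\frac{rk_{n}(\beta)-n-l}{2}$, which equals $\frac{rk_{n}(\beta)+l-n}{2}$ only when $c=l$. (Your parity worry, on the other hand, is vacuous: each band changes the component count by one, so $rk_{n}(\beta)\equiv n-l \pmod 2$ automatically.)

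The paper avoids this entirely by not using Theorem \ref{theorem1}. It uses Proposition \ref{bcob} for braided cobordisms, which is proved by summing the filtered degrees of the elementary annular cobordism maps from Section 3.5: split saddles shift $\mathcal{F}_{t}$ by $1-\frac{t}{2}$, merges and births by $0$, deaths by $-(1-\frac{t}{2})$. Writing $\beta$ as a product of $rk_{n}(\beta)$ conjugated generators gives a braided cobordism from $Id_{n}$ to $\beta$ with $s+m=rk_{n}(\beta)$ saddles and $s-m=l-n$, so $s=\frac{rk_{n}(\beta)+l-n}{2}$, and Proposition \ref{bcob} yields the stated bound; the relevant count is the number of \emph{splits}, exploiting the asymmetry of the filtered degrees, not a genus. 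If you want to salvage your argument you should replace the appeal to Theorem \ref{theorem1} by exactly this kind of saddle-by-saddle degree count (or first prove a version of the cobordism inequality valid for non-strong annular cobordisms in terms of splits and deaths), since as written the key step fails.
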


Also, we get the following slice-Bennequin type lower bound on the band rank. A similar lower bound on four ball genus of a link was given by Cavallo \cite{cavallo}.

\begin{theorem}\label{theorem7}

If $\beta$ is a  $n$-braid  with $l$ componenets and $\mathcal{L}$ its Legendrization then,

\[  \frac{rk_{n}(\beta)+l-n}{2}  (1-\frac{t}{2}) \geq \frac{nt}{4}+ (1-\frac{t}{2})(\frac{tb(\mathcal{L})+ |rot(\mathcal{L})|+l+1-n}{2}) - \frac{n}{2} \] 

holds for all $t \in [0,2]$.
  
\end{theorem}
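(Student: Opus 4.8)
The plan is to combine Theorem 6 with Theorem 5, evaluated at the identity braid. Theorem 6 gives
\[
\mathscr{A}_{\beta}(t) \geq \mathscr{A}_{{Id}_{n}}(t) - \frac{rk_{n}(\beta)+l-n}{2}\left(1-\frac{t}{2}\right),
\]
so it suffices to establish a good lower bound on $\mathscr{A}_{\beta}(t)$ and a matching upper bound on $\mathscr{A}_{{Id}_{n}}(t)$; rearranging then isolates $\frac{rk_{n}(\beta)+l-n}{2}(1-\frac{t}{2})$ on the left. For the lower bound on $\mathscr{A}_{\beta}(t)$, I would apply Theorem 5 to the annular link $L$ underlying $\beta$. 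Here the relevant linking number is $lk(U,L)=n$, since a braid on $n$ strands meets the braid axis $U$ exactly $n$ times. With $\mathcal{L}$ the Legendrization of this annular link, Theorem 5 yields
\[
\mathscr{A}_{\beta}(t) \geq \frac{nt}{4} + \left(1-\frac{t}{2}\right)\frac{tb(\mathcal{L})+|rot(\mathcal{L})|+l+1-n}{2}.
\]

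The remaining ingredient is the upper bound $\mathscr{A}_{{Id}_{n}}(t) \leq \frac{n}{2}$ for the identity $n$-braid. I expect this to follow from a direct computation: the identity braid closes up to the $n$-component unlink $U_n$, which together with the axis $U$ forms a particularly simple annular link for which the filtered complex $\hat{\mathcal{GC}}$ can be understood explicitly (e.g.\ via a minimal grid or the connected-sum/disjoint-union behavior of link Floer homology). The Maslov-grading-$0$ homology should be supported at $\mathcal{F}_t$-level governed by the top Alexander gradings of the $n$ unknotted components relative to $U$, giving exactly $n/2$ after the weighting $\tfrac{t}{2}A_U + (1-\tfrac{t}{2})A_L$ is applied — the two weights combining so that the $t$-dependence cancels and leaves the constant $n/2$. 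Alternatively, one can invoke Proposition~\ref{generalquasi} with $\beta = {Id}_n$ (a quasi-positive, indeed trivial, braid with $wr=0$, $l=n$), which directly gives $\mathscr{A}_{{Id}_n}(t) = \tfrac{n}{2}$.

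Assembling these: substitute the Theorem 5 lower bound for $\mathscr{A}_{\beta}(t)$ and the value $\mathscr{A}_{{Id}_n}(t)=\tfrac{n}{2}$ into the inequality from Theorem 6, then move terms across to obtain
\[
\frac{rk_{n}(\beta)+l-n}{2}\left(1-\frac{t}{2}\right) \geq \frac{nt}{4} + \left(1-\frac{t}{2}\right)\frac{tb(\mathcal{L})+|rot(\mathcal{L})|+l+1-n}{2} - \frac{n}{2},
\]
which is exactly the claimed inequality for all $t \in [0,2]$. The main obstacle I anticipate is pinning down $\mathscr{A}_{{Id}_n}(t)$ rigorously — one must be careful that the correct normalization of the Alexander filtrations $A_U, A_L$ is used and that the minimal grid diagram for the $n$-component unlink linked once each with the axis genuinely realizes Maslov grading $0$ homology at the asserted filtration level; but since this is precisely the $\beta = {Id}_n$ case of Proposition~\ref{generalquasi}, the computation is already available, and the rest is bookkeeping.
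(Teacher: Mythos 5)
Your proposal is, in substance, exactly the paper's proof: the paper disposes of Theorem \ref{theorem7} by "combining Theorem \ref{theorem6} and Theorem \ref{theorem5}," using $lk(U,\beta)=n$ in Theorem \ref{theorem5} and the computation $\mathscr{A}_{Id_n}(t)=\frac{n}{2}$ (which the paper records both directly and via Proposition \ref{generalquasi}, as you note). One small correction: the half of Theorem \ref{theorem6} you actually need is $\mathscr{A}_{\beta}(t)\leq \mathscr{A}_{Id_n}(t)+\frac{rk_n(\beta)+l-n}{2}\left(1-\frac{t}{2}\right)$, not the inequality $\mathscr{A}_{\beta}(t)\geq \mathscr{A}_{Id_n}(t)-\frac{rk_n(\beta)+l-n}{2}\left(1-\frac{t}{2}\right)$ you displayed --- substituting the Theorem \ref{theorem5} lower bound for $\mathscr{A}_{\beta}(t)$ into the latter does not chain, whereas with the former one gets $\frac{nt}{4}+\left(1-\frac{t}{2}\right)\frac{tb(\mathcal{L})+|rot(\mathcal{L})|+l+1-n}{2}\leq \mathscr{A}_{\beta}(t)\leq \frac{n}{2}+\frac{rk_n(\beta)+l-n}{2}\left(1-\frac{t}{2}\right)$, which rearranges to the claim; since Theorem \ref{theorem6} is stated with an absolute value, this is a fixable slip rather than a gap.
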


Finally, we explore some relations with transverse invariants and braid monodromy properties. In particular, we define subsets  $\mathscr{M}_{t}$  of braids such that for $0 \leq t_{1} \leq \cdots \leq t_{n} < 2$ , \[ QP \subseteq \mathscr{M}_{t_{1}} \subseteq \cdots \subseteq \mathscr{M}_{t_{n}} \subseteq RV . \] Where $QP$ and $RV$ denotes the monoids of Quasi-positive and right-veering braids respectively. \\

\begin{theorem}\label{theorem8} 

Membership in $\mathscr{M}_t$ is a transverse invariant and furthermore, $\mathscr{M}_t $ is a monoid. 
\end{theorem}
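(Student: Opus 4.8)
Recall that $\beta\in\mathscr{M}_t$ precisely when $\mathscr{A}_{\beta}(t)$ attains its extremal value, i.e.\ the value assigned by Proposition~\ref{generalquasi} to quasi-positive braids of the same index, writhe and number of components (equivalently, when the Bennequin-type bound of Theorem~\ref{theorem5} is sharp at the parameter $t$). Thus $QP\subseteq\mathscr{M}_t$ is immediate from Proposition~\ref{generalquasi}, and $\mathit{Id}_n\in\mathscr{M}_t$ since the identity braid is quasi-positive. Two claims remain: transverse invariance and closure under the braid product.

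For transverse invariance we invoke the transverse Markov theorem of Orevkov--Shevchishin and Wrinkle: two braids carry transversely isotopic transverse links if and only if they are related by a finite sequence of conjugations and positive (de)stabilizations, so it suffices to check that membership in $\mathscr{M}_t$ survives each such move. Conjugation is harmless because conjugate braids have isotopic annular closures, so $\mathscr{A}_{\bullet}(t)$ agrees, and the index, writhe and component count entering the extremal value are conjugacy invariants (cf.\ the conjugacy invariance of $\eta$ in Theorem~\ref{theorem2}). For a positive stabilization $\beta\mapsto\beta\sigma_n$ the number of link components is unchanged while the writhe and the index each increase by $1$, so by Proposition~\ref{generalquasi} the extremal value increases by exactly $\tfrac12$. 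Pairing this with the stabilization estimate for $\mathscr{A}_{\beta}(t)$ coming from the crossing-change/stabilization analysis in the $t\mathbf{C}$ complex (the $\mathscr{A}$-analogue of the inequality $\eta(\beta)\le\eta(\beta_{+stab})\le\eta(\beta)+\tfrac12$ of Theorem~\ref{theorem2}, already used for Theorem~\ref{theorem6}) forces $\mathscr{A}_{\beta}(t)$ to be extremal if and only if $\mathscr{A}_{\beta\sigma_n}(t)$ is; this settles both stabilization and destabilization.

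For the monoid property, suppose $\beta_1,\beta_2\in\mathscr{M}_t$ lie in a common braid group $B_n$; we must show $\beta_1\beta_2\in\mathscr{M}_t$. The lower bound of Theorem~\ref{theorem5} already gives $\mathscr{A}_{\beta_1\beta_2}(t)$ at least the extremal value attached to $(n,\,wr(\beta_1)+wr(\beta_2),\,l(\beta_1\beta_2))$, so it remains to bound it above by the same number. For this I would study the chain map on $\hat{\mathcal{GC}}$, equivalently on $t\mathbf{C}$, relating the complexes of $\beta_1$, $\beta_2$ and $\beta_1\beta_2$ obtained by juxtaposing their grid diagrams; this realizes a cobordism (a merge, or an explicit sequence of band moves) from $\widehat{\beta_1}\sqcup\widehat{\beta_2}$ to $\widehat{\beta_1\beta_2}$, carries the Maslov-$0$ homology of a suitable pairing of $t\mathbf{C}(\beta_1)$ and $t\mathbf{C}(\beta_2)$ into that of $t\mathbf{C}(\beta_1\beta_2)$, and shifts the weighted filtration $\mathscr{F}_t$ by a controlled amount. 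Applying it to the distinguished non-torsion classes of Theorem~\ref{theorem4} for $\beta_1$ and $\beta_2$ — which by hypothesis sit at the extremal $\mathscr{F}_t$-level — produces a non-torsion class for $\beta_1\beta_2$ witnessing the matching upper bound, and the two bounds pinch $\mathscr{A}_{\beta_1\beta_2}(t)$ to the extremal value.

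The main obstacle is precisely this last step: constructing the juxtaposition/merge map on the $t$-modified complexes and controlling its effect on the non-torsion class, together with the bookkeeping for the number of components (which is not additive under braid multiplication and must be matched against the cobordism). By contrast the transverse-invariance argument is essentially formal once the stabilization formula for $\mathscr{A}_{\beta}(t)$ — already needed for Theorems~\ref{theorem6} and~\ref{theorem7} — is in hand; the only care required there is to retain both the stabilization and the destabilization inequality rather than just one of them. With both parts established, the chain $QP\subseteq\mathscr{M}_{t_1}\subseteq\cdots\subseteq\mathscr{M}_{t_n}\subseteq RV$ exhibits each $\mathscr{M}_t$ as a transversely invariant monoid interpolating between quasi-positivity and right-veering.
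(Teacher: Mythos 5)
Your opening ``recall'' misstates the definition of $\mathscr{M}_t$, and this defect propagates through the whole argument. In the paper, $\beta\in\mathscr{M}_t$ means $M_t(\beta)=2m_t(\beta)+y_t(\beta)=\frac{n}{2}$, i.e.\ the $A_U$-grading of the controlling generator (equivalently, of the maximal non-torsion class $[\alpha]$ in the $[x^+]$ tower of $ct\mathbf{C}(m(\beta))$) is extremal. This is \emph{not} the same as $\mathscr{A}_{\beta}(t)$ attaining the quasi-positive value of Proposition \ref{generalquasi}, nor the same as sharpness of Theorem \ref{theorem5}: $M_t$ only records the $A_U$-component of the class realizing $\mathscr{A}_{\beta}(t)$, while your conditions constrain the full weighted filtration value, and no equivalence is proved (indeed the chain $QP\subseteq\mathscr{M}_t\subseteq RV$ is expected to be strict, so $\mathscr{M}_t$ should contain braids for which your ``extremality'' fails). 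Even granting your reformulation, the stabilization step has a gap: the estimate $\mathscr{A}_{\beta}(t)+\frac{1-t}{2}\leq\mathscr{A}_{\beta^{+}}(t)\leq\mathscr{A}_{\beta}(t)+\frac12$ together with the $+\frac12$ shift of your extremal value gives only the upper inequality, and the matching lower bound is not supplied by Theorem \ref{theorem5} (under a positive braid stabilization that bound shifts by $\frac{t-1}{2}$, not $\frac12$), so ``extremal iff extremal'' does not follow. The paper's proof instead works directly at the level of $A_U$: it applies the stabilization map $NS_-$ to the top non-torsion class, uses Proposition \ref{starchange} to see that $A_U$ shifts by exactly $-\frac12$ to $-\frac{n+1}{2}$, and concludes because $-\frac{n+1}{2}$ is the minimum possible value; conjugacy invariance comes from invariance of the annular link, as you say.

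For the monoid property you explicitly leave the key step open (constructing a juxtaposition/merge map on $t\mathbf{C}$ and controlling its effect on the non-torsion class), so the proposal is incomplete there, and in any case your ``lower bound from Theorem \ref{theorem5} equals the extremal value'' claim again conflates the Bennequin bound with the quasi-positive value. The paper takes a different and much shorter route that you should compare with: $M_t$ is additive under disjoint union of braids (via Proposition \ref{disjointsum} and the Künneth formula), so $\beta_1\sqcup\beta_2\in\mathscr{M}_t$; Baldwin's comultiplication result says $\beta_1\beta_2$ is transversely isotopic to $\beta_1\sqcup\beta_2$ after adding negative crossings; $M_t$ is non-decreasing under addition of a negative crossing (the crossing-change maps $c_\pm$ on $ct\mathbf{C}$ applied to the $x^+$ tower); and membership in $\mathscr{M}_t$ is a transverse invariant by the first half of the theorem. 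Since $\frac{n}{2}$ is the maximal possible value of $M_t$, this pins down $M_t(\beta_1\beta_2)$. No new pairing or merge cobordism map is needed, which is precisely the obstacle your sketch does not overcome.
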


The paper is organized as follows. In Section 2, we review filtered combinatorial link Floer complex and their relevant properties. In Section 3, we define the annular invariant and prove its basic properties. In Section 4, we define a chain complex for annular links to investigate the refinement of $\hat{\theta}$ invariant. In Section 5, we define the reformulation in terms of t-modified complex and prove the key properties. Finally in Section 6, we study the invariant for braids and braieded cobordisms.\\

\date{\textbf{Acknowledgments:} I am grateful to my advisor, Olga Plamenevskaya, for pointing me towards annular Khovanov homology.

\pagebreak
\section{Filtered combinatorial link Floer complex}

In this section, we will introduce various flavors of grid homology [See \cite{Grid Homology for Knots and Links} for more details] and state the key results that will be used later.
\subsection{Grid states and gradings}
A grid state $x$ for a grid diagram $D$ with grid number $n$ consists of
$n$ points in the torus such that each horizontal and each vertical circle contains exactly one element of $x$. The set of grid states for $D$ is denoted by $S(D)$.\\

Given $x,y \in S(D)$, let $Rect(x,y)$ denote the space of embedded rectangles with the following properties. First of all, $Rect(x,y)$ is empty unless $x,y$ coincide at exactly $n-2$ points. An
element $r$ of $Rect(x,y)$ is an embedded disk in $\mathbb{T}$, whose boundary consists of four
arcs, each contained in horizontal or vertical circles; under the orientation induced
on the boundary of $r$, the horizontal arcs are oriented from a point in $x$ to a point
in $y$. The set of empty rectangles $r \in Rect(x,y)$ with $x \cap Int(r) = \phi$ is denoted
by $Rect^{o}(x, y)$. More generally, a path from $x$ to $y$ is a $1$-cycle $\gamma$ on $\mathbb{T}$ contained in the union of horizontal and vertical circles such that the boundary of
the intersection of $\gamma$ with the union of the horizontal curves is $y-x$ , and a domain
$\Delta$ from $x$ to $y$ is a two-chain in $\mathbb{T}$ whose boundary $\partial \Delta$ is a path from x to y. The set of domains from $x$ to $y$ is denoted $ \Pi(x, y)$.

Now let us define the gradings that will be used in various flavors of grid homology. $\mathbf{Maslov}$ $\mathbf{grading}$ function $M: S(D) \rightarrow \mathbb{Z}$ is defined as \[M(x) = \mathcal{J} (x - \mathbb{O},x - \mathbb{O})+1.\] Here for sets $P,Q$ of finitely many points in the grid, \[ \mathcal{J}(P,Q):=\sum\limits_{a \in P} \#\{ (a,b) \in (P,Q) | b \text{ has both coordinates strictly greater than  the ones of } a \} .\] For each component $L_{i}$ , the $\mathbf{Alexander}$ $\mathbf{grading}$ function $A_{i}: S(D) \rightarrow \frac{1}{2} \mathbb{Z}$  is defined as \[ A_{i}(x)=  \mathcal{J} (x - \frac{1}{2}(\mathbb{X}+\mathbb{O}),\mathbb{X}_{i} - \mathbb{O}_{i}) - \frac{n_{i}-1}{2}.\] 

If there is an empty rectangle $r$ between $x$ and $y$ and the Alexander and Masolov gradings  also satisfy, 

\[ M(y)-M(x)= -1 + 2\#(r\cap \mathbb{O}) \]

and,

\[A_{i}(y)-A_{i}(x)= \#(r\cap \mathbb{O}_{i}) - \#(r\cap \mathbb{X}_{i}) .\]\\

Total Alexander grading $A(x)$ is given by the sum of all component grading functions. It satisfies, 
\begin{equation}\label{algformula}
A(x)= \frac{1}{2}(\mathcal{J}(x - \mathbb{O},x - \mathbb{O}) - \mathcal{J}(x- \mathbb{X},x - \mathbb{X}) -\frac{n-1}{2}.
\end{equation}

There is also a winding number formula for computing Alexander gradings of link components. Let $w_{L_{i}}(q)$ denote the winding number of component $L_{i}$ around a point $q$ and $(P_{i})_{i=1,\cdots,8n}$ be the corners of X and O markings. Then,

\begin{equation}\label{windingformula}
A_{i}(x)=  -\sum_{p\in x} w_{L_{i}}(p) + \frac{1}{8} \sum_{j=1}^{8n} w_{L_{i}}(P_{i}) - \frac{1}{2}.
\end{equation} 
  
\subsection{Fully blocked filtered grid complex $\widetilde{\mathcal{GC}}$}

Given a toroidal grid diagram $D$ of a link $L$ , we associate to it a chain complex $(\widetilde{\mathcal{GC}}(D), \widetilde{ \partial})$
as follows.  $\widetilde{\mathcal{GC}}(D)$  is a free $\mathbb{F}_{2}$ module generated by grid states $S(D)$.\\

Given $x \in S(G)$, the differential map $\widetilde{\partial} : \widetilde{\mathcal{GC}}(D) \rightarrow \widetilde{\mathcal{GC}}(D)$, is defined in the following way ,  
\[ \widetilde{\partial}x := \mathlarger{\sum\limits_{y\in S(D)} \sum\limits_{r \in Rect^{o}(x,y), r \cap \mathbb{O} = \phi}}  y \ \ \forall x \in S(D) \]

It can be shown that $ \widetilde{ \partial} \circ  \widetilde{ \partial} = 0$  . Also, the Maslov grading when extended as a bi-linear form gives the homological grading of this complex. Moreover, Alexander grading functions $A_{i}(x)$ gives filtrations when extended as bi-linear form. Therefore,$(\widetilde{\mathcal{GC}}(D), \widetilde{ \partial})$ can be thought of as a filtered chain complex. Its homology, denoted by $\widetilde{\mathcal{GH}}(D) $  isomorphic to $\mathbb{F}_{2}^{2^{n-1}}$. the associated graded complex is denoted by $(\widetilde{GC}(D),\widetilde{\partial_{\mathbb{X}}})$. 

\subsection{Simply blocked filtered grid complex $\widehat{\mathcal{GC}}$ }

Let $D$ be the grid diagram of an oriented link $L$ with $l$ components. $\mathbb{O}$ is the set of O-markings in $D$. $ s \mathbb{O}     \subset \mathbb{O}$ is a subset that contains precisely one O marking from each component of $L$. Elements of $s\mathbb{O}$ are called special and represented as '$\phi$' in D. 
The simply blocked filtered chain complex $(\widehat{\mathcal{GC}}(D), \widehat{\partial)}$ is defined as  - \\
\[ \widehat{\mathcal{GC}}(D)= \text{Free }  \mathbb{F}_{2}[V_{1},V_{2},...,V_{n}] \ \text{ module over grid states } S(D) .\]

The differential $\partial$ is defined in the following way- \\ 
\[ \widehat{\partial}x := \mathlarger{\sum\limits_{y\in S(D)} \sum\limits_{r \in Rect^{o}(x,y), r \cap s\mathbb{O} = \phi}} V_{1}^{O_{1}(r)}...V_{m}^{O_{m}(r)} y \ \ \ \forall x \in S(D)\]

The homological grading of a generator $x \in S(D)$ in this complex is again given by Maslov grading. Multiplication by $V_{i}$ lowers the Maslov grading by $2$. $\widehat{\partial}$ lowers Maslov grading by $1$.\\

$(\widehat{\mathcal{GC}}(D),\widehat{\partial})$ comes with additional filtration(called Alexander filtration) for each link component.For a $x \in S(D)$ the $i$'th link component the Alexander filtration which are given by $A_i$s on generators. Then, $A_{i}$'s are extended to the whole module so that multiplication by $V_{k}$ lowers $A_{i}$ by $1$ if $X_{k} \in \mathbb{X}_{i}$ and it remains unchanged otherwise. \\

It can be shown that multiplication by each $V_{i}$ is filtered chain homotopic to $0$. So can be thought of as a $\mathbb{F}_{2}$ module. Furthermore its homology, denoted by $\widehat{\mathcal{GH}}(L)$, isomorphic to $\mathbb{F}_{2}^{2^{l-1}}$. Ozsvath and Szab\'{o} showed that the filtered chain homotopy type of $(\widehat{\mathcal{GC}}(D),\widehat{\partial})$ is an invariant of $L$.\\ 

$\tau(L)$ is particularly interesting invariant that one can extract from the filtered chain homotopy type. Let $G$ be grid diagram of link $L$, $\tau(L)$ is defined to be the minimal value of $i$ such that the inclusion $H_{0}(\mathcal{F}_{i}(\widehat{\mathcal{GC}}(G)) \rightarrow H_{0}(\widehat{\mathcal{GC}}(G))$ is non zero.

It turns out that $\tau(L)$ is a smooth concordance invariant. In fact, for a knot, $\tau$ gives lower bound on $4$-ball genus. It is also possible to define a $\tau$-set by considering other Maslov gradings.\\

Sometimes we will be interested in the associated graded complex , $(\widehat{GC}(D),\widehat{\partial_{\mathbb{X}}})$.Its homology groups are written as ${\widetilde{GH}}_{i}(L,j)$ where $i$ indicates Maslov grading and $j$ indicates Alexander grading. We also have, $ \widetilde{GH}(D) \cong \widehat{GH}(L)  \otimes W^{n-l}$. Notice that even though $\widetilde{GH}(D)$ depends on grid number,  $\widehat{GH}(L)$ is a link invariant independent of the diagram $D$.  $\widehat{GH}(L)$ is also referred to as combinatorial link Floer  homology or simply link Floer homology.

\subsection{Unblocked filtered grid complex $\mathcal{GC}^{-}$}

The unblocked grid chain complex $(\mathcal{GC}^{-}(D), \partial^{-})$ defined as  - \\
\[ {\mathcal{GC}}^{-}(D)= \text{Free }  \mathbb{F}_{2}[V_{1},V_{2},...,V_{n}] \ \text{ module over grid states } S(D) .\]

The differential $\partial^{-}$ is defined in the following way- \\ 
\[ {\partial}^{-}x := \mathlarger{\sum\limits_{y\in S(D)} \sum\limits_{r \in Rect^{o}(x,y)}} V_{1}^{O_{1}(r)}...V_{m}^{O_{m}(r)} y \ \ \ \forall x \in S(D) .\]

 The unblocked grid homology package also comes with the same extra filtrations (one for each link component). The filtered quasi-isomorphism type of the unblocked grid complex is an invariant of the link $L$. We will denote its associated graded complex by $(GC^{-}(D), \partial_{\mathbb{X}}^{-})$. The maps given by multiplication by $V_{i}$ and $V_{j}$ are chain homotopic in this complex if $O_{i}$ and $O_{j}$ are in the same link component. Therefore, ${GH}^{-}(D)$ can be thought of as a  $\mathbb{F}_{2}[V_{1},V_{2},...,V_{l}]$ module if the link $L$ represented by $D$ has $l$ components. If $L$ is a knot, it can be shown that  ${GH}^{-}(D)= F[U] \oplus Tor$ where $Tor$ is the torsion part. Further, the maximal Alexander grading of a non-torsion class is equal to $- \tau(L)$.

\subsection{Grid complexes of mirror links}\label{mirrorgh}
 
 Given a grid diagram $D$ (with grid number $n$) of a link $L$, let $D^{*}$ be the diagram obtained by reflecting $D$ through a horizontal axis. Then $D^{*}$ represents the link $m(L)$. Let $M$ and $M^{*}$ (resp. $A$ and $A^{*}$) denote Maslov (resp Alexander) grading in grids $D$ and $D^{*}$.    Let $x \rightarrow x^{*}$ be the natural bijection of the grid states induced by reflection. Then we observe that there is also a bijection between $Rect^{o}(x,y)$ and $Rect^{o}(y^{*},x^{*})$. So there is an isomorphism of chain complexes $\widetilde{\mathcal{GC}}(D^{*}) \cong Hom(\widetilde{\mathcal{GC}}(D), \mathbb{F}_{2}) $. Now, we can verify that $M(x)+ M^{*}(x^{*})=1-n$ and $A(x)+ A^{*}(x^{*})=l-n$ for a grid state $x$. Also, using the standard convention that the dual complex $\widetilde{GC}^{*} \cong Hom(\widetilde{\mathcal{GC}}(D), \mathbb{F}_{2})$ has grading and filtration level obtained by taking negative of those in $\widetilde{GC}$. Then we get an isomorphism of  $\widetilde{\mathcal{GC}}(D^{*}) \cong \widetilde{\mathcal{GC}}^{*}(D))[1-n,l-n]$. Now, we can pass to the hat version using Proposition \ref{collapse}, There, we have $\widehat{\mathcal{GC}}(D^{*}) \otimes W^{n-l}\cong \widehat{\mathcal{GC}}^{*}(D))\otimes{W^{*}}^{n-l}[1-l,0]$. So, we get an isomorphism $\widehat{\mathcal{GC}}(D^{*}) \cong \widehat{\mathcal{GC}}^{*}(D))[1-l,0]$. In fact, we can use the individual Alexander filtrations $\mathcal{F}_{i}$, $i=1,\cdots,l$  to put this isomorphism more generally as

 \[\widehat{\mathcal{GC}}(D^{*}) \cong \widehat{\mathcal{GC}}^{*}(D))[1-l,0,\cdots,0]. \]
 
 This fact will be used later.

\subsection{Invariants of Legendrian and transverse links}

The element $x^{+} \in S(D)$, which consists of the intersection points at the upper right corners of the squares containing the markings X in $D$, is a cycle in $({GC}^{-}(D), {\partial}_{\mathbb{X}}^{-})$. The element $x^{-} \in S(D)$ consisting of the intersection points at the south west corners of X markings is also a cycle. If $L$ is the Legendrian link corresponding to the grid diagram $ D$, then we know that $D$ represents the topological link type of $m(L)$. There is an interesting formula for the Alexander and Maslov gradings of the distinguished class in terms of the classical invariants of Legendrian link components. First, let us introduce some notations. Suppose $L_1,\cdots, L_{l}$ are the components of $L$. Define $tb_i(L)$ to be the linking number of $L_i$ with the Legendrian push-off $L'$. It can be checked that $tb(L)= tb_1(L)+\cdots+tb_{l}(L)$. $tb_i$  can be computed
in terms of a generic front projection $\textit{D}(L) = \cup_{1}^{l} \textit{D}(L_i)$ where $\textit{D}(L_i)$ is associated with $L_i$. \[tb_{i}(L)= wr(\textit{D}(L_i))+lk(\textit{D}(L_i),\textit{D}(L) \backslash \textit{D}(L_i)) - \frac{1}{2} \# \{ \text{cusps   in } \textit{D}(L_i) \}.\] 

Now define rotation numbers \[ r(L_{i}):=  \frac{1}{2} \# ( \{ \text{downward-oriented cusps in} \textit{D}(L_i) \} - \{ \text{upward-oriented cusps in} \textit{D}(L_i) \} ).\]
Again, it can be checked that $r(L)=r_{1}(L) + \cdots + r_{l}(L)$.

\begin{prop}\label{legcomplemma}

Let $x^+$ and $x^-$ be the distinguished cycles in the grids of $m(L)$ then,

\[M(x^+) = tb(L)- r(L) +1 \ \ \ \ M(x^-) = tb(L)+ r(L) +1.\]

\[A_i(x^+) = \frac{tb_{i}(L)- r_{i}(L) +1}{2} \ \ \ \ A_{i}(x^-) = \frac{tb_{i}(L)- r_{i}(L) +1}{2} .\]

\[A(x^+) = \frac{tb(L)- r(L) + l}{2} \ \ \ \ A_{i}(x^-) = \frac{tb (L)- r(L) + l}{2} .\]

\end{prop}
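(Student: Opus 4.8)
The plan is to compute both gradings directly from the combinatorial formulas for the distinguished cycles $x^+$ and $x^-$ and then translate the resulting lattice-point counts into winding numbers, which are exactly the quantities governing the classical Legendrian invariants. First I would set up the grid-to-front-projection dictionary: a grid diagram $D$ for $m(L)$ corresponds to a Legendrian front for $L$ after the standard $45\degree$ rotation and smoothing of corners, so that NW–SE corners of the grid become the left/right cusps and the winding number of $L_i$ about a point in the plane is read off from the nesting of the projected strands. With this in hand, $x^+$ sits at the NE corners of the $\mathbb{X}$-markings and $x^-$ at the SW corners, and the point is that these two choices are related by a reflection which interchanges the roles of $\mathbb{X}_i$ and $\mathbb{O}_i$ and of ``strictly greater'' with ``strictly smaller'' coordinates.

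Next I would carry out the Maslov computation. Using $M(x)=\mathcal{J}(x-\mathbb{O},x-\mathbb{O})+1$, I expand $\mathcal{J}(x^+-\mathbb{O},x^+-\mathbb{O})$ into the four terms $\mathcal{J}(x^+,x^+)-\mathcal{J}(x^+,\mathbb{O})-\mathcal{J}(\mathbb{O},x^+)+\mathcal{J}(\mathbb{O},\mathbb{O})$. Because $x^+$ occupies the NE corners of the $\mathbb{X}$'s, each of these counts can be rewritten as a sum of local contributions at the cusps and crossings of the front: the writhe $wr(D(L_i))$ and the linking numbers $lk(D(L_i),D(L)\setminus D(L_i))$ come from the crossing contributions (pairs of markings on different strands whose relative position flips the sign count), and the cusp count $-\tfrac12\#\{\text{cusps}\}$ together with the rotation number $r(L)$ come from the local behavior at NW vs. SE corners — upward versus downward cusps contribute with opposite signs precisely as in the definition of $r_i(L)$. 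Assembling these gives $M(x^+)=tb(L)-r(L)+1$; the $x^-$ case is identical after the reflection, which swaps the sign of the cusp contribution, yielding $M(x^-)=tb(L)+r(L)+1$.

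For the Alexander gradings I would use the winding-number formula \eqref{windingformula}, $A_i(x)=-\sum_{p\in x}w_{L_i}(p)+\tfrac18\sum_j w_{L_i}(P_j)-\tfrac12$. For $x=x^+$ the points $p\in x^+$ are the NE corners of the $\mathbb{X}$-markings, so $\sum_{p\in x^+}w_{L_i}(p)$ is a sum of winding numbers at corners of markings, and the correction term $\tfrac18\sum_j w_{L_i}(P_j)$ averages the winding numbers over all eight corners of all $n$ markings. The combination collapses, via the same cusp-and-crossing bookkeeping as above applied to $L_i$ alone, to $\tfrac12(tb_i(L)-r_i(L)+1)$; summing over $i$ gives the total $A(x^+)=\tfrac12(tb(L)-r(L)+l)$. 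For $x^-$ one checks that replacing NE corners by SW corners changes $\sum_{p}w_{L_i}(p)$ in a way that is compensated by the averaged term, so that $A_i(x^-)=A_i(x^+)$ — which is consistent with the stated identities (the apparent asymmetry between the Maslov gradings of $x^+,x^-$ and the equality of their Alexander gradings is exactly the familiar phenomenon that $x^+$ and $x^-$ differ only in Maslov grading).

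The main obstacle will be the careful sign- and corner-bookkeeping in relating $\mathcal{J}$-type lattice counts to the front-projection quantities: one must keep precise track of which corner (NW, NE, SW, SE) of each $\mathbb{X}$- and $\mathbb{O}$-marking contributes, how the $45\degree$ rotation sends these to the four local pictures at a cusp or a crossing, and how the ``strictly greater in both coordinates'' condition in $\mathcal{J}$ becomes a nesting/linking condition in the plane. Once the local contribution of each cusp (to $wr$, to the cusp count, to $r_i$) and of each crossing (to $wr$ and to $lk$) is pinned down, the global formulas follow by summation, and the $x^+ \leftrightarrow x^-$ statements follow formally from the reflection symmetry together with the mirror-grading relations $M(x)+M^*(x^*)=1-n$, $A(x)+A^*(x^*)=l-n$ recorded in Section \ref{mirrorgh}.
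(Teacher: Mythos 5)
Your overall strategy -- computing $M(x^\pm)$ directly from the $\mathcal{J}$-formula and $A_i(x^\pm)$ from the winding-number formula (Equation \ref{windingformula}), and converting the corner counts into cusp and crossing contributions of the front after the $45$-degree rotation -- is exactly the standard argument; the paper does not actually prove this proposition but quotes it from the grid-homology literature (cf.\ \cite{theta}, \cite{Grid Homology for Knots and Links}), so for $M(x^+),M(x^-)$ and $A_i(x^+)$ your sketch could in principle be completed along those lines, modulo the admitted bookkeeping.

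There is, however, a genuine error in your treatment of $x^-$. The step ``replacing NE corners by SW corners changes $\sum_{p}w_{L_i}(p)$ in a way that is compensated by the averaged term'' is false: the discrepancy between the two corner sums is precisely the rotation number, and the correct values are $A_i(x^-)=\frac{tb_i(L)+r_i(L)+1}{2}$ and $A(x^-)=\frac{tb(L)+r(L)+l}{2}$. (The printed statement has a sign typo here -- note its third display even writes $A_i(x^-)$ for what should be $A(x^-)$ -- and the paper itself uses the $+r$ version later: in the proof of Theorem \ref{theorem5} the bound coming from $x^-$ carries $+rot(\mathcal{L})$, which is why $|rot|$ appears in that theorem.) Your rationalization that ``$x^+$ and $x^-$ differ only in Maslov grading'' cannot be right: a direct check shows $\mathcal{J}(x^\pm-\mathbb{X},x^\pm-\mathbb{X})+1=1-n$ for both canonical states, so $A(x^\pm)=\tfrac{1}{2}\bigl(M(x^\pm)+l-1\bigr)$, and a Maslov difference of $2r(L)$ forces an Alexander difference of $r(L)$; equality of the Alexander gradings would contradict your own Maslov formulas whenever $r\neq 0$. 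A correct proof must either redo the winding-number count honestly at the SW corners (where the cusp asymmetry produces the $+r_i$) or deduce $A(x^-)$ from $M(x^-)$ via the relation above. A minor further point: the passage from $x^+$ to $x^-$ is not the horizontal reflection of Section \ref{mirrorgh} (that reflection sends NE corners to SE corners and replaces the link by its mirror), so the identities $M(x)+M^*(x^*)=1-n$, $A(x)+A^*(x^*)=l-n$ do not formally yield the $x^-$ formulas as claimed.
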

  
The above formulas will be used later in computations.\\

The homology classes $[x^{+}], [x^{-}]  \in GH^{-}(m(L))$, denoted by $\lambda^{+}(D)$ and $\lambda^{-}(D)$ respectively, are called the Legendrian grid invariant of $D$. For the transverse push-off $\mathcal{T}$ of an oriented Legendrian link $L$, the transverse grid invariant $\theta^{-}(D)$ is defined to be $\lambda^{+}(L) \in GH^{-}(m(L)) $. The following proposition states that the homological class is a well-defined invariant of Legendrian and transverse link types.

\begin{prop}\cite{theta} Let $D$ and $D'$ be two grid diagrams corresponding to Legendrian link $L$ (similarly transverse link $\mathcal{T}$), then there is an isomorphism \\

$\phi : GH^{-}(D) \rightarrow GH^{-}(D')$\\

such that $\phi(\lambda^{+}(D))=\lambda^{+}(D')$ and $\phi(\lambda^{-}(D))=\lambda^{-}(D')$  (similarly $\phi(\theta^{-}(D))=\theta^{-}(D')$). 
\end{prop}

Therefore, we choose to write $\lambda^{+}(D)$ as $\lambda^{+}(L)$ and $\lambda^{-}(D)$ as $\lambda^{-}(L)$  when $D$ corresponds to Legendrian link type $L$. Similarly, we will write $\theta^{-}(D)$ as $\theta^{-}(\mathcal{T})$ when $D$ corresponds to transverse link type $\mathcal{T}$. It is often more useful to consider the projection of $\theta(\mathcal{T})$ into $\widehat{GH}$, which we will call $\hat{\theta}(\mathcal{T})$. Projection of $\hat{\theta}(\mathcal{T})$. into $\widetilde{GH}(D)$ will be denoted as $\tilde{\theta}(\mathcal{T})$. It can be showed that $\hat{\theta}(\mathcal{T})=0$ if and only if  $\tilde{\theta}(\mathcal{T})=0$.\\

\subsection{Collapsed grid complexes }  

There are several other collapsed complexes that we can construct from the $\mathbb{F}_{2}[V_{1},V_{2},...,V_{n}]$  module $\mathcal{GC}^{-}(G)$ by setting some of the $V_{i}$'s equal to each other. The collapsed filtered grid complex is defined as $c\mathcal{GC}^{-}(G): = \frac{{\mathcal{GC}}^{-}}{V_{i_{1}}=V_{i_{2}}=...=V_{i_{l}}}$ , where $O_{i_{k}}$ is a $O$ marking belonging in the k'th link component. The associated graded version will be denoted by $cGC^{-}(G)$. Its homology, $cGH^{-}(G)$ can be thought of as a $\mathbb{F}_{2}[V]$ module. In fact, it can be shown that $cGH^{-}(G) \cong (\mathbb{F}_{2}[V])^{2^{l-1}} \oplus Tor $ (Here $Tor$ is the torsion part). As usual, $ cGH^{-}_{i}(G)$ denotes the homology at $i$'th Maslov grading.\\

We will also be interested in complexes where we collapse $V_{i}$ and $V_{j}$ with markings $O_{i}$ and $O_{j}$ belonging in the same component. 

\begin{prop}\label{collapse}
Let $G$ be a grid of link $L$. Suppose markings $O_{i}$ and $O_{j}$ belong to some link component $L_{N}$.
Then, there are filtered quasi-isomorphisms $\frac{\mathcal{GC}^{-}(G)}{V_{i}- V_{j}} \rightarrow \mathcal{GC}^{-}(G) \otimes W_{L_{N}}$ and $\frac{\widehat{\mathcal{GC}(G)}}{V_{i}- V_{j}} \rightarrow \widehat{\mathcal{GC}}(G) \otimes W_{L_{N}}$,  where $W_{L_{N}}$ is a $2$ dimensional graded vector space with one generator having (Maslov grading $=0$ , $A_{L_{N}}$ grading $=0$ , Alexander gradings for other link components  $=0$) and the other generator having   (Maslov grading $=-1$ ,  $A_{L_{N}}$ grading $=-1$,  Alexander gradings for other link components  $=0$). 
\end{prop}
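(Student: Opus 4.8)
The plan is to construct the quasi-isomorphism explicitly as a "one-handle" or "stabilization-type" map, following the template used in the standard proof that setting $V_i = V_j$ (for $O_i, O_j$ in the same component) recovers $\mathcal{GC}^-(G) \otimes W$. First I would recall the algebraic input: over $\mathbb{F}_2[V_1,\dots,V_n]$, the map given by multiplication by $V_i - V_j$ fits into a short exact sequence of chain complexes $0 \to \mathcal{GC}^-(G) \xrightarrow{V_i - V_j} \mathcal{GC}^-(G) \to \mathcal{GC}^-(G)/(V_i - V_j) \to 0$, so the quotient complex is quasi-isomorphic to the mapping cone of $V_i - V_j$ acting on $\mathcal{GC}^-(G)$. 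Thus it suffices to show this mapping cone is filtered quasi-isomorphic to $\mathcal{GC}^-(G) \otimes W_{L_N}$, where the two generators of $W_{L_N}$ record the two copies of $\mathcal{GC}^-(G)$ in the cone, shifted appropriately in Maslov and $A_{L_N}$ grading.

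The key step is to verify that $V_i - V_j$ is filtered null-homotopic on $\mathcal{GC}^-(G)$, with a null-homotopy that respects \emph{all} the Alexander filtrations (not just the collapsed ones). For this I would use the fact that, since $O_i$ and $O_j$ lie in the same component $L_N$, there is a sequence of $X$-markings $X_{k_1}, \dots, X_{k_r}$ in $\mathbb{X}_{N}$ connecting $O_i$ to $O_j$ along that component, and correspondingly the maps $V_i$ and $V_j$ are chain homotopic in $GC^-$ via a standard count of rectangles through these markings (this is essentially the statement quoted in the excerpt that $V_i \simeq V_j$ when $O_i, O_j$ are in the same component). The content here is to check that the homotopy operator $H$ with $\partial^- H + H \partial^- = V_i - V_j$ drops each $A_m$ filtration by the correct amount — in particular it is filtered for every component index $m$, and on $A_{L_N}$ it behaves as prescribed so that the mapping cone splits off the stated $W_{L_N}$ with its $(0,0,\dots)$ and $(-1,-1,0,\dots)$ bigradings. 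Given $H$, the cone of $V_i - V_j$ deformation-retracts (via the "cancellation" / Gaussian elimination lemma for filtered complexes) onto the tensor product $\mathcal{GC}^-(G) \otimes W_{L_N}$, and the same argument verbatim handles the hat version $\widehat{\mathcal{GC}}(G)$ since the differential there only differs by which $O$-markings are forbidden, and $O_i, O_j$ can be chosen non-special.

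The main obstacle I expect is the bookkeeping of filtration levels: one must pin down the grading shifts of the homotopy $H$ precisely enough to conclude that the retraction is \emph{filtered} (for each of the $l$ Alexander filtrations simultaneously), rather than merely a quasi-isomorphism of the associated graded or of a single collapsed filtration. This amounts to carefully applying the Alexander grading change formula $A_m(y) - A_m(x) = \#(r \cap \mathbb{O}_m) - \#(r \cap \mathbb{X}_m)$ along the rectangles appearing in $H$ and checking that the $V$-powers absorbed compensate correctly. I would also note that the filtered quasi-isomorphism can be taken in the stated direction (from the quotient to the tensor product) by composing the cone identification with the natural map; alternatively, invoke that a filtered quasi-isomorphism between complexes of this type can be inverted up to filtered homotopy, so the direction is immaterial for the applications.
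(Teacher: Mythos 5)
Your proposal follows essentially the same route as the paper: identify the quotient with the mapping cone of $V_{i}-V_{j}$ via the short exact sequence, use that multiplication by $V_{i}-V_{j}$ is (filtered) null-homotopic with the stated Maslov and Alexander shifts, and conclude the cone splits as two shifted copies of the complex, i.e.\ the tensor product with $W_{L_{N}}$; the paper's proof is just a terser version of this, citing the null-homotopy rather than constructing it. Your added detail on the rectangle-count homotopy and the filtered cancellation step is a correct elaboration of the same argument.
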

\begin{proof}

Lets consider the short exact sequence \\
\begin{tikzcd}
 0 \arrow{r} & \mathcal{GC}^{-}(G) \arrow{r}{V_{i}- V_{j}}  & \mathcal{GC}^{-}(G)  \arrow{r}  & \frac{\mathcal{GC}^{-}(G)}{V_{i}- V_{j}} \arrow{r} & 0.
\end{tikzcd} \\

We know that the map given by multiplication by $V_{i}- V_{j}$ is chain homotopic to $0$. Also , multiplication by  $V_{i}- V_{j}$ lowers  Maslov grading by $2$, $A_{L_{N}}$ grading by $1$ and total Alexander grading by $1$. Therefore, mapping cone is filter-quasi-isomorphic to $\mathcal{GC}^{-}(G) \oplus \mathcal{GC}^{-}(G)[-1,-1,-1]$. So we have a quasi-isomorphism $\frac{\mathcal{GC}^{-}(G)}{V_{i}- V_{j}} \rightarrow \mathcal{GC}^{-}(G) \oplus \mathcal{GC}^{-}(G)[-1,-1,-1]$ and the conclusion follows. Similarly by setting on of the $U_{k}=0$ we can obtain the analogous result for the hat version.
\end{proof}

We can iterate the process in the above proposition as many times as we wish, but, we need to be careful when the complex has two markings in two different link components collapsed. It won't make sense to talk about link filtration for those particular components, and we need to talk about combined Alexander gradings.\\

\pagebreak
\section{Annular concordance invariant}
\subsection{Definition}
Let $L$ be an oriented link in $\mathbb{R}^{3}$. An annular link is $L'=L \cup U$ where $U$ is an unknot [See Fig ~\ref{anulink}].
We will assume that $U$ is oriented clockwise in x-y plane. So $-U$ will indicate the anticlockwise orientation. It should be mentioned that in our notation, for any oriented link $L$, $-L$ will denote $L$ with orientation reversed and $m(L)$ will denote mirror of $L$.\\

We will consider the grid chain complex $(\widehat{\mathcal{GC}}(D), \widehat{\partial)}$, where $D$ is a toroidal grid diagram of $U \cup L$ ;  $U$ being the unknot (oriented clockwise) and $L$ an oriented link. Suppose $D$ has grid number $n$ and $l$ is the number of components of $L$. We can write the set of non-special $O$- markings as $\mathbb{O} \setminus s\mathbb{O}=\{ O_{1},O_{2},..,O_{n-l} \}$. \\

We will call Alexander filtration for the unknot $U$ as $A_{U}$ and $(A_{1},...,A_{l})=$ Alexander filtrations for $l$ components of $L$. We denote the sum $A=A_{1}+..+A_{l}$ as simply $A_{L}$ and $A=A_{U}+A_{L}$ is the total Alexander grading.\\

\begin{figure}
\begin{center}
\includegraphics[width=0.30\textwidth]{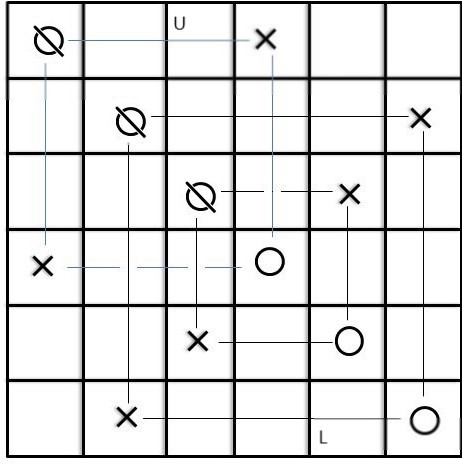}
\caption{Grid of $U \cup L$}\label{anulink}
\end{center}
\end{figure}

\begin{definition}
Define $\mathcal{F}_{t}(x)$ to be  $\frac{t}{2}A_{U}(x)+(1-\frac{t}{2})A_{L}(x)$ for each $0 \leq t \leq 2$ and $x \in \widehat{\mathcal{GC}}(D)$. 
\end{definition}

Since $\mathcal{F}_{t}(x)(\hat{\partial}(x))\leq\mathcal{F}_{t}(x)(x)$, $\mathcal{F}_{t}$  gives filtration levels defined as $\mathcal{F}_{t}^{s}(\widehat{\mathcal{GC}})=\{ a\in \widehat{\mathcal{GC}}(D) | \mathcal{F}_{t}(a) \leq s \} $ for every $s\in \mathbb{R}$. These filtration levels $ ..     \subseteq \mathcal{F}_{t}^{s} \subseteq \mathcal{F}_{t}^{s+1} \subseteq ..$ naturally induce filtration levels in the homology.  \\
 
Since $U \cup L$ has $l+1$ components from \cite{Grid Homology for Knots and Links}, we know that  $\widehat{\mathcal{GH}}(U\cup L)= H_{*}(\widehat{\mathcal{GC}},\widehat{\partial}) \cong (\mathbb{F}_{2})^{2^{l}}$ and $\widehat{\mathcal{GH}}_{0}(U\cup L) \cong \mathbb{F}_{2}$ ( $\widehat{\mathcal{GH}}_{0}$ is the homology at maslov grading $0$). 

 \theoremstyle{definition}
 \begin{definition}
 Define $\mathscr{A}_{L}^{U}(t):=min\{ s | H_{0}(\mathcal{F}_{t}^{s}(\widehat{\mathcal{GC}}),\widehat{\partial} ) \xrightarrow{i} \widehat{\mathcal{GH}}_{0}$ is nontrivial $\}$.\\
 \end{definition}
 
 Since $\mathcal{F}_{t}^{s}(\widehat{\mathcal{GC}})=\widehat{\mathcal{GC}}$ for sufficiently large $s$ and $\mathcal{F}_{t}^{s}(\widehat{\mathcal{GC}})$ is empty for sufficiently small $s$, it follows that $\mathscr{A}_{L}^{U}(t)$ is a finite real number.\\
 
Analogously, since the homology at Maslov grading $-l$ has rank equal to $1$, we can define 

 \begin{definition}
 Define $\mathsf{A}_{L}^{U}(t):=min\{ s|H_{-l}(\mathcal{F}_{t}^{s}(\widehat{\mathcal{GC}}),\widehat{\partial} ) \xrightarrow{i} \widehat{\mathcal{GH}}_{-l}$ is nontrivial $\}$.\\
 \end{definition}

We will drop $U$ for making the notation look less cumbersome (with a slight abuse of notation) and write $\mathscr{A}_{L}^{U}(t)$ and  $\mathsf{A}_{L}^{U}(t)$ as $\mathscr{A}_{L}(t)$ and $\mathsf{A}_{L}(t)$.
 
 Since filtered quasi-isomorphism type of the complex is an invariant of the link $U\cup L$, it follows that $\mathscr{A}_{L}(t)$ and $\mathsf{A}_{L}(t)$ are annular link invariants. \\

\subsection{Adding special O-markings}

In this section, we will show that if we add any number of special O markings the invariant (defined in the same way in that diagram) remains the same.

In particular, we may define the invariant in the $\widetilde{\mathcal{GC}}$ version which is useful for computations.
 \theoremstyle{definition}
 \begin{definition}
 $\widetilde{\mathscr{A}_{L}(t)}:=min\{ s|H_{0}(\mathcal{F}_{t}^{s}(\widetilde{\mathcal{GC}}),\widetilde{\partial} ) \xrightarrow{i} \widetilde{\mathcal{GH}}_{0}$ is nontrivial $\}$.
 \end{definition}
  We have the following equivalence, 
\begin{prop}\label{impvecdef}
$\mathscr{A}_{L}(t)= \widetilde{\mathscr{A}_{L}(t)}$. 
\end{prop}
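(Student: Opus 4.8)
The plan is to compare the two invariants by relating the complex $\widehat{\mathcal{GC}}(D)$ to the fully blocked complex $\widetilde{\mathcal{GC}}(D)$ via the standard stabilization/destabilization results for grid complexes, keeping careful track of the $\mathcal{F}_t$ filtration throughout. Recall that passing from $\widehat{\mathcal{GC}}$ to $\widetilde{\mathcal{GC}}$ amounts to setting all the non-special variables $V_i$ equal to $0$; more precisely, by iterating Proposition \ref{collapse} (and the analogous statement for killing $V_i$, i.e. replacing $\widehat{\mathcal{GC}}$ by its quotient), one has a filtered quasi-isomorphism relating $\widetilde{\mathcal{GC}}(D)$ to $\widehat{\mathcal{GC}}(D) \otimes W^{\otimes(n-l)}$, where each $W$ is a two-dimensional bigraded vector space with one generator in $(\text{Maslov }0, \text{Alexander }0)$ and one in $(\text{Maslov }-1,\text{Alexander }-1)$, the Alexander drop occurring in the component whose marking was killed. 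The key point is that these identifications are filtered with respect to \emph{each} individual Alexander filtration $A_U, A_1,\dots,A_l$, hence also with respect to the weighted filtration $\mathcal{F}_t = \tfrac{t}{2}A_U + (1-\tfrac{t}{2})A_L$ for every $t \in [0,2]$, since $\mathcal{F}_t$ is a non-negative linear combination of them.

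First I would set up the precise statement: $\widetilde{\mathcal{GH}}(D) \cong \widehat{\mathcal{GH}}(U\cup L) \otimes W^{\otimes(n-l)}$ as $\mathcal{F}_t$-filtered vector spaces, where on the right $\widehat{\mathcal{GH}}(U\cup L)$ carries the filtration induced from $\widehat{\mathcal{GC}}$ and each tensor factor $W$ contributes either $0$ or a shift by $-(1-\tfrac{t}{2})$ or $-\tfrac{t}{2}$ to $\mathcal{F}_t$, depending on whether the killed marking lies on $L$ or on $U$. Next, I would identify where the Maslov-grading-$0$ part of homology sits. Since $\widehat{\mathcal{GH}}_0(U\cup L) \cong \mathbb{F}_2$ is one-dimensional and each $W$ has its top generator in Maslov grading $0$, the Maslov-$0$ part of $\widetilde{\mathcal{GH}}(D)$ is carried by $[\text{generator of }\widehat{\mathcal{GH}}_0] \otimes (\text{top of }W)^{\otimes(n-l)}$, which has the \emph{same} $\mathcal{F}_t$-filtration level as the generator of $\widehat{\mathcal{GH}}_0$ itself, because the top generators of the $W$'s all have Alexander grading $0$ and hence $\mathcal{F}_t$-level $0$. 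Therefore the minimal filtration level at which the inclusion-induced map into $\widetilde{\mathcal{GH}}_0$ is nontrivial equals the corresponding minimal level for $\widehat{\mathcal{GH}}_0$, i.e. $\widetilde{\mathscr{A}_L(t)} = \mathscr{A}_L(t)$.

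The main obstacle will be making the filtered comparison genuinely rigorous at the \emph{filtered} (not merely associated-graded) level: Proposition \ref{collapse} is stated for $\widehat{\mathcal{GC}}$ and $\mathcal{GC}^-$, but here I need the version that kills a $V_i$ (sending a simply-blocked complex to a more-blocked one), and I need to confirm that the resulting quasi-isomorphism is filtered for each $A_i$ simultaneously and that it induces an isomorphism on the homology of each filtration sublevel $\mathcal{F}_t^s$ — this is what lets me transport the definition of the invariant. Concretely I would invoke the mapping cone argument as in the proof of Proposition \ref{collapse}: the short exact sequence $0 \to \widehat{\mathcal{GC}} \xrightarrow{V_i} \widehat{\mathcal{GC}} \to \widehat{\mathcal{GC}}/V_i \to 0$ exhibits $\widehat{\mathcal{GC}}/V_i$ as filtered quasi-isomorphic to the mapping cone of a filtered-nullhomotopic map, hence to $\widehat{\mathcal{GC}} \oplus \widehat{\mathcal{GC}}[\text{shift}]$, and the shift only involves the Alexander grading of the component containing $O_i$ — here, a component of $L$, so the shift in $\mathcal{F}_t$ is $-(1-\tfrac{t}{2}) \le 0$ and in particular has $\mathcal{F}_t$-level $0$ contribution from the top summand. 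Iterating over all $n-l$ non-special markings and tracking the top-graded summand through the homology spectral-sequence comparison completes the argument; the routine bookkeeping of which filtration shifts occur I would leave to the reader.
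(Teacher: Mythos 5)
Your proposal is correct and follows essentially the same route as the paper: iterate Proposition \ref{collapse} (together with its $V_i=0$ variant) to identify $\widetilde{\mathcal{GC}}$ with $\widehat{\mathcal{GC}}$ tensored with two-dimensional graded pieces whose top generators sit in $(M,A_U,A_L)=(0,0,0)$, and then use that $0$ is the top Maslov grading of $\widehat{\mathcal{GH}}$ to conclude the two minimal $\mathcal{F}_t$-levels coincide. The only differences are cosmetic bookkeeping (the paper separates the factors into $W^{\otimes n-l-2}\otimes W_U$ rather than your uniform $W^{\otimes(n-l)}$ with case-dependent shifts), which does not affect the argument.
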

\begin{proof}
Let $W=$ 2 dimensional graded vector space generated by  $ \{ v_{L}^{+},v_{L}^{-} \}$ ( with $(M,A_{U},A_{L})(v_{L}^{+})=(0,0,0)$ and $(M,A_{U},A_{L})(v_{L}^{-})=(-1,0,-1)$ )\\

and, $W_{U}=$ 2 dimensional graded vector space generated by  $ \{ v_{U}^{+},v_{U}^{-} \}$ ( with $(M,A_{U},A_{L})(v_{U}^{+})=(0,0,0)$ and $(M,A_{U},A_{L})(v_{U}^{-})=(-1,-1,0)$). \\

We know that   $(\widetilde{\mathcal{GC}},\widetilde{\partial})$ is ($A_{L}$,$A_{U}$) filtered quasi-isomorphic to $(\widehat{\mathcal{GC}}\otimes W^{\otimes n-l-2} \otimes W_{U},\widehat{\partial})$[From Proposition \ref{collapse}.  Since $0$ is the highest Maslov grading in $\widehat{\mathcal{GH}}$, it follows that $\mathscr{A}_{L}(t)= \widetilde{\mathscr{A}_{L}(t)}$.
\end{proof}

Similarly, we can define

 \theoremstyle{definition}
 \begin{definition}
 $\widetilde{\mathsf{A}_{L}}(t):=min\{ s|H_{1-n}(\mathcal{F}_{t}^{s}(\widetilde{\mathcal{GC}}),\widetilde{\partial} ) \xrightarrow{i} \widetilde{\mathcal{GH}}_{1-n}$ is nontrivial $\}$ [ where $n$ is the grid number of $U\cup L$].
 \end{definition}
 
 Then, we have the following relation
\begin{prop}
$\widetilde{\mathsf{A}}_{L}(t)= \mathsf{A}_{L}(t)  - \frac{t}{2} - (n-l-2)(1-\frac{t}{2})$. 
\end{prop}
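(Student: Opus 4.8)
The plan is to mirror the argument of Proposition \ref{impvecdef} but keep track of the gradings carefully. First I would invoke Proposition \ref{collapse} (iterated) exactly as in the proof of Proposition \ref{impvecdef}, which gives an $(A_U,A_L)$-filtered quasi-isomorphism
\[
(\widetilde{\mathcal{GC}},\widetilde{\partial}) \;\simeq\; \bigl(\widehat{\mathcal{GC}}\otimes W^{\otimes(n-l-2)}\otimes W_U,\widehat{\partial}\bigr),
\]
where $W$ and $W_U$ are the two-dimensional graded vector spaces from the earlier proof, each having a generator in $(M,A_U,A_L)$-degree $(0,0,0)$ and one in degree $(-1,0,-1)$ (resp.\ $(-1,-1,0)$). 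The point is that the filtered quasi-isomorphism is grading-preserving on the nose, so homology of a filtration level on the left corresponds to homology of the matching filtration level on the right, shifted by the degrees of the chosen tensor factors.

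Next I would locate which Maslov grading on the right corresponds to Maslov grading $1-n$ on the left, and, within the relevant summand, which $\mathcal{F}_t$-level realizes the minimum. The bottom of $\widehat{\mathcal{GH}}$ sits in Maslov grading $-l$ (as noted before Proposition \ref{impvecdef}'s paragraph), so $\mathsf{A}_L(t)$ is the minimal $\mathcal{F}_t$-level at which $H_{-l}$ of a filtration level hits $\widehat{\mathcal{GH}}_{-l}$. On the tensor product, Maslov grading $1-n = -l + \bigl(-(n-l-2) - 1\bigr)$ forces us to pick the bottom generator $v^{-}$ of every one of the $n-l-2$ copies of $W$ and the bottom generator $v_U^{-}$ of $W_U$; there is exactly one such choice of tensor factors, so the nontriviality of the map on $H_{1-n}$ of a filtration level is equivalent to nontriviality of the map on $H_{-l}$ of the corresponding $\widehat{\mathcal{GC}}$-filtration level, with the filtration level shifted by $\mathcal{F}_t$ of that fixed tensor vector. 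That shift is
\[
\mathcal{F}_t\bigl((v_L^-)^{\otimes(n-l-2)}\otimes v_U^-\bigr) = \tfrac{t}{2}A_U + \bigl(1-\tfrac{t}{2}\bigr)A_L = -\tfrac{t}{2} - (n-l-2)\bigl(1-\tfrac{t}{2}\bigr),
\]
since each $v_L^-$ contributes $A_L=-1$ and $v_U^-$ contributes $A_U=-1$. Comparing minima over filtration levels then yields
\[
\widetilde{\mathsf{A}}_L(t) = \mathsf{A}_L(t) - \tfrac{t}{2} - (n-l-2)\bigl(1-\tfrac{t}{2}\bigr),
\]
as claimed.

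To make the equivalence of the two minimization problems precise, I would note that for a filtered quasi-isomorphism $f$, the induced map on each filtration level $\mathcal{F}_t^s$ is a quasi-isomorphism, and tensoring a filtered complex with a graded vector space $W$ splits $\mathcal{F}_t^s(\,\cdot\,\otimes W)$ as a direct sum of shifted filtration levels indexed by a homogeneous basis of $W$; the map $H_*(\mathcal{F}_t^s) \to H_*(\text{total})$ is then the direct sum of the corresponding maps for $\widehat{\mathcal{GC}}$. Restricting to Maslov grading $1-n$ kills all summands except the one attached to $(v_L^-)^{\otimes(n-l-2)}\otimes v_U^-$, reducing the question to the $\widehat{\mathcal{GC}}$ problem at Maslov grading $-l$ and filtration level $s + \tfrac{t}{2} + (n-l-2)(1-\tfrac{t}{2})$. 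The main obstacle — really the only thing that needs care — is the bookkeeping: confirming that no other tensor factor lands in Maslov grading $1-n$ (which follows because each $W$, $W_U$ has top Maslov degree $0$ and there are exactly $n-l-1$ tensor factors, so the minimal total Maslov shift is $-(n-l-1)$ and is achieved uniquely), and checking the sign and coefficient in the $\mathcal{F}_t$-shift against the definition $\mathcal{F}_t = \tfrac{t}{2}A_U + (1-\tfrac{t}{2})A_L$. Everything else is the same formal argument already used for Proposition \ref{impvecdef}.
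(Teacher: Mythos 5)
Your argument is correct and is essentially the paper's proof: both use the tensor decomposition $\widetilde{\mathcal{GC}}\simeq\widehat{\mathcal{GC}}\otimes W^{\otimes(n-l-2)}\otimes W_U$ from Propositions \ref{collapse} and \ref{impvecdef}, observe that Maslov grading $1-n$ is hit only through $\widehat{\mathcal{GH}}_{-l}\otimes v_U^{-}\otimes (v_L^{-})^{\otimes(n-l-2)}$, and read off the $\mathcal{F}_t$-shift $-\tfrac{t}{2}-(n-l-2)(1-\tfrac{t}{2})$ of that fixed tensor vector. Your write-up is in fact more careful than the paper's (which is terse and contains typos in the exponent and gradings), but the route is the same.
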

\begin{proof}

Again, since homology at Maslov grading $1-n$ has rank $1$, we have   $(\widehat{\mathcal{GC}},\widetilde{\partial})$ is ($A_{L}$,$A_{U}$) filtered quasi-isomorphic to $(\widehat{\mathcal{GC}}\otimes W^{\otimes n-l} \otimes W_{U},\widehat{\partial})$. Now, $\mathcal{F}^{t}( \widetilde{\mathcal{GH}}(U\cup L)_{1-n}=\mathcal{F}^{t}( \widetilde{\mathcal{GH}}(U\cup L)_{l}\otimes v_{U}^{-} \otimes{ v_{L}^{-}}^{\otimes (n-l-2)}) $. Therefore, it follows from the definition that $\widetilde{\mathsf{A}}_{L}(t)= \mathsf{A}_{L}(t) - \frac{t}{2} - (n-l-2)(1-\frac{t}{2})$.
\end{proof}

\subsection{Properties of the invariant}

\begin{prop}
$\mathscr{A}_{-L}^{-U}(t) =  \mathscr{A}_{L}^{U}(t) $ and $\mathcal{A}_{L}^{U}(t)=\mathcal{A}_{-L}^{-U}(t)$ . 
\end{prop}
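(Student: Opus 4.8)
The plan is to deduce both identities from a single fact: reversing the orientation of every component of the annular link $U\cup L$ produces a link whose filtered combinatorial link Floer complex is filtered quasi-isomorphic to that of $U\cup L$ by a map that preserves the Maslov grading and each of the Alexander filtrations $A_U$ and $A_L$ (not merely their sum $A$). Granting this, both invariants are preserved, since each is extracted from the homology in a fixed Maslov grading of a fixed level of the filtration $\mathcal{F}_t=\tfrac t2 A_U+(1-\tfrac t2)A_L$.

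The key input is an instance of the orientation-reversal invariance of grid homology. Fix a toroidal grid diagram $D$ of $U\cup L$ with $U$ clockwise. Interchanging the roles of $\mathbb{X}$ and $\mathbb{O}$ (and, if one wishes to land on a prescribed diagram, following up with grid moves) yields a grid diagram $D'$ representing $(-U)\cup(-L)=-(U\cup L)$. As explained in \cite{Grid Homology for Knots and Links}, this operation is covered by the invariance of the filtered grid complex under reversal of the orientation of the whole link: there is a filtered quasi-isomorphism $\widehat{\mathcal{GC}}(D)\to\widehat{\mathcal{GC}}(D')$ which respects the Maslov grading and carries the Alexander filtration $A_i$ of the $i$-th component of $U\cup L$ to that of the corresponding component of $-(U\cup L)$ --- the two linking-number corrections coming from reversing the orientations of a pair of components individually cancel each other. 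In particular $A_U$ and $A_L$ are each preserved, and therefore so is $\mathcal{F}_t$ for every $t\in[0,2]$.

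It then remains to turn this into the statement about the invariants, which is formal. Since the quasi-isomorphism of the previous paragraph is $(A_U,A_L)$-filtered and $\mathcal{F}_t$ is a fixed $\mathbb{R}$-linear combination of $A_U$ and $A_L$, it induces for every $s$ a quasi-isomorphism $\mathcal{F}_t^{\,s}(\widehat{\mathcal{GC}}(D))\to\mathcal{F}_t^{\,s}(\widehat{\mathcal{GC}}(D'))$, compatibly over $s$ with the inclusions into the full complex, and it preserves the Maslov grading. Fixing a Maslov grading $m$ and passing to homology produces a commuting square whose vertical maps are isomorphisms and whose horizontal maps are the inclusion-induced maps $H_m(\mathcal{F}_t^{\,s})\to\widehat{\mathcal{GH}}_m$. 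Hence the least $s$ for which this map is nonzero is the same for $D$ and for $D'$. Taking $m=0$ gives $\mathscr{A}^{U}_L(t)=\mathscr{A}^{-U}_{-L}(t)$, and taking $m=-l$ gives the corresponding identity for the second invariant (the one defined at Maslov grading $-l$).

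The one delicate point is the grading bookkeeping in the second paragraph. A naive identification of the grid states of $D$ and $D'$ does not work: reversing every component negates every winding number, so by \eqref{windingformula} it would send $A_i\mapsto -A_i-1$, and it alters the Maslov grading as well; honest preservation of all gradings is supplied only by the genuine orientation-reversal quasi-isomorphism, not by a bare relabelling of generators. Once that map is in hand, everything above goes through and the two stated equalities follow.
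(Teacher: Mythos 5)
Your overall strategy is the same as the paper's: reduce both equalities to the existence of an equivalence between the complexes of $U\cup L$ and $-(U\cup L)$ that preserves the Maslov grading and each of $A_U$ and $A_L$ separately, and then conclude formally from the definition of the invariants as minimal $\mathcal{F}_t$-levels at a fixed Maslov grading. The formal half of your argument is fine (with the small caveat that for the level-wise statement you should take ``filtered quasi-isomorphism'' in the strong sense of a filtered homotopy equivalence, or argue via the associated graded; a map that is merely a quasi-isomorphism of total complexes need not restrict to quasi-isomorphisms on each $\mathcal{F}_t^{\,s}$). Where you differ from the paper is in how the key equivalence is produced: the paper simply takes the reflection of the grid through a diagonal, which is a diagram of $-(U\cup L)$, and observes that the induced bijection $\Phi$ on grid states is a literal isomorphism of filtered complexes with $A_U(\Phi(x))=A_U(x)$, $A_L(\Phi(x))=A_L(x)$ (and $M$ preserved), since $\mathcal{J}$ is invariant under transposition and empty rectangles correspond to empty rectangles. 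You instead pass to the reversed-orientation diagram by exchanging $\mathbb{X}$ and $\mathbb{O}$ and invoke a general orientation-reversal invariance of the filtered grid complex, justifying the grading bookkeeping only by the remark that ``the two linking-number corrections cancel.'' That remark is the soft spot: reversing components one at a time does shift Alexander and Maslov gradings by linking-number terms, and verifying that the composite of all these shifts is trivial for $U\cup L$ with its $l+1$ components is a computation you neither carry out nor can straightforwardly quote in the multi-filtered, componentwise form you need. You correctly flag that a naive relabelling of generators fails, but the honest map you appeal to is most easily obtained exactly as in the paper, by the diagonal reflection; with that substitution your argument closes up and coincides with the paper's proof.
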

\begin{proof}

As in [3], we can consider a map $\Phi$ from the diagram of $U\cup L$ to the reflection of the diagram along a diagonal which is a diagram of $-(U\cup L)$. We have $A_U(\Phi(x))=A_U(x)$ and $A_L(\Phi(x))=A_L(x)$.  Then $\Phi$ is a filtered isomorphism. Therefore, it follows that if both orientations are reversed, the invariants don't change.
\end{proof}

\begin{prop}\label{disjointsum}
$\mathscr{A}_{L_{1}\sqcup L_{2}}(t)= \mathscr{A}_{L_{1}}(t)+ \mathscr{A}_{L_{2}}(t)$. 
\end{prop}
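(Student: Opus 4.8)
The plan is to identify the filtered chain complex of the disjoint-union annular link $U\cup(L_1\sqcup L_2)$ as the tensor product of the filtered complexes associated to $U\cup L_1$ and $U\cup L_2$, and then to track how the minimal $\mathcal{F}_t$-filtration level detecting homology in Maslov grading $0$ behaves under such a tensor product. Concretely, using a connected-sum / disjoint-union stabilization argument for grid diagrams (as in \cite{Grid Homology for Knots and Links}), one can arrange a grid $D$ for $U\cup L_1 \sqcup L_2$ so that, up to filtered quasi-isomorphism, $\widehat{\mathcal{GC}}(D)$ (or, more conveniently, the $\widetilde{\mathcal{GC}}$ version via Proposition~\ref{impvecdef}) splits as a tensor product $\widehat{\mathcal{GC}}(D_1)\otimes_{\mathbb{F}_2}\widehat{\mathcal{GC}}(D_2)$ where $D_i$ is a grid for $U\cup L_i$, with the unknot component shared. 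The key point is that the filtration $\mathcal{F}_t$ is additive under this tensor decomposition: $A_U$ restricted to the first factor plus $A_U$ on the second factor recovers $A_U$ on the whole (up to a normalizing constant coming from the doubled unknot which one must account for), and similarly $A_{L_1}+A_{L_2}=A_L$, hence $\mathcal{F}_t = \mathcal{F}_t^{(1)}\otimes 1 + 1\otimes \mathcal{F}_t^{(2)}$ as a filtration on the tensor product.

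Granting that, the argument is the standard Künneth-type computation. First I would recall that $\widehat{\mathcal{GH}}(U\cup L_i)\cong (\mathbb{F}_2)^{2^{l_i}}$ with top Maslov grading $0$ and $\widehat{\mathcal{GH}}_0 \cong \mathbb{F}_2$; over $\mathbb{F}_2$ the homology of a tensor product of filtered complexes is the tensor product of the homologies, and since $0$ is the top Maslov grading in each factor, the generator of $\widehat{\mathcal{GH}}_0(U\cup(L_1\sqcup L_2))$ is exactly the tensor product of the generators of $\widehat{\mathcal{GH}}_0(U\cup L_1)$ and $\widehat{\mathcal{GH}}_0(U\cup L_2)$. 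Second I would observe that for a filtered complex $C=C_1\otimes C_2$ with additive filtration, the minimal filtration level at which a class $[a_1]\otimes[a_2]$ is in the image of the inclusion of a sublevel is the sum of the corresponding minimal levels for $[a_1]$ and $[a_2]$ — because a filtered cycle representing the product at level $s$ can be built from filtered cycles at levels $s_1$ and $s_2$ with $s_1+s_2=s$, and conversely any representative at level $s$ projects (via the filtration-preserving augmentations onto each factor) to give representatives at levels summing to at most $s$. Applying this with the class being the generator of $\widehat{\mathcal{GH}}_0$ gives $\mathscr{A}_{L_1\sqcup L_2}(t)=\mathscr{A}_{L_1}(t)+\mathscr{A}_{L_2}(t)$.

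The main obstacle I expect is making precise the claim that the grid complex of the disjoint-union annular link is filtered quasi-isomorphic to the tensor product of the two pieces \emph{with the unknot correctly bookkept}. The subtlety is that $U$ appears in both $U\cup L_1$ and $U\cup L_2$, so naively tensoring double-counts the unknot; one must instead realize the disjoint union with a single shared unknot, which means the tensor decomposition is not literally of the two invariant-complexes but of complexes where the $U$-strand is split between the two grids, and one has to check the $A_U$-gradings add up to the correct normalization (the free $\mathbb{F}_2[V]$-module structure of $GH^-(U)$, or equivalently the extra $W$ factors from Proposition~\ref{collapse}, must be matched on both sides). I would handle this by working in the $\widetilde{\mathcal{GC}}$ model, where stabilization invariance and the tensor behavior under splitting a grid into two sub-grids joined along a trivial unknotted strand are cleanest, and then invoke Proposition~\ref{impvecdef} to transfer the conclusion back to $\mathscr{A}_L(t)$. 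The remaining steps — additivity of filtrations, Künneth over $\mathbb{F}_2$, and the additivity of minimal filtration levels — are routine once the decomposition is set up correctly.
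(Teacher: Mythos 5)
Your proposal is correct and is essentially the paper's argument: the paper realizes $(L_{1}\sqcup L_{2})\cup U$ as the connected sum $(L_{1}\cup U_{1})\#(L_{2}\cup U_{2})$ taken along the two axis unknots (so $A_{U}=A_{U_{1}}+A_{U_{2}}$ with no correction term, which settles the shared-unknot bookkeeping you worry about) and simply cites the filtered K\"unneth theorem of Ozsv\'ath--Szab\'o (Theorem 11.1 of the link-invariants paper) for exactly the tensor decomposition you propose to build by hand in the grid model. The remaining step --- additivity of the minimal $\mathcal{F}_{t}$-level detecting the Maslov grading $0$ generator under this tensor decomposition --- is treated as immediate in the paper, so your slightly more detailed (if informally justified) version of that step is consistent with the paper's proof.
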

\begin{proof}
We observe that $(L_{1} \sqcup L_{2}) \cup U$ can be viewed as $(L_{1} \cup U_{1}) \# (L_{2} \cup U_{2}) $ where the connected sum is obtained as $(U_{1} \# U_{2}) \sqcup  (L_{1} \sqcup L_{2})$. Let $A_{U}$ be equal to $A_{ U_{1}} + A_{ U_{2}}$ . From the Kunneth formula ( See  Theorem 11.1 in \cite{HolomorphicLinkInvariants} ), we know that $\widehat{\mathcal{GC}}((L_{1} \cup U_{1}) \# (L_{2} \cup U_{2}))$ is $(A_{U},A_{L_{1}},A_{L_{2}})$-filtered isomorphic to  $\widehat{\mathcal{GC}}(L_{1} \cup U_{1}) \otimes \widehat{\mathcal{GC}}((L_{2} \cup U_{2}) $. Therefore, they are also $\mathscr{F}_{t}$ filtered isomorphic and the conclusion immediately follows. 
\end{proof}

\begin{prop}

$\mathscr{A}_{L}(t)= - \mathcal{A}_{m(L)}(t)$.
\end{prop}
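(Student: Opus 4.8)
The plan is to deduce this from the mirror duality for the simply blocked filtered grid complex established in Section~\ref{mirrorgh}, combined with the same filtered-duality argument that yields $\tau(m(K))=-\tau(K)$. Fix a grid diagram $D$ of $U\cup L$, an annular link with $l+1$ components, and let $D^{*}$ be its reflection through a horizontal axis, a grid diagram of $m(U)\cup m(L)$. Reversing all orientations if necessary, which by the proposition that $\mathscr{A}$ and $\mathcal{A}$ are unchanged when every orientation is reversed is harmless, we may take $D^{*}$ to compute $\mathscr{A}_{m(L)}$ and $\mathcal{A}_{m(L)}$. Write $C=\widehat{\mathcal{GC}}(D)$ and $C^{*}=\widehat{\mathcal{GC}}^{*}(D)$, noting that each $V_i$ acts null-homotopically, so the relevant homology is finite-dimensional in each grading.

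First I would fix the grading bookkeeping. Since $U\cup L$ has $l+1$ components, $\widehat{\mathcal{GH}}(U\cup L)\cong\mathbb{F}_2^{2^{l}}$ is supported in Maslov gradings $0,-1,\dots,-l$, with $\widehat{\mathcal{GH}}_0\cong\widehat{\mathcal{GH}}_{-l}\cong\mathbb{F}_2$; write $\xi_{\mathrm{top}}$ for the Maslov $0$ generator. By definition $\mathscr{A}_{L}(t)$ is the least $\mathcal{F}_t$-filtration level of a cycle in $C$ representing $\xi_{\mathrm{top}}$, where $\mathcal{F}_t=\tfrac t2 A_U+(1-\tfrac t2)A_L$. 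The isomorphism of Section~\ref{mirrorgh}, applied to the $(l+1)$-component link $U\cup L$, reads $\widehat{\mathcal{GC}}(D^{*})\cong\widehat{\mathcal{GC}}^{*}(D)[-l,0,\dots,0]$: it negates the Maslov grading and each Alexander filtration $A_U,A_1,\dots,A_l$, then shifts the Maslov grading down by $l$ while shifting each Alexander filtration by $0$. In particular it carries $\mathcal{F}_t$ (computed on $D^{*}$) to $-\mathcal{F}_t$ (computed on $C^{*}$), with no shift, for every $t$. The dual class $\xi_{\mathrm{top}}^{*}$ sits in Maslov grading $0$ of $C^{*}$, hence in Maslov grading $-l$ of $\widehat{\mathcal{GC}}(D^{*})$; it is therefore the generator of $\widehat{\mathcal{GH}}_{-l}(m(U)\cup m(L))$, and so $\mathcal{A}_{m(L)}(t)$ equals the least $(-\mathcal{F}_t)$-filtration level of a cycle in $C^{*}$ representing $\xi_{\mathrm{top}}^{*}$.

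The remaining, essential step is a filtered duality lemma: for the top class, the least $\mathcal{F}_t$-filtration of a cycle representing $\xi_{\mathrm{top}}$ in $C$ is the negative of the least $(-\mathcal{F}_t)$-filtration of a cycle representing $\xi_{\mathrm{top}}^{*}$ in $C^{*}$. I would prove this exactly as for $\tau$: for fixed $t$ and $s$, dualize the short exact sequence $0\to\mathcal{F}_t^{s}C\to C\to C/\mathcal{F}_t^{s}C\to 0$; the dual of the quotient is the subcomplex of $C^{*}$ spanned by the generators of $(-\mathcal{F}_t)$-level strictly below $-s$. Since $\widehat{\mathcal{GH}}_0(C)\cong\widehat{\mathcal{GH}}_0(C^{*})\cong\mathbb{F}_2$, the map $H_0(\mathcal{F}_t^{s}C)\to H_0(C)$ is nonzero iff $H_0(C)\to H_0(C/\mathcal{F}_t^{s}C)$ is zero iff, dualizing and using $H_n(C^{*})\cong H_{-n}(C)^{*}$, the map $H_0((C/\mathcal{F}_t^{s}C)^{*})\to H_0(C^{*})$ is zero. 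Because $\mathcal{F}_t$ (hence $-\mathcal{F}_t$) takes values in a fixed discrete set for each $t$, "strictly below $-s$" has a well-defined largest level, and reading off filtration levels turns this chain of equivalences into $s\ge\mathscr{A}_{L}(t)\iff -s\le\mathcal{A}_{m(L)}(t)$, i.e. $\mathscr{A}_{L}(t)=-\mathcal{A}_{m(L)}(t)$.

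The main obstacle is bookkeeping rather than substance: one must pin down precisely the Maslov and Alexander shifts in the mirror isomorphism for the $(l+1)$-component link, so that $\xi_{\mathrm{top}}$ lands in Maslov grading $-l$ on the mirror side, which is what makes $\mathcal{A}$ rather than $\mathscr{A}$ appear there, and keep careful track of the half-integer off-by-one when dualizing filtration levels. Once this is fixed the homological algebra is routine.
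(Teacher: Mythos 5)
Your proposal is correct and follows essentially the same route as the paper: both rest on the mirror duality $\widehat{\mathcal{GC}}(D^{*})\cong\widehat{\mathcal{GC}}^{*}(D)$ (with the Maslov shift placing the dual of the top class in grading $-l$) from Section \ref{mirrorgh}, which is exactly what makes $\mathcal{A}$ appear on the mirror side. You merely spell out the short-exact-sequence/dualization step (as in $\tau(m(K))=-\tau(K)$) that the paper's terse "the conclusion follows" leaves implicit, and your grading bookkeeping ($[-l,0,\dots,0]$ for the $(l+1)$-component link) is the consistent reading of the paper's shift.
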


\begin{proof}

Let $D$ be the grid diagram for $L \cup U$, and let $D^{*}$ be the diagram obtained by reflecting  $D$ through a horizontal axis. Then  $D^{*}$ represents  $m(L \cup U)$.\\
We know that $\widehat{GC}(D^{*})$ is filtered isomorphic to $\widehat{GC}^{*}(D)[l-1,0,\cdots,0]$ [See Section \ref{mirrorgh}].

This implies $H_{*}(\mathcal{F}_{t}^{s}(\widehat{GC}(D^{*})))= H_{* - (1-l)}({{\mathcal{F}}^{*}}_{t}^{s}({\widehat{GC}}^{*}(D)))$. Therefore, the conclusion follows.  

\end{proof}

We can extract two invariant functions from the invariant $\mathscr{A}_{L}(t)$. For any $t_{0} \in[0,2)$, the slope function

\[ m_{t_{0}}(L):= \lim_{t\to {t_{0}^{+}}}\frac{A_{L}(t)-A_{L}(t_{0})}{t-t_{0}} .\]
We will also assume $m_{2}(L)=0$. We also define the y-value function $y_{t_{0}}(L) := \mathscr{A}_{L}(t_{0})-t_{0}m_{t_{0}}(L)$.

\begin{prop}\label{slopeprop}
(i)$\mathscr{A}_{L}(t)$ is a continuous piece-wise linear function.\\
(ii) At a non-singular point $t_{0}$, the slope $m_{t_{0}}$ is equal to $\frac{A_{U}(x_{0})-A_{L}(x_{0})}{2}$ for some generator $x_{0} \in \widehat{\mathcal{GC}}(U \cup L)$.  \\
(iii)If $t_{0}$ is a singular point, then the absolute value of change in slope $|\Delta m_{t_{o}}|$ is equal to $|\frac{A_{L}(x_{2})-A_{L}(x_{1})}{t_{0}}|$ for some generators  $x_{1},x_{2} \in \widehat{\mathcal{GC}}(U \cup L)$.
\end{prop}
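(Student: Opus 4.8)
The plan is to exploit the fact that $\mathscr{A}_L(t)$ is, for each fixed $t$, the minimum over cycles representing the generator of $\widehat{\mathcal{GH}}_0$ of the filtration level $\mathcal{F}_t = \tfrac t2 A_U + (1-\tfrac t2)A_L$, and that $\widehat{\mathcal{GC}}(U\cup L)$ is a finitely generated complex whose generators carry \emph{integer-ish} (in fact half-integer) values of $A_U$ and $A_L$. So everything is controlled by a finite set of linear functions of $t$. Concretely: by Proposition~\ref{impvecdef} we may compute in $\widetilde{\mathcal{GC}}$, which has finitely many generators $S(D)$; for a chain $\xi=\sum_{x} x$ one has $\mathcal{F}_t(\xi)=\max_{x\in\xi}\bigl(\tfrac t2 A_U(x)+(1-\tfrac t2)A_L(x)\bigr)$, a maximum of finitely many affine functions of $t$, hence continuous and piecewise linear in $t$. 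Since $\widetilde{\mathscr{A}_L(t)}$ is obtained by minimizing $\mathcal{F}_t(\xi)$ over the (finite, $t$-independent as a set) collection of chains $\xi$ that are cycles mapping onto the generator of $\widetilde{\mathcal{GH}}_0$, it is a minimum of finitely many continuous piecewise-linear functions, therefore itself continuous and piecewise linear. That gives (i).

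For (ii): away from the finitely many breakpoints there is a neighborhood of $t_0$ and a single minimizing cycle $\xi_0$ realizing $\mathscr{A}_L(t)=\mathcal{F}_t(\xi_0)$ throughout that neighborhood, and within a (possibly smaller) neighborhood the max defining $\mathcal{F}_t(\xi_0)$ is attained at a single generator $x_0\in\xi_0$; then $\mathscr{A}_L(t)=\tfrac t2 A_U(x_0)+(1-\tfrac t2)A_L(x_0)$ on that neighborhood, whose derivative is exactly $\tfrac12\bigl(A_U(x_0)-A_L(x_0)\bigr)$. (I should note the genericity argument that at a non-singular $t_0$ these ``single minimizer'' and ``single maximizer'' choices can indeed be made simultaneously — if two generators in the minimizing cycle tie, or two minimizing cycles tie, at $t_0$ but their slopes differ, then $t_0$ would be a breakpoint, contradiction; if their slopes agree there is nothing to prove.) For (iii): at a singular point $t_0$ the left and right derivatives are $\tfrac12(A_U(x_1)-A_L(x_1))$ and $\tfrac12(A_U(x_2)-A_L(x_2))$ coming from generators $x_1,x_2$ active just to the left and right; since $t_0$ is a genuine corner, $\mathcal{F}_{t_0}(x_1)=\mathcal{F}_{t_0}(x_2)=\mathscr{A}_L(t_0)$, i.e. $\tfrac{t_0}2 A_U(x_1)+(1-\tfrac{t_0}2)A_L(x_1)=\tfrac{t_0}2 A_U(x_2)+(1-\tfrac{t_0}2)A_L(x_2)$; solving this linear relation for $A_U(x_i)-A_L(x_i)$ and substituting into $\Delta m_{t_0}=\tfrac12\bigl[(A_U(x_2)-A_L(x_2))-(A_U(x_1)-A_L(x_1))\bigr]$ collapses the $A_U$-terms and yields $\Delta m_{t_0} = \dfrac{A_L(x_2)-A_L(x_1)}{t_0}$, which is the claimed identity (up to the absolute value, and up to the trivial case $t_0=0$, where one argues separately or notes $t=0$ is an endpoint of the interval).

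The only genuinely substantive point — and where I'd be most careful — is the interchange of the minimum over cycles with the pointwise behavior in $t$: one must check that the relevant collection of cycles can be taken to be a \emph{fixed finite set} independent of $t$, so that $\mathscr{A}_L(t)$ really is a min of finitely many PL functions rather than something with $t$-varying index set. This is where working in $\widetilde{\mathcal{GC}}$ (finitely many $\mathbb{F}_2$-generators, hence finitely many subsets $\xi\subseteq S(D)$ total, of which only finitely many are cycles carrying the $\widetilde{\mathcal{GH}}_0$ class) does the work: the differential $\widetilde\partial$ and the maps in homology do not depend on $t$ — only the filtration function does — so ``$\xi$ is a cycle representing the generator of $H_0$'' is a $t$-independent condition. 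Once that is in place, (i) is immediate and (ii)–(iii) are the standard local analysis of a PL function as above; the generator identifications in (ii) and (iii) are read off directly from which affine piece is active. I would also remark that the half-integrality of $A_U,A_L$ makes the slopes and slope-jumps land in $\tfrac12\mathbb{Z}$ (resp. are controlled by differences of Alexander gradings), which is implicit in the statement.
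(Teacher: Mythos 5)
Your proposal is correct and follows essentially the same route as the paper: realize $\mathscr{A}_L(t)$ as a minimum of finitely many affine functions $\tfrac t2 A_U+(1-\tfrac t2)A_L$ attached to a fixed finite, $t$-independent collection of representatives, deduce continuity and piecewise linearity, and then read off (ii) and (iii) from which affine piece is active on either side of $t_0$, using the equality of filtration levels at a corner to eliminate the $A_U$-terms. Your extra care in distinguishing cycles from single grid states (taking the max over constituents of a cycle and passing to $\widetilde{\mathcal{GC}}$ via Proposition~\ref{impvecdef} to keep the index set finite) is a tidier justification of a step the paper treats loosely, but it is the same argument.
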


\begin{proof}

There are only finitely many elements $x_{i} \in \widehat{\mathcal{GC}}(U \cup L)$  that are generators of homology at Maslov degree $0$. Lets consider all linear functions $G_{x_{i}}(t)= \frac{t}{2}A_{U}(x_{i}) + (1-\frac{t}{2})A_{L}(x_{i})$. Then, $\mathscr{A}_{L}(t)= \min_{i} G_{x_{i}}(t)$. It follows that $\mathscr{A}_{L}(t)$ is a continuous piece-wise linear.\\

At each non-singular point $t_{0}$, there must be some generator $x_{0}$ such that $\mathscr{A}_{L}(t)= \ G_{x_{0}}(t)$ $\forall t\in (t_{0}- \delta, t_{0} + \delta)$ for some $\delta > 0$. Hence, the slope of $G_{x_{0}}(t)$ is equal to $m_{t_{0}}$. \\

At a singular point $t_{0}$ assume there is a generator generator $x_{1}$ that assumes the value of the invariant for $t$'s slightly less than $t_{0}$ and $x_{2}$ assumes the value for $t$'s slightly greater than $t_{0}$. Then at $t_{0}$, we must have $\frac{t_{0}}{2} A_{U}(x_{1})+(1- \frac{t_{0}}{2} ) A_{L}(x_{1})= \frac{t_{0}}{2} A_{U}(x_{2})+(1- \frac{t_{0}}{2} ) A_{L}(x_{2}) $ . So, $t_{0}\Delta m_{t_{0}} = A_{L}(x_{2})-A_{L}(x_{1})$ and the conclusion follows.  
\end{proof}

Let $\tau$ be the Cavallo's invariant \cite{cavallo} for links. 
\begin{prop}

$\mathscr{A}_{L}(\frac{1}{2})=\frac{\tau(U\cup L)}{2}$.
\end{prop}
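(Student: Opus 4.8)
The plan is to identify $\mathscr{A}_{L}(1/2)$ with (half of) Cavallo's $\tau$-invariant for the link $U \cup L$ by unwinding both definitions until they become comparisons of the same filtration level on the same complex. First I would substitute $t = 1/2$ into the weighted filtration: $\mathcal{F}_{1/2} = \tfrac14 A_U + \tfrac34 A_L$. The first thing to notice is that this is \emph{not} proportional to the total Alexander grading $A = A_U + A_L$, so a direct comparison with $\tau(U \cup L)$ (which is defined via the $A$-filtration of $\widehat{\mathcal{GC}}(U \cup L)$ at Maslov grading $0$) is not immediate. The natural move is to rescale: observe that on generators, the relevant quantity to minimize is $G_{x}(1/2) = \tfrac14 A_U(x) + \tfrac34 A_L(x)$, and I would like to relate $\min_x G_x(1/2)$ to $\min_x \bigl(A_U(x) + A_L(x)\bigr)$ over the generators $x$ carrying $\widehat{\mathcal{GH}}_0$. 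The key algebraic input here should be a relation between $A_U$ and $A_L$ forced by the specific structure of the annular link $U \cup L$ — in particular, $U$ being an unknot that links $L$, one expects a constraint of the form $A_U(x)$ being essentially determined up to a constant (coming from $lk(U,L)$ and component counts) once one restricts to the cycles relevant at Maslov grading $0$, or more precisely that the linear functions $G_{x_i}$ and $A(x_i)$ achieve their minima (as a function over the finite generating set realizing $\widehat{\mathcal{GH}}_0$) at the same filtration value after the correct affine rescaling.

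Concretely, the steps I would carry out are: (1) Recall from Cavallo \cite{cavallo} that $\tau(U \cup L)$ is the minimal $i$ such that $H_0(\mathcal{F}_i^A(\widehat{\mathcal{GC}}(U\cup L))) \to \widehat{\mathcal{GH}}_0$ is nontrivial, where $\mathcal{F}^A$ is the total Alexander filtration — this is exactly the definition of $\tau(L)$ from Section 2 applied to the link $U \cup L$. (2) Use Proposition \ref{impvecdef} to pass freely between $\widehat{\mathcal{GC}}$ and $\widetilde{\mathcal{GC}}$ if convenient for the bookkeeping, since $0$ is the top Maslov grading. (3) Show that for the finitely many generators $x_i$ realizing $\widehat{\mathcal{GH}}_0$, the filtration value $\mathcal{F}_{1/2}(x_i) = \tfrac14 A_U(x_i) + \tfrac34 A_L(x_i)$ equals $\tfrac12 A(x_i) = \tfrac12(A_U(x_i) + A_L(x_i))$ up to a term that is constant across these generators (or that the minimizing generator is the same). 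The cleanest way is probably to note that $\tfrac14 A_U + \tfrac34 A_L = \tfrac12(A_U + A_L) - \tfrac14(A_U - A_L)$, and then argue that $A_U - A_L$ is constant on the relevant generators — this should follow from a winding-number argument using formula \eqref{windingformula}, or from the observation that differences in $A_U$ and differences in $A_L$ between grid states are governed by rectangles through $\mathbb{X}$ and $\mathbb{O}$ markings in a correlated way for this particular diagram, or simply from the remark preceding Theorem \ref{theorem5} relating $\mathscr{A}$ to $lk(U,L)$. (4) Conclude that $\mathscr{A}_L(1/2) = \min_i \mathcal{F}_{1/2}(x_i) = \tfrac12 \min_i A(x_i) + (\text{const}) = \tfrac12 \tau(U \cup L)$, checking that the additive constant vanishes (this is where the normalization conventions for $A_U$, $A_L$ and $A$ must be pinned down carefully, since $A = A_U + A_L$ by the definitions in Section 3.1 and $\tau$ is defined with respect to exactly this $A$).

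The main obstacle I anticipate is step (3): establishing that the weighted filtration $\tfrac14 A_U + \tfrac34 A_L$ and the total filtration $\tfrac12 A$ select the same (or appropriately related) minimal level on the subcomplexes computing $\widehat{\mathcal{GH}}_0$. A priori, a piecewise-linear function of $t$ could have the value at $t=1/2$ governed by a generator different from the one governing the value of a rescaled total-Alexander functional, so one cannot argue purely formally — one genuinely needs the geometric fact that $A_U - A_L$ (equivalently $A_U - A$, hence $A_U$ up to the total grading) is constant on homologically relevant generators, or at least that its variation does not affect the minimum. If that constancy fails in general, the fallback is to invoke it only where needed: since $\mathscr{A}_L(t)$ is continuous piecewise-linear (Proposition \ref{slopeprop}) and $t = 1/2$ is a single point, a slightly softer argument comparing the two filtration functions at that point may suffice, but I expect the honest proof routes through the identity $4\mathcal{F}_{1/2} = 2A - (A_U - A_L)$ together with a computation showing the last term is diagram-independent on the generators in question. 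Everything else — the translation of Cavallo's definition, the passage between grid complex flavors, and the final arithmetic — is routine bookkeeping.
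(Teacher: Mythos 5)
Your instinct that something is off is right, but the escape route you chose is the wrong one, and it hides the actual content of the paper's argument. The paper's own proof is a one-liner: it asserts $\mathcal{F}_{1/2}=\tfrac12 A_U+\tfrac12 A_L=\tfrac12 A$, i.e.\ it evaluates the weighted filtration at the \emph{equal-weight} parameter, where the filtration is literally half the total Alexander filtration; then $\mathscr{A}_L$ at that parameter equals $\tfrac12\tau(U\cup L)$ immediately from Cavallo's definition of $\tau$ for the link $U\cup L$ (your step (1)), with no comparison between different weightings and no additive constant to chase. In other words, the proposition is stated under the convention $\mathcal{F}_t=tA_U+(1-t)A_L$ (equivalently, under the Section~3 definition $\mathcal{F}_t=\tfrac t2 A_U+(1-\tfrac t2)A_L$ it should be read at $t=1$); the discrepancy you noticed is a normalization clash in the paper, not a mathematical gap to be bridged.

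The genuine flaw in your proposal is step (3), which you yourself flag: you want $\tfrac14 A_U+\tfrac34 A_L$ and $\tfrac12(A_U+A_L)$ to pick out the same minimal level by showing $A_U-A_L$ is (essentially) constant on the cycles realizing $\widehat{\mathcal{GH}}_0$. This is false in general and cannot be salvaged: if $A_U-A_L$ were constant on the relevant generators, then every $G_{x_i}(t)=A_L(x_i)+\tfrac t2\bigl(A_U(x_i)-A_L(x_i)\bigr)$ would have the same slope, so $\mathscr{A}_L(t)=\min_i G_{x_i}(t)$ would be globally linear in $t$, contradicting the whole piecewise-linear framework of Proposition~\ref{slopeprop} (singular points occur exactly when the minimizing generator changes, with slope jump governed by differences of $A_L$-values). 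There is also no winding-number or $lk(U,L)$ constraint forcing such constancy, since $A_U$ and $A_L$ vary independently across grid states at Maslov grading $0$. So with the Section~3 definition taken literally, the identity at $t=1/2$ is not something one should expect to prove; the correct move is to recognize the convention being used (equal weights) and then the statement reduces, as in the paper, to the definition of Cavallo's $\tau$ after rescaling the filtration by $\tfrac12$.
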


\begin{proof}

We have, $\mathcal{F}_{\frac{1}{2}}= \frac{1}{2}A_{U}+\frac{1}{2}A_{L}=\frac{1}{2}A$.
Therefore,
$\mathscr{A}_{L}^{U}(\frac{1}{2}):=min\{ s | H_{0}(\mathcal{F}_{\frac{1}{2}}^{s}(\widehat{\mathcal{GC}}),\widehat{\partial} ) \xrightarrow{i} \widehat{\mathcal{GH}}_{0}$ is nontrivial $\}=\frac{1}{2} \tau(U\cup L)$.
\end{proof}

\subsection{Some computations }

We will give two sample computations for torus braids and trivial braid. We will derive a more general formula for quasi-positive braids in Proposition \ref{generalquasi}.

\begin{figure}[h!]
\begin{center}
\includegraphics[width=0.3\textwidth]{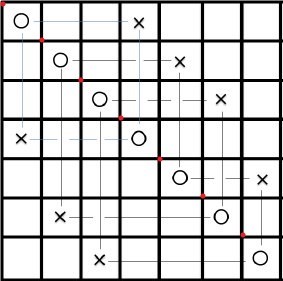}
\end{center}
\caption{Torus braid} \label{torus}
\end{figure}

\begin{prop}
$\mathscr{A}_{T_{p,q}}(t)=\frac{pq-q+l}{2}+ \frac{t}{4} (p+q-pq-l)$ where $T_{p,q}$ is the torus braid with $l$ components and $0 \leq p \leq q $. 
\end{prop}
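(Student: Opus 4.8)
The plan is to work in the fully blocked complex $\widetilde{\mathcal{GC}}(D)$, which is legitimate by Proposition~\ref{impvecdef}, and to exploit that, by the proof of Proposition~\ref{slopeprop}, the function $\mathscr{A}_{T_{p,q}}(t)=\min_i G_{x_i}(t)$ is a minimum of affine functions, hence concave and piecewise linear. Since the claimed answer $f(t):=\tfrac{pq-q+l}{2}+\tfrac{t}{4}(p+q-pq-l)$ is itself affine, it suffices to prove (a) the upper bound $\mathscr{A}_{T_{p,q}}(t)\le f(t)$ for all $t$, by exhibiting a single cycle at $\mathcal F_t$-level $f(t)$ whose homology class generates $\widetilde{\mathcal{GH}}_0$, and (b) the reverse inequality at the two endpoints $t=0$ and $t=2$ only. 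Indeed, a concave function lying pointwise below an affine function and agreeing with it at both ends of an interval must coincide with it throughout (its graph lies above the chord through its endpoint values, which is the affine function), so (a) and the endpoint cases of (b) together force $\mathscr{A}_{T_{p,q}}\equiv f$ on $[0,2]$.

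For (a) I would fix the standard braid-form grid diagram $D$ of the annular link $U\cup T_{p,q}$, with $T_{p,q}=(\sigma_1\cdots\sigma_{p-1})^q$ drawn as a positive braid closure and $U$ realized as the braid axis, so that $\mathrm{lk}(U,T_{p,q})=p$. The candidate cycle is the distinguished state $x^{+}$ at the appropriate corners of the $\mathbb X$-markings, i.e.\ the state carrying the transverse grid invariant (taking the grid of the appropriate mirror so this class lands in Maslov grading $0$): because $T_{p,q}$ is a positive braid the transverse invariant is non-zero, so $[x^{+}]\neq 0$, and since $\widetilde{\mathcal{GH}}_0$ has rank one, $[x^{+}]$ is its generator. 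Its bi-Alexander gradings are then computed either from the winding number formula~\eqref{windingformula} or from Proposition~\ref{legcomplemma} applied to the natural Legendrian representative (the component $U$ a standard $tb=-1$ Legendrian unknot linking the $p$ strands, the remaining components realizing the positive braid): one gets $A_U(x^{+})=\tfrac{p}{2}$ and $A_L(x^{+})=\tfrac{pq-q+l}{2}$, whence $\mathcal F_t(x^{+})=\tfrac{t}{2}\cdot\tfrac p2+(1-\tfrac t2)\tfrac{pq-q+l}{2}=f(t)$, giving $\mathscr{A}_{T_{p,q}}(t)\le f(t)$.

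For (b), note $\mathcal F_0=A_L$ and $\mathcal F_2=A_U$, so the two endpoint values are the $\tau$-type numbers $\min\{A_L(z): z\text{ a cycle},\ [z]\ne0\}$ and $\min\{A_U(z):\ [z]\ne0\}$. I would argue that $x^{+}$ is already the filtration-minimal representative of the generator: since the transverse invariant of the positive braid is non-zero even in the associated graded homology, $x^{+}$ lies outside the image of the associated graded differential at its bigrading, so any cycle homologous to $x^{+}$ that cancels the $x^{+}$ summand must bring in a summand of strictly larger filtration level; hence no homologous cycle has strictly smaller $A_L$-level or $A_U$-level. This yields $\mathscr{A}_{T_{p,q}}(0)=A_L(x^{+})=f(0)$ and $\mathscr{A}_{T_{p,q}}(2)=A_U(x^{+})=f(2)$, and by the concavity argument above the identity holds for all $t\in[0,2]$.

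The step I expect to be the main obstacle is (b): establishing that $x^{+}$ is the minimal-filtration representative at both endpoints rests on a non-vanishing statement for the transverse-type invariant of the \emph{annular} link $U\cup T_{p,q}$ in the associated graded complex — not merely the positivity of $T_{p,q}$ in $S^3$ — together with the explicit grading bookkeeping $A_U(x^{+})=p/2$, $A_L(x^{+})=(pq-q+l)/2$; care is also needed with the mirroring conventions relating the grid diagram, the Legendrian representative, and the placement of the generator in Maslov grading $0$. I would finally remark that once Proposition~\ref{generalquasi} is available this computation is simply the special case $\beta=T_{p,q}$, a positive (hence quasi-positive) braid of index $p$ with $wr(T_{p,q})=q(p-1)$, which serves as a consistency check.
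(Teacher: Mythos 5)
There is a genuine gap, and it sits exactly where you place your main cycle: the state $x^{+}$ is not a Maslov-grading-$0$ element of the complex that computes $\mathscr{A}_{T_{p,q}}(t)$. The invariant is defined from a grid $D$ of $U\cup T_{p,q}$ as the minimal $\mathcal{F}_t$-level at which the generator of $\widehat{\mathcal{GH}}_{0}$ appears, and by Proposition~\ref{legcomplemma} the Maslov grading of $x^{+}\in S(D)$ is $tb-r+1$ of the Legendrian associated to $D$ (which represents $m(U\cup T_{p,q})$); this is far from $0$ for a braid-form diagram. Passing to ``the appropriate mirror'' does not repair this: under mirroring the grading-$0$ invariant $\mathscr{A}_{L}$ corresponds to the bottom-grading invariant of the mirror ($\mathscr{A}_{L}(t)=-\mathcal{A}_{m(L)}(t)$), so in the mirror diagram $x^{+}$ is relevant to $\mathcal{A}$, via the non-torsion tower of Theorem~\ref{theorem4} and Proposition~\ref{quasi1} (quasi-positivity of the annular link, proved there through cabling and Cavallo's result), not to a rank-one statement in Maslov grading $0$. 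In other words, the only rigorous way to run your step (a) with $x^{+}$ is the paper's quasi-positive machinery (Proposition~\ref{generalquasi}), which you relegate to a consistency check. The paper's actual proof of this proposition is much more elementary and bypasses all of this: in the braid-form grid of $U\cup T_{p,q}$ there is, by Sarkar, a \emph{unique} grid state $x_{NWO}$ in Maslov grading $0$; since $\widehat{\mathcal{GH}}_{0}\cong\mathbb{F}_{2}\neq 0$, that state is forced to be the (only) representative, so $\mathscr{A}_{T_{p,q}}(t)=\mathcal{F}_t(x_{NWO})$ for every $t$ simultaneously, and the winding-number formula \eqref{windingformula} gives $A_U(x_{NWO})=p/2$, $A_L(x_{NWO})=(pq-q+l)/2$. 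No concavity or endpoint interpolation is needed.

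Two further steps of your outline would also need repair even after fixing the choice of state. First, concavity of $\mathscr{A}_{L}(t)$ is not established by Proposition~\ref{slopeprop}: the filtration level of a representing cycle is the \emph{maximum} of $G_x(t)$ over its terms, so $\mathscr{A}_L(t)$ is a minimum of maxima of affine functions, which is piecewise linear but not obviously concave; your reduction to the endpoints $t=0,2$ rests on this unproved convexity property. Second, in step (b) the implication ``nonzero in the associated graded homology $\Rightarrow$ no homologous cycle at strictly smaller filtration level'' is not valid at the $E_{1}$ stage: a class can be nonzero in the associated graded yet be killed by a higher differential of the filtration spectral sequence, lowering the filtration level of its total homology class; one needs survival to $E_\infty$ (or a tower argument as in Theorem~\ref{theorem4}), and the nonvanishing input you would need concerns the annular link $U\cup T_{p,q}$ including the axis, which is precisely the nontrivial content of Proposition~\ref{quasi1} rather than a consequence of positivity of $T_{p,q}$ alone.
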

\begin{proof}
In case of annular torus links $T_{p,q}$ with $p \leq q$, the grid diagram( of  $T_{p,q} \cup U$) [See Fig ~\ref{torus}] contains a unique generator  $x_{NWO}$ \cite{sarkar} of the filtered complex at Maslov grading $0$. We can compute the  $A_{U}$ and $A_{L}$ gradings of the generator $x_{NWO}$ representing the cycle to determine the invariant. We use the winding number formula [Equation \ref{windingformula}] and Alexander grading formula [Equation \ref{algformula}] to derive,
\[A_{U}(x_{NWO})= \frac{p}{2} \]

and 
\[ A_{T_{p,q}}(x_{NWO})=\frac{pq-q+l}{2} .\]
Therefore,
\[\mathscr{A}_{T_{p,q}}(t)=\frac{pq-q+l}{2}+ \frac{t}{4} (p+q-pq-l)\]

\end{proof}

\begin{prop}
$\mathscr{A}_{I_{n}}(t) =  \frac{n}{2} $ where $I_{n}$ is the trivial braid with $n$ strands.
\end{prop}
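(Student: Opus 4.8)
The plan is to exhibit the grid diagram of $I_n \cup U$ and locate the unique (or cycle-representing) generator at Maslov grading $0$, then compute its $A_U$ and $A_L$ gradings and feed them into the definition of $\mathscr{A}_{I_n}(t)$. First I would note that $I_n$ is the identity $n$-braid, so $I_n$ is the $n$-component unlink, each component encircling the braid axis $U$ once; hence $I_n \cup U$ is itself a very symmetric link and admits a standard grid presentation in which every strand of $I_n$ winds once around the puncture corresponding to $U$. In particular $lk(U, I_n) = n$ and $l = n$.

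Next I would identify the distinguished cycle. As in the torus-braid computation, the braid-axis grid of $I_n \cup U$ has a preferred grid state — the north-west (or $x_{NWO}$-type) generator — which is the unique generator at Maslov grading $0$ and represents the generator of $\widehat{\mathcal{GH}}_0$. Then I would compute its Alexander gradings using the winding-number formula \eqref{windingformula} together with the total-Alexander formula \eqref{algformula}. I expect to get $A_U(x) = n/2$ and $A_L(x) = n/2$: since each of the $n$ strands of $I_n$ winds once around the $U$-puncture and $U$ winds once around each $I_n$-puncture, symmetry of the setup should force $A_U = A_L$, and counting (as in the $T_{p,q}$ case specialized to $p = q$... but here the relevant count gives $n/2$) pins down the value. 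A cleaner route: apply Proposition \ref{legcomplemma}, since a grid of $I_n \cup U$ can be taken to be the grid of the mirror of a Legendrian representative, and one reads off $A_i$ from $tb_i$ and $r_i$ of the Legendrian unlink components; these are the standard Legendrian unknots, whose classical invariants are known, yielding $A_U(x) = A_L(x) = n/2$.

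Finally, plugging into the definition: $\mathscr{A}_{I_n}(t) = \mathcal{F}_t(x) = \tfrac{t}{2}A_U(x) + (1-\tfrac{t}{2})A_L(x) = \tfrac{t}{2}\cdot\tfrac{n}{2} + (1-\tfrac{t}{2})\cdot\tfrac{n}{2} = \tfrac{n}{2}$, independent of $t$, as claimed. One can cross-check this with Proposition \ref{generalquasi} (the identity braid is quasi-positive with $wr = 0$, $l = n$, index $n$, giving $t\cdot 0/4 + (0+n)/2 = n/2$) and with the mirror relation $\mathscr{A}_L(t) = -\mathcal{A}_{m(L)}(t)$, since $I_n$ is amphichiral.

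\textbf{Main obstacle.} The only real subtlety is justifying that the north-west generator is genuinely the unique cycle (up to homology) at Maslov grading $0$ and that it is not a boundary — i.e.\ that it represents the generator of $\widehat{\mathcal{GH}}_0(I_n \cup U) \cong \mathbb{F}_2$ — rather than just computing gradings of an arbitrary state. For the identity braid this should follow from the explicit simple structure of the grid (or from the disjoint-union behaviour of Proposition \ref{disjointsum} applied carefully), but making that identification precise, together with confirming the winding-number bookkeeping gives exactly $n/2$ and not some shifted value, is where the care is needed.
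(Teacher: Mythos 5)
Your proposal is correct and follows essentially the same route as the paper: identify the unique Maslov-grading-$0$ cycle $x_{NWO}$ in the standard grid of $I_n \cup U$, compute $A_U(x_{NWO}) = A_{I_n}(x_{NWO}) = \tfrac{n}{2}$ via the winding-number formula, and conclude $\mathscr{A}_{I_n}(t) = \tfrac{t}{2}\cdot\tfrac{n}{2} + (1-\tfrac{t}{2})\cdot\tfrac{n}{2} = \tfrac{n}{2}$. The uniqueness of the Maslov-$0$ generator that you flag as the main subtlety is exactly the point the paper also relies on (asserted as in the torus-braid case, following Sarkar), and your cross-check against Proposition \ref{generalquasi} is a nice consistency confirmation.
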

\begin{proof}
As in the case of annular torus links, the grid diagram contains a unique cycle in Maslov grading $0$ represented by the generator $x_{NWO}$. Also,
\[A_{U}(x_{NWO})= \frac{n}{2} \]
and
\[A_{I_{n}}(x_{NWO})=\frac{n}{2}.\]
Therefore,
\[\mathscr{A}_{I_{n}}(t)= \frac{n}{2}\]

\end{proof}

\subsection{Annular concordance}

\theoremstyle{definition}
\begin{definition}
An  annular cobordism $\Sigma$ between two annular links $L_{1} \in S^{3} \times \{0\}$ and $L_{2} \in S^{3} \times \{1\}$ is an embedded surface in $S^{3} \times [0,1]$  which is disjoint from z-axis in each $ S^{3} \times \{i\}$ for $i \in [0,1]$ and satisfying $\partial \Sigma= L_{1} \sqcup - L_{2}$.
\end{definition}

We say two links $L_1$ and $L_2$ are annular concordant if there is an annular cobordism of genus $0$ between $L_1$ and $L_2$. Any annular cobordism can be represented by a sequence of an identity, annular split, annular merge, annular birth, and annular death cobordisms. An annular cobordism $\Sigma$ between two links $L_{1}$ and $L_{2}$ is called $\mathbf{strong}$ if the connected components of $\Sigma$ are knot cobordisms between a two components of $L_{1}$ and $L_{2}$. Any strong annular cobordism can be perturbed so that it is a composition of torus cobordisms, annular birth followed by merge cobordisms and split followed by annular death cobordisms.\\ 
 \\

\begin{figure}[h!]
\begin{center}
\includegraphics[width=0.5\textwidth]{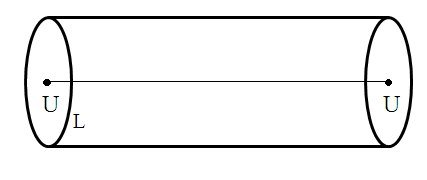}
\end{center}
\caption{Identity cobordism}
\end{figure}
 By taking slices of a cobordism, any cobordism can also be seen as a movie in the link diagram. An identity cobordism is a cobordism between a link the movie is represented by the three Reidemeister moves performed. For birth and death, the movie corresponds introduction or deletion of an unknotted circle. For merge or split cobordisms the change is represented by perturbing the link diagram like Figure \ref{sdl}. Similarly, we can easily figure out annular version of the movie [See \cite{annular} for more details]. \\ 
\begin{figure}[h!] 
    \begin{center}    
    \includegraphics[width=.3\textwidth]{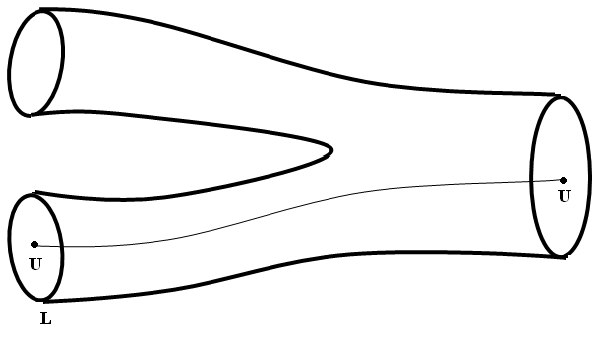}
    \end{center}
   \caption{Annular merge cobordism}
 \end{figure}

We know that each of these moves induces filtered maps of some degree. We check that these maps are both $A_{U}$ and $A_{L}$ filtered. We also compute the $\mathcal{F}_{t}$ grading shift that will give bounds on cobordism genus. Our construction follows the prescription given in \cite{sarkar}.\\

\begin{figure}[h!] 
    \begin{center}    
    \includegraphics[width=.3\textwidth]{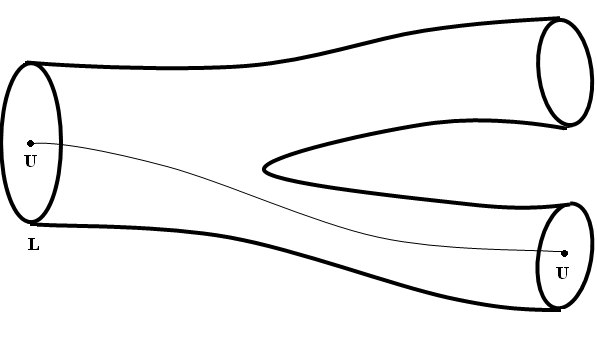}
    \end{center}
   \caption{Annular Split cobordism}
 \end{figure}

\begin{enumerate}
\item \textbf{Identity:} These maps are filtered quasi-isomorphisms, hence $\mathcal{F}_{t}$ filtered of degree 0.
\item \textbf{Split:} If the grid diagram $D_{2}$ of $L_{2}$ is obtained from $D_{1}$ of $L_{1}$ by a split move, then it can represented as the effect of swapping the positions of two Xs(in one of the components of $L_{1}$ in $2*2$ block. Notice that we need an extra special marking  in $D_{2}$ for $\widehat{\mathcal{GC}}$ version. So, we consider the $\widetilde{\mathcal{GC}}$ version as done in \cite{sarkar}.  \\

\begin{figure}[h!] 
\begin{center}
\includegraphics[width=.4\textwidth]{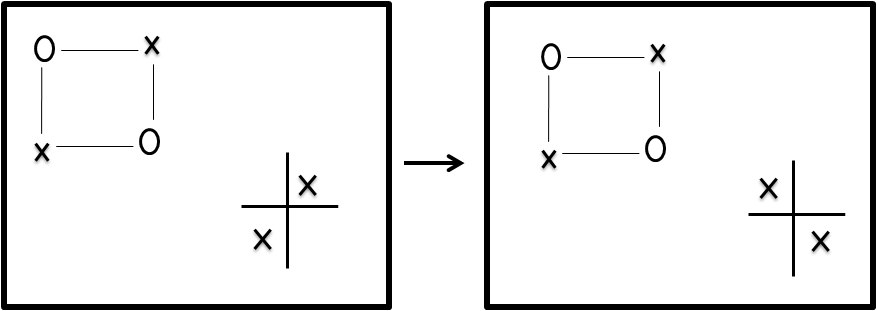}
\end{center}    
   \caption{Grid move corresponding to Merge and Split cobordisms}\label{sdl}
 \end{figure}
 
Consider $ Id : \widetilde{\mathcal{GC}}(D_{1}) \rightarrow \widetilde{\mathcal{GC}}(D_{2})$. It induces an isomorphism in homology as it is unaffected by O swaps. So we just need to compute $A_{U}$ and $A_{L}$ degree shifts. \\

We have the following formula (See \cite{Grid Homology for Knots and Links}) for alexander filtration for unknot in terms of winding numbers-  $A_{U}(\mathbf{x}) = \sum \limits_{x \in \mathbf{x}} w_{U}(x) + \frac{1}{8} \sum \limits_{j=1}^{8n}w_{U}(p_{j}) - 1 $ where $p_{j}$ are corners of Xs and Os. Clearly swapping two Xs in one of the components of $L$ doesn't affect any of the winding numbers. Therefore, $A_{U}$ degree shift is $0$.  \\
Now $A_{L_{1}}(x)-A_{L_{2}}(x)=\frac{1}{2}(\mathcal{J} (x -\mathbb{X}_{L_{1}},x - \mathbb{X}_{L_{1}})-\mathcal{J} (x -\mathbb{X}_{L_{2}},x - \mathbb{X}_{L_{2}})) + \frac{1}{2}=\frac{1}{2}(\mathcal{J}(x,\mathbb{X}_{L_{2}})-\mathcal{J}(x,\mathbb{X}_{L_{1}})+\mathcal{J}(\mathbb{X}_{L_{2}},x)-\mathcal{J}(\mathbb{X}_{L_{1}},x)+\mathcal{J}(\mathbb{X}_{L_{1}},\mathbb{X}_{L_{1}})-\mathcal{J}(\mathbb{X}_{L_{2}},\mathbb{X}_{L_{2}}))+ \frac{1}{2}=1$[Since the quantity $\mathcal{J}(\mathbb{X}_{L_{1}},\mathbb{X}_{L_{1}})-\mathcal{J}(\mathbb{X}_{L_{2}},\mathbb{X}_{L_{2}})=1$  from the diagram]. Here $\mathbb{X}_{L_{1}}$ are the X's in the grid diagram of $L_{1}$ and  $\mathbb{X}_{L_{2}}$ are X's in grid diagram of $L_{2}$.\\

Passing to the $\widehat{\mathcal{GC}}$ version, we get a quasi-isomorphism $\Phi_{merge} : \widehat{\mathcal{GC}}(L_{1}) \rightarrow \widehat{\mathcal{GC}}(L_{2})\otimes W$ of $\mathcal{F}_{t}$ filtered of degree $1-\frac{t}{2}$. \\

\item \textbf{Merge:}
We use the same construction for the merge move. $A_{U}$ degree shift is $0$ by the same argument, but for the $A_{L}$ grading, since number of link component is decreasing by $1$ here, $A_{L_{1}}(x)-A_{L_{2}}(x)=\frac{1}{2}(\mathcal{J} (x -\mathbb{X}_{L_{1}},x - \mathbb{X}_{L_{1}})-\mathcal{J} (x -\mathbb{X}_{L_{2}},x - \mathbb{X}_{L_{2}})) - \frac{1}{2}=0$.\\

Again by passing to the $\widehat{\mathcal{GC}}$ version, we get a quasi-isomorphism $\Phi_{split} : \widehat{\mathcal{GC}}(L_{1}) \otimes W \rightarrow \widehat{\mathcal{GC}}(L_{2})$ of $\mathcal{F}_{t}$ filtered of degree $0$.

\begin{figure}[h!]
\begin{center}

    \includegraphics[width=.4\textwidth]{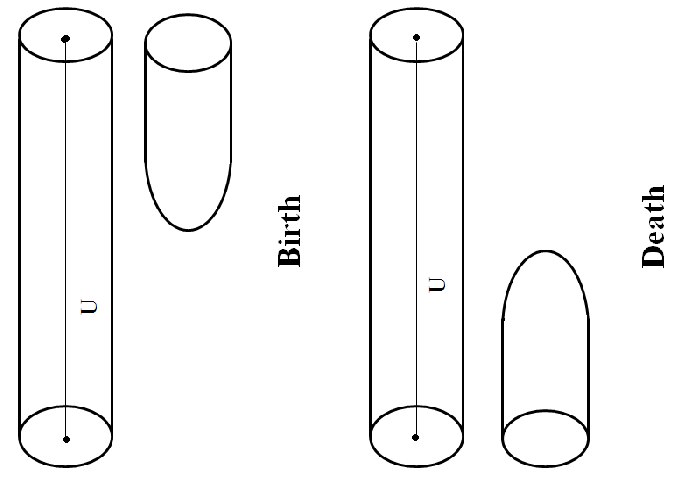}
\end{center}    
   \caption{Annular Birth and death cobordisms}
 \end{figure}

\item \textbf{Birth:}
If $L_{2}$ (with grid $D_{2}$ is obtained from $L_{1}$ (with grid $D_{1}$) by birth, we know from \cite{sarkar} that there is a quasi-isomorphism from $\widehat{\mathcal{GC}}(L_{1})$ to $\widehat{\mathcal{GC}}(L_{2})$ given by \\

$ s(x)=\sum\limits_{y\in S(D_{1})}\sum\limits_{H \in s\mathscr{L}(i(x),y,x), H \cap s\mathbb{O}=\phi} V_{1}^{n_{1}(H)}...V_{m}^{n_{m}(H)} y $ for any $x \in S(D_{1})$. Here  $s\mathscr{L}(i(x),y,x)$ are snail like domains centered at $c$ joining $i(x)$ to $y$ and $n_{i}(H)$ is the number of times its passes through $O_{i}$.\\

\begin{figure}[h!]
\includegraphics[width=1 \textwidth]{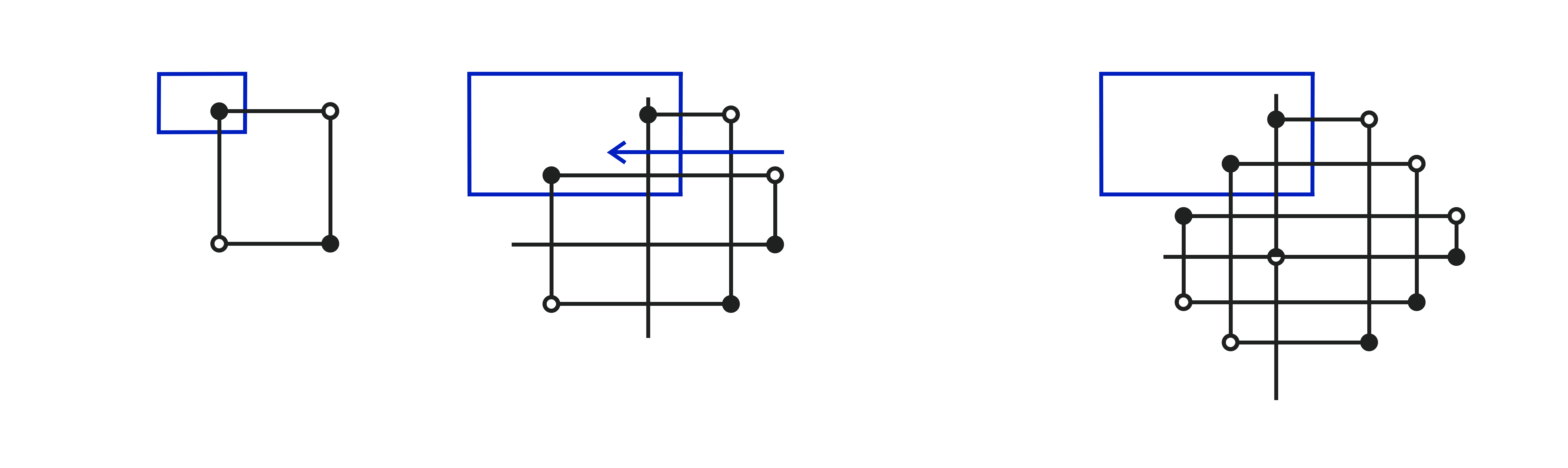}
\caption{Snail like domains in the birth move}
\end{figure}
    
Now let us assume $O_{1}$ be the non-special O marking in $U$. Now, for any $H \in s\mathscr{L}(i(x),y,x)$ we have $A_{U}(x)-A_{U}(y)=-\sum($ unknot winding numbers for $x$ points $) + \sum ($ unknot winding numbers for $y$ points $)=n_{1}(H)$.Therefore $A_{U}(x)=A_{U}( V_{1}^{n_{1}(H)}...V_{m}^{n_{m}(H)} y)$. So $s$ is $A_{U}$ filtered.\\

We already know that $s$ is $A$ filtered from [2]. Therefore, $s$ is $\mathcal{F}_{t}$ filtered of degree $0$.

\item \textbf{Death:} If we compose annular birth cobordism with a merge then we get a cobordism $\mathbf{BM}$ which induces a quasi-isomorphism $\Phi_{BM} : \widehat{\mathcal{GC}}(L_{1}) \rightarrow \widehat{\mathcal{GC}}(L_{2})$  which is  $\mathcal{F}_{t}$ filtered of degree $0$. It can also be seen as $\Phi_{BM}^{*} : \widehat{\mathcal{GC}}(L_{2}^{*}) \rightarrow \widehat{\mathcal{GC}}(L_{1}^{*})$. It follows that  $\Phi_{BM}^{*}$ is also  $\mathcal{F}_{t}$ filtered of degree $0$. Now we observe that the cobordism from $L_{2}^{*}$ to $L_{1}^{*}$ is a split move followed by annular death,i.e. $\Phi_{BM}^{*}= \Phi_{Split} \circ \Phi_{Death}$. Therefore, annular death induces the quasi-isomorphism $ \Phi_{Death}$ which is $\mathcal{F}_{t}$ filtered of degree $-1+\frac{t}{2}$.

\end{enumerate}

We will call annular merge cobordism followed by an annular split cobordism(or annular split cobordism followed by an annular merge cobordism) a $\mathbf{torus}$ $\mathbf{cobordism}$. Clearly torus cobordism induces a  quasi-isomorphism $\Phi_{T}:  \widehat{\mathcal{GC}}(L_{1}) \rightarrow \widehat{\mathcal{GC}}(L_{2}) $ of filtered degree $1-\frac{t}{2}$.\\

\begin{proof} [Proof of Theorem \ref{theorem1}]
Since any strong cobordism of genus $g$ can be written as composition of $g$ torus cobordisms, some annular birth followed by merge and some split followed by annular death cobordisms. Both annular birth followed by merge and some split followed by annular death are filtered of degree $0$. So, we get a quasi-isomorphism $\Phi : \widehat{\mathcal{GC}}(L_{1}) \rightarrow \widehat{\mathcal{GC}}(L_{2})$ with filtered degree $g(1-\frac{t}{2})$. So we have a commutative diagram \\

\[\begin{tikzcd}[
  ar symbol/.style = {draw=none,"\textstyle#1" description,sloped},
  isomorphic/.style = {ar symbol={\cong}},
  ]
H_{0}(\mathcal{F}_{t}^{s}(\widehat{\mathcal{GC}}(U \cup L_{1})) \arrow{r}{\Phi} \arrow[swap]{d}{i} &H_{0}(\mathcal{F}_{t}^{s+g(1-\frac{t}{2})}(\widehat{\mathcal{GC}}(U \cup L_{2})) \arrow{d}{i} \\
\widehat{\mathcal{GH}}_{0}(U \cup L_{1}) \arrow{r}{Id} & \widehat{\mathcal{GH}}_{0}(U \cup L_{2}) \\
  \mathbb{F}_{2} \ar[u,isomorphic] & \mathbb{F}_{2} \ar[u,isomorphic]
\end{tikzcd}
.\]

It follows that $\mathscr{A}_{L_{2}}(t) \leq \mathscr{A}_{L_{1}}(t)+ g(1-\frac{t}{2})$. Similarly by looking at the cobordism from $L_{2}$ to $L_{1}$, we can show that  $\mathscr{A}_{L_{1}}(t) \leq \mathscr{A}_{L_{2}}(t)+ g(1-\frac{t}{2})$.

\end{proof}

\begin{cor}
If $L_{1}$ and $L_{2}$ are annular concordant then $\mathscr{A}_{L_{1}}(t)=\mathscr{A}_{L_{2}}(t)$. 
\end{cor}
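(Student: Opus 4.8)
The plan is to read this off directly from Theorem~\ref{theorem1}. The point is that an annular concordance is, by definition, an annular cobordism of genus $0$, and moreover a \emph{strong} one: a concordance between links requires $L_1$ and $L_2$ to have the same number of components and pairs them up by cylinders, so each connected component of the cobordism is a genus-$0$ knot cobordism between a single component of $L_1$ and a single component of $L_2$. Thus the hypotheses of Theorem~\ref{theorem1} are met with $g=0$.

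First I would simply invoke Theorem~\ref{theorem1} with $g=0$, which gives
\[ |\mathscr{A}_{L_1}(t) - \mathscr{A}_{L_2}(t)| \le 0 \cdot \bigl(1-\tfrac{t}{2}\bigr) = 0 \]
for every $t\in[0,2]$, hence $\mathscr{A}_{L_1}(t)=\mathscr{A}_{L_2}(t)$. Equivalently, one can unwind the argument: since $g=0$, the perturbed decomposition of the strong annular cobordism contains no torus cobordisms, only annular-birth-followed-by-merge and split-followed-by-annular-death pieces, each of which was shown to induce an $\mathcal{F}_t$-filtered quasi-isomorphism of degree $0$. Composing them yields an $\mathcal{F}_t$-filtered quasi-isomorphism $\Phi:\widehat{\mathcal{GC}}(U\cup L_1)\to\widehat{\mathcal{GC}}(U\cup L_2)$ of filtered degree $0$, and the commutative square from the proof of Theorem~\ref{theorem1} (with the grading shift $g(1-\tfrac t2)$ set to $0$) forces both $\mathscr{A}_{L_1}(t)\le\mathscr{A}_{L_2}(t)$ and, by symmetry of the cobordism, $\mathscr{A}_{L_2}(t)\le\mathscr{A}_{L_1}(t)$.

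There is essentially no obstacle here; the only thing worth spelling out is the identification of an annular concordance with a strong annular cobordism of genus $0$, which is immediate from the definitions, so that Theorem~\ref{theorem1} applies verbatim.
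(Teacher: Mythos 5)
Your proposal is correct and matches the paper's intent: the corollary is stated as an immediate consequence of Theorem~\ref{theorem1} with $g=0$, exactly as you argue. Your extra remark identifying an annular concordance with a strong genus-$0$ annular cobordism (components paired by cylinders) is a reasonable and welcome clarification of the hypothesis, not a deviation from the paper's route.
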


\pagebreak

\section{Refinement of $\hat{\theta}$ invariant} \label{refinesection}

\subsection{Braid grid complex}

\begin{figure}
\begin{center}
\includegraphics[scale=0.6]{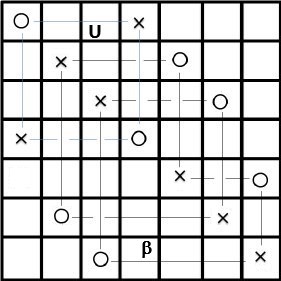}
\end{center}

\caption{Diagram of  $U \cup m(\beta)$ } \label{k1}
\end{figure}
Let $\beta$ be a $N$-braid. We can consider the the grid diagram $D$ of $U \cup m(\beta)$ [See Fig. \ref{k1}] where the unknot $U$ (oriented clockwise) acts as a braid axis. Also we assume that the unknot is linked negatively with the braid. Let $\mathbb{X}=\{X_{1},X_{2},X_{3},\cdots,X_{n} \}$  and $\mathbb{O}=\{O_{1},O_{2},O_{3},\cdots,O_{n} \}$ be the sets of X markings and O markings respectively where $X_{1}, X_{2}, O_{1} \text{ and } O_{2}$ represent the markings of the unknot $U$. We will use the notation $O_{\beta}(r)$ to denote the number of O markings belonging to the $\beta$ component inside a rectangle $r$.

\begin{definition}
 Define the chain complex $({\mathcal{C}}_{U \cup \beta}(D),\partial)$ as a $\mathbb{F}_{2}[V_{3},V_{4},\cdots,V_{n}]$-module over grid states  $S(D)$ 
 \[ {\partial}x := \mathlarger{\sum\limits_{y\in S(D)} \sum\limits_{r \in Rect^{o}(x,y), r \cap \mathbb{X} = \phi}} V_{3}^{O_{3}(r)}V_{4}^{O_{4}(r)}\cdots V_{n}^{O_{n}(r)} y \ \ \ \forall x \in S(D) \]
\end{definition}

From now on we will refer to a $\mathbb{F}_{2}[V_{3},V_{4},\cdots,V_{n}]$-module as $\mathcal{R}$-module.

We review the construction used in \cite{lossbraid} and show how their chain complex is related to ours. A multi-pointed Heegaard diagram for an oriented link $L \subset Y$ is given by a ordered tuple $H =(\Sigma,\alpha,\beta, z_L,w_L \cup w_f)$, where $w_f$ is the set of free base points. A grid diagram  can be naturally viewed as a multi-pointed Heegard diagram embedded in torus. Lets denote the  the grid of $\beta \cup U$ as the multi-pointed Heegard diagram, $\mathcal{H}_{1}=(T^{2},\alpha,\beta,z_{\beta}\cup z_{U},w_{\beta}\cup w_{U})$. Here $z$s denote the X markings and $w$s denote the O markings. If we drop the points $z_{U}$, then we get the variant  $\mathcal{H}_{4}=(T^{2},\alpha,\beta,z_{\beta},w_{\beta}\cup w_{U})$ where the set $w_{U}$ work as free basepoints. This variant is considered in \cite{lossbraid} to introduce the reformulation of GRID invariant. $CFK^{-,2}({\mathcal{H}}_{4})$ denotes the knot Floer complex associated with  $\mathcal{H}_{4}$.   

\begin{prop}\label{impiso}
$\mathcal{C}_{U\cup \beta}$ is filtered quasi-isomorphic to $CFK^{-,2}({\mathcal{H}}_{4})$.
\end{prop}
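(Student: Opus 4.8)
The plan is to exhibit an explicit identification of the two complexes at the chain level and then check that it respects all the structure (the $V$-module structure, the homological grading, and the Alexander filtration). The starting observation is that both $\mathcal{C}_{U\cup\beta}(D)$ and $CFK^{-,2}(\mathcal{H}_4)$ are free modules over $\mathbb{F}_2[V_3,\dots,V_n]$ generated by the same set of grid states $S(D)$: the Heegaard diagram $\mathcal{H}_4=(T^2,\alpha,\beta,z_\beta,w_\beta\cup w_U)$ is, as a pointed diagram, exactly the grid diagram $D$ of $U\cup m(\beta)$ with the two basepoints $z_U$ (the $X$-markings $X_1,X_2$ of the unknot) deleted, and with the two $O$-markings $O_1,O_2$ of the unknot promoted to free basepoints. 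Free basepoints carry no formal variable, which is why the polynomial ring is $\mathbb{F}_2[V_3,\dots,V_n]$ on both sides. So there is a canonical bijection of generating sets, and it extends to a module isomorphism of the underlying modules; I would spell this out first.

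Next I would match the differentials. In the grid/Heegaard correspondence the holomorphic disks of index $1$ in $T^2$ are precisely the empty rectangles $Rect^o(x,y)$, and each such rectangle contributes $\prod V_i^{O_i(r)}$ where the product runs over $O$-markings that are \emph{not} free basepoints and that lie in $r$ — i.e. over $O_3,\dots,O_n$. The condition that a holomorphic disk avoid the link basepoints $z$ translates to $r\cap\mathbb{X}=\emptyset$ (here $\mathbb{X}$ is the full $X$-set, including $X_1,X_2$, since in $\mathcal{H}_4$ the remaining $z$-basepoints $z_\beta$ together with... — more precisely, the basepoints that must be avoided are exactly the $X$-markings of the grid, because $CFK^{-,2}$ counts disks missing all $z$ basepoints, and the $z$ basepoints of $\mathcal{H}_4$ are $z_\beta$, while $z_U=\{X_1,X_2\}$ were deleted; but deleting $z_U$ only removes a constraint, so one must be careful). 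The cleanest route is to recall the definition of $CFK^{-,2}$ from \cite{lossbraid}: it is the complex whose differential counts rectangles disjoint from \emph{all} $X$ markings, weighted by the $O$-powers away from the free basepoints — which is literally our $\partial$. So the content here is bookkeeping: identify which markings are "$z$", which are "$w$", and which "$w$" are free, and observe the formulas coincide verbatim.

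Then I would check that the gradings and the filtration match. The Maslov grading on $\mathcal{C}_{U\cup\beta}$ is the grid Maslov grading $M$, and in \cite{lossbraid} the homological grading of $CFK^{-,2}(\mathcal{H}_4)$ is the same $M$ (possibly up to an overall normalization constant depending on the number of free basepoints, which one fixes once and for all); multiplication by each $V_i$ drops it by $2$ on both sides. For the filtration: $CFK^{-,2}(\mathcal{H}_4)$ carries the Alexander filtration $\mathcal{F}^{-U}$ associated to the unknot component, which on grid states is computed by the winding-number formula \eqref{windingformula} for $w_U$, i.e. it is exactly $A_U$ up to the usual shift; and $V_i$ for $i\geq 3$ (these mark the $\beta$ component) leaves $A_U$ unchanged, matching the behaviour of $\mathcal{F}^{-U}$. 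So the natural bijection of generators is a filtered $\mathcal{R}$-module chain isomorphism, which in particular is a filtered quasi-isomorphism.

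The main obstacle is not any single hard argument but rather the pedantic matching of conventions: one must pin down precisely the definition of $CFK^{-,2}$ and of the filtration $\mathcal{F}^{-U}$ used in \cite{lossbraid} (including normalization of the gradings and which set of basepoints is "free"), and verify that under the stated embedding of $D$ as a Heegaard diagram these coincide on the nose with the combinatorial data $(\partial, M, A_U)$ of $\mathcal{C}_{U\cup\beta}$. I would therefore organize the proof as: (1) identify the Heegaard data of $\mathcal{H}_4$ with the grid data of $D$ minus $z_U$; (2) quote that index-one disks are empty rectangles and read off that $\partial^{CFK^{-,2}}=\partial$; (3) compare Maslov gradings and $V$-actions; (4) compare the $\mathcal{F}^{-U}$ filtration with $A_U$ via \eqref{windingformula}; and conclude that the identity map on grid states is the desired filtered quasi-isomorphism. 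Since an isomorphism of filtered complexes is a filtered quasi-isomorphism, Proposition \ref{impiso} follows.
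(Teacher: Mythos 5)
Your overall plan --- identify generators, match differentials, then check gradings and the $\mathcal{F}^{-U}$ filtration --- is the same strategy as the paper's (very terse) proof, but your dictionary between the two diagrams is set up the wrong way around, and the place where you hesitate is exactly where the real content lies. You identify the deleted basepoints $z_U$ with the unknot $X$-markings $X_1,X_2$ and the free basepoints $w_U$ with the unknot $O$-markings $O_1,O_2$, and you claim the differentials then agree \emph{verbatim}. They do not: in $\mathcal{C}_{U\cup\beta}$ rectangles are forbidden from crossing $X_1,X_2$ but may cross $O_1,O_2$ freely and without weight, whereas in $\mathcal{H}_4$ the points $z_U$ are simply absent (so no constraint can be imposed there) and the only extra data on the unknot is the pair of free basepoints $w_U$. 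Your parenthetical (``deleting $z_U$ only removes a constraint, so one must be careful'') flags precisely this mismatch, but you then dispose of it by asserting that the $CFK^{-,2}$ differential of \cite{lossbraid} ``counts rectangles disjoint from all $X$ markings'' --- a statement that is not even meaningful for $\mathcal{H}_4$ under your matching, since the unknot $X$-markings are not part of that diagram, and which you never verify against the actual convention for free basepoints in \cite{lossbraid}.

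The paper's proof consists exactly of the observation you are missing: the isomorphism holds after \emph{reversing the roles of the $X$- and $O$-markings in the unknot component}. Under that swap the paper's forbidden markings $X_1,X_2$ correspond to the free basepoints $w_U$ of $\mathcal{H}_4$, and the paper's unconstrained, unweighted markings $O_1,O_2$ correspond to the deleted $z_U$; this is what makes the two differentials literally coincide, and it simultaneously accounts for the orientation conventions (swapping $X$ and $O$ of a component reverses its orientation, matching the $-U$ of $CFK^{-,2}(-U\cup\beta)$ in \cite{lossbraid} against the clockwise $U$ used here). The same swap is what you need to compare $\mathcal{F}^{-U}$ with $A_U$ correctly (note the paper later uses $\mathcal{F}_U=-A_U$ on grid states), so your step (4) inherits the same sign/orientation issue. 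To repair the proposal, replace the ``verbatim'' claim by: state the free-basepoint convention of \cite{lossbraid} explicitly, perform the $X/O$ exchange on the unknot component, and then your generator-by-generator identification and the remaining bookkeeping go through as you describe.
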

\begin{proof}

It is easy to see that these complexes are isomorphic after reversal of roles of X and O markings in the unknot component, which also explains the orientation convention of the unknot in these two complexes.

\end{proof}

In their paper \cite{lossbraid} , they also show that $H_{top}( {\mathcal{F}_{U}}^{bot}(\mathcal{C}_{U \cup \beta}))$ is generated by the distinguished cycle $[x_{4}]$ which in our complex is the state consting of north-east corners of X-markings (Here $bot$ is minimum value of  $\mathcal{F}_{U}$). We will use this relationship to refine the transverse invariant.

\begin{prop}
Multiplication by $V_{i}$ is chain homotopic to multiplication by $V_{j}$ if $O_{i}$ and $O_{j}$ belong to the same link component in $\beta$.
\end{prop}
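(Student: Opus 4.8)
The plan is to mimic the standard grid-homology argument (as in \cite{Grid Homology for Knots and Links}) showing that $V_i$ and $V_j$ are chain homotopic when $O_i$ and $O_j$ lie on a common component, and to check that nothing in that argument is disturbed by the fact that in $\mathcal{C}_{U\cup\beta}$ we have reversed the roles of $\mathbb{X}$ and $\mathbb{O}$ on the unknot component and we only count rectangles with $r\cap\mathbb{X}=\emptyset$. First I would recall the key topological input: if $O_i$ and $O_j$ belong to the same component of $\beta$, then they can be connected by an arc $\gamma$ in the knot projection that meets no other markings except the two $X$-markings flanking the relevant portion of the strand; more precisely, one chooses $O_i$ and an adjacent marking $X_k$ sitting in the same row or column, and builds a homotopy operator by counting rectangles (or, in general, the relevant pentagon/domain count) that pass through $X_k$.

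The concrete construction: define $H_{X_k}\colon \mathcal{C}_{U\cup\beta}\to\mathcal{C}_{U\cup\beta}$ by
\[
H_{X_k}(x) := \sum_{y\in S(D)}\ \sum_{\substack{r\in Rect^{o}(x,y)\\ X_k\in r,\ r\cap(\mathbb{X}\setminus\{X_k\})=\emptyset}} V_3^{O_3(r)}\cdots V_n^{O_n(r)}\, y,
\]
i.e.\ exactly the kind of rectangle count appearing in $\partial$ except that we now \emph{require} $r$ to contain the distinguished $X$-marking $X_k$ (and forbid all other $X$'s). Then I would compute $\partial\circ H_{X_k} + H_{X_k}\circ\partial$ by the usual domain-composition bookkeeping: pairs of rectangles compose either to a horizontal/vertical annulus or to a width-one or height-one ``thin'' region; the annuli through a single $O$-marking produce exactly multiplication by that $V$-variable, and all other contributions cancel in pairs over $\mathbb{F}_2$. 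Because $O_i,O_j$ and $X_k$ are consecutive along the same strand of $\beta$, the annular terms that survive are precisely $V_i$ and $V_j$ (whichever $O$'s are adjacent to $X_k$ on either side), giving $\partial H_{X_k} + H_{X_k}\partial = V_i + V_j$; chaining such homotopies along the strand then identifies $V_i$ with $V_j$ for any two $O$'s on the same $\beta$-component.

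The main point requiring care—and the place I'd expect the only real obstacle—is the bookkeeping of which rectangles are \emph{allowed} given the modified prohibition $r\cap\mathbb{X}=\emptyset$ in $\partial$ versus $\mathbb{X}\ni X_k$ in $H_{X_k}$: one must confirm that the composite count $\partial H_{X_k}+H_{X_k}\partial$ really does see every relevant pair of domains exactly once and that the forbidden-$\mathbb{X}$ condition is consistent across the two factors (a domain decomposing as $r_1 * r_2$ must have $X_k$ in exactly one of the two pieces and no other $X$ in either). This is the standard ``$X_k$ appears in precisely one of the two rectangles'' dichotomy, and it goes through verbatim as in the knot case because $X_k$ is an honest $X$-marking of $\beta$ and the unknot's markings play no role here. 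Finally I would note that since this is an $\mathcal{R}$-module chain homotopy, it descends to the filtered and associated-graded levels, and in particular $\mathcal{C}_{U\cup\beta}$ becomes, after these identifications, a module over a polynomial ring with one variable per $\beta$-component, exactly as needed for the subsequent arguments.
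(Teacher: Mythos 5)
Your proposal is correct and follows essentially the same route as the paper: it defines the homotopy operator $H_{X_{k}}$ counting empty rectangles containing exactly the distinguished marking $X_{k}$ (lying in the same row as one $O$ and the same column as the other), verifies $\partial H_{X_{k}}+H_{X_{k}}\partial = V_{i}+V_{j}$ via the two thin annuli through $X_{k}$ with all other composite contributions cancelling over $\mathbb{F}_{2}$, and then iterates along the strand to relate any two $O$-markings on the same component of $\beta$. Nothing further is needed.
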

\begin{proof}
Let $X_{k}$ be the X-marking that is in same row as $O_{m}$ and in the same column as $O_{n}$.
Define $H_{X_{k}}:\mathcal{C}_{U\cup \beta} \rightarrow \mathcal{C}_{U\cup \beta}$,
 \[ H_{X_{k}}(x) := \mathlarger{\sum\limits_{y\in S(D)} \sum\limits_{r \in Rect^{o}(x,y), r \cap \mathbb{X} = {X_{k}}} } V_{3}^{O_{3}(r)}V_{4}^{O_{4}(r)}\cdots V_{n}^{O_{n}(r)} y \ \ \ \forall x \in S(D) \]

 Then, in the compositions of rectangles appearing in $\partial H_{X_{k}} + H_{X_{k}} \partial$, contributions from all but two annuli containing $X_{k}$ cancel. So we get,

\[ \partial H_{X_{k}} + H_{X_{k}} \partial= V_{m} - V_{n} .\]

It follows that $V_{m}$ and $V_{n}$ are chain homotopic. Iterating this argument shows that $V_{i}$ and $V_{j}$ are chain homotopic if $O_{i}$ and $O_{j}$ belong to the same link component.

\end{proof}

In view of the last result, we can think of  $\mathcal{C}_{U\cup \beta}$ as a $\mathbb{F}_{2}[V_{i_{1}},\cdots,V_{i_{l}}]$-module. We can also consider the complex $c\mathcal{C}_{U\cup \beta} \cong \frac{\mathcal{C}_{U\cup \beta}}{V_{i_{1}}=\cdots=V_{i_{l}}}$. It can be easily seen that $\mathcal{F}_{U}$ filtered quasi-isomorphism type of $c\mathcal{C}_{U\cup \beta}$ is also a braid conjugacy class invariant and its homology can be thought of as a $\mathbb{F}_{2}[V]$-module.\\

We will postpone the proof of the following theorem till Section 5.

\begin{theorem}
Filtered quasi-isomorphism type of $\mathcal{C}_{U\cup \beta}$ is a braid conjugacy class invariant.
\end{theorem}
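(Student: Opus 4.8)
The plan is to prove that the filtered quasi-isomorphism type of $\mathcal{C}_{U\cup \beta}$ is a braid conjugacy class invariant by checking invariance under the Markov moves that generate braid conjugacy: braid isotopy (which amounts to commuting distant generators, i.e. far-commutation), and conjugation $\beta \mapsto \gamma \beta \gamma^{-1}$ in the braid group $B_N$, while \emph{not} allowing positive/negative stabilization (those change the braid index and are treated separately in the stabilization results). Since every such move can be realized by a finite sequence of grid moves — commutations and (de)stabilizations of the grid diagram $D$ of $U\cup m(\beta)$ that fix the number of strands $N$ — it suffices to show that each grid move that does not alter the link type of $U\cup m(\beta)$ induces an $\mathcal{F}_U$-filtered (and $A_\beta$-filtered) quasi-isomorphism on $\mathcal{C}_{U\cup\beta}$. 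Here the key point is Proposition \ref{impiso}: the change of roles of $X$ and $O$ markings identifies $\mathcal{C}_{U\cup\beta}$ with a version of the link Floer complex, so I can borrow the whole grid-invariance machinery.

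First I would recall, following \cite{Grid Homology for Knots and Links} and the reformulation in \cite{lossbraid}, that the ordinary filtered grid complex $\mathcal{GC}^-$ of a link is invariant under commutation and stabilization moves through explicitly constructed filtered chain homotopy equivalences (pentagon- and hexagon-counting maps for commutations, and the one-to-one/one-to-two stabilization maps). The complex $\mathcal{C}_{U\cup\beta}$ differs from $\mathcal{GC}^-(D)$ only in which markings are ``blocked'': here the $\mathbb{X}$-markings play the role usually played by special $\mathbb{O}$-markings, and the $\mathbb{O}$-markings of the $\beta$-component carry the formal variables $V_3,\dots,V_n$, while the $\mathbb{O}$-markings of $U$ are set to zero (they are the free basepoints $w_U$ of $\mathcal{H}_4$). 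So the plan is: (i) observe that under the reflection of roles of $X$ and $O$ in the $U$-component, $\mathcal{C}_{U\cup\beta}\cong CFK^{-,2}(\mathcal{H}_4)$ as filtered complexes (Proposition \ref{impiso}); (ii) invoke the general invariance of the (collapsed, partially blocked) link Floer complex $CFK^{-,2}$ of $U\cup\beta$ under Heegaard moves that preserve the multi-pointed Heegaard diagram data — these are exactly the commutation and non-index-changing (de)stabilization grid moves, and the induced maps are filtered with respect to both the Alexander filtration of $\beta$ and the Alexander filtration $\mathcal{F}_U$ coming from the basepoints $z_U$; (iii) conclude that any two grids of $U\cup m(\beta)$ and $U\cup m(\gamma\beta\gamma^{-1})$ related by conjugation are connected by such moves, hence the filtered quasi-isomorphism types agree.

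The one genuinely new thing to verify, compared to the standard statement for $\mathcal{GC}^-$ or $\widehat{\mathcal{GC}}$, is that the stabilization and commutation homotopy equivalences respect the \emph{extra} filtration $\mathcal{F}_U$ by the unknot's Alexander grading, not just the total Alexander grading of $U\cup\beta$. I would handle this exactly as in Section 3 of this paper (and as in \cite{lossbraid}): the stabilization maps are built from domains (snail-like regions, and the pentagon/hexagon regions for commutation) whose boundary behaviour with respect to the winding number of $U$ is controlled, so the winding-number formula (\ref{windingformula}) gives $A_U(\text{target}) \le A_U(\text{source})$ term by term, because each $V_i$ with $O_i$ in $\beta$ leaves $A_U$ unchanged and the combinatorics of the domains is exactly the one already shown to be $A_U$-filtered for the birth/split/merge maps. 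Thus every map in the chain of equivalences is $\mathcal{F}_U$-filtered and $A_\beta$-filtered, and a standard argument (the maps are filtered homotopy inverses, and filtered quasi-isomorphisms of filtered complexes have the same filtered quasi-isomorphism type) finishes the proof.

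The main obstacle I expect is the bookkeeping in step (ii)–(iii): namely checking that conjugation by a generator $\sigma_i^{\pm 1}$ of $B_N$ really is realized by grid commutation/(de)stabilization moves on the diagram $D$ of $U\cup m(\beta)$ that keep $U$ in its standard position as braid axis (so that the role of $U$, and hence the filtration $\mathcal{F}_U$, is unchanged throughout), rather than by a move that would disturb $U$. Once the moves are set up so that $U$ is untouched, the filtered-ness of each individual map is essentially the same computation already carried out for the cobordism maps in Section 3, so I do not anticipate difficulty there; but I would be careful to note — as the paper does — that because $\mathcal{C}_{U\cup\beta}$ uses a Heegaard diagram with $X$ and $O$ roles swapped on $U$, ``conjugacy class'' really is the right equivalence (stabilization is excluded and dealt with in Theorem \ref{theorem2}).
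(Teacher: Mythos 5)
Your proposal is correct and takes essentially the same route as the paper: invariance is established by checking that grid commutation and stabilization moves performed on the non-axis component (so that $U$ and its markings are untouched) induce quasi-isomorphisms via the pentagon/hexagon and stabilization maps, whose compatibility with the extra filtration $\mathcal{F}_{U}$ (alongside the $A_{\beta}$ grading) is verified by the same local winding-number computations you describe. The only organizational difference is that the paper deduces this statement as the $t=0$ specialization of the more general Theorem \ref{invarianceTALF} on the $t$-modified annular complex $t\mathbf{C}$, whose formulation in terms of annular link invariance also absorbs the conjugation bookkeeping you flag, since conjugate braids have isotopic annular closures.
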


 In the next section we will consider the complex $t\mathbf{C}$ which yields $\mathcal{C}_{U\cup \beta}$ at $t=0$ for briads. There, we will prove a more general version (Theorem \ref{invarianceTALF}) of the above theorem.

\subsection{Crossing change move}

Suppose $L^{+}$ is obtained from $L^{-}$ by changing a negative crossing to positive crossing. The crossing change maps between their $GC^{-}$ version of grid complexes was defined in chapter 6 of \cite{Grid Homology for Knots and Links}. Now, we will study the effect of crossing change map in the complex. First, we discuss the effect of changing a positive crossing to negative in the braid $\beta$. The maps associated with crossing change move will also appear for positive and negative stabilizations.

\begin{figure}
\begin{center}
\includegraphics[scale=0.4]{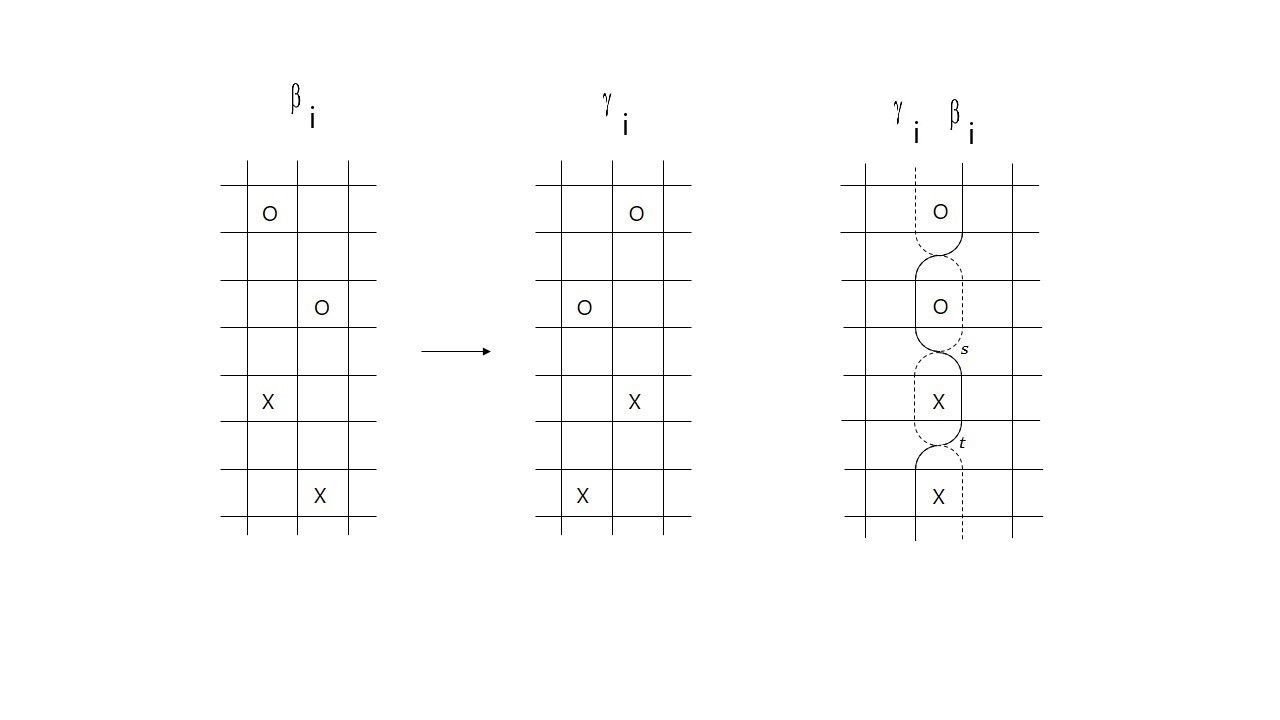}
\caption{Crossing change move}
\end{center}
\end{figure}

Suppose the diagram $D^-$ representing $U \cup \beta^{-}$is  obtained by cross commutation of the two columns from the diagram $D^+$ representing $U \cup {\beta}^{+}$ (See Figure \ref{cross}). The vertical circle $\beta_{i}$ in $D^+$ is replaced by the dotted vertical circle $\gamma_{i}$ in $D^-$ and, they intersect at two points $s$ and $t$. We define the $\mathcal{R}$-module maps $c_{-}$ : $\mathcal{C}_{U \cup {\beta}^{+}}(D^{+}) \rightarrow \mathcal{C}_{U \cup \beta^{-}}(D^{-})$ and $c_{+} : \mathcal{C}_{U \cup \beta^{+}}(D^{-}) \rightarrow \mathcal{C}_{U \cup \beta^{-}}(D^{+})$ for a grid state $x \in S(D^{+})$ and $y' \in  S(D^{-})$
respectively in the following way

\[ c_{-}(x) =\sum_{ y \in S(D^{-})} \sum_{p \in {Pent_{s}}^{o}(x,y), \ \mathbb{X}\cap p= \phi}  V_{3}^{O_{p}(r)}V_{4}^{O_{4}(p)}\cdots V_{n}^{O_{n}(p)} y \]

and,

\[ c_{+}(y') =\sum_{ x' \in S(D^{+})} \sum_{p \in {Pent_{t}}^{o}(y',x'), \ \mathbb{X}\cap p= \phi}   V_{3}^{O_{3}(p)}V_{4}^{O_{p}(r)}\cdots V_{n}^{O_{n}(p)} x' .\]

\begin{figure}
\begin{center}
\includegraphics[scale=0.4]{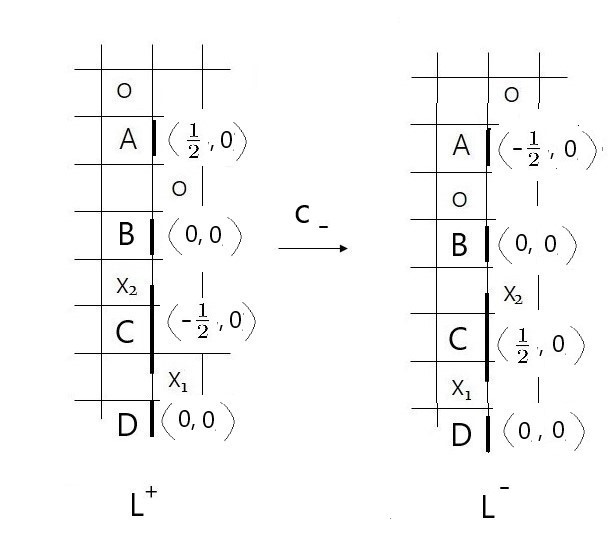}
\caption{Local change in $(A_{\beta},A_{U})$ grading} \label{cricintervals}
\end{center}
\end{figure}

\begin{prop}\label{crossgradechange}
The map $c_{-}$ is $A_{\beta}$ graded and $\mathcal{F}_{U}$ filtered. The map $c_{+}$ is $A_{\beta}$ graded of degree $1$ and $\mathcal{F}_{U}$ filtered.
\end{prop}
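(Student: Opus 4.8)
The plan is to track how the Alexander gradings $A_\beta$ and $A_U$ change along the pentagons appearing in the definitions of $c_-$ and $c_+$, using the same winding-number bookkeeping that appears earlier in the excerpt (Equations \ref{algformula} and \ref{windingformula}) together with the local picture in Figure \ref{cricintervals}. First I would set up the standard computation: for a pentagon $p \in \mathrm{Pent}_s^o(x,y)$ one has the grading-change formulas analogous to those for rectangles, namely $A_i(x) - A_i(y) = \#(p \cap \mathbb{X}_i) - \#(p \cap \mathbb{O}_i)$ up to a correction coming from the fact that a pentagon has five corners rather than four; that correction is the local contribution of the crossing-change region. Since all the pentagons in $c_-$ and $c_+$ avoid $\mathbb{X}$ by hypothesis ($\mathbb{X}\cap p=\phi$), the $\#(p\cap\mathbb{X}_i)$ terms vanish, so the only contributions are from the $V$-powers $V_k^{O_k(p)}$ (which by definition of the Alexander filtration on the module lower $A_i$ exactly to compensate $\#(p\cap\mathbb{O}_i)$ when $O_k\in\mathbb{X}_i$) and from the local ``shift'' built into the pentagon near $s$ (resp. $t$).

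Second, I would compute that local shift explicitly. The crossing change replaces the vertical circle $\beta_i$ by $\gamma_i$, and the relevant intervals whose winding numbers change are exactly the ones drawn in Figure \ref{cricintervals}. For the $A_U$ grading: the unknot $U$ is disjoint from the two columns being cross-commuted (the braid axis is not involved in a braid crossing), so its winding numbers around the corners of $X$ and $O$ markings, and around the grid state points, are unaffected by the pentagon; hence after accounting for the $V$-powers both $c_-$ and $c_+$ are $\mathcal{F}_U$ filtered (indeed filtered of degree $0$, i.e.\ $A_U$-nonincreasing). For the $A_\beta$ grading: the computation of the winding numbers of the braid strand around the pentagon's corners shows that the net shift is $0$ for $c_-$ and $+1$ for $c_+$; this asymmetry is precisely the usual asymmetry between the ``small'' pentagon at $s$ and the ``large'' pentagon at $t$ in the crossing-change maps of \cite{Grid Homology for Knots and Links}, Chapter 6, transported to our setting. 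So $c_-$ is $A_\beta$-graded (degree $0$) and $c_+$ is $A_\beta$-graded of degree $1$.

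Concretely, the order of steps would be: (1) write the rectangle-style grading-change identity for a general pentagon, isolating the five-corner correction term; (2) observe that $\mathbb{X}\cap p=\phi$ kills the $\mathbb{X}_i$-count, leaving $V$-powers plus the correction; (3) check that the $V$-powers exactly cancel the $\mathbb{O}_i$-count for the filtration/grading associated to a component, so the net change is just the correction term; (4) evaluate the correction term for $A_U$ using that $U$ is untouched by the crossing change, getting $0$ for both maps; (5) evaluate the correction term for $A_\beta$ from Figure \ref{cricintervals}, getting $0$ for $c_-$ and $1$ for $c_+$; (6) conclude, noting ``$A_\beta$ graded'' means the map respects the grading on the associated graded object while ``$\mathcal{F}_U$ filtered'' means it is nonincreasing in the $A_U$ filtration.

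The main obstacle I expect is step (5): correctly reading off the winding-number changes from Figure \ref{cricintervals} and bookkeeping the five corners of the pentagon so that the asymmetry between $c_-$ (degree $0$) and $c_+$ (degree $1$) comes out with the right sign. This is where one has to be careful about whether the pentagon ``grows'' or ``shrinks'' across the swapped columns, and about which of $s,t$ the pentagon is based at; getting the orientation of the unknot (linked negatively with the braid, as fixed in the setup around Figure \ref{k1}) consistent with these sign choices is the subtle point. Everything else is the standard filtered-grid-complex computation and should go through routinely, mirroring \cite{Grid Homology for Knots and Links}.
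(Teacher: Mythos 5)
Your plan is essentially the paper's own argument: the paper likewise compares each pentagon to an associated rectangle (via the correspondence between states on $\beta_i$ and $\gamma_i$), splitting the grading change into a rectangle-style marking count plus a local correction read off from the intervals $A$, $B$, $C$, $D$ of Figure \ref{cricintervals}, then notes that no unknot markings or local $A_U$ shifts occur (so $c_\pm$ are $\mathcal{F}_U$ filtered) and computes the $A_\beta$ shifts case by case exactly as in Lemma 6.2.1 of \cite{Grid Homology for Knots and Links}. So your proposal is correct and follows the same route; your ``five-corner correction term'' is precisely the paper's local change between a state and its corresponding state.
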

\begin{proof}
To compute degrees of $c_-$ and $c_+$ as in Lemma 6.2.1 of \cite{Grid Homology for Knots and Links}, we can compare the pentagons appearing in those maps with rectangles. First notice, that each state $s$ in $D^{+}$ corresponds to a state $\phi(s)$ in $D^{-}$ by mapping the point in $\beta_{i}$ vertical circle to the $\gamma_{i}$ circle. Now, the $4$ X and O markings divide the vertical circles $\beta_{i}$ and $\gamma_{i}$ into four intervals $A$, $B$, $C$ and $D$. We consider states that contains a point in on of the the four intervals $A$, $B$, $C$ and $D$ [See Figure \ref{cricintervals}] between the four special markings. We can make local computations in each case (based on relative position of markings) to compute the difference in gradings between a state and its corresponding state. Now if $y$ is a term appearing in $c_{-}(x)$, the there is a pentagon from $x$ to $y$. To each pentagon from a state in $D^{+}$ to a state in $D^{-}$, we can associate a rectangle [See Figure \ref{explain}] in $D^{+}$ from $x$ to $\phi(y)$. This allows us to compute grading change under $c_{-}$ by the following formula

\[A(x)-A(y)= (A(x)-A(\phi^{-}(y))+ (A(\phi^{-}(y)-A(y)).\]

\begin{figure}[!tbph]
\begin{center}
\includegraphics[scale=0.4]{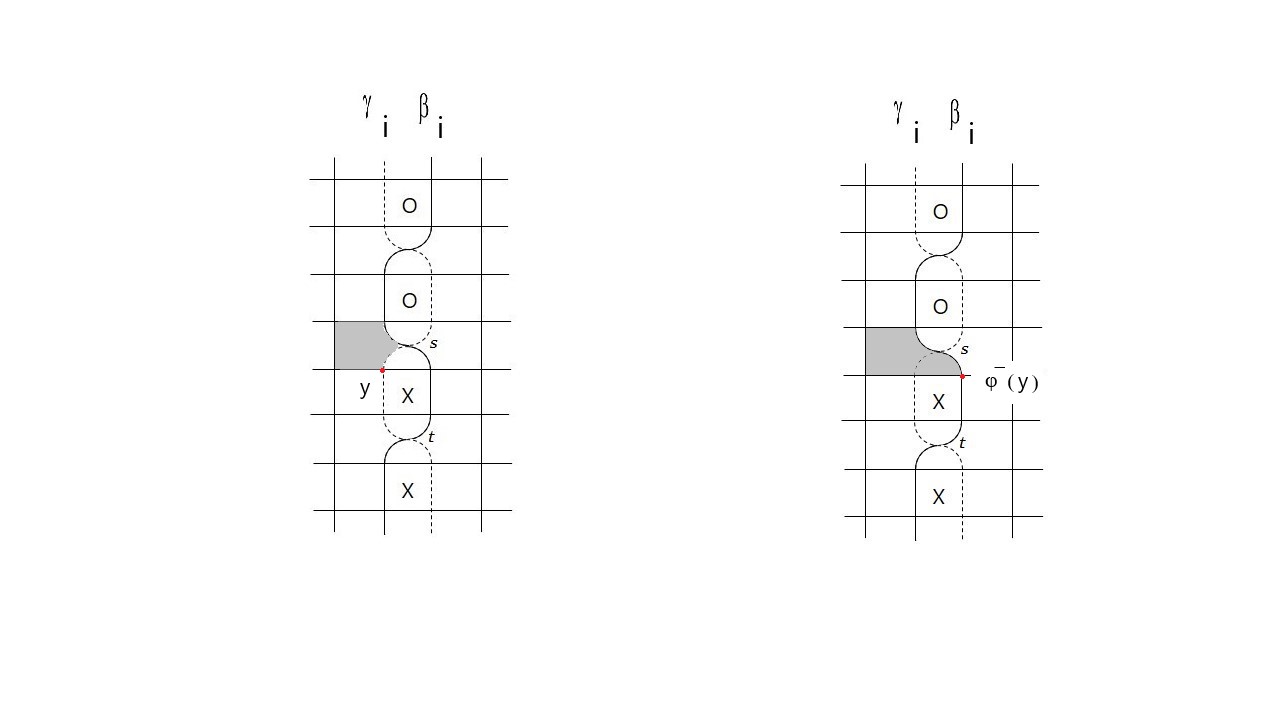}
\caption{A left pentagon and its associated rectangle}\label{explain}
\end{center}
\end{figure}

 The first term $A(x)-A(\phi(y))$ just counts extra contribution in the associated rectangle as the differential preserves degree and filtration, and the second term $A(\phi(y)-A(y)$ computes the local change. In this case, it is easy to see the $\mathcal{F}_{U}$ filtration is unaffected. Similarly, we can compute grading change under $c_{+}$. In our case, there is no change in $\mathcal{F}_{U}$ filtration level for either $c_{-}$ or $c_{+}$ as there no local change in $A_{U}$ grading or any extra $X$ marking belonging to the unknot component in the associated rectangles. So those maps are $\mathcal{F}_{U}$ filtered. Then, the computation of $A_{\beta}$ grading change under $c_{-}$ and $c_{+}$  turns out to be identical to Lemma 6.2.1 of \cite{Grid Homology for Knots and Links}.

\begin{figure}[!tbph]
\begin{center}
\includegraphics[scale=0.4]{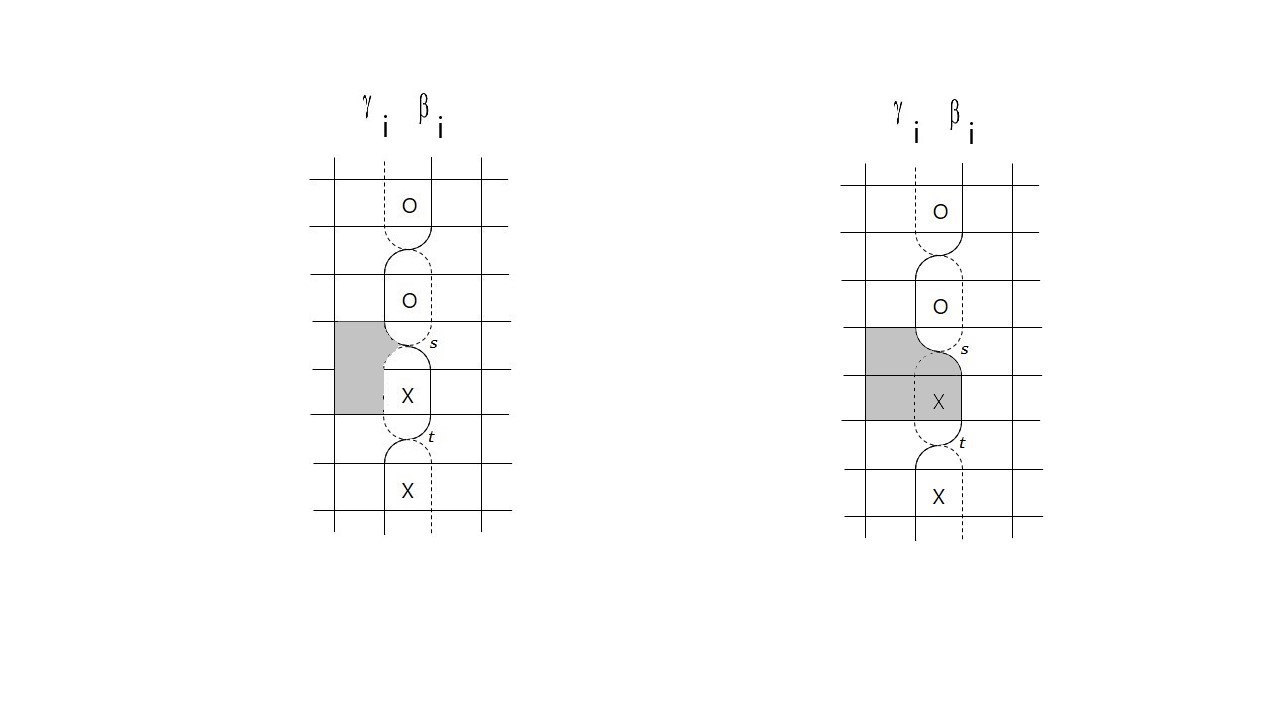}
\caption{Associated rectangle for a left pentagon in interval C}\label{explainC}
\end{center}
\end{figure}

Let $y$ be the term, appearing in $c_{-}(x)$ and assume there is a left pentagon from $x$ to $y$. 

\textbf{Case 1: $y$ is type B}  Since there are no extra markings in the associated rectangles [See Figure \ref{explain}] we have, 

\[A_{\beta}(y)-A_{\beta}(x)=A_{\beta}(\phi^{-}(y))-A_{\beta}(x)+ A_{\beta}(y)- A_{\beta}(\phi^{-}(y))=0\]

and

\[\mathcal{F}_{U}(y)-\mathcal{F}_{U}(x)=\mathcal{F}_{U}(\phi^{-}(y))-\mathcal{F}_{U}(x)+ \mathcal{F}_{U}(y)- \mathcal{F}_{U}(\phi^{-}(y))\leq  \mathcal{F}_{U}(\phi^{-}(y))-\mathcal{F}_{U}(y)=0.\]

\textbf{Case 2: $y$ is type C} In this case, we have an extra X marking belonging to the braid $\beta$ [See Figure \ref{explainC}] in the associated rectangle. So,

\[A_{\beta}(y)-A_{\beta}(x)=A_{\beta}(\phi^{-}(y))-A_{\beta}(x)+ A_{\beta}(y)- A_{\beta}(\phi^{-}(y))= 1-1=0\]

and

\[\mathcal{F}_{U}(y)-\mathcal{F}_{U}(x)=\mathcal{F}_{U}(\phi^{-}(y))-\mathcal{F}_{U}(x)+ \mathcal{F}_{U}(y)- \mathcal{F}_{U}(\phi^{-}(y))\leq  \mathcal{F}_{U}(\phi^{-}(y))-\mathcal{F}_{U}(y)=0.\]

For right pentagons, we consider initial corners in $B$ and $C$ and the computation works similarly. We can also compute grading and filtered degree of $c_+$ using the same technique.

\end{proof}

\begin{prop}
The map $c_{-}$ and $c_{+}$ are chain maps.
\end{prop}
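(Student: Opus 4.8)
The plan is to mimic the standard grid-homology argument that the pentagon-counting maps are chain maps (as in Lemma 6.2.2 / the crossing-change chapter of \cite{Grid Homology for Knots and Links}), adapted to the complex $\mathcal{C}_{U\cup\beta}$ where we count only domains avoiding $\mathbb{X}$ and weight by the $V_i$ attached to the non-unknot, non-special $O$-markings. Concretely, I would show that $\partial' c_- + c_- \partial = 0$ and $\partial c_+ + c_+ \partial' = 0$, where $\partial$ is the differential on $\mathcal{C}_{U\cup\beta^+}(D^+)$ and $\partial'$ the one on $\mathcal{C}_{U\cup\beta^-}(D^-)$. The proof is by analyzing the juxtapositions that appear: compositions of a rectangle followed by a pentagon, or a pentagon followed by a rectangle, each giving a domain in $\Pi(x,y)$ that decomposes as a rectangle-pentagon pair in exactly two ways, so the contributions cancel in pairs over $\mathbb{F}_2$ — \emph{provided} the domain contains no $\mathbb{X}$-marking. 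The ones that decompose uniquely are the thin strips/degenerate regions supported near the two intersection points $s$ and $t$ of $\beta_i$ and $\gamma_i$; I would argue these either also cancel against each other or are excluded because they necessarily pass through an $\mathbb{X}$-marking (the four markings $X$, $O$ that sit in the relevant two columns), so their coefficient in both $\partial' c_-$ and $c_-\partial$ vanishes.

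First I would set up notation: fix the destabilization picture of Figure \ref{cross}, label the four intervals $A,B,C,D$ on $\beta_i\cup\gamma_i$, and recall that ${Pent_s}^o(x,y)$ consists of empty pentagons with a corner at $s$. Then I would enumerate, for a fixed pair of states $x$ and $z$ and a fixed domain $\psi\in\Pi(x,z)$ with $\mathbb{X}\cap\psi=\emptyset$ that arises in $c_-\partial$ or $\partial' c_-$, the ways of writing $\psi = r * p$ (rectangle then pentagon) or $p * r$ (pentagon then rectangle). The key combinatorial fact — identical to the closed-grid case — is that the boundary of the moduli space of such cut-up domains is a $0$-manifold with an even number of points whenever $\psi$ is not one of the finitely many exceptional configurations; these come in pairs and cancel mod $2$. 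For the exceptional configurations (overlapping rectangle/pentagon, or domains wrapping around near $s$ or $t$), I would check case by case that the associated coefficient of $V_3^{O_3}\cdots V_n^{O_n}$ agrees on both sides, using that the two pentagons at $s$ and at $t$ that together form an annulus are excluded because that annulus contains an $\mathbb{X}$-marking of the unknot or braid. The same argument, with pentagons at $t$, handles $c_+$.

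The main obstacle — and the only place real care is needed — is the bookkeeping of the degenerate/exceptional juxtapositions near the two points $s,t$ and verifying the $\mathbb{X}$-avoidance hypothesis genuinely kills the unmatched terms; in the usual $GC^-$ setting one must also track how the $V_i$ exponents behave when a composite domain crosses an $O$-marking, and here we additionally need that markings of the unknot component and the special markings never contribute (they carry no $V_i$), which I would confirm from the grid layout in Figure \ref{k1}. Once the cancellation is arranged for the "generic" domains and the finitely many exceptional domains are shown to contribute equally to both terms (or to vanish by the $\mathbb{X}$-constraint), bilinearity over $\mathcal{R}$ extends the identity from grid states to all of $\mathcal{C}_{U\cup\beta}$, completing the proof that $c_-$ and $c_+$ are chain maps.
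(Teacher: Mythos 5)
Your direct approach is essentially sound, but it is not the route the paper takes. The paper's proof is a two-line reduction: it considers the standard crossing-change map $\mathbf{c_{-}}$ on $GC^{-}(D^{+})\rightarrow GC^{-}(D^{-})$ (counting the same $\mathbb{X}$-avoiding pentagons at $s$, but weighted by all of $V_{1},\dots,V_{n}$), which is already known to be a chain map from Chapter 6 of \cite{Grid Homology for Knots and Links}, and then observes that $c_{-}$ is precisely the map induced on the quotient obtained by setting $V_{1}=V_{2}=1$ (the quotient by the ideal generated by $V_{1}-1$ and $V_{2}-1$); since the quotient projection is a chain map and the ideal is preserved, the induced map is automatically a chain map, and the same works for $c_{+}$. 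What you propose instead is to redo the pentagon--rectangle juxtaposition cancellation from scratch inside $\mathcal{C}_{U\cup\beta}$: that does work, because the $O$-multiplicities of a composite domain are independent of its decomposition (so dropping the variables at $O_{1},O_{2}$ does not disturb the mod-$2$ cancellation) and a composite domain avoids $\mathbb{X}$ exactly when both constituent pieces do; but it duplicates the book's Lemma rather than citing it. Two small inaccuracies in your sketch worth flagging: the annulus formed by the two pentagons at $s$ and $t$ is relevant to the homotopy statement $c_{+}\circ c_{-}\simeq V_{i}$ (Proposition on $\text{Hex}^{\circ}_{s,t}$), not to the chain-map identity itself, and in the chain-map identity the composite domains all admit exactly two decompositions, so there are no genuinely unmatched ``thin strip'' terms to dispose of. In short, your argument is the self-contained combinatorial verification; the paper's buys brevity by functoriality of quotients applied to the already-established $GC^{-}$ maps.
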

\begin{proof}

Again, consider the $\mathbb{F}_{2}[V_{1},V_{2},\cdots,V_{n}]$-module map $\mathbf{c_{-}}:GC^{-}(D^{+}) \rightarrow GC^{-}(D^{-})$ given by 
\begin{align*}
    \mathbf{c_{-}}(x)= \sum\limits_{y' \in S(D^-)} \sum\limits_{p \in {Pent_{s}}^\circ(x,y'), p \cap \mathbb{X}= \phi}  V_{1}^{O_{1}(p)}V_{2}^{O_{2}(p)}V_{3}^{O_{3}(p)}\cdots V_{n}^{O_{n}(p)} \cdot y'  
\end{align*}
Then, $c_{-}$ is induced map on the quotient $\frac{GC^{-}(D)}{(V_{1}-1)(V_{2}-1)}$. Therefore, $c_{-}$ is a chain map since $\mathbf{c_{-}}$ is a chain map. A similar argument shows that $c_{+}$ is a chain map.

\end{proof}

The chain maps $c_-$ and $c_+$ induce the desired maps $C_-$ and $C_+$ on the homologies.

\begin{prop}\label{cross}
$c_+ \circ c_-$ and $c_- \circ c_+$ are chain homotopic to multiplication by $V_{i}$.
\end{prop}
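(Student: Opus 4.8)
The plan is to mimic the proof of the analogous statement for the unblocked grid complex (Lemma 6.2.2 and the surrounding material in \cite{Grid Homology for Knots and Links}), adapting it to the complex $\mathcal{C}_{U\cup\beta}$ in which the roles of $\mathbb{X}$ and $\mathbb{O}$ are partially swapped on the unknot component. First I would introduce the hexagon-counting homotopy operator: define an $\mathcal{R}$-module map $H:\mathcal{C}_{U\cup\beta^{+}}(D^{+})\to\mathcal{C}_{U\cup\beta^{+}}(D^{+})$ by counting domains obtained as juxtapositions of a pentagon in $\mathrm{Pent}_{s}^{o}$ with a pentagon in $\mathrm{Pent}_{t}^{o}$ — i.e. the hexagonal regions (and the degenerate "thin" annular pieces) that arise when one composes $c_{+}\circ c_{-}$ and analyses the resulting moduli spaces. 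The composite $c_{+}\circ c_{-}$, expanded, is a sum over such juxtaposed regions from $D^{+}$ back to $D^{+}$, weighted by the appropriate powers of $V_{3},\dots,V_{n}$ recording the $\mathbb{O}_{\beta}$-multiplicities.

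Next I would run the standard degeneration argument: consider the one-dimensional moduli spaces of such composite domains and look at the ends of their compactifications. The ends split into (a) those contributing to $\partial H + H\partial$, (b) those contributing to the composite $c_{+}\circ c_{-}$ itself, and (c) a small number of exceptional ends coming from the two thin annuli in the region between the curves $\beta_{i}$ and $\gamma_{i}$ (one containing the marking that records $V_{i}$, one not). After all the generic cancellations, the exceptional ends assemble into multiplication by $V_{i}$ (or by $1$, depending on which $O$-marking lies in the relevant annulus, but since $O_i$ belongs to the braid the surviving term is $V_i$). This yields $\partial H + H\partial = c_{+}\circ c_{-} + V_{i}$, i.e. $c_{+}\circ c_{-}$ is chain homotopic to multiplication by $V_{i}$; the symmetric argument with the roles of $s$ and $t$ (and of $D^{+}$, $D^{-}$) interchanged gives the statement for $c_{-}\circ c_{+}$.

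As in the earlier propositions, the cleanest way to make this rigorous is to lift everything to the $\mathbb{F}_{2}[V_{1},\dots,V_{n}]$-module level: define $\mathbf{H}$, $\mathbf{c}_{\pm}$ on $GC^{-}(D^{\pm})$ by the same domain counts but now also recording $V_{1},V_{2}$, verify the homotopy identity $\partial\mathbf{H}+\mathbf{H}\partial = \mathbf{c}_{+}\mathbf{c}_{-}+V_{i}$ there by the degeneration argument, and then pass to the quotient that defines $\mathcal{C}_{U\cup\beta}$ (setting $V_{1},V_{2}$ to behave appropriately and restricting to rectangles/pentagons disjoint from $\mathbb{X}$). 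One should check along the way that $H$ is $A_{\beta}$-graded of the correct degree and $\mathcal{F}_{U}$-filtered, by the same associated-rectangle bookkeeping used in Proposition~\ref{crossgradechange}; this guarantees the homotopy is a filtered one.

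The main obstacle I anticipate is the careful enumeration of the exceptional ends — precisely identifying which thin annular domains survive the cancellation and checking that their total $\mathbb{O}_{\beta}$-multiplicity is exactly one power of $V_{i}$ rather than $V_{i}^{2}$, a constant, or a combination involving the unknot's markings $V_{1},V_{2}$. This is exactly the point where the nonstandard $\mathbb{X}\leftrightarrow\mathbb{O}$ convention on the unknot could introduce a discrepancy, so that local analysis near the crossing region (the intervals $A,B,C,D$ of Figure~\ref{cricintervals}) must be done explicitly rather than quoted verbatim from \cite{Grid Homology for Knots and Links}.
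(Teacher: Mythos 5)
Your proposal matches the paper's proof in essence: the paper likewise defines hexagon-counting homotopy operators $H_{\pm}$ (counting empty hexagons with consecutive corners at $s$ and $t$, weighted by $V_{3}^{O_{3}(h)}\cdots V_{n}^{O_{n}(h)}$) and verifies $\partial\circ H_{\pm}+H_{\pm}\circ\partial=c_{\pm}\circ c_{\mp}+V_{i}$ by following the standard juxtaposition argument of Proposition 6.1.1 in the grid homology book, with the surviving thin annulus contributing exactly $V_{i}$ as you anticipated. The only cosmetic difference is that the paper works with $H_{\pm}$ directly on $\mathcal{C}_{U\cup\beta}(D^{\pm})$ rather than lifting to $GC^{-}$ and quotienting, but this does not change the substance of the argument.
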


\begin{proof}
For ${x}_-, {y}_- \in {S}(D^-)$, let $\text{Hex}_{s,t}^\circ({x}_-, {y}_-)$ denote the set of empty hexagons with two consecutive corners at $s$ and at $t$ in the order consistent with the orientation of the hexagon. The set $\text{Hex}_{s,t}^\circ$ for ${x}_+, {y}_+ \in {S}(D^+)$ is defined analogously.

Let $H_- : \mathcal{C}_{U\cup \beta}(D^-) \rightarrow \mathcal{C}_{U\cup \beta}(D^-)$ be the $\mathcal{R}$-module map whose value on any $x_- \in {S}(D^-)$ is
\begin{align*}
    H_-({x}_-)=\sum\limits_{{y}_-\in {S}(D^-)} \sum\limits_{h \in \text{Hex}^\circ_{s,t}({x}_-, {y}_-)} V_{3}^{O_{3}(h)}\cdots V_{n}^{O_{n}(h)}\cdot {y}_-.
\end{align*}
The analogous map $H_+ : \mathcal{C}_{U\cup \beta}(D^+) \rightarrow \mathcal{C}_{U\cup \beta}(D^+)$ is defined in the same way using $\text{Hex}_{s,t}^\circ({x}_+, {y}_+)$.\\

Following the lines of the proof of Proposition 6.1.1. in \cite{Grid Homology for Knots and Links}, we can easily verify that $H_+$ is a chain homotopy between $c_+ \circ c_-$ and the multiplication by $V_{i}$, and that $H_-$ is a chain homotopy between $c_- \circ c_+$ and $V_{i}$, i.e.:

\begin{align*}
    \partial \circ H_+ +  H_+ \circ \partial=c_+ \circ c_- + V_{i},\\
    \partial \circ H_- +  H_- \circ \partial=c_- \circ c_+ + V_{i}.
\end{align*}

\end{proof}

\subsection{Positive stabilization}

\begin{figure}
\begin{center}

\includegraphics[scale=0.12]{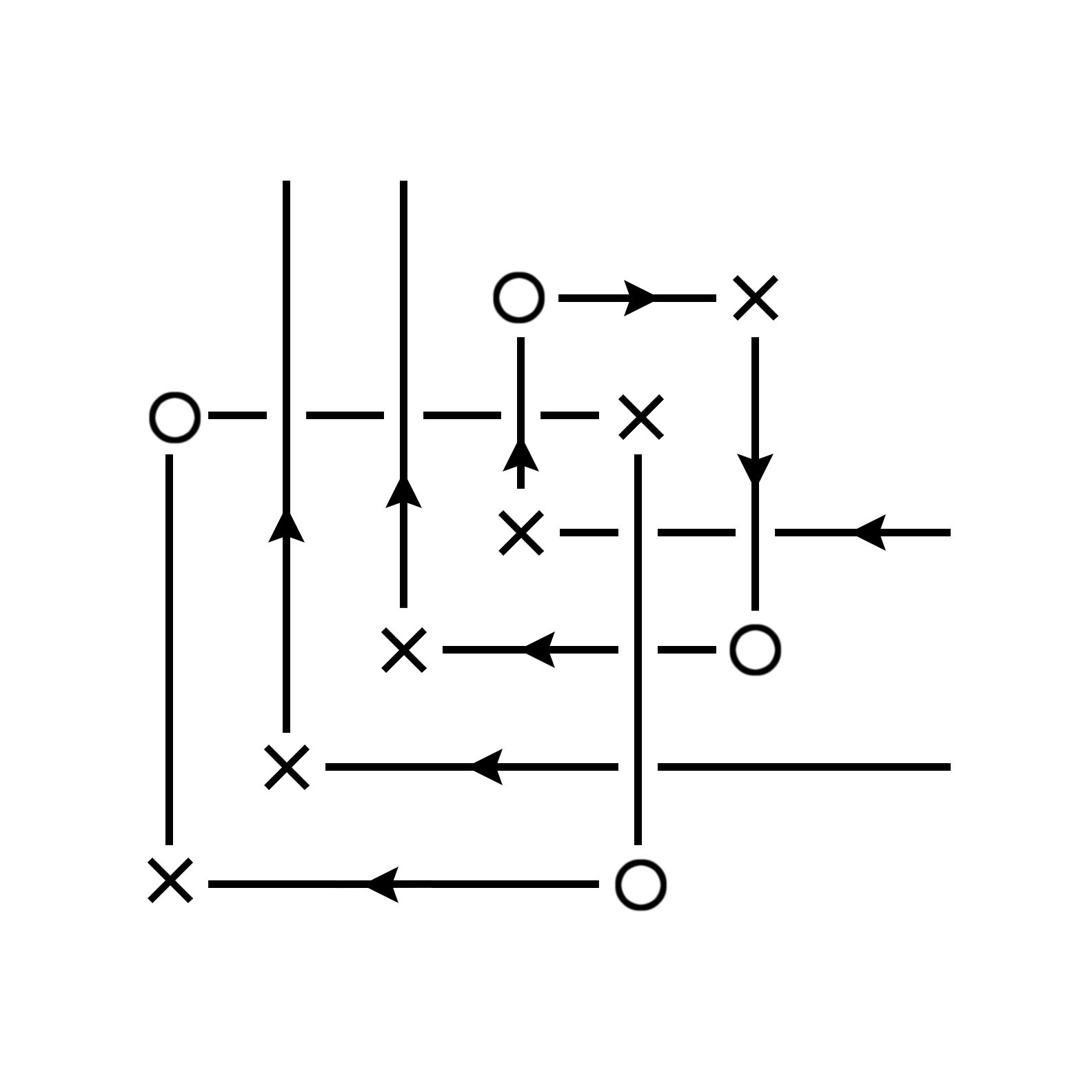}
\caption{Positive stabilization}\label{ps}
\end{center}
\end{figure}

Suppose we have a positive stabilization diagram $D^+$ (See Figure \ref{ps}) and $D^-$ is obtained by cross commutation of the two columns in the left one belonging to the unknot and the other one belonging to the braid $\beta$ as shown in Figure \ref{ps}. It is easy to see that then $D^-$ represents $U \cup \beta$. We define the $\mathcal{R}$-module maps ${PS}_{+}$ : $\mathcal{C}_{U \cup {\beta}_{+stab}}(D^{-}) \rightarrow \mathcal{C}_{U \cup \beta}(D^{+})$ and ${PS}_{-} : \mathcal{C}_{U \cup \beta}(D^{+}) \rightarrow \mathcal{C}_{U \cup \beta_{+stab}}(D^{-})$ for a grid state $x \in S(D^{+})$ and $y' \in  S(D^{-})$
respectively in the following way:
\[ {PS}_{-}(x) =\sum_{ y \in S(D^{-})} \sum_{p \in {Pent_{s}}^{o}(x,y), \ \mathbb{X}\cap p= \phi}  V_{3}^{O_{p}(r)}V_{4}^{O_{4}(p)}\cdots V_{n}^{O_{n}(p)} y \]

\[ {PS}_{+}(y') =\sum_{ x' \in S(D^{+})} \sum_{p \in {Pent_{s}}^{o}(y',x'), \ \mathbb{X}\cap p= \phi}   V_{3}^{O_{3}(p)}V_{4}^{O_{p}(r)}\cdots V_{n}^{O_{n}(p)} x' \]

\begin{figure}
\begin{center}
\includegraphics[scale=0.4]{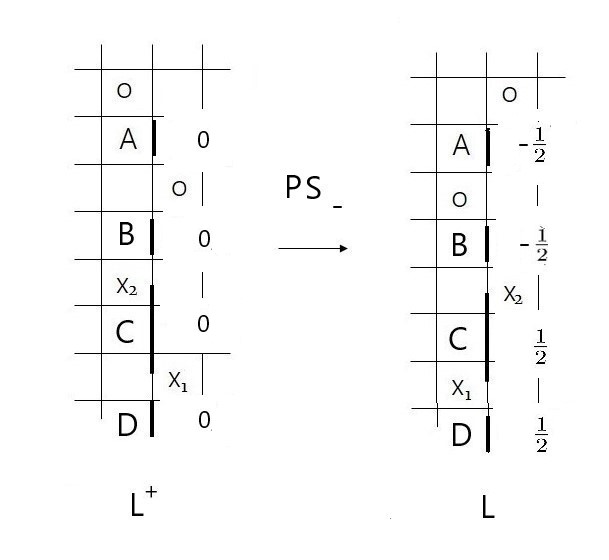}
\caption{Change in local $A_{U}$ grading}\label{psintchange}
\end{center}
\end{figure}

\begin{prop}
The maps ${PS}_{-}$ is $\mathcal{F}_{U}$ filtered and $PS_{+}$ is $\mathcal{F}_{U}$ filtered of degree $\frac{1}{2}$.
\end{prop}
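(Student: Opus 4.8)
The plan is to mirror the technique from Proposition \ref{crossgradechange}, since the positive stabilization maps $PS_{-}$ and $PS_{+}$ are built from the same pentagon-counting prescription as $c_{-}$ and $c_{+}$; the only genuinely new feature is that the cross commutation now swaps a column belonging to the unknot $U$ with a column belonging to $\beta$, so the local bookkeeping of the $A_{U}$ grading is no longer trivially zero. First I would set up the state correspondence $\phi : S(D^{+}) \to S(D^{-})$ that sends the point on the stabilized vertical circle to the dotted circle $\gamma$, and, exactly as before, decompose each pentagon appearing in $PS_{\pm}$ into an associated rectangle in $D^{+}$ (or $D^{-}$) plus a local contribution coming from the four intervals $A,B,C,D$ cut out by the four special markings near the stabilization region (Figure \ref{psintchange}). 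Then for any term $y$ in $PS_{-}(x)$ I would write
\[
\mathcal{F}_{U}(y)-\mathcal{F}_{U}(x) = \bigl(\mathcal{F}_{U}(\phi(y))-\mathcal{F}_{U}(x)\bigr) + \bigl(\mathcal{F}_{U}(y)-\mathcal{F}_{U}(\phi(y))\bigr),
\]
where the first bracket records the extra $O$- and $X$-markings of the unknot lying in the associated rectangle (the differential on $\mathcal{C}_{U\cup\beta}$ preserves $\mathcal{F}_{U}$, so this term is $\leq 0$ by the usual rectangle estimate), and the second bracket is the purely local change.

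The main case analysis is the local change term. Because the swapped column belongs to $U$, an associated rectangle can now contain an extra $O$-marking or $X$-marking of the unknot, and it is precisely here that $A_{U}$ can jump. I would go through the same interval cases ($y$ of type $B$ or type $C$, left versus right pentagon) as in Proposition \ref{crossgradechange}, but now tracking $A_{U}$ instead of (or in addition to) $A_{\beta}$: using the winding-number formula \eqref{windingformula} for $A_{U}$, the local change $A_{U}(y)-A_{U}(\phi(y))$ is either $0$ or, in the cases where the relevant interval straddles the unknot's marking, exactly $0$ for $PS_{-}$ and contributes $+\tfrac12$ for $PS_{+}$ (the half coming from the $\tfrac18\sum w_{U}(P_{j})$ term in the winding-number formula, since one corner of a marking changes sides). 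Combining the non-positive rectangle term with the local computation gives $\mathcal{F}_{U}(y) \leq \mathcal{F}_{U}(x)$ for every term of $PS_{-}(x)$, i.e. $PS_{-}$ is $\mathcal{F}_{U}$ filtered of degree $0$; the symmetric computation for $PS_{+}$, where the orientation of the relevant pentagons forces the associated rectangle to pick up the unknot marking on the other side, yields degree exactly $\tfrac12$.

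The hard part will be getting the sign and the half-integer bookkeeping of the local $A_{U}$ change right in each of the interval cases, since this is exactly the place where the present situation differs from the ordinary crossing-change analysis of \cite{Grid Homology for Knots and Links} — there the swapped columns lie in the same component, so all the relevant markings are of one type, whereas here one must carefully attribute each marking in the associated rectangle to either $U$ or $\beta$ and apply the corresponding winding-number formula. Once the local computations in Figure \ref{psintchange} are verified, that $PS_{\pm}$ are chain maps follows as in the crossing-change case (they are induced by the $GC^{-}$-level pentagon maps on the appropriate quotient), and the filtered-degree statement is immediate. I expect everything except this local $A_{U}$ computation to be routine and parallel to the preceding propositions.
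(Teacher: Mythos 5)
Your overall strategy coincides with the paper's: associate to each pentagon counted by $PS_{\pm}$ a rectangle, split the change in $\mathcal{F}_U$ into an associated-rectangle term plus a local term read off from the intervals $A,B,C,D$, and compute the local term via the winding-number formula (\ref{windingformula}), exactly as in Proposition \ref{crossgradechange}. The difficulty is that this proposition \emph{is} that bookkeeping, and the bookkeeping you assert is not the one that actually occurs, while the genuine computation is explicitly deferred as ``the hard part.'' Concretely: (1) your claim that the associated-rectangle term is always $\leq 0$ fails for $PS_+$ --- in two of the four cases (terminal corner of type $A$ or $B$, see Figure \ref{explainplus}) the associated rectangle has one fewer $O$-marking of the unknot, so that term is only bounded by $+1$, and the degree $\frac{1}{2}$ arises as $1-\frac{1}{2}$, not as $0+\frac{1}{2}$; (2) your claim that the local change is exactly $0$ for $PS_-$ is also not what the computation gives --- for a left pentagon with terminal point of type $B$ the local change in $\mathcal{F}_U$ is $-\frac{1}{2}$, and in type $C$ the associated rectangle contains an extra unknot $X$-marking whose contribution must be balanced against a local change of the opposite sign (Figure \ref{explainC}); only the right-pentagon cases give $0$, and the degree-$0$ statement is the maximum over all cases; (3) for $PS_+$ all four interval types $A,B,C,D$ occur and must be checked, not just the two types you carry over from the crossing-change analysis.

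So if one followed your asserted decomposition literally, the estimate for $PS_+$ would not close (it rests on a nonpositivity of the rectangle term that is false there), and the $PS_-$ cases would not come out as described, even though the final degrees $0$ and $\frac{1}{2}$ you state are correct. The missing content is precisely the case-by-case half-integer computation of the local $A_U$ change together with the count of extra or missing unknot markings in the associated rectangles; until that is carried out, the proposal is an outline of the paper's method rather than a proof, and the specific values it guesses conflict with the ones the method actually produces.
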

\begin{proof}

We inspect the intervals $A$, $B$, $C$ and $D$ again [See Figure \ref{psintchange}]. Local change in $A_{U}$ can be computed easily using the winding number formula [Equation \ref{windingformula}] for $A_{U}$. Then, we can use the fact that $\mathcal{F}_{U}=-A_{U}$ for grid states to compute filtration change. If $y$ is a term appearing in ${PS}_{-}(x)$ and there is an empty left pentagon $p$ (pentagon to the the left of vertical circle $\beta_{i}$ or $\gamma_{i}$) from $x$ to $y$. Notice that, that the terminal generator $y$ is either of type $B$ or $C$ [ See Lemma 6.2.1 of \cite{Grid Homology for Knots and Links}]. So we can use associated left rectangles for the left pentagons to compute grading change as in Proposition \ref{crossgradechange}.

\begin{figure}[!tbph]
\begin{center}
\includegraphics[scale=0.4]{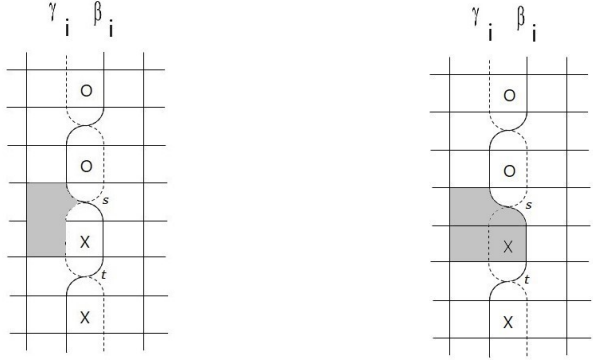}
\caption{Associated rectangle for a left pentagon in interval C}\label{explainC}
\end{center}
\end{figure}

\textbf{Case 1: $y$ is type B}  Since there are no extra markings in the associated rectangles [See Figure \ref{explain}] we have, \[\mathcal{F}_{U}(y)-\mathcal{F}_{U}(x)=\mathcal{F}_{U}(\phi^{-}(y))-\mathcal{F}_{U}(x)+ \mathcal{F}_{U}(y)- \mathcal{F}_{U}(\phi^{-}(y))\leq  \mathcal{F}_{U}(\phi^{-}(y))-\mathcal{F}_{U}(y)=-\frac{1}{2}\]

\textbf{Case 2: $y$ is type C} In this case, we have an extra X marking belonging to the unknot [See Figure \ref{explainC}] in the associated rectangle. So,
\[\mathcal{F}_{U}(y)-\mathcal{F}_{U}(x)=\mathcal{F}_{U}(\phi^{-}(y))-\mathcal{F}_{U}(x)+ \mathcal{F}_{U}(\phi^{-}(y))-\mathcal{F}_{U}(y)\leq  -1 + \mathcal{F}_{U}(\phi^{-}(y))-\mathcal{F}_{U}(y)=-1+\frac{1}{2}=-\frac{1}{2}.\] 

Similarly for a right pentagon (pentagon to the the right of vertical circle $\beta_{i}$ or $\gamma_{i}$), we compare it with a right rectangle. Here the initial corner is either of type $B$ or type $C$. In each case, we get filtration change $=0$. Therefore, $PS_{-}$ is $\mathcal{F}_{U}$ filtered of degree $0$.\\

Now, let $y'$ be a term appearing in ${PS}_{+}(x')$.\\ 

\textbf{Case 1: $y'$ is type B}
From Figure \ref{explainplus}, we observe that the associated rectangle has one less O marking  belonging to the unknot. So we have, \[\mathcal{F}_{U}(y')-\mathcal{F}_{U}(x')=\mathcal{F}_{U}(\phi(y'))-\mathcal{F}_{U}(x')+ \mathcal{F}_{U}(y')- \mathcal{F}_{U}(\phi(y'))\leq  1+\mathcal{F}_{U}(y')-\mathcal{F}_{U}(\phi(y'))=\frac{1}{2}\]

\textbf{Case 2: $y'$ is type C}
There are no additional markings. So we have, \[\mathcal{F}_{U}(y')-\mathcal{F}_{U}(x')=\mathcal{F}_{U}(\phi(y'))-\mathcal{F}_{U}(x')+ \mathcal{F}_{U}(y')- \mathcal{F}_{U}(\phi(y'))\leq \mathcal{F}_{U}(y')-\mathcal{F}_{U}(\phi(y'))=\frac{1}{2}\]

\textbf{Case 3: $y'$ is type D}
Again there are no additional markings. Therefore,
 \[\mathcal{F}_{U}(y')-\mathcal{F}_{U}(x')=\mathcal{F}_{U}(\phi(y'))-\mathcal{F}_{U}(x')+ \mathcal{F}_{U}(y')- \mathcal{F}_{U}(\phi(y'))\leq \mathcal{F}_{U}(y')-\mathcal{F}_{U}(\phi(y'))=\frac{1}{2}\]

\textbf{Case 4: $y'$ is type A}
Again the associated rectangle has one less O marking  belonging to the unknot. So we have, \[\mathcal{F}_{U}(y')-\mathcal{F}_{U}(x')=\mathcal{F}_{U}(\phi(y'))-\mathcal{F}_{U}(x')+ \mathcal{F}_{U}(y')- \mathcal{F}_{U}(\phi(y'))\leq 1+\mathcal{F}_{U}(y')-\mathcal{F}_{U}(\phi(y'))=\frac{1}{2}\]

Hence, the map $PS_{+}$ is $\mathcal{F}_{U}$ filtered of degree $\frac{1}{2}$.

\end{proof}

\begin{figure}[!tbph]
\begin{center}
\includegraphics[scale=0.4]{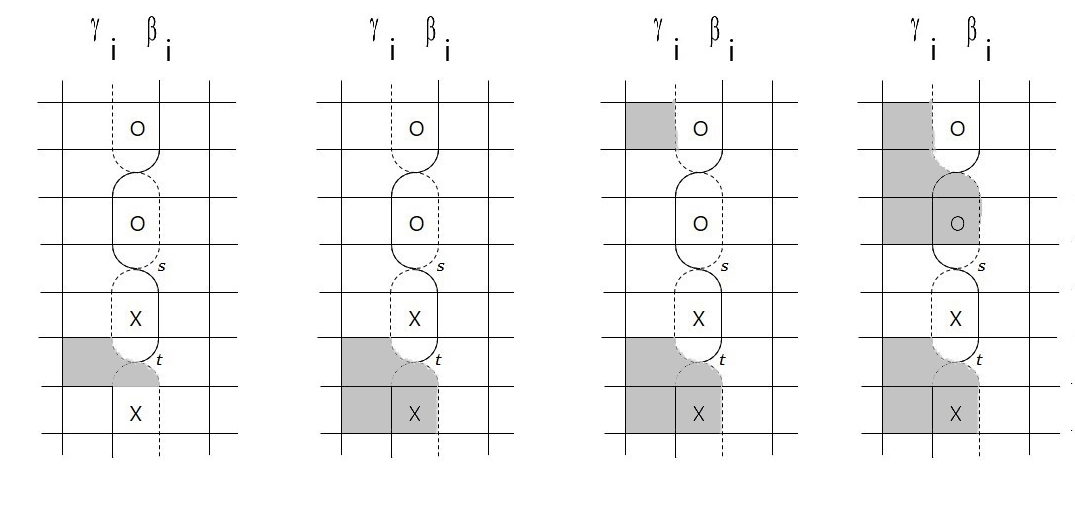}
\caption{Associated rectangle for interval A, B, C and D in the map $PS_+$}\label{explainplus}
\end{center}
\end{figure}

\begin{prop}
The maps ${PS}_{-}$ and ${PS}_{+}$ are chain maps.
\end{prop}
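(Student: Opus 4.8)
The plan is to mimic the standard grid-homology argument that pentagon-counting maps are chain maps (Lemma 6.1.1 / Lemma 13.3.x in \cite{Grid Homology for Knots and Links}), adapting it to the fact that our differential only counts rectangles disjoint from \emph{all} of $\mathbb{X}$ rather than from $s\mathbb{O}$. Concretely, I would first fix the notation: let $D^+$, $D^-$ be as in Figure \ref{ps}, let $s,t$ be the two intersection points of $\beta_i$ and $\gamma_i$, and recall that ${PS}_-$ counts empty pentagons in ${Pent_s}^o$ avoiding $\mathbb{X}$, while the differentials $\partial^{\pm}$ on $\mathcal{C}_{U\cup\beta}(D^{\pm})$ count empty rectangles avoiding $\mathbb{X}$. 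The goal is to show
\[
\partial^{-}\circ {PS}_- = {PS}_-\circ \partial^{+},
\qquad
\partial^{+}\circ {PS}_+ = {PS}_+\circ \partial^{-}.
\]

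The key step is the usual juxtaposition argument: one examines all composite domains $\psi$ that arise as a rectangle followed by a pentagon (or a pentagon followed by a rectangle) from a state $x$ to a state $z$, weighted by the appropriate product of $V_i$'s, and shows they cancel in pairs. As in the classical proof, a generic such composite domain admits exactly two decompositions — either as (rectangle, then pentagon) or as (pentagon, then rectangle) — and these two appear on opposite sides of the desired identity, so they cancel; the only domains with a \emph{unique} decomposition are the degenerate ones (thin annuli, or domains where the rectangle and pentagon overlap in a forced way), and one checks these either cancel among themselves or vanish because they necessarily contain an $\mathbb{X}$ marking and hence are excluded from both sides. Because the pentagons and rectangles here are all required to avoid $\mathbb{X}$ (not $s\mathbb{O}$), the bookkeeping for which degenerate terms survive is slightly different from the reference, but the combinatorial skeleton is identical: I would reduce to a local analysis near the bigon region bounded by $\beta_i$ and $\gamma_i$, using the intervals $A,B,C,D$ already introduced, and observe that any would-be leftover term sweeps across an X-marking of the unknot and is therefore killed. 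The $V_i$-exponents match on the two sides because the $\mathbb{O}$-markings covered by a composite domain depend only on the underlying domain, not on its decomposition.

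The main obstacle I anticipate is not conceptual but careful case-checking of the degenerate/boundary configurations, exactly the place where positive stabilization differs from an ordinary crossing commutation: here one of the two commuted columns belongs to $U$ and the other to $\beta$, so the local picture near $s$ and $t$ is asymmetric, and one must verify that the ``extra'' thin domains that would obstruct the chain map identity all contain an X-marking of $U$ (so are excluded from the maps) rather than only an O-marking (which would merely contribute a $V$-power). Once that local verification is in place, the global cancellation is formal. An entirely parallel argument — counting composite domains with the roles of $D^+$ and $D^-$ swapped, using ${Pent_s}^o$ in the other direction — gives that ${PS}_+$ is a chain map. I would therefore organize the write-up as: (1) set up composite domains and their two decompositions; (2) handle the generic cancellation; (3) dispense with the degenerate terms via the local $A,B,C,D$ analysis and the $\mathbb{X}$-avoidance condition; (4) remark that the $V_i$-weights are decomposition-independent; (5) note the symmetric argument for ${PS}_+$.
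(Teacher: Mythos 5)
Your argument is correct, but it is not the route the paper takes. The paper's proof is a one-line appeal to ``the same argument'' used for the crossing-change maps $c_{\pm}$: there, one lifts the pentagon map to an $\mathbb{F}_2[V_1,\dots,V_n]$-module map $\mathbf{c}_{-}\colon GC^{-}(D^{+})\to GC^{-}(D^{-})$, which is already known to be a chain map by the standard results of \cite{Grid Homology for Knots and Links}, and then observes that $c_{-}$ (and likewise $PS_{\pm}$) is the induced map on the quotient in which the two unknot variables $V_1,V_2$ are set to $1$; a quotient of a chain map is a chain map, so no new domain counting is needed. You instead re-run the juxtaposition argument directly inside $\mathcal{C}_{U\cup\beta}$: generic composite domains (rectangle--pentagon juxtapositions) admit exactly two decompositions and cancel mod $2$, the $V_i$-weights depend only on the underlying domain, and you dispose of would-be degenerate terms by the local $A,B,C,D$ analysis and the $\mathbb{X}$-avoidance condition. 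That is a legitimate, self-contained proof, and it buys independence from the $GC^{-}$-level citation; the paper's quotient trick buys brevity and avoids exactly the delicate case-checking you flag as the main obstacle. One small calibration: for the chain-map identity itself there are in fact no uncancelled degenerate composite domains (thin annuli and the like only enter when one compares compositions such as $PS_{+}\circ PS_{-}$ with multiplication by $V_i$ or the identity), so the asymmetry of the stabilization picture near $s$ and $t$ matters less here than you anticipate, though your stated strategy of killing any leftover term via an $X$-marking of $U$ would still be the right check to perform.
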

\begin{proof}
This same argument shows that $\partial \circ{ PS}_{-} + {PS}_{-} \circ \partial=0$ and $\partial \circ{ PS}_{+} + {PS}_{+} \circ \partial=0$. 
\end{proof}

\begin{prop}
 $PS_{-}$ and ${PS}_{+}$ are quasi-isomorphisms.
\end{prop}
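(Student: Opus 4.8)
The plan is to mimic the stabilization-invariance argument from Chapter 13 of \cite{Grid Homology for Knots and Links}, adapted to the filtered braid complex $\mathcal{C}_{U\cup\beta}$. First I would set up the composites $PS_{+}\circ PS_{-}$ and $PS_{-}\circ PS_{+}$. As in the crossing-change case (Proposition \ref{cross}), these composites will not be the identity but will instead be chain homotopic to multiplication by some $V_{i}$ (or to the identity, depending on which $V$ variable got absorbed by the stabilization move) via an explicit hexagon-counting homotopy $H_{\pm}$ built exactly as in the proof of Proposition \ref{cross}, with hexagons having two consecutive corners at the two intersection points $s$ and $t$ of the circles $\beta_i$ and $\gamma_i$. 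The routine verification that $\partial H_{\pm} + H_{\pm}\partial$ equals $PS_{\mp}\circ PS_{\pm}$ plus the appropriate $V_i$-term follows the same pattern of ``all but two of the degenerate annuli cancel'' computation; I would not grind through it.

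Next I would address the fact that the stabilization is of positive type, so one of the $V$-variables attached to a marking in the new column does \emph{not} act invertibly on homology in the relevant grading — indeed on the unknot component markings $V_1, V_2$ are already set to $1$ in $\mathcal{C}_{U\cup\beta}$ by Proposition \ref{impiso}, so the extra marking introduced by the positive stabilization is one for which multiplication is an isomorphism on homology. Concretely, I would argue that after passing to homology the $V_i$ appearing in $PS_{\mp}\circ PS_{\pm} \simeq V_i$ acts as an isomorphism, so that $PS_{-}$ and $PS_{+}$ have two-sided inverses up to the automorphism $V_i$, hence induce isomorphisms $H_*(\mathcal{C}_{U\cup\beta}(D^+)) \cong H_*(\mathcal{C}_{U\cup\beta_{+stab}}(D^-))$. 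Combined with the filtered degree statements already established (that $PS_-$ is $\mathcal{F}_U$-filtered and $PS_+$ is $\mathcal{F}_U$-filtered of degree $\tfrac12$), this upgrades the quasi-isomorphism to a filtered one in the sense needed later.

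The subtlety — and the step I expect to be the main obstacle — is keeping track of the filtration behaviour of the homotopies $H_{\pm}$ and of the composites, so that the conclusion ``quasi-isomorphism'' is compatible with the $\mathcal{F}_U$-filtration (this matters for Theorem \ref{theorem2}, where the degree $\tfrac12$ shift is exactly what produces the bound $\eta(\beta)\le\eta(\beta_{+stab})\le\eta(\beta)+\tfrac12$). I would handle this by checking, interval-by-interval over the four regions $A,B,C,D$ exactly as in the proof of the preceding proposition, that the hexagons contributing to $H_{\pm}$ are $\mathcal{F}_U$-filtered of the appropriate degree, using the winding-number formula \eqref{windingformula} and the identity $\mathcal{F}_U = -A_U$ on grid states. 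Once the homotopies are known to respect the filtration, the standard five-lemma-type argument on the induced maps of filtration-level homologies gives that $PS_{-}$ and $PS_{+}$ are filtered quasi-isomorphisms. Finally I would remark that the analogous statement for $c_{-}, c_{+}$ (Proposition \ref{cross}) already guarantees that no information is lost in the crossing-change reduction, so the composite of all these moves realizing a conjugation or a positive (de)stabilization is a filtered quasi-isomorphism with controlled $\mathcal{F}_U$-degree, which is what the later invariance theorems will invoke.
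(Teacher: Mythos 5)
Your proposal is correct and follows essentially the same route as the paper: the paper reuses the hexagon homotopies $H_{\pm}$ from the crossing-change proposition and simply records that here $\partial\circ H_{\pm}+H_{\pm}\circ\partial = PS_{\pm}\circ PS_{\mp}+1$, i.e.\ the composites are chain homotopic to the identity rather than to a nontrivial $V_i$ (exactly because the surviving thin annulus only meets an unknot $O$-marking, whose variable is absent from $\mathcal{C}_{U\cup\beta}$ --- the point you make via $V_1=V_2=1$), so both maps are quasi-isomorphisms. Your extra bookkeeping on the filtration behaviour of the homotopies is harmless but not needed for this statement, since the filtered degrees of $PS_{\pm}$ were already established in the preceding proposition.
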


\begin{proof}

Let us revisit the homotopy equivalence maps from the previous section again. In this case, the same argument shows
 
\begin{align*}
    \partial \circ H_+ +  H_+ \circ \partial={PS}_{+}\circ{PS}_{-} + 1,\\
    \partial \circ H_- +  H_- \circ \partial={PS}_{-}\circ{PS}_{+} + 1.
\end{align*}

The conclusion follows.
\end{proof}

\subsection{Refinement of $\theta$}

Let $x_{4}$ be the distinguished cycle consisting of northeast corners of X-markings in $D$ as described before. Let $V$ be the subset of homology generated by $V_{i}$'s. Any class belonging in $V$ is said to be in $V$-image.

\begin{definition}

Define $\eta(\beta) = \min \ \{ \ k \ | \ [x_{4}]$ is in the $V$-image in  $H_{*}( {\mathcal{F}_{U}}^{k}(\mathcal{C}_{U \cup \beta})   \} $.
\end{definition}

Now we are ready to prove Theorem \ref{theorem2}.

\begin{proof} [Proof of Theorem \ref{theorem2}]

From \cite{lossbraid}, we know that $H_{top}( {\mathcal{F}_{U}}^{bot}(\mathcal{C}_{U \cup \beta}))$ is generated by $[x_{4}]$. So we can reinterpret the definition as  $ \eta (\beta)= \min \ \{ \ k \ | \  \text{ the natural inclusion map } i: H_{top}( {\mathcal{F}_{U}}^{bot}(\mathcal{C}_{U \cup \beta})) \rightarrow\frac{ H_{*}({\mathcal{F}_{U}}^{k}( \mathcal{C}_{U \cup \beta}))}{V} \text{ is trivial }     \} $. It follows that $\eta (\beta) $ is a braid conjugacy class invariant. Since ${PS}_{-}$ and ${PS}_{+}$ are filtered quasi-isomorphisms, the second claim follows from the filtered degrees of the maps ${PS}_{-}$ and ${PS}_{+}$. 

\end{proof} 

Using the above theorem, we know that $\eta(\beta)$ is non decreasing under positive stabilization. So it is possible to define a transverse invariant $\bar{\eta}(\beta)$  by taking minimum over all braid representatives. 

\begin{proof} [Proof of Theorem \ref{theorem3}]

From  \cite{lossbraid}, we know that there is a inclusion map $I: HFK^{-}(m(\beta))  \rightarrow HFK^{-,2}({\mathcal{H}}_{4})$ sending $\theta(\mathcal{T})$ to the distinguished class $[x_{4}] \in HFK^{-,2}({\mathcal{H}}_{4})$. Therefore, if $\hat{\theta}(\mathcal{T}) \neq 0$ then $[x_{4}]$ can't be in the $V$ image in any filtration level in $HFK^{-,2}\cong H(\mathcal{C}_{U\cup \beta})$ and vice-versa. Hence, it follows that $\eta(\beta)=\infty$ if and only if  $\hat{\theta}(\mathcal{T})\neq 0$. Also since $[x_{4}]$ generates the top Maslov grading in the bottom filtration in $HFK^{-,2}$. 
From the winding number formula [Equation \ref{windingformula}], we have $bot=A_{U}(x^{+})=-N+\frac{1}{8}4(N+1)-\frac{1}{2}=-\frac{N}{2}$. It follows that $\eta(\beta) \geq bot = -\frac{N}{2}$. Also since $\mathcal{F}_{U}^{\frac{N}{2}}(\mathcal{C}_{U\cup \beta})=\mathcal{C}_{U\cup \beta}$, it is obvious that  $\frac{N}{2} \geq \eta(\beta)$.

\end{proof}

It also follows that the transverse invariant $\bar{\eta}(\beta)$ is atleast as strong as $\hat{\theta}$. 

\begin{prop}
If $\beta^{+}$ is obtained from $\beta^{-}$ by changing a negative crossing to positive crossing then,  $\eta(\beta^{+}) \geq \eta(\beta^{-})$.

\end{prop}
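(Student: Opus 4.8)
The plan is to use the crossing change maps $c_-$ and $c_+$ constructed in the previous subsection, together with the relation $c_+\circ c_- \simeq V_i$ from Proposition \ref{cross}. Recall that changing a negative crossing of $\beta^-$ into a positive one corresponds at the level of grids to the cross-commutation of two columns (both belonging to the braid component), so we are in the setting of the crossing change move, with $c_- : \mathcal{C}_{U\cup\beta^+} \to \mathcal{C}_{U\cup\beta^-}$ and $c_+ : \mathcal{C}_{U\cup\beta^-} \to \mathcal{C}_{U\cup\beta^+}$. By Proposition \ref{crossgradechange}, both $c_-$ and $c_+$ are $\mathcal{F}_U$-filtered (of degree $0$). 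The key additional input is that $c_-$ carries the distinguished cycle $x_4^+$ of $D^+$ to the distinguished cycle $x_4^-$ of $D^-$ (up to a lower-order term / up to adding a $V$-multiple), since $x_4$ consists of the northeast corners of the X-markings and these corners are preserved under the cross-commutation; I would verify this by a direct local inspection of the pentagons emanating from $x_4^+$, exactly as in the analogous GRID-invariant computations.

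With these ingredients in place, the argument runs as follows. Fix $k = \eta(\beta^+)$. By definition, $[x_4^+]$ lies in the $V$-image inside $H_*(\mathcal{F}_U^k(\mathcal{C}_{U\cup\beta^+}))$, i.e.\ $[x_4^+] = \sum_i V_i [z_i]$ for some classes $[z_i] \in H_*(\mathcal{F}_U^k(\mathcal{C}_{U\cup\beta^+}))$. Apply the filtered chain map $c_+$ (wait — direction): I want to move from $\beta^+$ to $\beta^-$, so I apply $c_-$. Since $c_-$ is $\mathcal{F}_U$-filtered of degree $0$, it induces $H_*(\mathcal{F}_U^k(\mathcal{C}_{U\cup\beta^+})) \to H_*(\mathcal{F}_U^k(\mathcal{C}_{U\cup\beta^-}))$, and because $c_-$ is an $\mathcal{R}$-module map it commutes with multiplication by each $V_i$. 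Hence $c_-([x_4^+]) = \sum_i V_i\, c_-([z_i])$ lies in the $V$-image at filtration level $k$ in $H_*(\mathcal{C}_{U\cup\beta^-})$. Combining with $c_-([x_4^+]) = [x_4^-]$ (modulo a $V$-multiple, which only helps), we conclude $[x_4^-]$ is in the $V$-image at level $k$, so $\eta(\beta^-) \le k = \eta(\beta^+)$.

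The step I expect to be the main obstacle is pinning down precisely the behaviour of $c_-$ on the distinguished cycle $x_4$: one needs that $c_-(x_4^+)$ equals $x_4^-$ plus possibly a term already in the $V$-image, and in particular that the $V$-free part of $c_-(x_4^+)$ is exactly $x_4^-$ and not $0$. This requires identifying the unique empty pentagon (of the relevant handedness, avoiding all X-markings) from $x_4^+$ to $x_4^-$ and checking no O-marking of the braid component lies in it, so that no spurious $V_i$ factor appears; and checking that every other pentagon out of $x_4^+$ either hits an X-marking (and is excluded) or contributes a term that is a $V$-multiple. This is a finite local check around the cross-commutation region, entirely parallel to Lemma 6.2.1 and the invariance arguments of \cite{Grid Homology for Knots and Links} and \cite{lossbraid}, but it must be done carefully because the whole inequality hinges on it. Everything else — filteredness, the $\mathcal{R}$-module property, and the passage through homology — is formal given the propositions already established.
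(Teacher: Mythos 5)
Your proposal is correct and follows essentially the same route as the paper: the paper's proof likewise applies the crossing change map $C_-$, uses that it is $\mathcal{F}_U$-filtered of degree $0$ (and $\mathcal{R}$-linear, hence preserves the $V$-image), and asserts that it sends $[x_4]$ of $\mathcal{C}_{U\cup\beta^+}$ to $[x_4]$ of $\mathcal{C}_{U\cup\beta^-}$. The local pentagon check on $x_4$ that you flag as the main obstacle is exactly the point the paper dispatches with ``it can be shown that,'' so your treatment is, if anything, more explicit.
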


\begin{proof}

It can be shown that the crossing change map $C_{-}$ sends the cycle $[x_{4}]$ in $H(\mathcal{C}_{U \cup {\beta}^{+}})$ to $[x_{4}]$ in $H(\mathcal{C}_{U \cup {\beta}^{-}})$. Since $C_{-}$  is filtered of degree $0$, the conclusion follows.
\end{proof}

\subsection{Negative stabilization} 

\begin{figure}
\begin{center}
\includegraphics[scale=0.12]{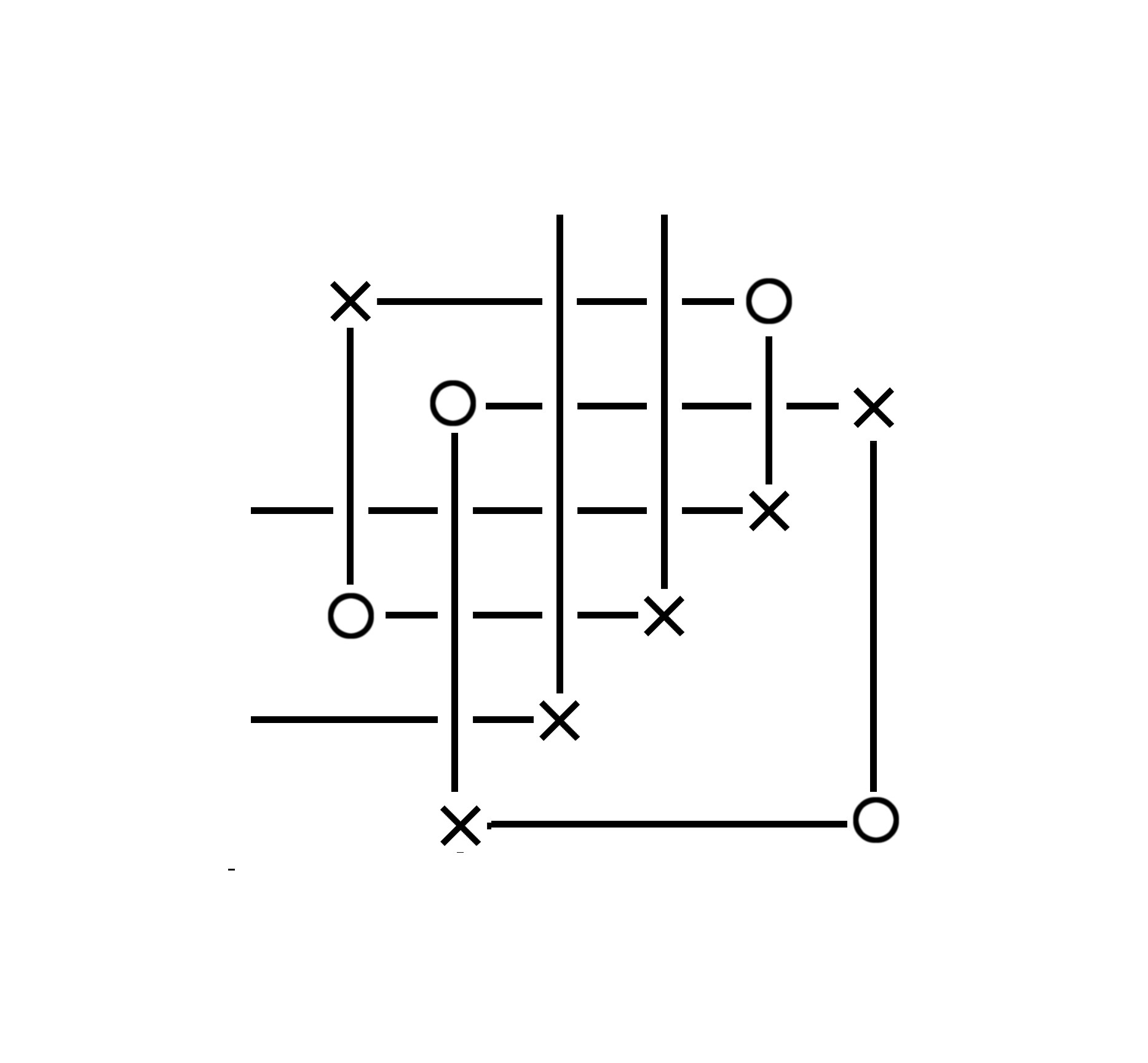}
\caption{Negative stabilization}\label{ns}
\end{center}

\end{figure}

Now lets consider a negative stabilization diagram as in Figure \ref{ns}. Then we can define analogous maps ${NS}_{-}$ : $\mathcal{C}_{U \cup {\beta}_{-stab}}(D^{+}) \rightarrow \mathcal{C}_{U \cup \beta}(D^{-})$ and ${NS}_{+} : \mathcal{C}_{U \cup \beta}(D^{-}) \rightarrow \mathcal{C}_{U \cup \beta_{-stab}}(D^{+})$ be the analogous $\mathcal{R}$-module maps. Again the same argument shows that these are chain maps. However, it is easily checked that they are not filtered quasi-isomorphisms through the next proposition.

\begin{prop}
$NS_{+} \circ NS_{-}$ is chain homotopic to $V_{i}$.
\end{prop}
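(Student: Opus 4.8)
The plan is to mimic exactly the proof of Proposition \ref{cross} (and its positive-stabilization analogue), where the composite of the two pentagon-counting maps is shown to be chain homotopic to multiplication by $V_i$ via a hexagon-counting homotopy. The essential input is that the diagrams $D^+$ and $D^-$ in the negative stabilization picture differ by a cross commutation of two adjacent columns that happen to straddle one of the intersection points $s$ (or $t$) of $\beta_i$ and $\gamma_i$, exactly as in the crossing-change and positive-stabilization setups; the only difference is which column belongs to the unknot and which to the braid, and the orientation data at the stabilized crossing. Since the homotopy operator $H_{\pm}$ defined by counting empty hexagons $\mathrm{Hex}^{\circ}_{s,t}$ is built purely from the local combinatorics of rectangles near the two columns and does not see orientations, the same operator works here.

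First I would define the $\mathcal{R}$-module map $H_{+} : \mathcal{C}_{U\cup\beta_{-stab}}(D^{+}) \rightarrow \mathcal{C}_{U\cup\beta_{-stab}}(D^{+})$ by
\[
H_{+}(x_{+}) = \sum_{y_{+}\in S(D^{+})}\ \sum_{h\in \mathrm{Hex}^{\circ}_{s,t}(x_{+},y_{+})} V_{3}^{O_{3}(h)}\cdots V_{n}^{O_{n}(h)}\cdot y_{+},
\]
where $\mathrm{Hex}^{\circ}_{s,t}(x_{+},y_{+})$ is the set of empty hexagons having two consecutive corners at $s$ and $t$ consistent with the hexagon's orientation, and the hexagons are required to be disjoint from $\mathbb{X}$. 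Second, I would compute $\partial\circ H_{+} + H_{+}\circ\partial$ by the usual juxtaposition argument: decomposing the boundary of a region obtained by composing a pentagon with a rectangle, a hexagon with a rectangle, etc., one finds that all terms cancel in pairs except for the two degenerate annular-type contributions, and these count exactly the number of times the annulus through $O_i$ is covered — that is, they contribute $V_i$. This yields
\[
\partial\circ H_{+} + H_{+}\circ\partial = NS_{+}\circ NS_{-} + V_{i}.
\]
Third, I would verify that the bookkeeping with the variables $V_3,\dots,V_n$ is consistent — in particular that the markings $O_1,O_2$ (belonging to the unknot $U$) never appear as exponents, which follows because no hexagon or rectangle in this local picture can cover an $O$-marking of $U$ without also covering an $X$-marking of $U$, and such regions are excluded; the column being cross-commuted is the stabilization column, so the only $O_i$ that can be covered in a degenerate region is the one created by the stabilization.

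The main obstacle I anticipate is the case analysis for the degenerate (thin) regions at the stabilization: unlike the positive-stabilization case, here the extra crossing is negative, so the distinguished cycle $x_4$ sits at a different corner configuration and the local intervals $A,B,C,D$ relative to the four markings $s,t$ and the stabilization $X,O$ are arranged differently. One must check that the two surviving thin annuli in $\partial H_{+} + H_{+}\partial$ still assemble into precisely $V_i$ (with coefficient $1$ over $\mathbb{F}_2$) and not, say, $V_i + (\text{boundary term})$. This is a finite check that parallels Lemma 6.2.1 and Proposition 6.1.1 of \cite{Grid Homology for Knots and Links}, so I would carry it out by drawing the local picture and matching regions, but it is the one place where the negative-stabilization combinatorics genuinely differs from what has already been done. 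Note that no analogous homotopy exists for $NS_{-}\circ NS_{+}$ with the filtration-preserving property — this asymmetry is exactly why $NS_{\pm}$ fail to be filtered quasi-isomorphisms, which is the point the section is building toward.
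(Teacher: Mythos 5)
Your proposal is essentially the paper's own argument: the paper proves this proposition simply by appealing to the hexagon-counting homotopy already set up for the crossing-change maps in Proposition \ref{cross}, and the identity $\partial\circ H_{+}+H_{+}\circ\partial=NS_{+}\circ NS_{-}+V_{i}$ with the local check of the surviving thin regions is exactly what you spell out. One small caveat on your closing aside: the analogous homotopy for $NS_{-}\circ NS_{+}$ does exist and likewise gives multiplication by $V_{i}$; the reason $NS_{\pm}$ fail to be quasi-isomorphisms is that the composites are homotopic to $V_{i}$ rather than to the identity (in contrast with the positive-stabilization case), not that one of the homotopies is missing.
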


\begin{proof}

This also follows from the discussion in Proposition \ref{cross}.

\end{proof}

Hence, unlike positive stabilizations these maps are not quasi-isomorphisms. In fact, in the next proposition, we show that negative stabilizations have a special value.

\begin{figure}[!tbph]
\begin{center}
\includegraphics[scale=0.15]{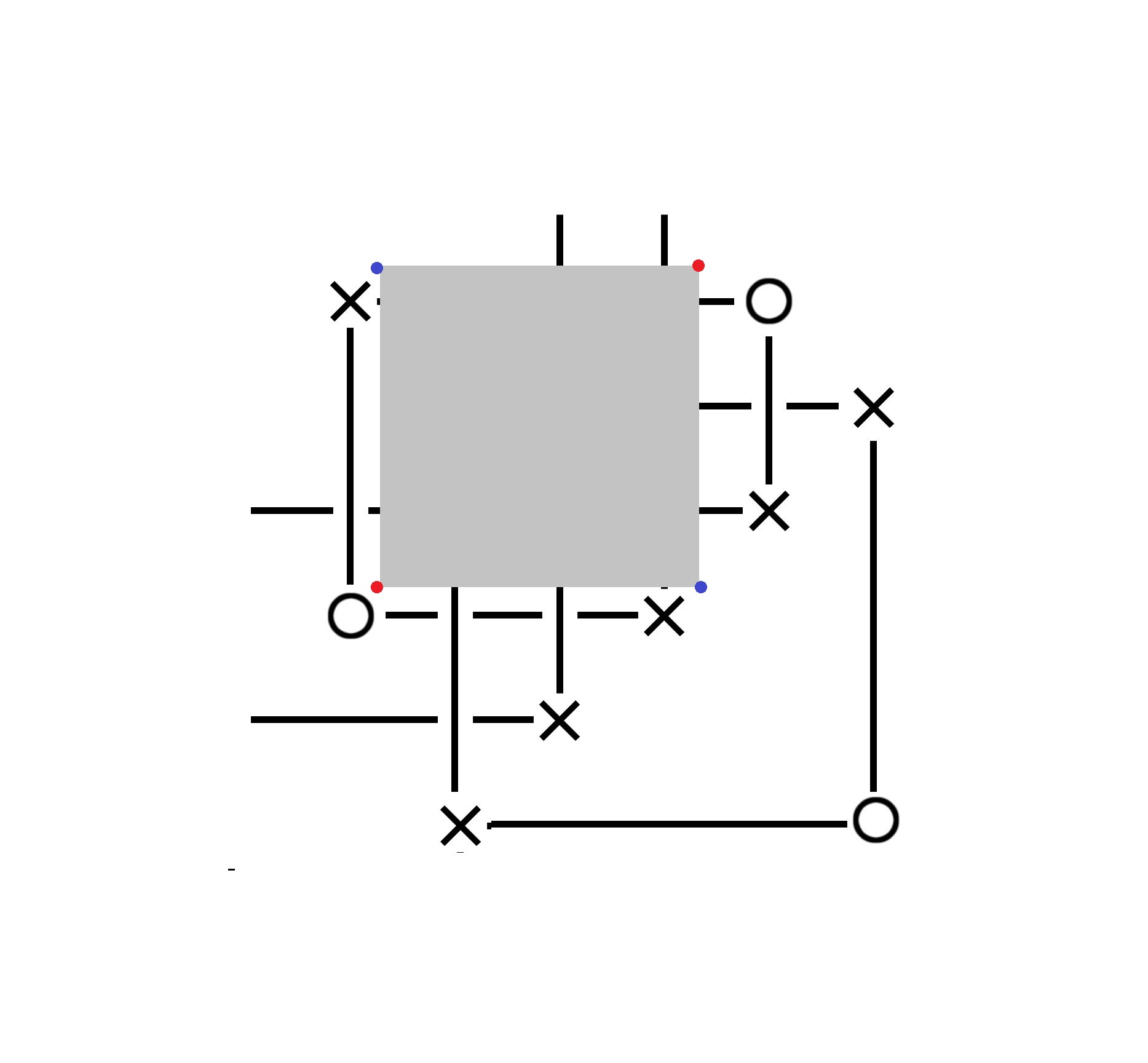}
\caption{Rectangle to the distinguished state in a negative stabilization}\label{nstrivial}
\end{center}

\end{figure}

\begin{prop}

$\eta(\beta_{-stab})= \frac{-N+1}{2}$.

\end{prop}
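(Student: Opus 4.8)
The plan is to show two inequalities, $\eta(\beta_{-stab}) \leq \tfrac{-N+1}{2}$ and $\eta(\beta_{-stab}) \geq \tfrac{-N+1}{2}$, the first being the substantive one. Here the braid $\beta_{-stab}$ is an $(N+1)$-braid, so by the computation in the proof of Theorem~\ref{theorem3} the bottom filtration level of $\mathcal{C}_{U \cup \beta_{-stab}}$ is $bot = A_U(x^+) = -\tfrac{N+1}{2}$, and the distinguished class $[x_4]$ generates $H_{top}(\mathcal{F}_U^{bot}(\mathcal{C}_{U \cup \beta_{-stab}}))$ by \cite{lossbraid}. For the upper bound, I would exhibit an explicit rectangle (or short domain) in the negative-stabilization grid diagram whose differential realizes $[x_4]$ as lying in the $V$-image already at filtration level $\tfrac{-N+1}{2}$; concretely, in Figure~\ref{nstrivial} there is a rectangle from the distinguished state $x_4$ to another generator $z$ that passes over exactly one O-marking $O_i$ of the braid component and no X-markings, so that $\partial z = V_i \cdot x_4 + (\text{lower terms})$, which forces $[x_4] = V_i \cdot [z]$ in the homology of $\mathcal{F}_U^{k}(\mathcal{C}_{U \cup \beta_{-stab}})$ as soon as $k \geq \mathcal{F}_U(z)$. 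The key local computation is that $\mathcal{F}_U(z) = -A_U(z) = \tfrac{-N+1}{2}$, which one reads off from the winding-number formula \eqref{windingformula} applied to the stabilized diagram exactly as in the $bot$ computation — the point being that $z$ differs from $x_4$ by moving a single point across one vertical circle near the stabilization region, changing $A_U$ by precisely $\tfrac{1}{2}$.

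For the lower bound $\eta(\beta_{-stab}) \geq \tfrac{-N+1}{2}$, I would argue that $[x_4]$ cannot be in the $V$-image at any strictly smaller filtration level. The cleanest route is to note that $\mathcal{F}_U^{bot} = \mathcal{F}_U^{-(N+1)/2}$ is the minimal nonempty filtration level and that $H_{top}(\mathcal{F}_U^{bot})$ is one-dimensional generated by $[x_4]$; any generator $w$ with $V_i[w]$ hitting $[x_4]$ in $H_*(\mathcal{F}_U^k)$ must have Maslov grading two higher than $x_4$, and a grading/filtration bookkeeping argument — using that the $V_i$-action drops $A_U$-grading appropriately and that the top Maslov grading in filtration level $bot$ is exactly that of $x_4$ — shows no such $w$ survives in $\mathcal{F}_U^{k}$ for $k < \tfrac{-N+1}{2}$. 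Equivalently, one can invoke the reinterpretation of $\eta$ from the proof of Theorem~\ref{theorem2} as the minimal $k$ making the inclusion $H_{top}(\mathcal{F}_U^{bot}) \to H_*(\mathcal{F}_U^{k})/V$ trivial, and check directly that at level $bot = -\tfrac{N+1}{2}$ this map is nontrivial (indeed an isomorphism onto its image), so $\eta(\beta_{-stab}) > -\tfrac{N+1}{2}$, combined with the existence of the rectangle above forcing triviality one step up.

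The main obstacle I anticipate is the upper-bound local computation: one must verify that the single relevant rectangle in the negative-stabilization picture really does pass over exactly one braid O-marking and zero X-markings (so it contributes to $\partial$ in $\mathcal{C}_{U\cup\beta_{-stab}}$, which quotients by the X-passing rectangles), and that no competing differential or homotopy obstructs the conclusion $[x_4] = V_i[z]$ at that level — i.e., that $z$ is genuinely a cycle modulo the relevant filtration, or that its boundary terms other than $V_i x_4$ lie at strictly lower filtration and hence vanish in the associated quotient. This is a finite check localized to the stabilization region, entirely parallel to the crossing-change and stabilization analyses already carried out in Propositions~\ref{crossgradechange}--\ref{cross}, so while fiddly it should present no genuine difficulty; the global part of the argument is immediate from the results already established.
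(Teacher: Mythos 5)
The substantive half of your argument, the upper bound $\eta(\beta_{-stab})\leq\frac{-N+1}{2}$, does not work as written. You propose to find a generator $z$ and a rectangle connecting $z$ and $x_{4}$ covering exactly one braid $O$-marking, so that $\partial z = V_{i}x_{4} + (\text{lower terms})$, and you conclude $[x_{4}] = V_{i}[z]$. That implication is false: $\partial z = V_{i}x_{4}+w$ is a statement about the class $V_{i}[x_{4}]$ (it says $V_{i}x_{4}$ is homologous to $w$), and it cannot place $[x_{4}]$ itself in the $V$-image $\sum_{i} V_{i}\cdot H_{*}$. (There is also a direction confusion: in $\mathcal{C}_{U\cup\beta_{-stab}}$ there are no rectangles out of $x_{4}$ contributing to the differential at all, since every empty rectangle leaving the state of northeast corners of the $X$-markings contains an $X$-marking, so $\partial x_{4}=0$; and a rectangle changes a state in two points, not one.) To exhibit $[x_{4}]$ in the $V$-image at level $\frac{-N+1}{2}$ you need the opposite configuration: a chain $r$ lying in $\mathcal{F}_{U}^{\frac{-N+1}{2}}$ with $x_{4}+V_{i}(\cdots)=\partial r$, i.e. $x_{4}$ must occur in $\partial r$ with coefficient $1$ (a rectangle covering no markings), while every other term of $\partial r$ carries a positive power of some $V_{i}$. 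This is exactly the paper's argument: the red state $r$ of Figure \ref{nstrivial} sits at filtration level $\frac{-N+1}{2}$, one empty rectangle from $r$ reaches $x_{4}$ without crossing any marking, and every other empty $X$-avoiding rectangle out of $r$ crosses an $O$-marking of the $\beta$ component, so $\partial r = x_{4}+V_{i}(\cdots)$ and hence $[x_{4}]=V_{i}[\cdots]$ over $\mathbb{F}_{2}$. Your proposal never identifies such an $r$, and the "finite local check" you outline verifies the wrong kind of domain, so the key step is missing.

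Your lower-bound discussion is fine and is essentially what the paper leaves implicit: $[x_{4}]$ generates $H_{top}(\mathcal{F}_{U}^{bot})$ with $bot=-\frac{N+1}{2}$ for the $(N+1)$-braid $\beta_{-stab}$, there is nothing two Maslov gradings higher at that level, so $[x_{4}]$ cannot be a $V$-multiple there; since $A_{U}$-values of grid states differ by integers, the next filtration level is $bot+1=\frac{-N+1}{2}$. But without a correct argument that $[x_{4}]$ does become a $V$-multiple at that next level, the equality is not established.
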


\begin{proof}

Let $r$ be red colored state in Figure \ref{nstrivial} where $x_{4}$ is depicted by the blue colored state. It is easy to see that $r\in \mathcal{F}^{\frac{-N+1}{2}}(\mathcal{C}_{U\cup\beta})$. By considering the rectangles from $r$, we find that either they must contain a O marking in the $\beta$ component or they connect to the distinguished state $x_{4}$. So, $\partial r = x_{4} + V_{i}(..)$. It follows that $[x_{4}]$ is $V$-image in $\mathcal{F}^{\frac{-N+1}{2}}(\mathcal{C}_{U\cup\beta})$ and the conclusion follows.
\end{proof}

\begin{remark}

These properties of $\eta(\beta)$ are very similar to the Kapppa invariant \cite{saltz} in Khovanov homology which served as a motivation for defining $\eta$. It is not clear at this point how one can compute the transverse $\bar{\eta}(\beta)$.
\end{remark}

\pagebreak

\section{A $t$-modified annular chain complex}

Now we will generalize the construction of the braid grid complex from the last section to define the $t$-modified annular chain complex that will help us reformulate the annular invariant $\mathscr{A}_{L}(t)$. Then, we will investigate how the invariant behaves under crossing change and stabilization.   

\subsection{Definition}
Let $D$ be a grid diagram of annular link $U \cup L$. Let $\mathbb{X}=\{X_{1},X_{2},X_{3},\cdots,X_{n} \}$  and $\mathbb{O}=\{O_{1},O_{2},O_{3},\cdots,O_{n} \}$ be the sets of X markings and O markings respectively where $X_{1}, X_{2}, O_{1} \text{ and } O_{2}$ represent the markings of the unknot $U$. For  $0 \leq t=2\frac{p}{q} \leq 2$ where $p,q$ are co-prime non-negative integers, we define the following modified link Floer complex.

\begin{definition}
   Define $t\mathbf{C}(D) = \mathbb{F}_{2}[V_{1},V_{2},V_{3},\cdots,V_{n-1}]$ module over grid states $ S(D)$ and  for a $x \in S(D)$,
 \[ {\partial}_{t}x := \mathlarger{\sum\limits_{y\in S(D)} \sum\limits_{r \in Rect^{o}(x,y), r \cap \mathbb{X} = \phi}} V_{1}^{pO_{1} (r)}V_{2}^{pO_{2} (r)}V_{3}^{(q-p)O_{3}(r)}\cdots V_{n-1}^{(q-p)O_{n-1}(r)}V_{1}^{(q-p)O_{n}(r)} y .\]  
\end{definition}

We first show that ${\partial}_{t}$ is indeed a differential.

\begin{prop}

 ${\partial}_{t} \circ {\partial}_{t}=0$.
\end{prop}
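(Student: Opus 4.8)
The plan is to mimic the standard proof that $\partial^{-}\circ\partial^{-}=0$ for the grid complex (Lemma 4.2.3-type argument in \cite{Grid Homology for Knots and Links}), tracking carefully the $V$-powers attached to each rectangle. First I would expand $\partial_t\circ\partial_t(x)$ as a sum over pairs of empty rectangles $(r_1,r_2)$ with $r_1\in Rect^o(x,y)$, $r_2\in Rect^o(y,z)$, each avoiding $\mathbb{X}$. As in the classical case, the composite domain $r_1*r_2$ is either a ``width-$2$'' or ``height-$2$'' domain admitting exactly two decompositions into a pair of empty rectangles, or it contains one marking in its interior for one of the two decompositions. The key point is that the two decompositions of a given composite domain $\psi$ cover exactly the same set of $\mathbb{O}$-markings with the same multiplicities, so they contribute the same monomial in the $V_i$'s; hence they cancel in pairs over $\mathbb{F}_2$, and $\partial_t\circ\partial_t=0$.

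Concretely, the steps in order: (1) Set up the domain-composition bookkeeping: for $z\neq x$, pairs $(r_1,r_2)$ connecting $x\to z$ through some intermediate $y$ correspond to domains $\psi\in\Pi(x,z)$ that decompose as a juxtaposition of two empty rectangles, and I recall the standard classification (the local pictures: the two rectangles share a corner, overlap in an L-shape, are disjoint, etc.), noting that in each case there are precisely two such decompositions $\psi = r_1*r_2 = r_1'*r_2'$. (2) For the term $z=x$: the only domains are the ``thin'' annular regions (a single row or column between consecutive markings); these are excluded because each such region contains an $\mathbb{X}$-marking, so these terms vanish — this is exactly why the condition $r\cap\mathbb{X}=\phi$ (rather than $r\cap\mathbb{O}=\phi$) is used. (3) For $z\neq x$: observe that the multiset of $\mathbb{O}$-markings in $\psi$ (counted with multiplicity of the domain) equals $O_i(r_1)+O_i(r_2)=O_i(r_1')+O_i(r_2')$ for every $i$, since both equal $O_i(\psi)$. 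Therefore the weight $V_1^{p O_1(\psi)}V_2^{pO_2(\psi)}V_3^{(q-p)O_3(\psi)}\cdots V_{n-1}^{(q-p)O_{n-1}(\psi)}V_1^{(q-p)O_n(\psi)}$ attached to $r_1*r_2$ equals the one attached to $r_1'*r_2'$. Over $\mathbb{F}_2$ the two contributions cancel, giving $\partial_t\circ\partial_t=0$.

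The only genuinely new feature compared with the textbook proof is the unusual weight assignment: $O_1$ and $O_2$ (the unknot markings) get exponent $p$, the markings $O_3,\dots,O_{n-1}$ get exponent $q-p$, and $O_n$ is folded back onto $V_1$ with exponent $q-p$. Since the cancellation argument only needs that two decompositions of the same composite domain carry the \emph{same} monomial, and this holds coefficient-by-coefficient for each $O_i$ regardless of which exponent (or which $V$-variable, including the $O_n\mapsto V_1$ identification) is attached, the weighting causes no difficulty. I would also remark that the condition $r\cap\mathbb{X}=\phi$ (inherited from the braid complex $\mathcal{C}_{U\cup\beta}$) is preserved under taking sub-rectangles of a composite domain, so both decompositions lie in the allowed set.

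I expect the main obstacle — really just a point requiring care rather than a true difficulty — to be making the domain-classification step airtight: verifying that for $z\neq x$ every composite domain admits exactly two decompositions into \emph{empty} rectangles both avoiding $\mathbb{X}$, and that the ``overcounting'' domains (those where one decomposition has a non-empty rectangle or hits $\mathbb{X}$) are handled correctly. Since this is verbatim the argument of \cite{Grid Homology for Knots and Links} with only the coefficient ring changed, I would cite that classification and simply emphasize the one-line observation that $O_i(r_1)+O_i(r_2)$ depends only on $\psi$, which forces the monomial weights of the two decompositions to agree.
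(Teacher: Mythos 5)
Your argument is correct, but it is not the route the paper takes. You redo the textbook rectangle-cancellation proof from scratch: for $z\neq x$ every composite domain $\psi$ has exactly two decompositions into empty rectangles avoiding $\mathbb{X}$, both carrying the monomial determined by the multiplicities $O_i(\psi)$ alone (so the unusual exponents $p$, $q-p$ and the identification of $O_n$ with $V_1$ are irrelevant to the cancellation), while the $z=x$ terms vanish because every thin row or column annulus contains an $X$-marking. All of this is sound, including the observation that $\mathbb{X}$-avoidance is a property of the support of $\psi$ and hence shared by both decompositions. The paper instead avoids re-running the combinatorics: it introduces the auxiliary complex $\mathscr{C}_{U\cup L}$ obtained from the standard $\mathbb{X}$-avoiding differential by the change of variables $V_i\mapsto W_i^{p(q-p)}$ (so $\partial_{\mathbb{X}}^2=0$ is inherited from $GC^-$), then passes to the quotient by $W_1^{q-p}-W_n^{p}$ and identifies $t\mathbf{C}$ with this quotient under $V_1=W_1^{q-p}=W_n^{p}$, $V_2=W_2^{q-p}$, $V_i=W_i^{p}$ for $i>2$, so that $\partial_t$ is the induced differential and squares to zero automatically. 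Your version is more self-contained and makes transparent exactly why the weighting causes no difficulty; the paper's version is shorter and, more importantly, the presentation of $t\mathbf{C}$ as a quotient of a rescaled standard complex is reused later (the chain-map and chain-homotopy statements for pentagon and hexagon maps are also obtained by passing to quotients), which your direct approach would have to reprove case by case.
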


\begin{proof}
Let  $ \mathscr{C}_{U\cup L}$  be the $\mathbb{F}_{2}[ W_{1}, W_{2}, \cdots, W_{n}]$ module over grid states $S(D)$ 

\[ {\partial}_{\mathbb{X}}x := \mathlarger{\sum\limits_{y\in S(D)} \sum\limits_{r \in Rect^{o}(x,y), r \cap \mathbb{X} = \phi}} W_{1}^{p(q-p)O_{1}(r)} W_{2}^{p(q-p)O_{2}(r)}\cdots W_{1}^{p(q-p)O_{n}(r)} y\]  
 
$( \mathscr{C}_{U\cup L}$,${\partial}_{\mathbb{X}})$ is a chain complex obtained  $GC^{-}(U\cup L)$ by change of variables $V_{i} \rightarrow W_{i}^{p(q-p)}$. Now, consider the quotient complex  $\frac{\mathscr{C}_{U\cup L}}{W_{1}^{q-p} - W_{n}^{p}}$. After setting, $V_{1}=W_{1}^{q-p}=W_{n}^{p}$, $V_{2}=W_{2}^{q-p}$ and $V_{i}=W_{i}^{p}$ for $i>2$, we observe that the differential ${\partial}_{\mathbb{X}}$ becomes ${\partial}_{t}$ in the quotient. Hence, $t\mathbf{C} \cong \frac{\mathscr{C}_{U\cup L}}{W_{1}^{q-p} - W_{n}^{p}} $ and ${\partial}_{t}$ is just the restriction of ${\partial}_{\mathbb{X}}$ to the quotient. The conclusion follows easily.
 
\end{proof}

$t\mathbf{C}$ is not Maslov graded. We define a function, $\mathscr{F}_{t}(x)=\frac{ pA_{U}(x)+ (q-p)A_{L}(x)}{q}$ for $x\in S(D)$ which is extended to $t\mathbf{C}$ by setting $\mathscr{F}_{t}(V_{i})= -\frac{1}{q}$. Similarly define, $\mathit{F}_{t}(x)=\frac{ (2p-2q)A_{U}(x)- \frac{p}{2} A_{L}(x)}{q}$ for $x\in S(D)$ and $\mathit{F}_{t}(V_{i})= -\frac{1}{q}$ \\

\begin{prop}
$t\mathbf{C}$  is $\mathscr{F}_{t}$ graded and $\mathit{F}_{t}$ filtered.
\end{prop}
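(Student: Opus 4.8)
The plan is to verify the two claims separately, each by a short grading computation on rectangles, exactly parallel to the classical statement that $A$ is a filtration and $M$ is a grading on $GC^{-}$.

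First I would check that $\mathscr{F}_{t}$ is a grading, i.e. that $\partial_{t}$ is homogeneous of degree $-1$ with respect to $\mathscr{F}_{t}$. Take $x\in S(D)$ and a rectangle $r\in Rect^{o}(x,y)$ with $r\cap\mathbb{X}=\phi$, contributing the monomial $V_{1}^{pO_{1}(r)}V_{2}^{pO_{2}(r)}V_{3}^{(q-p)O_{3}(r)}\cdots V_{n-1}^{(q-p)O_{n-1}(r)}V_{1}^{(q-p)O_{n}(r)}\,y$ to $\partial_{t}x$. Using the rectangle formulas $A_{U}(y)-A_{U}(x)=\#(r\cap\mathbb{O}_{U})-\#(r\cap\mathbb{X}_{U})$ and $A_{L}(y)-A_{L}(x)=\#(r\cap\mathbb{O}_{L})-\#(r\cap\mathbb{X}_{L})$, together with $r\cap\mathbb{X}=\phi$, I get $A_{U}(y)-A_{U}(x)=O_{1}(r)+O_{2}(r)$ and $A_{L}(y)-A_{L}(x)=O_{3}(r)+\cdots+O_{n}(r)$. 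Hence $\mathscr{F}_{t}(y)-\mathscr{F}_{t}(x)=\frac{1}{q}\bigl(p(O_{1}(r)+O_{2}(r))+(q-p)(O_{3}(r)+\cdots+O_{n}(r))\bigr)$, which is precisely the negative of $\mathscr{F}_{t}$ evaluated on the attached monomial; so the term lands in degree $\mathscr{F}_{t}(x)-1$. Since this holds termwise, $\partial_{t}$ is $\mathscr{F}_{t}$-homogeneous of degree $-1$, as claimed. (Alternatively, one can just transport this along the isomorphism $t\mathbf{C}\cong\mathscr{C}_{U\cup L}/(W_{1}^{q-p}-W_{n}^{p})$ from the previous proposition, noting that $\mathscr{F}_{t}$ is the pushforward of the grading on $\mathscr{C}_{U\cup L}$ induced by $pA_{U}+(q-p)A_{L}$ and that the relation $W_{1}^{q-p}-W_{n}^{p}$ is homogeneous for it.)

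Next I would check that $\mathit{F}_{t}$ is a filtration, i.e. that $\partial_{t}$ is non-increasing in $\mathit{F}_{t}$. Here I cannot hope for equality, only an inequality, because $\mathit{F}_{t}$ mixes $A_{U}$ and $A_{L}$ with coefficients of opposite sign, so the bound has to come from the fact that $Rect^{o}$ allows $\mathbb{O}$-markings inside but not $\mathbb{X}$-markings. Running the same rectangle computation, $\mathit{F}_{t}(y)-\mathit{F}_{t}(x)=\frac{1}{q}\bigl((2p-2q)(O_{1}(r)+O_{2}(r))-\tfrac{p}{2}(O_{3}(r)+\cdots+O_{n}(r))\bigr)$, and the monomial attached contributes $-\frac{1}{q}\bigl(p(O_{1}(r)+O_{2}(r))+(q-p)(O_{3}(r)+\cdots+O_{n}(r))\bigr)$; summing, the change in $\mathit{F}_{t}$ of the term $V^{\bullet}y$ relative to $x$ is $\frac{1}{q}\bigl((p-2q)(O_{1}(r)+O_{2}(r))+(-\tfrac{p}{2}-q+p)(O_{3}(r)+\cdots+O_{n}(r))\bigr)$, and since $0\le 2p\le 2q$ (so $t\in[0,2]$) every coefficient here is $\le 0$ while every $O_{i}(r)\ge 0$; hence the change is $\le 0$, which is exactly the filtered (degree $\le 0$, or whatever normalization is intended) condition. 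I would double-check the intended normalization of $\mathit{F}_{t}$ against its later use and adjust the constant/sign bookkeeping accordingly, but the structure of the argument — every rectangle coefficient appearing with the "right" sign once the $V_{i}$-weights are folded in — is what makes it work.

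The main obstacle, and the only place care is really needed, is the sign bookkeeping for $\mathit{F}_{t}$: one must be sure that the chosen coefficients $2p-2q$ and $-\tfrac{p}{2}$ (together with the weight $-1/q$ on each $V_{i}$) genuinely produce non-positive coefficients on every $O_{i}(r)$ for all admissible $t=2p/q\in[0,2]$, including the boundary cases $p=0$ and $p=q$. Everything else is a direct application of the Alexander-grading rectangle formulas recalled in Section 2 and of the quotient description $t\mathbf{C}\cong\mathscr{C}_{U\cup L}/(W_{1}^{q-p}-W_{n}^{p})$, so I would present the proof as: (i) reduce both statements to the single-rectangle grading change; (ii) plug in the rectangle formulas and $r\cap\mathbb{X}=\phi$; (iii) observe that for $\mathscr{F}_{t}$ the net change is $-1$ identically, and for $\mathit{F}_{t}$ the net change is a non-positive combination of the non-negative integers $O_{i}(r)$.
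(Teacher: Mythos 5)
Your argument is the same as the paper's: a termwise computation of the change of $\mathscr{F}_{t}$ and $\mathit{F}_{t}$ across a single empty rectangle with $r\cap\mathbb{X}=\phi$, using the rectangle formula for the Alexander gradings and folding in the weights $\mathscr{F}_{t}(V_{i})=\mathit{F}_{t}(V_{i})=-\frac{1}{q}$; your filtration computation, including the coefficients $p-2q$ and $\frac{p}{2}-q$ being nonpositive for $0\leq p\leq q$, matches the paper exactly. One slip: your own cancellation for $\mathscr{F}_{t}$ shows the term $V^{\bullet}y$ lands in degree $\mathscr{F}_{t}(x)$, i.e.\ $\partial_{t}$ \emph{preserves} the $\mathscr{F}_{t}$-grading (degree $0$, as in the paper), not degree $\mathscr{F}_{t}(x)-1$; the conclusion ``homogeneous of degree $-1$'' contradicts the displayed computation and should be corrected, since degree-preservation is the fact used later when grading shifts of the maps $c_{\pm}$, $PS_{\pm}$, $NS_{\pm}$ are measured against the differential.
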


\begin{proof}

Let $y$ be a state appearing in the expansion of differential of $x$. Then, 

\begin{align*}
 \mathscr{F}_{t}(V_{1}^{pO_{1} (r)}V_{2}^{pO_{2} (r)}V_{3}^{(q-p)O_{3}(r)}\cdots V_{n-1}^{(q-p)O_{n-1}(r)}V_{1}^{(q-p)O_{n}(r)} y)-\mathscr{F}_{t}(x)  
 \end{align*}
\begin{align*}
=\mathscr{F}_{t}(y')-\mathscr{F}_{t}(x)-\frac{1}{q}(pO_{U}(r)+(q-p)O_{L}(r))=0. 
\end{align*}
And,
\begin{align*}
\mathit{F}_{t}(V_{1}^{pO_{1} (r)}V_{2}^{pO_{2} (r)}V_{3}^{(q-p)O_{3}(r)}\cdots V_{n-1}^{(q-p)O_{n-1}(r)}V_{1}^{(q-p)O_{n}(r)} y)-\mathit{F}_{t}(x)  
\end{align*}
\begin{align*}
=\mathit{F}_{t}(y')-\mathit{F}_{t}(x)-\frac{1}{q}(pO_{U}(r)+(q-p)O_{L}(r))= \frac{1}{q}( (p-2q)O_{U}(r)+(\frac{p}{2}-q)O_{L}(r))\leq 0.
\end{align*}

\end{proof}

The homology of $t\mathbf{C}$ will be denoted by  $tH_{*}(\mathbf{C})$.

\subsection{Invariance of $t\mathbf{C}$}

The goal of this section is to prove the following theorem
\begin{theorem}\label{invarianceTALF}
${\mathit{F}}_{t}$ filtered quasi-isomorphism type of $t\mathbf{C}$ is an annular link invariant.
\end{theorem}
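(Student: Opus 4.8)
The plan is to follow the standard Ozsváth–Szabó–Thurston template for proving invariance of grid homology, adapted to the $t$-modified setting. Recall that any two grid diagrams of the same link are connected by a finite sequence of grid moves: commutations (of rows or columns) and (de)stabilizations. So it suffices to produce, for each such move, an ${\mathit{F}}_t$-filtered quasi-isomorphism between the associated $t\mathbf{C}$ complexes. The two cases—commutation and stabilization—are handled separately, exactly as in Chapter 5 of \cite{Grid Homology for Knots and Links}, but with bookkeeping of the weighted filtration $\mathit{F}_t$ and the modified exponents on the $V_i$'s.

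First I would treat \textbf{commutation moves}. Here one builds a pentagon-counting chain map $P$ between $t\mathbf{C}(D)$ and $t\mathbf{C}(D')$, weighted by the appropriate monomials in the $V_i$ (with exponents $pO_U(\cdot)$ and $(q-p)O_L(\cdot)$ as in the definition of $\partial_t$), and a hexagon-counting chain homotopy showing $P$ composed with its counterpart is homotopic to the identity. The key point is that commutation moves do not change which $O$-markings belong to $U$ versus $L$, nor the winding numbers $w_U, w_L$ that govern $A_U$ and $A_L$; so the pentagon map is $A_U$-graded and $A_L$-graded in the same way as the ordinary grid commutation map, whence it preserves $\mathscr{F}_t$ and respects $\mathit{F}_t$. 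I expect this to be essentially routine once the grading computation of Lemma~6.2.1 of \cite{Grid Homology for Knots and Links} is invoked verbatim and re-weighted.

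Next, the \textbf{stabilization moves}. The cleanest approach is to mimic the destabilization argument: a stabilization introduces a new marking and a new variable, and one shows that $t\mathbf{C}$ of the stabilized diagram is filtered quasi-isomorphic to $t\mathbf{C}$ of the original after tensoring with an acyclic two-step complex (the mapping cone of multiplication by the appropriate power of the new $V$). Concretely, I would use the same short-exact-sequence / mapping-cone trick already deployed in the proof that $\partial_t^2=0$: present $t\mathbf{C}$ as a quotient of $\mathscr{C}_{U\cup L}$, which is obtained from the honest unblocked complex $GC^-(U\cup L)$ by the change of variables $V_i\mapsto W_i^{p(q-p)}$. Since the unblocked filtered quasi-isomorphism type of $GC^-(U\cup L)$—with both the $A_U$ and $A_L$ filtrations—is already known to be a link invariant (stated in the excerpt), a stabilization induces a filtered quasi-isomorphism of $GC^-$ compatible with both Alexander filtrations; one then needs to check this descends through the change of variables and the quotient by $W_1^{q-p}-W_n^p$ to give an $\mathit{F}_t$-filtered quasi-isomorphism of $t\mathbf{C}$. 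The filtration $\mathit{F}_t$ is a fixed $\mathbb{Q}$-linear combination of $A_U$ and $A_L$ with the stated normalization $\mathit{F}_t(V_i)=-1/q$, so any map that is $(A_U,A_L)$-filtered of the relevant bidegree is automatically $\mathit{F}_t$-filtered of the corresponding degree.

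The main obstacle is the \textbf{stabilization step}, specifically verifying that the destabilization quasi-isomorphism is compatible with the two-variable change $V_i \mapsto W_i^{p(q-p)}$ and the quotient $\mathscr{C}_{U\cup L} \twoheadrightarrow \mathscr{C}_{U\cup L}/(W_1^{q-p}-W_n^p)$—in particular, that the new variable created by stabilization can be absorbed correctly when $O_1$ (a $U$-marking) and $O_n$ (an $L$-marking, or whichever is split off) get their exponents weighted asymmetrically. One has to be careful that stabilizing at a marking of $U$ versus a marking of $L$ produces the two genuinely different local pictures, and that in each case the acyclic summand one splits off carries the correct $\mathit{F}_t$-degree (a power of $q$ in the denominator). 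A secondary subtlety is that $t\mathbf{C}$ is not Maslov-graded, so one must phrase ``quasi-isomorphism'' purely via the chain-homotopy-equivalence statements ($\partial H + H\partial = \mathrm{id} + (\text{something filtered-negative})$) rather than appealing to a grading argument; this is exactly the style already used for the maps $PS_\pm$ in Section~4, and I would carry it over. Once these compatibilities are in place, composing the move-by-move filtered quasi-isomorphisms along a sequence connecting any two grid diagrams yields the theorem.
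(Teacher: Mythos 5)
Your proposal follows essentially the same route as the paper: pentagon maps weighted by the $t$-modified monomials with hexagon homotopies for commutation moves (with the filtration shift read off from the $O_U$ and $O_L$ counts in the pentagons), and for stabilization the observation that $t\mathbf{C}$ is a quotient of a change-of-variables version of $\mathcal{GC}^-$, so the standard destabilization quasi-isomorphism and its homotopies descend to $\mathit{F}_t$-filtered ones. The paper phrases the stabilization step via the splitting $t\mathbf{C}(D')\cong I\oplus N$ and the map $e$ rather than your mapping-cone packaging, but the underlying argument (descending the $GC^-$ homotopy equivalences through the quotient) is the same, so your write-up matches the paper's proof in substance.
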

The proof is identical to the invariance proof of grid homology. We sketch the details for completeness. The key observation is that we need to check invariance under commutation moves and stabilization moves of the component $L$.

\subsubsection{Commutation move}

\begin{figure}
\begin{center}
\includegraphics[scale=0.10]{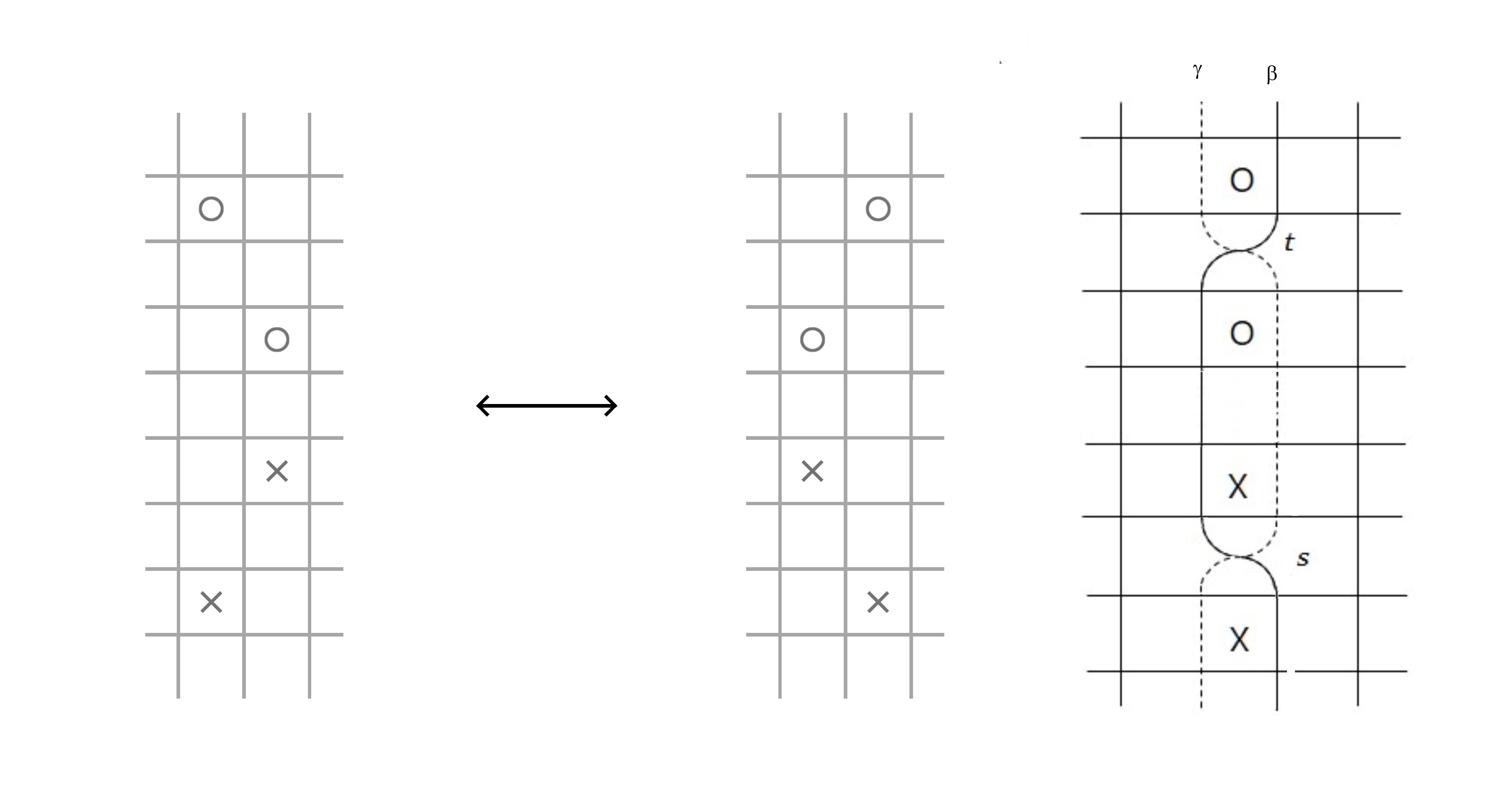}
\end{center}
\caption{Vertical curves in a commutation move}\label{commutfig}
\end{figure}

Consider two grid diagrams $D$ and $D'$ with grid number $n$ that differ by a commutation move. We can represent these two diagrams in the same picture so that the $X$ and $O$ markings are fixed, and two of the vertical circles are curved [See Figure \ref{commutfig}]. Denote the horizontal circles of $D$ by $\alpha = \{\alpha_1, ...\alpha_n\}$ and its vertical circles by $\beta = \{\beta_1, ...\beta_n\}$. Then the set of horizontal circles of $D'$ is also $\alpha$ and its vertical circles are given by $\gamma = \{\beta_1, ..., \beta_{i-1}, \gamma_i, \beta_{i+1}, ..., \beta_n\}$. The vertical circles  $\beta_i$ and $\gamma_i$ intersect at two points. The complement of $\beta_i \cup \gamma_i$ in the grid, consists of two bigons. Consider the one, of which the western boundary is a part of $\beta_i$, and the eastern boundary is a part of $\gamma_i$. We label the two intersection points by $s$ and $s'$ so that $s$ is the southern vertex and $s'$ is the northern vertex of that bigon.\\

Define the $\mathcal{R}$-module map $P\colon t\mathbf{C}(D) \rightarrow t\mathbf{C}(D')$ by the formula:
\begin{align*}
    P(x)= \sum\limits_{y' \in S(D')} \sum\limits_{\Pi \in {Pent}^\circ(x,y'), \Pi \cap \mathbb{X}= \phi}  V_{1}^{pO_{1} (\Pi)}V_{2}^{pO_{2} (\Pi)}V_{3}^{(q-p)O_{3}(\Pi)}\cdots V_{n-1}^{(q-p)O_{n-1}(\Pi)}V_{1}^{(q-p)O_{n}(\Pi)} \cdot y'  
\end{align*}

\begin{lemma}
$P$ is $\mathscr{F}_{t}$ graded and $\mathit{F}_{t}$ filtered.
\end{lemma}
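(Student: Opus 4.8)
The plan is to follow verbatim the grading/filtration computation for the pentagon-counting commutation map in ordinary grid homology (Lemma 5.1.4 and the surrounding discussion in \cite{Grid Homology for Knots and Links}), and simply track how the new weights $p$ on the unknot variables $V_1,V_2$ and $q-p$ on the braid-component variables $V_3,\dots,V_{n-1},V_n$ interact with the gradings $\mathscr{F}_t$ and $\mathit{F}_t$. First I would recall that the key algebraic fact about a pentagon $\Pi \in Pent^\circ(x,y')$ with $\Pi\cap\mathbb X=\phi$ is an identity of the form
\[
M(x)-M(y') = 1 - 2\#(\Pi\cap\mathbb O),\qquad A_i(x)-A_i(y') = \#(\Pi\cap\mathbb O_i),
\]
exactly as for a rectangle except for a universal $+1$ shift in the Maslov relation (pentagons have one more corner). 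Summing the Alexander relations over the unknot markings and over the $L$-markings gives $A_U(x)-A_U(y')=O_U(\Pi)$ and $A_L(x)-A_L(y')=O_L(\Pi)$, where $O_U(\Pi)=O_1(\Pi)+O_2(\Pi)$ and $O_L(\Pi)=O_3(\Pi)+\cdots+O_n(\Pi)$.

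Next I would just plug these into the definitions. Since $\mathscr F_t(V_i)=-1/q$ uniformly and the monomial attached to $\Pi$ carries total $V$-exponent $p\,O_U(\Pi)+(q-p)\,O_L(\Pi)$, we get
\[
\mathscr F_t\!\big(V_1^{pO_1(\Pi)}\cdots V_1^{(q-p)O_n(\Pi)}\,y'\big)-\mathscr F_t(x)
=\mathscr F_t(y')-\mathscr F_t(x)-\tfrac1q\big(pO_U(\Pi)+(q-p)O_L(\Pi)\big),
\]
and using $\mathscr F_t(x)-\mathscr F_t(y')=\tfrac1q\big(p(A_U(x)-A_U(y'))+(q-p)(A_L(x)-A_L(y'))\big)=\tfrac1q\big(pO_U(\Pi)+(q-p)O_L(\Pi)\big)$ this is exactly $0$, so $P$ is $\mathscr F_t$-graded. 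For $\mathit F_t$ the same substitution gives
\[
\mathit F_t(\text{monomial}\cdot y')-\mathit F_t(x)=\mathit F_t(y')-\mathit F_t(x)-\tfrac1q\big(pO_U(\Pi)+(q-p)O_L(\Pi)\big),
\]
and since $\mathit F_t(x)-\mathit F_t(y')=\tfrac1q\big((2p-2q)O_U(\Pi)-\tfrac p2 O_L(\Pi)\big)$, the difference equals $\tfrac1q\big((p-2q)O_U(\Pi)+(\tfrac p2-q)O_L(\Pi)\big)\le 0$ because $\#(\Pi\cap\mathbb O_i)\ge 0$ and the coefficients $p-2q$ and $\tfrac p2-q$ are $\le 0$ on $[0,2]$ (indeed $p\le q$). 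Hence $P$ is $\mathit F_t$-filtered, mirroring exactly the proof that $\partial_t$ is $\mathit F_t$-filtered given just above.

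I expect no serious obstacle here; the only point requiring a little care is making sure the weight bookkeeping for the ``wrapped around'' variable (the $V_1^{(q-p)O_n(\Pi)}$ factor arising from identifying the $n$-th $O$-marking's variable with $V_1$ in passing to the quotient complex) is consistent with how $A_U$ and $A_L$ split the markings — i.e.\ that $O_n$ really is counted as an $L$-marking with weight $q-p$, which is forced by the definition of $\partial_t$. Once that is fixed, the computation is line-for-line the one already carried out for $\partial_t$, and the commutation-map estimate for the destabilized curve $\gamma_i$ is identical since $\gamma_i$ does not separate any $\mathbb O$-marking from the unknot any differently than $\beta_i$. The analogous map $P'\colon t\mathbf C(D')\to t\mathbf C(D)$ counting pentagons in the other direction satisfies the same estimates by symmetry, and the homotopies $P'P\simeq \mathrm{id}$, $PP'\simeq\mathrm{id}$ are built from hexagon counts exactly as in \cite{Grid Homology for Knots and Links}, all with weights assigned by the same rule; I would defer that to the next lemmas.
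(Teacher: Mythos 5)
This is essentially the paper's own proof: observe that across an empty, $\mathbb{X}$-free pentagon the component Alexander gradings change by the $\mathbb{O}$-counts $O_U(\Pi)$ and $O_L(\Pi)$, and then take the weighted sum exactly as in the earlier verification that $\partial_t$ is $\mathscr{F}_t$-graded and $\mathit{F}_t$-filtered, so your argument and the paper's coincide. One small bookkeeping remark: with the paper's rectangle convention $A_i(y)-A_i(x)=\#(r\cap\mathbb{O}_i)-\#(r\cap\mathbb{X}_i)$, the pentagon relation that makes the cancellation work is $A_i(y')-A_i(x)=O_i(\Pi)$, which is the direction your two final displayed formulas actually use; as literally written, your intermediate line $\mathscr{F}_t(x)-\mathscr{F}_t(y')=\frac{1}{q}\bigl(pO_U(\Pi)+(q-p)O_L(\Pi)\bigr)$ has $x$ and $y'$ transposed (the paper's proof contains the same transposition), but this does not affect the substance of the argument.
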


\begin{proof}

Suppose $\Pi$ is an empty pentagon from $x$ to $y'$ in the expansion of $P(x)$. Then,

\begin{align*}
      A_{U}(x)- A_{U}(y')=  O_{U}(\Pi) \ \text{and} \
      A_{L}(x)- A_{L}(y')=  O_{L}(\Pi)
\end{align*}

The conclusion follows by taking the sum with proper weights.

\end{proof}

The following proposition can be proved analogously. 
\begin{proposition}
The map $P$ is a chain map.
\end{proposition}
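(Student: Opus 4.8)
The plan is to show that the pentagon-counting map $P$ commutes with the $t$-modified differentials, by the standard ``count the boundary of a moduli space of juxtaposed regions'' argument used throughout grid homology. First I would set up the auxiliary map $P'\colon t\mathbf{C}(D')\to t\mathbf{C}(D)$ defined exactly like $P$ but counting empty pentagons (with initial corner near $s'$) going the other way, so that the roles of $D$ and $D'$ are symmetric. Then the claim $\partial_t\circ P + P\circ\partial_t = 0$ follows by analyzing, for each pair of grid states $x$ and $y'$, the one-dimensional regions from $x$ to $y'$ built as a composite of a rectangle and a pentagon, each avoiding $\mathbb{X}$.

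The key step is the combinatorial fact (Lemma 5.2.something-style, identical to the commutation invariance lemma in \cite{Grid Homology for Knots and Links}) that for generic $x,y'$ the domains from $x$ to $y'$ which decompose as (empty rectangle)$\ast$(empty pentagon) or (empty pentagon)$\ast$(empty rectangle) and avoid all $X$-markings come in pairs, except for a small number of degenerate configurations near $s$ and $s'$; and crucially, the two members of each pair have \emph{the same} total multiplicity at every $O$-marking. Since the coefficient attached to a composite region in $\partial_t P$ or $P\partial_t$ is the monomial $V_1^{p\,O_1}V_2^{p\,O_2}V_3^{(q-p)O_3}\cdots V_{n-1}^{(q-p)O_{n-1}}V_1^{(q-p)O_n}$ with the total $O$-multiplicities, paired regions contribute identical monomials and cancel over $\mathbb{F}_2$. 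One must also check that the degenerate terms — thin annular regions through $s$ or $s'$, and regions where the pentagon and rectangle overlap to give a ``bad'' composite — either cancel among themselves or are killed because they necessarily run over an $X$-marking; this is where the hypothesis $\Pi\cap\mathbb{X}=\phi$ in the definition of $P$ does its work. Because all of this is purely local to the two curved vertical circles, the argument is word-for-word the one in the unblocked case, with the only new bookkeeping being that we track $O$-multiplicities weighted by $p$ or $q-p$ rather than unweighted.

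The main obstacle — and really the only thing that needs genuine care — is verifying that the weighting does not break the pairing. Concretely, one has to confirm that when two composite regions $R_1$ and $R_2$ from $x$ to $y'$ are identified by the standard ``move the corner'' involution, they satisfy $O_i(R_1)=O_i(R_2)$ for every $i$, not merely $\sum_i O_i(R_1)=\sum_i O_i(R_2)$; otherwise the monomials $V_1^{pO_1}\cdots$ would differ. This is true because the symmetric difference of the underlying domains of $R_1$ and $R_2$ is supported in the thin bigon region between $\beta_i$ and $\gamma_i$, which contains no $O$-markings, so the local multiplicity at each $O$ is unchanged. Granting that, I would conclude $\partial_t P + P \partial_t = 0$, hence $P$ is a chain map, completing the proof of the proposition; the remaining ingredients for Theorem \ref{invarianceTALF} (that $P$ and $P'$ are filtered quasi-isomorphisms, via the analogue of Proposition \ref{cross}) then proceed exactly as in the braid grid complex and the standard grid references.
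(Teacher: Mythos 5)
Your proposal is correct and follows essentially the same route as the paper, which simply invokes the standard pentagon-rectangle juxtaposition argument from \cite{Grid Homology for Knots and Links}; since canceling pairs are two decompositions of the same underlying domain, every individual $O_i$-multiplicity agrees, so the weighted monomials match exactly as you argue. Your extra care about the $p$ versus $q-p$ weighting is the right (and only) new point to check, and it holds for the reason you give.
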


Now, we define an analogous $\mathcal{R}$-module homomorphism $P^\prime\colon t\mathbf{C}(D') \rightarrow t\mathbf{C}(D)$.
For a grid state ${x}\in {S}(D^\prime)$, let
\begin{align*}
    P^\prime({x}^\prime)=\sum\limits_{{y}\in {S}(D)} \left(\sum\limits_{p \in \text{Pent}^\circ({x}^\prime, {y})} V_{1}^{pO_{1} (\Pi)}V_{2}^{pO_{2} (\Pi)}V_{3}^{(q-p)O_{3}(\Pi)}\cdots V_{n-1}^{(q-p)O_{n-1}(\Pi)}V_{1}^{(q-p)O_{n}(\Pi)}  \right)\cdot {y}.
\end{align*}

Again. We will show that the two maps $P$ and $P^\prime$ are homotopy inverses of each other. 

Define the $\mathcal{R}$-module homomorphism $H\colon t\mathbf{C}(D)\rightarrow t\mathbf{C}(D)$  for each ${x}\in {S}(D)$ by the formula:

\begin{align*}
     H({x})=\sum\limits_{{y}\in {S}(D)} \left(\sum\limits_{h \in \text{Hex}^\circ({x}, {y})}V_{1}^{pO_{1} (h}V_{2}^{pO_{2} (h)}V_{3}^{(q-p)O_{3}(h)}\cdots V_{n-1}^{(q-p)O_{n-1}(h)}V_{1}^{(q-p)O_{n}(h)}  \right)\cdot {y}.
\end{align*}

An analogous map $H^\prime\colon t\mathbf{C}(D^\prime)\rightarrow t\mathbf{C}(D^\prime)$ can be defined by counting empty
hexagons from ${S}(D^\prime)$ to itself.

\begin{lemma}
The map $H\colon t\mathbf{C}(D)\rightarrow t\mathbf{C}(D)$ provides a chain homotopy from the chain map $P^\prime \circ P$ to the identity map on $t\mathbf{C}(D)$.
\end{lemma}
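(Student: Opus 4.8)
The plan is to reproduce the classical grid-homology argument of Ozsv\'ath--Szab\'o (Proposition 3.2.2 / Lemma 3.2.3 in \cite{Grid Homology for Knots and Links}) verbatim, adapted to the weighted variables of $t\mathbf{C}$. Recall that in a commutation move the vertical circles $\beta_i$ and $\gamma_i$ bound a distinguished bigon $B$ with southern vertex $s$ and northern vertex $s'$; the pentagon counts in $P$ and $P'$ are taken from the corner $s$, and the hexagon count in $H$ uses hexagons having two consecutive corners at $s$ and at $s'$. First I would write out $\partial_t \circ H + H \circ \partial_t + P' \circ P$ as a sum over all juxtaposed regions: the composite $P' \circ P$ contributes domains that decompose as a pentagon followed by a pentagon, $\partial_t \circ H$ and $H \circ \partial_t$ contribute a rectangle followed by a hexagon and a hexagon followed by a rectangle. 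The standard combinatorial lemma (the analogue of the ``reading off'' argument for rectangles) classifies these composite domains: apart from two special families they pair up and cancel in characteristic $2$, so the only surviving terms come from (a) thin annular regions passing through a single $O_k$, which contribute the appropriate power of the corresponding $V$-variable, and (b) the ``width-zero'' degenerate domain connecting $x$ to itself through no marking, which contributes the identity. Collecting these gives exactly $\partial_t H + H \partial_t = P'P + \mathrm{Id} + (\text{terms with positive } V\text{-power})$; since over $\mathbb{F}_2$ the $V$-power terms must themselves cancel in pairs (they come from the genuine annuli, and by the same juxtaposition bookkeeping each such annulus appears an even number of times once one accounts for both the $s$ and $s'$ corners), the net relation is $\partial_t H + H \partial_t = P'P + \mathrm{Id}$.

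The only point requiring care beyond the knot case is the bookkeeping of the weights. In $t\mathbf{C}$ the variable attached to a rectangle $r$ is $V_1^{p(O_1+O_n)(r)} V_2^{pO_2(r)} \prod_{i\geq 3} V_i^{(q-p)O_i(r)}$ — wait, more precisely $V_1^{pO_1(r)}V_2^{pO_2(r)}V_3^{(q-p)O_3(r)}\cdots V_{n-1}^{(q-p)O_{n-1}(r)}V_1^{(q-p)O_n(r)}$. Since $H$, $P$, $P'$ and $\partial_t$ all assign weights by the same rule (a product of $V_j$'s with the same exponents read off from the $O$-multiplicities of the region), and since the $O$-multiplicity of a juxtaposed domain is the sum of the $O$-multiplicities of its pieces, the weight of a composite domain is the product of the weights of its pieces. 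Hence the cancellation in pairs is weight-respecting: two composite domains that cancel as unweighted regions carry identical monomials, so they still cancel over $\mathcal{R}$. This reduces the weighted statement to the unweighted combinatorial classification, which is exactly the content of the argument in \cite{Grid Homology for Knots and Links}.

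Concretely, the steps I would carry out are: (1) state the combinatorial classification of domains $D_1 * D_2$ where $D_1,D_2$ range over rectangles, pentagons, hexagons with the prescribed corner conditions, exactly as in the reference, noting that the same list of cases holds here because it depends only on the underlying grid and the markings $\mathbb{X},\mathbb{O}$, not on the coefficient ring; (2) observe that in each case the two relevant decompositions of a given domain occur with the same monomial weight, so they cancel; (3) identify the leftover terms as the annular regions through a single $O_k$ and the trivial domain, reading off that they contribute $\sum_k V_k^{w_k} + \mathrm{Id}$ with the $V_k^{w_k}$ terms themselves cancelling in pairs (or, if one prefers the cleaner route, quotienting to the argument already done for $\mathscr{C}_{U\cup L}$ via the isomorphism $t\mathbf{C}\cong \mathscr{C}_{U\cup L}/(W_1^{q-p}-W_n^p)$ and pushing the known chain homotopy through the quotient); (4) conclude $\partial_t H + H\partial_t = P'P + \mathrm{Id}$.

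The main obstacle is purely the verification that the weight assignments behave multiplicatively under juxtaposition of domains and that no monomial mismatch spoils the pairwise cancellation — everything else is a transcription of the knot-theoretic proof. In practice the slickest way to dispatch this obstacle is the quotient argument mentioned in step (3): lift $H$ to the hexagon-counting homotopy $\mathbf{H}$ on $\mathscr{C}_{U\cup L}$ (equivalently on $GC^-$ after the substitution $V_i\mapsto W_i^{p(q-p)}$), where the identity $\partial_{\mathbb{X}}\mathbf{H}+\mathbf{H}\partial_{\mathbb{X}} = \mathbf{P}'\mathbf{P}+\mathrm{Id}$ is already known, and then reduce modulo $W_1^{q-p}-W_n^p$, under which $\partial_{\mathbb{X}}$ becomes $\partial_t$, $\mathbf{H}$ becomes $H$, and $\mathbf{P},\mathbf{P}'$ become $P,P'$. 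This avoids re-deriving the combinatorics entirely.
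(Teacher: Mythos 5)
Your proposal is correct and takes essentially the same route as the paper: the paper's proof of this lemma simply declares that the standard commutation-invariance argument (pentagon/hexagon juxtapositions, pairwise cancellation over $\mathbb{F}_2$, one trivial composite domain contributing the identity) goes through ``by the same argument,'' and your transcription, with the key observation that the $V$-weights are multiplicative under juxtaposition of domains so that cancelling decompositions carry identical monomials, is exactly that argument adapted to $t\mathbf{C}$. Your alternative quotient route (lifting to $\mathscr{C}_{U\cup L}$, i.e.\ to $GC^{-}$ after $V_i\mapsto W_i^{p(q-p)}$, invoking the known homotopy there, and reducing modulo $W_1^{q-p}-W_n^{p}$) is also valid and in fact mirrors how the paper itself proves $\partial_t\circ\partial_t=0$ and stabilization invariance; the only small slip in your direct route is the suggestion that annular regions through a single $O_k$ survive and must cancel in pairs --- in the commutation homotopy the only unpaired composite domain is the trivial one giving the identity, the $V$-power annuli arising instead in the crossing-change and stabilization homotopies --- but this does not affect your conclusion.
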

\begin{proof}
We can show that $H$ increases the $\mathscr{F}_{t}$-grading by $1$ by following the same argument. 

To have that $H$ is a chain homotopy from $P^\prime \circ P$ to the identity map on $t\mathbf{C}(D)$, we need to verify the following identities which are true by the same argument.
\begin{align*}
    \partial_{t}^{-}\circ H +  H \circ \partial_{t}^{-}= {Id} -P^\prime \circ P, \quad \text{ that is,}
\end{align*}
\begin{align*}
    (\partial_{t}^{-}\circ H +  H \circ \partial_{t}^{-}+P^\prime \circ P)({x})=  {x} \quad \text{ for any } {x}\in {S}(D).
\end{align*}

\end{proof}

Now by putting it all together, we get -

\begin{theorem}
Let $D$ and $D^\prime$ be two grid diagrams that differ by a commutation move. Then
\begin{align*}
    H(t\mathbf{C})(D)\cong H(t\mathbf{C})(D^\prime).
\end{align*}
\end{theorem}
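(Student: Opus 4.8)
The plan is to assemble the three ingredients already developed in this subsection into a proof of the invariance-under-commutation statement, exactly mirroring the classical grid homology argument (see \cite{Grid Homology for Knots and Links}). Recall that we have constructed $\mathcal{R}$-module maps $P\colon t\mathbf{C}(D)\rightarrow t\mathbf{C}(D')$ and $P'\colon t\mathbf{C}(D')\rightarrow t\mathbf{C}(D)$ by counting empty pentagons avoiding $\mathbb{X}$, and auxiliary maps $H$ and $H'$ counting empty hexagons. We have already shown that $P$ and $P'$ are chain maps and that they are $\mathscr{F}_t$-graded and $\mathit{F}_t$-filtered, and that $H$ (and symmetrically $H'$) is a chain homotopy witnessing $P'\circ P \simeq \mathrm{Id}_{t\mathbf{C}(D)}$ (and $P\circ P' \simeq \mathrm{Id}_{t\mathbf{C}(D')}$). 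The theorem is then the formal consequence: a pair of filtered chain maps that are mutual filtered homotopy inverses induces an isomorphism on homology.

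First I would state clearly that by the preceding lemmas, $P$ and $P'$ are filtered chain maps with respect to $\mathit{F}_t$, and hence descend to maps on homology $P_*\colon H(t\mathbf{C})(D)\rightarrow H(t\mathbf{C})(D')$ and $P'_*$ in the other direction. Next, the chain homotopy identities $\partial_t H + H\partial_t = \mathrm{Id} - P'\circ P$ and $\partial_t H' + H'\partial_t = \mathrm{Id} - P\circ P'$ imply $P'_*\circ P_* = \mathrm{Id}$ and $P_*\circ P'_* = \mathrm{Id}$ on homology, so $P_*$ is an isomorphism. Since $P$ and $P'$ are $\mathit{F}_t$-filtered, this is in fact a filtered quasi-isomorphism, giving the stated isomorphism $H(t\mathbf{C})(D)\cong H(t\mathbf{C})(D')$ (and more precisely the filtered quasi-isomorphism type is preserved, which is what is needed for Theorem \ref{invarianceTALF}).

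The only genuine content hiding behind ``by the same argument'' — and the step I would flag as the main obstacle — is the verification of the chain homotopy identity itself, i.e. that the broken juxtapositions of pentagons and hexagons appearing in $\partial_t H + H\partial_t + P'\circ P$ cancel in pairs except for the identity term. In the classical setting this is a careful domain-by-domain enumeration: a region contributing to the composite $P'\circ P$ either decomposes uniquely as a juxtaposition of a hexagon with a piece of the differential (cancelling against a term of $\partial_t H + H\partial_t$), or is one of the ``thin'' annular/degenerate domains that forces $y = x$, producing the identity. I would note that in our modified setting the combinatorics of the underlying domains is literally unchanged — only the $V$-powers decorating each term differ — and since the $V$-exponent attached to a juxtaposed domain is additive in the pieces (the weight $pO_U + (q-p)O_L$ of a domain is the sum of the weights of its constituents), the cancellation in the classical proof goes through verbatim with the new coefficients. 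Thus I would simply cite Proposition 6.1.1 / Chapter 5 of \cite{Grid Homology for Knots and Links} and remark that the only modification is bookkeeping of the $V_i$-powers, which respects juxtaposition.

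Finally, I would observe that this is precisely the commutation-move case of the invariance of $t\mathbf{C}$; combined with the (analogous, and similarly routine) stabilization-move case and the fact that any two grid diagrams of $U\cup L$ in which the $U$-markings are fixed are connected by a sequence of commutations and stabilizations of the $L$-component, this completes the proof of Theorem \ref{invarianceTALF}. For the purposes of this theorem as stated, it suffices to record the commutation case, which is what the displayed statement asserts.
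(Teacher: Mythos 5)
Your proposal is correct and follows essentially the same route as the paper: it assembles the pentagon maps $P$, $P'$ and the hexagon homotopies $H$, $H'$ already established in this subsection, notes they are filtered chain maps and mutual homotopy inverses, and concludes the isomorphism on homology formally, with the only real content being the homotopy identity, which (as the paper also does) is reduced to the classical count in \cite{Grid Homology for Knots and Links} since the domain combinatorics are unchanged and the $V$-exponents are additive under juxtaposition. No gap to report.
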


\subsubsection{Stabilization moves}

{Let $D$ be a grid diagram. By performing a stabilization of type $X \colon SW$, we get the diagram $D^\prime$ . Number the markings in the way that $O_1$ is the newly-introduced $O$-marking, $O_2$ is in the consecutive row below $O_1$, $X_1$ and $X_2$ lie in the same row as
$O_1$ and $O_2$, respectively, i.e.}
\begin{tabular}{ c|c } 
 $X_1$ & $O_1$\\
 \hline
 $ $ & $X_2$\\
\end{tabular}.

{Denote $c$ the intersection point of the new horizontal and vertical circles in $D^\prime$. Considering this point, we can partition the grid states of the stabilized diagram $D^\prime$ into two parts, depending on whether or not they contain the intersection point $c$. Define the sets ${I}(D^\prime)$ and ${N}(D^\prime)$ so that ${x} \in {I}(D^\prime)$ if $c$ is included in ${x}$, and ${x} \in {N}(D^\prime)$ if $c$ is not included in ${x}$. Now ${S}(D^\prime)$  = ${I}(D^\prime) \cup {N}(D^\prime)$ gives a decomposition of $t\mathbf{C}(D^\prime) \cong I \oplus N$, where $I$ and $N$ denote the $\mathcal{R}$-modules spanned by the grid states of ${I}(D^\prime)$ and ${N}(D^\prime)$ respectively.}

{There is a one-to-one correspondence between grid states of ${I}(D^\prime)$ and grid states of ${S}(D)$: Let}
{
\begin{align*}
    e:{I}(D^\prime)\rightarrow {S}(D), \quad {x}\cup \{c\} \mapsto {x}.
\end{align*}}

Then, we linearly extend $e$ to $t\mathbf{C}$.

\begin{prop}
The map $e$ is a filtered quasi-isomorphism.
\end{prop}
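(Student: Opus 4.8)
The plan is to transcribe the stabilization-invariance argument for grid homology from \cite{Grid Homology for Knots and Links}, while keeping track of the weighted $V_i$-powers and of the filtration $\mathit{F}_{t}$; recall that only commutation and stabilization moves of the component $L$ need to be checked, so throughout the new $O$- and $X$-markings lie in $L$. We already have the $\mathcal{R}$-module splitting $t\mathbf{C}(D^\prime)\cong I\oplus N$ and the module map $e\colon I\to t\mathbf{C}(D)$, $\mathbf{x}\cup\{c\}\mapsto\mathbf{x}$, which is an isomorphism once — after the evident reindexing of markings — the variable of the newly introduced $O$-marking is identified with that of its partner in $L$ (this is harmless, since both carry weight $q-p$ in $\partial_{t}$). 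Writing $\partial_{t}^\prime$ in block form with respect to $I\oplus N$, the first step is the combinatorial fact, proved exactly as in \cite{Grid Homology for Knots and Links}, that $t\mathbf{C}(D^\prime)$ deformation retracts onto a complex supported on $I$, the retraction and its homotopies being assembled from the length-one domains adjacent to $c$ in the stabilization square.

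Granting this, the retract has the form $\bigl(I,\ \partial_I^I+\partial_N^I\circ h\circ\partial_I^N\bigr)$ for a suitable homotopy $h$, and the main point is to identify this complex, through $e$, with $\bigl(t\mathbf{C}(D),\partial_{t}\bigr)$. Geometrically this is the standard grid-homology dictionary: a rectangle in $D$ whose support sweeps across the destabilized cell appears in $D^\prime$ either as a single rectangle missing $c$ (a term of $\partial_I^I$) or as a composition of two pentagons through $c$ (a term of $\partial_N^I\circ h\circ\partial_I^N$), and conversely, so the underlying $\mathbb{F}_2$-linear maps agree. The genuinely new ingredient is the coefficient check: when a rectangle $r$ in $D$ breaks into two pentagons in $D^\prime$, its $O$-markings are partitioned between the two pentagons with each marking keeping its own exponent weight ($p$ for the two $U$-markings and $q-p$ for the $L$-markings, with $O_n$ recorded as a $(q-p)$-power of $V_1$), so that the product of $V_i$-powers contributed by $\partial_N^I\circ h\circ\partial_I^N$ matches $V_1^{pO_1(r)}\cdots V_{n-1}^{(q-p)O_{n-1}(r)}V_1^{(q-p)O_n(r)}$ attached to $r$ in $t\mathbf{C}(D)$. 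This is the analogue of the pentagon-versus-rectangle bookkeeping carried out in \cite{Grid Homology for Knots and Links}, and is where essentially all of the work lies; I expect this weighted-coefficient verification to be the main obstacle, everything else being a transcription of the grid-homology proof.

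It remains to handle the filtration. Since the markings created by the stabilization, together with the point $c$, all lie along (or, for $U$, are disjoint from) the $L$-strand of the move, the winding-number formula \ref{windingformula} shows that $A_U\circ e^{-1}-A_U$ and $A_L\circ e^{-1}-A_L$ are constant on $I(D^\prime)$; hence $\mathit{F}_{t}$ is preserved by $e$ up to an overall additive constant, which is immaterial for the filtered quasi-isomorphism property. The homotopy $h$ and the mixed maps $\partial_N^I$, $\partial_I^N$ are $\mathit{F}_{t}$-filtered by the same local winding-number computations used for the pentagon maps in Section~\ref{refinesection}, so the deformation retraction is a map of filtered complexes. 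Combining the three steps yields that $e$ is a filtered quasi-isomorphism; and one should note, as a small but essential preliminary, the identification of variables above, so that $t\mathbf{C}(D)$ and $t\mathbf{C}(D^\prime)$ are being compared over a common ring $\mathcal{R}$.
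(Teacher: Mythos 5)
Your plan—to rerun the grid-homology stabilization argument directly in $t\mathbf{C}$ with weighted exponents—founders at its first structural step, and this is not a cosmetic issue. In the splitting $t\mathbf{C}(D')\cong \mathbf{I}\oplus \mathbf{N}$, every empty rectangle disjoint from $\mathbb{X}$ with a corner at $c$ must occupy, locally at $c$, one of the two quadrants not blocked by $X_1$ and $X_2$; with the standard corner conventions this forces $c$ to be an initial (or, with the opposite convention, a terminal) corner in every such rectangle. Hence one of the two off-diagonal blocks of $\partial_t$ vanishes identically, so $\mathbf{N}$ is a subcomplex and $\mathbf{I}$ the quotient (or vice versa), and your proposed perturbed differential $\partial_I^I+\partial_N^I\circ h\circ\partial_I^N$ degenerates to $\partial_I^I$ because one of its outer factors is zero. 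A deformation retraction of $t\mathbf{C}(D')$ onto a complex supported on $\mathbf{I}$ would therefore force $(\mathbf{N},\partial_N^N)$ to be acyclic, and it is not: counting rectangles crossing $X_2$ exactly once identifies $(\mathbf{N},\partial_N^N)$, up to isomorphism, with another free copy of the destabilized complex over the enlarged ring, exactly as in the $GC^-$ stabilization proof in \cite{Grid Homology for Knots and Links}. Equivalently, $(\mathbf{I},\partial_I^I)$ is isomorphic to $t\mathbf{C}(D)[V_{\mathrm{new}}]$, whose homology is strictly larger than that of $t\mathbf{C}(D)$, so no such retraction can exist. The rectangle-versus-two-pentagons dictionary you describe is indeed the right geometric input, but it belongs to a different packaging: it exhibits $t\mathbf{C}(D')$ as (isomorphic to) the mapping cone of multiplication by the weighted difference $V_{\mathrm{new}}^{\,q-p}-V_{\mathrm{old}}^{\,q-p}$ on $t\mathbf{C}(D)[V_{\mathrm{new}}]$, and only after quotienting by this injective map—i.e., the identification of the new variable with its partner, which is precisely the non-harmless step—does one reach $t\mathbf{C}(D)$. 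The destabilization quasi-isomorphism is $e$ composed with that identification on $\mathbf{I}$, together with a nontrivial map on $\mathbf{N}$; it is not $e$ extended by zero off a retract of $\mathbf{I}$.

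The paper sidesteps this bookkeeping altogether: as in the proof that $\partial_t\circ\partial_t=0$, it realizes $t\mathbf{C}$ as a quotient of $GC^-$ after the change of variables $V_i\mapsto W_i^{p(q-p)}$, and then observes that the known stabilization homotopy equivalences of \cite{Grid Homology for Knots and Links}, being module maps, descend to this quotient along with their homotopies, so that $e$ is induced by a filtered quasi-isomorphism already established upstairs; the filtration statement follows from the grading computations already in place. If you prefer a direct argument, replace your first step by the mapping-cone comparison sketched above; your weighted-coefficient and winding-number checks are then exactly the verifications needed, but they must be carried out for the cone maps (and for the quotient by $V_{\mathrm{new}}^{\,q-p}-V_{\mathrm{old}}^{\,q-p}$), not for a retraction onto $\mathbf{I}$.
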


\begin{proof}

The same argument works in this case. $e$ can be seen as a quotient of a filtered quasi-isomorphism that is defined in $GC^{-}$. Then, taking quotients of the homotopy equivalences show that $e$ is a filtered quasi-isomorphism.  

\end{proof}

\subsection{Relation with the annular concordance invariant}

\begin{definition}
 For  $0 \leq t=\frac{p}{q} \leq 1$,  Define $t\mathscr{C} = F[V]$ module over grid states $ S(D)$ and \vspace{5mm}\\
 ${\partial}_{t}x := \mathlarger{\sum\limits_{y\in S(D)} \sum\limits_{r \in Rect^{o}(x,y), r \cap \mathbb{X} = \phi}} V^{pO_{U} (r)+ (q-p)O_{L}(r)} y $ for a $x \in S(D)$\\ 
\end{definition}

So $t\mathscr{C}$ is obtained from $t\mathbf{C}$ by setting all $V_{i}'s$ equal to each other.

\begin{prop} \label{imphomtc}
Multiplication by $V_{i}^{q}$ is chain homotopic to multiplication by $V_{j}^{q}$ in $t\mathbf{C}$ if $O_{i}$ and $O_{j}$ belong to the same link component in $L$.
\end{prop}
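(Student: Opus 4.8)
The plan is to mimic the chain-homotopy argument used earlier for the complex $\mathcal{C}_{U\cup\beta}$ (the proposition showing $V_i$ and $V_j$ are chain homotopic when $O_i$ and $O_j$ lie in the same link component), adapting the exponents to the $t$-modified setting. Recall that in $t\mathbf{C}$ each variable $V_i$ attached to an $O$-marking in $L$ appears raised to the power $(q-p)$ in the differential, so the "natural" commuting object is $V_i^{q}$ for markings in $L$ (and, by the definition of $\partial_t$, $V_i^q = (V_i^{q-p})\cdot(V_i^p)$ lines up with the way $L$-markings and the relabelled $V_1^{(q-p)O_n(r)}$ term enter). The key input is that, just as in the unmodified grid complex, for any two $O$-markings $O_m$ and $O_n$ in the same component of $L$ there is an $X$-marking $X_k$ of $L$ sharing a row with one and a column with the other, and the standard count of empty rectangles through $X_k$ realizes a homotopy.

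Concretely, first I would reduce to the case where $O_i$ and $O_j$ are "adjacent" in the component, i.e. connected by a single such $X_k$; the general case then follows by iterating along the component exactly as in the earlier proposition. Second, with $X_k$ as above lying in the same row as $O_m$ and the same column as $O_n$, I would define the $\mathcal{R}$-module homotopy operator
\[
H_{X_k}(x) := \sum_{y\in S(D)}\ \sum_{\substack{r\in Rect^{o}(x,y)\\ r\cap \mathbb{X} = \{X_k\}}} V_{1}^{pO_{1}(r)}V_{2}^{pO_{2}(r)}V_{3}^{(q-p)O_{3}(r)}\cdots V_{n-1}^{(q-p)O_{n-1}(r)}V_{1}^{(q-p)O_{n}(r)}\, y,
\]
counting rectangles that contain $X_k$ and no other $X$-marking. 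Third, I would compute $\partial_t\circ H_{X_k} + H_{X_k}\circ \partial_t$ by the usual juxtaposition-of-two-rectangles argument: pairs of composable rectangles cancel in pairs except for the two annular regions (a "vertical" and a "horizontal" annulus) swept out through $X_k$. One of these annuli contains $O_m$ (and the other $O$-markings of its row/column appropriately) contributing a factor that is the product of the relevant $V$'s to their $t$-weighted powers — which, since $O_m$ and $O_n$ lie in $L$, is exactly $V_m^{q}$ up to relabelling — and the other contributes $V_n^{q}$. Thus $\partial_t\circ H_{X_k} + H_{X_k}\circ \partial_t = V_m^{q} - V_n^{q}$ (working over $\mathbb{F}_2$ the sign is irrelevant), giving the chain homotopy; iterating yields the claim for arbitrary $O_i,O_j$ in the same component of $L$.

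The main obstacle, and the only place one must be genuinely careful, is bookkeeping the exponents: one must check that the two annuli through $X_k$ pick up precisely the factors $V_m^{q}$ and $V_n^{q}$ and not some other $t$-weighted monomial — in particular that the $(q-p)$-weighting on $L$-markings in $\partial_t$ combines with the $p$-weighting so that the leftover factor after cancellation is a pure $q$-th power, and that the two $X$-markings of $L$ adjacent to $X_k$ (whose rectangles are forbidden) do not interfere, exactly as in Proposition 6.1.1 of \cite{Grid Homology for Knots and Links}. Since both $O_m$ and $O_n$ belong to $L$, no $V_1$-type (unknot) exponent asymmetry appears, so this reduces to the same computation as in the classical case with $p(q-p)$ replaced appropriately; everything else — that $H_{X_k}$ is an $\mathcal{R}$-module map and that it respects the structure of $t\mathbf{C}$ — is immediate from the definitions.
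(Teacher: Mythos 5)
Your proposal is essentially the paper's own proof: the same homotopy operator $H_{X_k}$ counting empty rectangles meeting $\mathbb{X}$ exactly in $X_k$, weighted exactly as in $\partial_t$, the same juxtaposition argument in which all composite domains cancel in pairs except the two thin annuli through $X_k$, and the same iteration along the component for non-adjacent markings. The one caveat is precisely the exponent bookkeeping you flag but do not carry out: the paper likewise only asserts the identity $\partial_t H_{X_k}+H_{X_k}\partial_t=V_m^{q}-V_{n}^{q}$, and since each surviving thin annulus contains a single $O$-marking of $L$ and no unknot markings, your appeal to the $p$- and $(q-p)$-weights ``combining'' should be replaced by an explicit check of which power of $V$ the annuli actually contribute (the naive weight from the definition of $\partial_t$ is the $(q-p)$-th power), so that the exponent in the homotopy identity genuinely matches the one in the statement.
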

\begin{proof}
Let $X_{k}$ be the X-marking that is in same row as $O_{m}$ and in the same column as $O_{n}$.
Define $H_{X_{k}}:t\mathbf{C} \rightarrow t\mathbf{C}$,
 \[ H_{X_{k}}(x) := \mathlarger{\sum\limits_{y\in S(D)} \sum\limits_{r \in Rect^{o}(x,y), r \cap \mathbb{X} = {X_{k}}} } V_{1}^{pO_{1} (r)}V_{2}^{pO_{2} (r)}V_{3}^{(q-p)O_{3}(r)}\cdots V_{n-1}^{(q-p)O_{n-1}(r)}V_{1}^{(q-p)O_{n}(r)} y \ \ \ \forall x \in S(D) \]

Then, if $O_{m}$ is one of the markings belonging to $L$ 

\[ \partial H_{X_{k}} + H_{X_{k}} \partial= V_{m}^{q} - V_{n}^{q} \]

It follows that $V_{m}^{q}$ and $V_{n}^{q}$ are chain homotopic. Iterating this argument shows that $V_{i}^{q}$ and $V_{j}^{q}$ are chain homotopic if $O_{i}$ and $O_{j}$ belong to the same link component in $L$.

\end{proof}

In light of Proposition \ref{imphomtc}, we can think of  $t\mathbf{C}$ as a $\mathbb{F}_{2}[V_{i_{1}},\cdots,V_{i_{l}}]$-module. We can also consider the complex $ct\mathbf{C}\cong \frac{t\mathbf{C}}{V_{i_{1}}=\cdots=V_{i_{l}}}$. It can be easily seen that $\mathit{F}_{t}$ filtered quasi-isomorphism type of  $ct\mathbf{C}$ is also an annular link invariant and its homology (denoted by $ctH_{*}(\mathbf{C})$) can be thought of as a $\mathbb{F}_{2}[V]$-module. Now, we are ready to relate $t\mathscr{C}$ with $ct\mathbf{C}$\\

Let $\mathbf{W}_{L}$ be a vector space with two generators, one in $0$ $\mathscr{F}_{t}$-grading and the other in $(1-t)$ $\mathscr{F}_{t}$-grading. Let $\mathbf{W}_{U}$ be a vector space with two generators one in $0$ $\mathscr{F}_{t}$-grading and the other in $t$ $\mathscr{F}_{t}$-grading (Similar to proposition \ref{impvecdef}).
\begin{prop}\label{ctrel}
$t\mathscr{C}$ is quasi-isomorphic to $ct\mathbf{C}\otimes \mathbf{W}_{L}^{n-l-2} \otimes \mathbf{W}_{U}$ where $l$ denotes the number of components in $L$. 
\end{prop}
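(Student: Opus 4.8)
The plan is to mimic the proof of Proposition \ref{impvecdef} (and of Proposition \ref{collapse}), adapted to the $t$-modified setting. Recall that $ct\mathbf{C}$ is the $\mathbb{F}_2[V]$-module obtained from $t\mathbf{C}$ by setting $V_{i_1}=\cdots=V_{i_l}$, while $t\mathscr{C}$ is obtained from $t\mathbf{C}$ by setting \emph{all} $V_i$ equal. So the task is to pass from ``one variable per component of $U\cup L$'' to ``a single variable'', and to account for the Künneth-type correction factors $\mathbf{W}_L,\mathbf{W}_U$ that appear each time we identify two variables living on the \emph{same} component. Concretely, $U\cup L$ has $l+1$ components but a grid of grid number $n$ carries $n$ markings, so there are $n-(l+1)=n-l-1$ ``redundant'' markings; among these, $n-l-2$ lie on components of $L$ and exactly one extra lies on $U$ (the grid of $U\cup\beta$ as drawn has two markings $O_1,O_2$ on $U$). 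Each identification of two same-component variables costs one tensor factor $\mathbf{W}_L$ (for an $L$-component) or $\mathbf{W}_U$ (for the $U$-component), matching the statement.

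First I would set up, for a pair of $O$-markings $O_i,O_j$ on the same link component of $U\cup L$, the short exact sequence
\begin{equation*}
0 \longrightarrow t\mathbf{C} \xrightarrow{\ V_i^{q}-V_j^{q}\ } t\mathbf{C} \longrightarrow \frac{t\mathbf{C}}{V_i^{q}-V_j^{q}} \longrightarrow 0,
\end{equation*}
exactly as in the proof of Proposition \ref{collapse}. By Proposition \ref{imphomtc}, multiplication by $V_i^{q}-V_j^{q}$ is chain homotopic to $0$ on $t\mathbf{C}$, so its mapping cone is filtered quasi-isomorphic to $t\mathbf{C}\oplus t\mathbf{C}[\text{shift}]$; hence $t\mathbf{C}/(V_i^{q}-V_j^{q})$ is filtered quasi-isomorphic to $t\mathbf{C}\otimes \mathbf{W}$, where $\mathbf{W}$ is a two-dimensional space whose second generator sits in the $\mathscr{F}_t$-grading shift induced by $V_i^{q}-V_j^{q}$. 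The key bookkeeping point is to compute that shift: since $\mathscr{F}_t(V_k)=-1/q$ and $V_i^q$ contributes weight $q\cdot p/q$ or $q\cdot(q-p)/q$ depending on whether $O_i\in U$ or $O_i\in L$, one gets that identifying two $L$-markings shifts by $-(q-p)/q=-(1-t)$ — i.e.\ the correction factor is $\mathbf{W}_L$ with generators in $\mathscr{F}_t$-grading $0$ and $1-t$ — while identifying an $U$-marking with an $L$-marking (or the two $U$-markings) produces a factor whose nontrivial generator is in $\mathscr{F}_t$-grading $t$, i.e.\ $\mathbf{W}_U$. Iterating over the $n-l-2$ redundant $L$-markings and the one redundant $U$-marking, and noting that identifying variables on distinct components still makes sense once we pass to combined Alexander gradings (as remarked after Proposition \ref{collapse}), collapses $t\mathbf{C}$ all the way down to $t\mathscr{C}$ and yields
\begin{equation*}
t\mathscr{C}\ \simeq\ ct\mathbf{C}\otimes \mathbf{W}_L^{\otimes(n-l-2)}\otimes \mathbf{W}_U,
\end{equation*}
filtered quasi-isomorphically, which is the claim.

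I expect the main obstacle to be purely combinatorial rather than conceptual: getting the grading/filtration shifts and the \emph{count} of correction factors exactly right. In particular one must be careful that $t\mathbf{C}$ is a module over $\mathbb{F}_2[V_1,\dots,V_{n-1}]$ rather than $\mathbb{F}_2[V_1,\dots,V_n]$ — the variable originally attached to $O_n$ has already been identified with $V_1$ in the definition of $\partial_t$ — so that one of the expected collapses is ``already done'' and must not be double-counted; tracking this is what pins down the exponent $n-l-2$ on $\mathbf{W}_L$ and the single $\mathbf{W}_U$. Everything else (that the relevant maps are filtered chain maps, that the homotopies in Proposition \ref{imphomtc} interact correctly with the $\mathscr{F}_t$- and $\mathit{F}_t$-gradings, and that taking iterated quotients of filtered quasi-isomorphisms stays a filtered quasi-isomorphism) is routine and parallels the arguments already given for Propositions \ref{collapse} and \ref{impvecdef}.
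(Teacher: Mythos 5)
Your proposal is essentially the paper's own argument: the paper likewise derives the relation by feeding Proposition \ref{imphomtc} into a short exact sequence of the type used in Proposition \ref{collapse} and iterating over the redundant markings, and your count of $n-l-2$ same-component collapses on $L$ plus one on $U$ is exactly the bookkeeping the paper leaves implicit. The one wrinkle (inherited from the paper, whose Proposition \ref{imphomtc} is stated with $V_i^{q}-V_j^{q}$ while its $\mathbf{W}_L,\mathbf{W}_U$ gradings presuppose otherwise) is that the homotopy $H_{X_k}$ actually null-homotopes $V_i^{q-p}-V_j^{q-p}$ for $L$-markings and $V_1^{p}-V_2^{p}$ for the $U$-markings, and these are the maps whose $\mathscr{F}_t$-shifts are the $-(q-p)/q$ and $-p/q$ you quote, so your short exact sequences should be taken with those maps rather than with $V_i^{q}-V_j^{q}$ (whose shift would be $-1$) for your grading bookkeeping to be internally consistent.
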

\begin{proof}

Using Proposition \ref{imphomtc} and a short exact sequence similar to one considered earlier, we can derive the relation.
\end{proof}

\begin{prop}\label{homorel}
$tH(\mathscr{C}) \cong (\mathbb{F}_{2}[V])^{2^{n-1}} \bigoplus Tor$ , where $n$ is the grid number of $U\cup L$.
\end{prop}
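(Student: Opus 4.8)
\textbf{Proof proposal for Proposition \ref{homorel}.}
The plan is to reduce the computation of $tH(\mathscr{C})$ to a well-understood module computation over $\mathbb{F}_{2}[V]$ by comparing $t\mathscr{C}$ with a collapsed grid complex whose homology is already known. First I would invoke the change-of-variables description used in the proof that $\partial_t\circ\partial_t=0$: recall that $t\mathbf{C}$ sits as a quotient $\frac{\mathscr{C}_{U\cup L}}{W_1^{q-p}-W_n^{p}}$ of a complex obtained from $GC^{-}(U\cup L)$ by substituting $V_i\rightarrow W_i^{p(q-p)}$, and $t\mathscr{C}$ is the further quotient obtained by setting all $V_i$ equal. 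So $t\mathscr{C}$ is a module over $\mathbb{F}_{2}[V]$ and, as a complex, it is obtained from $cGC^{-}(U\cup L)$ (the fully collapsed grid complex, whose homology is $cGH^{-}(U\cup L)\cong(\mathbb{F}_2[V])^{2^{l}}\oplus Tor'$ since $U\cup L$ has $l+1$ components) by the ring substitution $V\mapsto V^{\,r}$ for the appropriate exponent $r$ coming from the weights $p,\ q-p,\ q$.

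Next I would make precise the effect of the substitution on homology. Extension of scalars along $\mathbb{F}_2[V]\to\mathbb{F}_2[V]$, $V\mapsto V^{r}$, turns a free summand $\mathbb{F}_2[V]$ into $\mathbb{F}_2[V]$ again (viewing the target as a rank-$r$ free module recombined into the single variable it now carries), and turns a torsion summand $\mathbb{F}_2[V]/V^{k}$ into $\mathbb{F}_2[V]/V^{rk}$, which is still torsion; one must also check via a Künneth/universal-coefficient style argument that no new homology is created, i.e. that $t\mathscr{C}$ is quasi-isomorphic to $cGC^{-}(U\cup L)\otimes_{\mathbb{F}_2[V]}\mathbb{F}_2[V]$ with the twisted module structure, which follows because $\mathbb{F}_2[V]$ is a PID and the map is flat after localizing appropriately, or more concretely because the substitution is injective on the polynomial ring and the complex is free. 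Putting this together gives $tH(\mathscr{C})\cong(\mathbb{F}_2[V])^{2^{l}}\oplus Tor$ as a first count.

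Then I would account for the discrepancy between the exponent $2^{l}$ coming from $cGH^{-}(U\cup L)$ and the exponent $2^{n-1}$ claimed in the statement. This is exactly the extra tensor factors that appear because $t\mathscr{C}$ is built on a grid of grid number $n$ rather than on the "minimal" complex: by Proposition \ref{ctrel}, $t\mathscr{C}\simeq ct\mathbf{C}\otimes\mathbf{W}_L^{\,n-l-2}\otimes\mathbf{W}_U$, and each of the $n-l-1$ two-dimensional factors $\mathbf{W}_L,\mathbf{W}_U$ (over $\mathbb{F}_2$, not over $\mathbb{F}_2[V]$) doubles the rank of the free part while keeping the torsion part torsion. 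Since $ctH(\mathbf{C})$, being the homology of a collapsed $t$-modified complex of the $(l+1)$-component link $U\cup L$, has free rank $2^{(l+1)-1}=2^{l}$, tensoring with $2^{\,n-l-1}$ copies of a $2$-dimensional $\mathbb{F}_2$-space yields free rank $2^{l}\cdot 2^{\,n-l-1}=2^{\,n-1}$, as desired, and the torsion stays torsion. I expect the main obstacle to be the bookkeeping in the previous paragraph: verifying cleanly that the monomial change of variables on the coefficient ring does not alter the free-versus-torsion dichotomy and does not introduce spurious homology, for which the cleanest route is to run the mapping-cone/short-exact-sequence argument (as in the proof of Proposition \ref{collapse} and Proposition \ref{ctrel}) directly at the level of the $t$-modified complexes rather than trying to push a general algebraic lemma through.
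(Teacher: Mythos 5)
There is a genuine gap, in two places. First, your foundational identification is false for general $t$: $t\mathscr{C}$ is not obtained from $cGC^{-}(U\cup L)$ by a single substitution $V\mapsto V^{r}$. In $\partial_{t}$ the $O$-markings of $U$ are weighted by $p$ while those of $L$ are weighted by $q-p$, and no single exponent $r$ encodes both unless $p=q-p$ (i.e.\ $t=1$); moreover $cGC^{-}$ identifies only one variable per link component, whereas $t\mathscr{C}$ collapses all the variables. What is true is that $t\mathscr{C}$ is a base change of the multivariable complex along $\mathbb{F}_2[V_1,\ldots,V_n]\to\mathbb{F}_2[V]$, $V_i\mapsto V^{p}$ or $V^{q-p}$ according to the component, but this ring map is not flat (for instance $V_i-V_j\mapsto 0$ for two markings of $U$), so freeness of the chain modules does not let you conclude that homology commutes with the base change; the clean algebraic fact you invoke applies only to the honest one-variable substitution $V\mapsto V^{r}$, where $\mathbb{F}_2[V]$ is free of rank $r$ over $\mathbb{F}_2[V^{r}]$, which is not the situation here. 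Second, your reconciliation of the ``first count'' $2^{l}$ with the claimed $2^{n-1}$ is circular: you take as input that $ctH_{*}(\mathbf{C})$ has free rank $2^{l}$, but in the paper that statement is deduced as a corollary of Proposition \ref{homorel} itself together with Proposition \ref{ctrel}; you offer no independent argument for it, and without it your method does not actually pin down the free rank.

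The paper's proof rests on one move you are missing: detect the free part by specializing $V=1$. The projection $\phi\colon t\mathscr{C}\to \frac{t\mathscr{C}}{V-1}$ kills exactly the torsion in homology, because the torsion of the graded, finitely generated $\mathbb{F}_2[V]$-module $tH(\mathscr{C})$ is $V$-power torsion, on which $V-1$ acts invertibly, and over the PID $\mathbb{F}_2[V]$ one has $H_{*}\bigl(\frac{t\mathscr{C}}{V-1}\bigr)\cong H_{*}(t\mathscr{C})\otimes \mathbb{F}_2[V]/(V-1)$ with vanishing Tor term. The quotient complex counts empty rectangles avoiding $\mathbb{X}$ with all coefficients equal to $1$, i.e.\ after exchanging the roles of $\mathbb{X}$ and $\mathbb{O}$ it is $\widetilde{\mathcal{GC}}(-(U\cup L))$, whose homology has $\mathbb{F}_2$-dimension $2^{n-1}$ for a grid of grid number $n$; hence the free rank of $tH(\mathscr{C})$ is $2^{n-1}$. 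If you wish to keep your strategy, you would have to prove Proposition \ref{ctrel} first via the mapping-cone arguments of Propositions \ref{collapse} and \ref{imphomtc}, and then still supply an independent computation of the free rank of $ctH_{*}(\mathbf{C})$ --- which is precisely what the $V=1$ specialization provides.
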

\begin{proof}

 Let $\phi: t\mathscr{C} \rightarrow  \frac{t\mathscr{C}}{V-1}$ be the projection onto quotient.\\
Now the quotient, $ \frac{t\mathscr{C}}{V-1} \cong \widetilde{\mathcal{GC}}(- (U\cup L) )$ under the natural identification (we will call the identification map $\chi$). \\  We also know that $ [ \xi ] \in    tH(\mathscr{C}) $ is non-torsion element if and only if $H(\phi)([ \xi ]) \neq 0$. So $ ker(H(\phi))= Tor(t\mathscr{C})$ . \\ Now since $\widetilde{\mathcal{GC}}(-(U \cup L)) \cong \mathbb{F}_{2}^{2^l}$ , it follows that rank of free part of $tH(\mathscr{C})$ is  $2^{n-1}$. Hence we can conclude that $tH(\mathscr{C}) \cong (\mathbb{F}_{2}[V])^{2^{n-1}} \bigoplus Tor$.

\end{proof}

We can also compute the free rank of $ct\mathbf{C}$. 
\begin{prop}
If an annular link $U\cup L$ has $l$ components.
\begin{align*}
    \frac{ctH_{*}(\mathbf{C}(D))}{Tor(ctH_{*}(\mathbf{C}(D))}\cong \mathbb{F}_{2}[V]^{2^{l-1}}.
\end{align*}
\end{prop}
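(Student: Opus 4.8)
The plan is to pin down the free $\mathbb{F}_{2}[V]$-rank of $ctH_{*}(\mathbf{C}(D))$ by playing Proposition~\ref{ctrel} off against Proposition~\ref{homorel}, and then to invoke the structure theorem for finitely generated modules over a PID. Throughout I will write $l$ for the number of components of $L$ (so that $U\cup L$ has $l+1$ components); the statement above uses $l$ for the number of components of $U\cup L$, so the two conventions differ by $1$ and I will reconcile them at the end. First I would record, as in the discussion preceding this proposition, that $ct\mathbf{C}(D)$ is a bounded complex of finitely generated free $\mathbb{F}_{2}[V]$-modules — it is generated over $\mathbb{F}_{2}[V]$ by the finite set $S(D)$, and by Proposition~\ref{imphomtc} the endomorphism induced by each $V_{i}^{q}$ coincides on homology with that of the chosen representative — so that $ctH_{*}(\mathbf{C}(D))$ is a finitely generated module over the PID $\mathbb{F}_{2}[V]$. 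Hence $ctH_{*}(\mathbf{C}(D))\cong \mathbb{F}_{2}[V]^{r}\oplus Tor$ with $Tor$ its torsion submodule, so $ctH_{*}(\mathbf{C}(D))/Tor\cong \mathbb{F}_{2}[V]^{r}$ and it only remains to compute the free rank $r$.

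Next I would apply Proposition~\ref{ctrel}, which provides a quasi-isomorphism of $\mathbb{F}_{2}[V]$-complexes $t\mathscr{C}\simeq ct\mathbf{C}(D)\otimes_{\mathbb{F}_{2}}\mathbf{W}$, where $\mathbf{W}:=\mathbf{W}_{L}^{\otimes(n-l-2)}\otimes_{\mathbb{F}_{2}}\mathbf{W}_{U}$ is a graded $\mathbb{F}_{2}$-vector space of dimension $2^{\,n-l-2}\cdot 2=2^{\,n-l-1}$ carrying the zero differential and the zero $V$-action (exactly as the factors $W_{L_{N}}$ do in Proposition~\ref{collapse}). Taking homology of both sides and using the Künneth formula over the field $\mathbb{F}_{2}$ (legitimate since $\mathbf{W}$ has zero differential) gives, after a choice of basis of $\mathbf{W}$, an isomorphism of $\mathbb{F}_{2}[V]$-modules
\[
tH(\mathscr{C})\;\cong\; ctH_{*}(\mathbf{C}(D))\otimes_{\mathbb{F}_{2}}\mathbf{W}\;\cong\;\big(ctH_{*}(\mathbf{C}(D))\big)^{\oplus\,2^{\,n-l-1}},
\]
the $V$-action being carried entirely by the $ctH_{*}(\mathbf{C}(D))$-factor. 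Since free rank is additive over finite direct sums, the free rank of $tH(\mathscr{C})$ equals $2^{\,n-l-1}\,r$. On the other hand Proposition~\ref{homorel} says the free rank of $tH(\mathscr{C})$ is $2^{\,n-1}$. Equating the two expressions gives $2^{\,n-1}=2^{\,n-l-1}\,r$, hence $r=2^{\,l}$; since $U\cup L$ has $l+1$ components this reads $r=2^{\,(\#\,\text{components of }U\cup L)-1}$, which is precisely the exponent claimed (with $l$ now denoting the number of components of $U\cup L$). Together with the reduction of the previous paragraph this finishes the argument.

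The only place demanding real care is the index bookkeeping: Propositions~\ref{ctrel} and~\ref{homorel} are stated in terms of the grid number $n$ and with $l$ meaning the number of components of $L$, while the target statement uses $l$ for the number of components of $U\cup L$, so one must keep track of this offset — and of the exponents $n-l-2$ and $n-1$ — when reading off the final power of $2$. Everything else is routine: that tensoring over $\mathbb{F}_{2}$ with a finite-dimensional vector space is exact, commutes with homology, and multiplies free $\mathbb{F}_{2}[V]$-rank by the dimension are standard facts over a field. I would also remark that the statement concerns only the free quotient $ctH_{*}(\mathbf{C}(D))/Tor$, so no description of the torsion submodule is required (in particular one need not check it is $V$-power torsion, as one would for $cGH^{-}$). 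An alternative but essentially equivalent route is to set $V=1$ directly in $ct\mathbf{C}(D)$, identify the quotient with $\widetilde{\mathcal{GC}}$ of $\pm(U\cup L)$ up to tensoring with $W$-factors, and run the universal-coefficients computation exactly as in the proof of Proposition~\ref{homorel}; this merely repackages the same count, so routing through Propositions~\ref{ctrel} and~\ref{homorel} is the most economical option.
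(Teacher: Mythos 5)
Your argument is correct and is essentially the paper's own (one-line) proof written out: combine Proposition~\ref{ctrel} with Proposition~\ref{homorel}, compare free ranks over $\mathbb{F}_{2}[V]$, and keep track of the fact that $l$ in those propositions counts components of $L$ while the statement counts components of $U\cup L$. One small inaccuracy: $ct\mathbf{C}(D)$ is not itself a finitely generated free $\mathbb{F}_{2}[V]$-module (the uncollapsed variables remain), but this is harmless because $ctH_{*}(\mathbf{C}(D))$ is a direct summand of $tH(\mathscr{C})\cong ctH_{*}(\mathbf{C}(D))\otimes\mathbf{W}$, which is finitely generated over $\mathbb{F}_{2}[V]$ since $t\mathscr{C}$ is, so the structure theorem and your rank count go through unchanged.
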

\begin{proof}
The conclusion follows from Proposition \ref{ctrel} and \ref{homorel}.

\end{proof}

The distinguished class $[x^{+}]$ in $ctH_{*}(\mathbf{C})\subset tH(\mathscr{C})$  is a non-torsion element since its image under  $\phi \circ \chi$ is $[x^{NEX}]$ in $\widetilde{\mathcal{GC}}(-(U\cup L) )$, which is non trivial of Maslov grading $1-n$. There is an unique generator of that grading in $\widetilde{\mathcal{GC}}(-(U\cup L) )$. So, there must be a class $[\alpha]$ in $ctH_{*}(\mathbf{C})\subset tH(\mathscr{C})$ for which  $[x^{+}]=V^{k} [\alpha] + [\beta]$ where $[\beta]$ is a torsion class and $k \geq 0$ is maximum. Similarly, it can be seen that $[x^-]$ is also a non-torsion element in the same tower. This gives the following relation with the annular concordance invariant  $\mathcal{A}_{L}(t)$.

\begin{proof} [Proof of Theorem \ref{theorem4}]

Let $\mathcal{F}_{t}$ be the filtration given by $\frac{t}{2}A_{-U} + (1-\frac{t}{2})A_{-L}$ on $\widetilde{\mathcal{GC}}(- (U\cup L) )$. 
 It follows from the definitions that $\mathcal{F}_{t} (x) = - \mathscr{F}_{t} (x) - \frac{t}{2} - (n-l-2)(1-\frac{t}{2})  $  . Since $H(\phi \circ \chi)([\alpha]) \neq 0$. It follows from the definition of $\widetilde{\mathcal{A}_{L}}(t)$ and 4.3 that  $\widetilde{\mathcal{A}_{L}^{U}}(t)=\widetilde{\mathcal{A}_{-L}^{-U}}(t)  \leq  \mathcal{F}_{t} ((\phi \circ \chi)([\alpha])) = - \mathscr{F}_{t}([\alpha]) - \frac{t}{2} - (n-l-2)(1-\frac{t}{2}) $. Therefore, using 4.2 we obtain $\mathcal{A}_{L}(t) \leq - \mathscr{F}_{t}([\alpha])$ . Conversely, if we take an element $ a \neq 0 \in \widetilde{\mathcal{GC}}_{1-n}(-(U\cup L) )$ with  $\widetilde{\mathcal{A}_{L}}(t) =  \mathcal{F}_{t} (a)$ . Since $H(\phi \circ \chi)([\alpha])= [x^{NEX}]=[a]$, it follows that  $\mathcal{A}_{L}(t)= \mathcal{F}_{t}(a) + \frac{t}{2} + (n-l-2)(1-\frac{t}{2}) \geq  \mathcal{F}_{t}([a])  + \frac{t}{2} + (n-l-2)(1-\frac{t}{2})  =  - \mathscr{F}_{t}([\alpha])$. Hence, the equality follows.

\end{proof}

\subsection{Crossing change}

Let $L_+$ and $L_-$ be two annular links with grids $D_+$ and $D_-$ that differ only in a crossing in the $L$ component.
\begin{prop} \label{gencross}
There are $\mathcal{R}$-module maps
\begin{align*}
    C_-\colon ctH(\mathbf{C})(L_+)\rightarrow ctH(\mathbf{C})(L_-) \quad \text{and}\quad  C_+\colon ctH(\mathbf{C})(L_-)\rightarrow ctH(\mathbf{C})(L_+)
\end{align*}
where $C_-$ is homogeneous and preserves the $\mathscr{F}_{t}$ grading, and $C_+$ is homogeneous and shifts $\mathscr{F}_{t}$ degree by $-(1-\frac{t}{2})$. $C_- \circ C_+$ and $C_+ \circ C_-$ are both the multiplication by $V^{q-p}$.
\end{prop}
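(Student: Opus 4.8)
The plan is to mimic the construction of the crossing change maps for grid homology (Chapter 6 of \cite{Grid Homology for Knots and Links}), but carried out in the $t$-modified setting, and to reduce all the homological algebra to the statements already proved in Section 4. First I would lift everything to the big complex $\mathscr{C}_{U\cup L}$ used in the proof that $\partial_t\circ\partial_t=0$: recall $t\mathbf{C}\cong \mathscr{C}_{U\cup L}/(W_1^{q-p}-W_n^{p})$, and $ct\mathbf{C}$ is a further quotient obtained by identifying the $L$-variables. So it suffices to define $\mathbb{F}_2[W_1,\dots,W_n]$-module maps $\mathbf{c}_\pm$ between the big complexes $\mathscr{C}_{U\cup L}(D_+)$ and $\mathscr{C}_{U\cup L}(D_-)$ by counting pentagons $Pent_s^\circ$ and $Pent_t^\circ$ (exactly as $c_\pm$ in Section 4.2, but with the $\mathbb{X}$-avoiding condition and the $W$-weights of $\partial_{\mathbb{X}}$), check that these are chain maps, descend them to the quotients to get $C_\pm$ on $ct\mathbf{C}$ and hence on $ctH(\mathbf{C})$. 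The chain-map property for $\mathbf{c}_\pm$ is the usual pentagon/rectangle composition count: the juxtapositions of a pentagon with a rectangle decompose in two ways except for the terms that produce the identity/stabilization contributions, and these come out identically to the classical case since introducing the $\mathbb{X}$-avoiding condition only removes some domains symmetrically.

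Next I would pin down the grading behavior. By the same winding-number / local-interval analysis as in Proposition \ref{crossgradechange} (comparing each pentagon with its associated rectangle in $D_+$ via the state bijection $\phi$, and splitting $A(x)-A(y)$ into the ``extra marking in the associated rectangle'' term and the ``local change on the four intervals $A,B,C,D$'' term), one checks that $\mathbf{c}_-$ preserves both $A_U$ and $A_L$, while $\mathbf{c}_+$ shifts $A_L$ by $1$ and preserves $A_U$ — the crossing being in the $L$ component means no $\mathbb{X}_U$ marking ever appears in an associated rectangle and there is no local change in $A_U$. Passing to $\mathscr{F}_t=\frac{pA_U+(q-p)A_L}{q}$, the map $C_-$ is $\mathscr{F}_t$-graded of degree $0$, and $C_+$ shifts $\mathscr{F}_t$ by $-\frac{q-p}{q}=-(1-\frac{t}{2})$ (recalling $t=2p/q$). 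This is essentially bookkeeping once Proposition \ref{crossgradechange} is in hand.

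Finally, for the composition identities $C_+\circ C_-=C_-\circ C_+=V^{q-p}$, I would reuse the hexagon homotopies of Proposition \ref{cross}: define $\mathbf{H}_\pm$ on $\mathscr{C}_{U\cup L}(D_\pm)$ by counting empty hexagons with consecutive corners at $s$ and $t$ (weighted by $\partial_{\mathbb{X}}$'s $W$-monomials), and verify, exactly as in Proposition 6.1.1 of \cite{Grid Homology for Knots and Links}, that $\partial_{\mathbb{X}}\circ\mathbf{H}_\pm+\mathbf{H}_\pm\circ\partial_{\mathbb{X}}=\mathbf{c}_\pm\circ\mathbf{c}_\mp+W_i^{\,e}$ where the exponent $e$ on the $W$-variable of the relevant column is whatever $\partial_{\mathbb{X}}$ assigns ($=p(q-p)$ times the column's $O$-count). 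Descending to $t\mathbf{C}$, the change of variables sends that monomial to $V^{q-p}$ in $t\mathbf{C}$ when the column's $O$-marking carries $V$-exponent $q-p$ in $\partial_t$ (equivalently $V^{p}$ in the other case — but since the crossing is in the $L$ component, one arranges the labelling so that the surviving variable is an $L$-variable, which in $ct\mathbf{C}$ is $V$, with exponent $q-p$); after identifying all $L$-variables this is precisely multiplication by $V^{q-p}$ on $ct\mathbf{C}$, hence on $ctH(\mathbf{C})$. The main obstacle I anticipate is purely organizational rather than conceptual: keeping the two quotient steps ($W\mapsto V$ change of variables and the $L$-variable identification) consistent so that the homotopy identity really lands on $V^{q-p}$ and not some other power, and making sure the $\mathbb{X}$-avoiding pentagon and hexagon counts genuinely satisfy the same cancellation combinatorics as their $\mathbb{O}$-counting analogues in \cite{Grid Homology for Knots and Links}; once those are verified the result follows formally.
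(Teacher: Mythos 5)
Your proposal follows essentially the same route as the paper: pentagon-counting maps weighted as in $\partial_t$ (obtained, as the paper also indicates, by descending from a larger complex through the quotients), grading shifts computed by the same local interval/associated-rectangle analysis as in Proposition \ref{crossgradechange}, and the composite identities from hexagon homotopies pushed through the quotients, landing on $V^{q-p}$ precisely because the changed crossing lies in the $L$ component. The only quibble is a harmless sign slip (you say $\mathbf{c}_+$ shifts $A_L$ by $+1$ yet conclude the shift $-(1-\frac{t}{2})$, which is what the statement asserts); this is bookkeeping only, and the paper's own conventions wobble the same way between Sections 4 and 5.
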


\begin{figure}\label{crosspic}
    \centering
    \includegraphics[width=\linewidth]{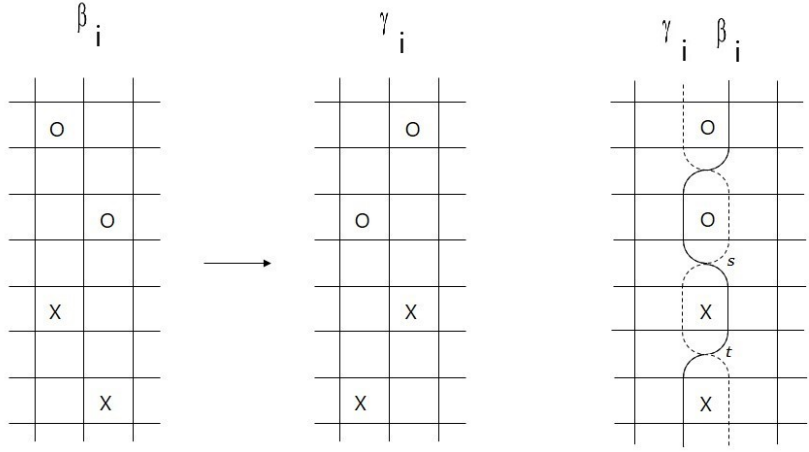}
    \caption{Crossing change move in $t\mathbf{C}$}
\end{figure}
\begin{proof}
{Fix grid states ${x}_+ \in {S}(D_+)$ and ${x}_- \in {S}(D_-)$. We again use the notation $Pent^\circ_s({x}_+, {x}_-)$ for the set of empty pentagons from ${x}_+$ to ${x}_-$ containing $s$ as a vertex, and similarly, $Pent^\circ_s({x}_-, {x}_+)$ for the set of empty pentagons from ${x}_-$ to ${x}_+$ with one vertex at $t$.}

{Consider the $\mathcal{R}$-module maps $c_- \colon ct\mathbf{C}(D_+) \rightarrow ct\mathbf{C}(D_-)$ and $c_+ \colon ct\mathbf{C}(D_-) \rightarrow ct\mathbf{C}(D_+)$ defined on a grid state ${x}_+ \in {S}(D_+)$ and ${x}_- \in {S}(D_-)$ respectively in the following way:}
\begin{align*}
    c_-({x}_+)=\sum\limits_{{y}_-\in {S}(D_-)} \sum\limits_{\Pi \in \text{Pent}^\circ_s({x}_+, {y}_-)}  V_{1}^{pO_{1} (\Pi)}V_{2}^{pO_{2} (\Pi)}V_{3}^{(q-p)O_{3}(\Pi)}\cdots V_{n-1}^{(q-p)O_{n-1}(\Pi)}V_{1}^{(q-p)O_{n}(\Pi)}\cdot {y}_-.
\end{align*}
\begin{align*}
    c_+({x}_-)=\sum\limits_{{y}_+\in {S}(D_+)} \sum\limits_{\Pi \in \text{Pent}^\circ_t({x}_-, {y}_+)}  V_{1}^{pO_{1} (\Pi)}V_{2}^{pO_{2} (\Pi)}V_{3}^{(q-p)O_{3}(\Pi)}\cdots V_{n-1}^{(q-p)O_{n-1}(\Pi)}V_{1}^{(q-p)O_{n}(\Pi)}\cdot {y}_+.
\end{align*}
\end{proof}
\begin{prop}
The map $c_-$ preserves the $\mathscr{F}_{t}$-grading and $c_+$ drops the $\mathscr{F}_{t}$-grading by $(1-\frac{t}{2})$.
\end{prop}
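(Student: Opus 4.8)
The plan is to compute the $\mathscr{F}_t$-degree shift of $c_-$ and $c_+$ by the same "pentagon versus associated rectangle" bookkeeping used in Proposition \ref{crossgradechange}, but now keeping track of the weighted filtration $\mathscr{F}_t = \frac{pA_U + (q-p)A_L}{q}$ rather than $A_U$ and $A_\beta$ separately. First I would recall that a term ${y}_-$ appearing in $c_-({x}_+)$ arises from an empty pentagon $\Pi$, and to each such pentagon one associates a rectangle $r$ in $D_+$ from ${x}_+$ to $\phi^{-1}({y}_-)$ (as in Figure \ref{explain}), so that the total $\mathscr{F}_t$-change splits as
\[
\mathscr{F}_t({x}_+) - \mathscr{F}_t({y}_-) = \bigl(\mathscr{F}_t({x}_+) - \mathscr{F}_t(\phi^{-1}({y}_-))\bigr) + \bigl(\mathscr{F}_t(\phi^{-1}({y}_-)) - \mathscr{F}_t({y}_-)\bigr),
\]
where the first bracket is dictated by the $O$-markings inside the associated rectangle (since $\partial_t$ preserves the $\mathscr{F}_t$-grading, these are exactly cancelled by the monomial weights $V_i^{pO_i}, V_i^{(q-p)O_i}$ attached to $\Pi$) and the second bracket is the purely local change coming from moving a point between the intervals $A,B,C,D$ across the bigon between $\beta_i$ and $\gamma_i$.

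Next I would run through the case analysis exactly as in Proposition \ref{crossgradechange}: the terminal corner of a left pentagon is of type $B$ or type $C$, and the initial corner of a right pentagon is of type $B$ or $C$. In the crossing-change setup the curved columns both belong to the $L$ component, so there is no local change in $A_U$ and the local change in $A_L$ is $0$ in the type-$B$ case and is compensated by an extra $X$-marking of the $L$ component in the associated rectangle in the type-$C$ case; combining with the weight $V_i^{(q-p)O_i(\Pi)}$ attached to the pentagon one checks that the net $\mathscr{F}_t$-degree of $c_-$ is $0$, i.e. $c_-$ preserves the $\mathscr{F}_t$-grading. For $c_+$ the associated rectangles (Figure for the $c_+$ pentagons) have one extra $X_i$-marking or one fewer $O_i$-marking of the $L$ component compared with the $c_-$ situation, so each case contributes $-(1-\tfrac t2)$; more cleanly, one can argue that $c_- \circ c_+$ is multiplication by $V^{q-p}$ (Proposition \ref{gencross}), and $V^{q-p}$ shifts $\mathscr{F}_t$ by $-(q-p)/q = -(1-\tfrac t2)$, so since $c_-$ is degree $0$ the map $c_+$ must drop $\mathscr{F}_t$ by $(1-\tfrac t2)$.

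The main obstacle I anticipate is not conceptual but bookkeeping: one must be careful that the weighted filtration $\mathscr{F}_t$ really does get shifted by $-1/q$ under each $V_i$ (including the variable $V_1$ that plays a double role, carrying both $V_1^{pO_1}$ and $V_1^{(q-p)O_n}$), and that the "associated rectangle" construction genuinely identifies, case by case, the discrepancy in $O_U$-count and $O_L$-count between the pentagon and its rectangle. Since the crossing being changed lies entirely in the $L$ component, the $A_U$-contributions are inert throughout, which is what makes the degree a clean multiple of $(1-\tfrac t2)$; the cleanest write-up is to reduce everything to the already-established facts that $\partial_t$ is $\mathscr{F}_t$-graded and that $c_\pm$ lift to the pentagon maps $\mathbf{c}_\pm$ on $GC^-$, so that the $\mathscr{F}_t$-degree computation is inherited from the known $(A_U, A_L)$-bidegrees of those lifts together with the substitution $\mathscr{F}_t(V_i) = -1/q$.
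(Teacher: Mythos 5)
Your main computation---splitting the $\mathscr{F}_t$-change of a pentagon into the part seen by the associated rectangle (cancelled by the attached $V_i$-weights) plus the local interval change, and running the type-$B$/type-$C$ case analysis for left and right pentagons with $A_U$ inert---is exactly the paper's proof, which reduces to the same local computations as in Proposition \ref{crossgradechange} and treats $c_+$ "similarly." One small caution: your alternative shortcut via $c_-\circ c_+\simeq V^{q-p}$ does not by itself show that $c_+$ is homogeneous of degree $-(1-\frac{t}{2})$ (the chain homotopy need not be grading-homogeneous), so the direct case-by-case check is the step that actually establishes the shift, just as in the paper.
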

\begin{proof}

The grading changes can be computed by considering local computations for each interval. Let $y$ be the term, appearing in $c_{-}(x)$ and assume there is a left pentagon from $x$ to $y$.

\textbf{Case 1: $y$ is type B} In this case we saw that, $A_{L}(y)-A_{L}(x)=A_{L}(\phi^{-}(y))-A_{L}(x)+ A_{L}(y)- A_{L}(\phi^{-}(y))=0$ and $\mathcal{A}_{U}(y)-\mathcal{A}_{U}(x)=\mathcal{A}_{U}(\phi^{-}(y))-\mathcal{A}_{U}(x)+ \mathcal{A}_{U}(y)- \mathcal{A}_{U}(\phi^{-}(y))=0$. So $\mathscr{F}_{t}(y)-\mathscr{F}_{t}(x)=0$.

\textbf{Case 2: $y$ is type C} In this case, $A_{L}(y)-A_{L}(x)=A_{L}(\phi^{-}(y))-A_{L}(x)+ A_{L}(y)- A_{L}(\phi^{-}(y))= 1-1=0$ and $\mathcal{A}_{U}(y)-\mathcal{A}_{U}(x)=\mathcal{A}_{U}(\phi^{-}(y))-\mathcal{A}_{U}(x)+ \mathcal{A}_{U}(y)- \mathcal{A}_{U}(\phi^{-}(y))=0$. So $\mathscr{F}_{t}(y)-\mathscr{F}_{t}(x)=0$.

For right pentagons, the computation is same except we consider initial corners in $B$ and $C$.

Similarly, we can compute the grading shift for $c_+$.

\end{proof}
\begin{prop}
The maps $c_-$ and $c_+$ are chain maps.
\end{prop}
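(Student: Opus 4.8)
The plan is to mimic the standard pentagon-counting argument used for the grid complex, adapting it to the $t$-modified weights. First I would set up the composite maps $\partial_t \circ c_- + c_- \circ \partial_t$ and $\partial_t \circ c_+ + c_+ \circ \partial_t$ and show each vanishes. As in Lemma 6.2.1 and Proposition 6.1.1 of \cite{Grid Homology for Knots and Links}, the terms appearing in these composites correspond to juxtapositions of a rectangle and a pentagon; these juxtapositions can be grouped into pairs of domains whose boundaries coincide (they decompose as a rectangle-then-pentagon and a pentagon-then-rectangle with the same total support), and because we are working over $\mathbb{F}_2$ these paired contributions cancel. The only thing that needs care relative to the untwisted case is that each term carries a monomial in the $V_i$'s whose exponents are weighted by $p$ (for the two unknot markings) and $q-p$ (for the $L$-markings), with the extra identification $V_n \leftrightarrow V_1^{(q-p)}$.

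The key point, and the step I expect to be the main obstacle, is checking that the two domains in each cancelling pair pass through exactly the same $O$-markings with the same multiplicities, so that the weighted monomials $V_1^{pO_1}\cdots V_{n-1}^{(q-p)O_{n-1}}V_1^{(q-p)O_n}$ attached to them agree. This is automatic in the untwisted setting because the monomial only depends on the total domain; here it still only depends on the total domain (the weight assigned to $O_i(r)$ is a fixed linear function of $i$), so once one verifies that the paired domains have the same image in $\Pi(x,y)$ — equivalently the same local multiplicities at every $O$ — the monomials match verbatim. I would phrase this by noting, exactly as in the cited references, that each ``bad'' composite region has a unique alternative decomposition with identical underlying domain, hence identical $V$-weight, hence cancellation over $\mathbb{F}_2$.

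A clean way to organize the whole thing, avoiding reproving the combinatorics, is to lift to the unblocked picture: recall from the proof that $t\mathbf{C} \cong \frac{\mathscr{C}_{U\cup L}}{W_1^{q-p}-W_n^p}$, where $\mathscr{C}_{U\cup L}$ is obtained from $GC^-$ by the substitution $V_i \mapsto W_i^{p(q-p)}$. The maps $c_\pm$ are the restrictions (to this quotient) of the honest $GC^-$ pentagon maps $\mathbf{c}_\pm$ of Chapter 6 of \cite{Grid Homology for Knots and Links}, precomposed with the same variable substitutions — compare the proof that $c_-$, $c_+$ are chain maps in the braid setting (Section~\ref{refinesection}), where exactly this reduction was used. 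Since $\mathbf{c}_\pm$ are chain maps for $\partial^-_{\mathbb{X}}$, and both the substitution $V_i \mapsto W_i^{p(q-p)}$ and passage to the quotient by $W_1^{q-p}-W_n^p$ are $\mathbb{F}_2$-algebra homomorphisms commuting with the relevant differentials, the induced maps $c_\pm$ on $ct\mathbf{C}$ (equivalently $t\mathbf{C}$, then further quotienting all the $L$-variables together) are chain maps. The only genuine verification left is that $\mathbf{c}_\pm$ descend to the quotient by $W_1^{q-p}-W_n^p$, i.e. commute with multiplication by that element, which is immediate since $\mathbf{c}_\pm$ are $\mathcal{R}$-module maps. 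I would write the proof in this second style, as it is shorter and reuses the already-cited machinery rather than redoing pentagon bookkeeping.
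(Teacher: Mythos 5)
Your proposal is correct, and the route you actually choose (the ``second style'': realizing $c_\pm$ as maps induced from the unblocked pentagon maps of \cite{Grid Homology for Knots and Links} after the variable substitution and passage to the quotient defining $t\mathbf{C}$, then collapsing the $L$-variables) is essentially the paper's own argument, since the paper's proof simply defers to the analogous quotient/induced-map argument of Section~\ref{refinesection}. The direct pentagon--rectangle cancellation you sketch first is a valid alternative but is not needed once the reduction to the known $GC^-$-level chain maps is in place.
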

\begin{proof}

Again the proof is similar to the one given in section \ref{refinesection}.

\end{proof}

The above chain maps $c_-$ and $c_+$ induce the desired maps $C_-$ and $C_+$ on the homologies. In order to verify Proposition \ref{gencross}, we have to show that $C_- \circ C_+$ and $C_+ \circ C_-$ are both the multiplication by $V^{q}$. For this, we can find chain homotopies between the composites $c_- \circ c_+$ respectively $c_+ \circ c_-$ and multiplication by $V^{q-p}$ by taking quotient similar to the proof in the last section.\\

Therefore, we have that $C_- \circ C_+$ and $C_+ \circ C_-$ are both the multiplication by $V^{q-p}$.\\ 

\subsection{Stabilizations}
Now, we will look at stabilizations. First, we will need to define the notion of stabilization of annular links. We define negative stabilization $L^{-}$ to be the annular link obtained from of an annular link $L$  by adding a linked negative crossing [See Figure \ref{aps}]. Similarly, we define positive stabilization $L^{+}$ by generalizing the picture of positive stabilization in Figure \ref{ns}. Obviously, for braids, these correspond to the standard braid stabilizations. We will abuse notations by referring to both grids of annular links $L$ and $L^{+}$ by $L$ and $L^{+}$ respectively. 

\begin{figure}[!tbph]
\begin{center}

\includegraphics[scale=0.15]{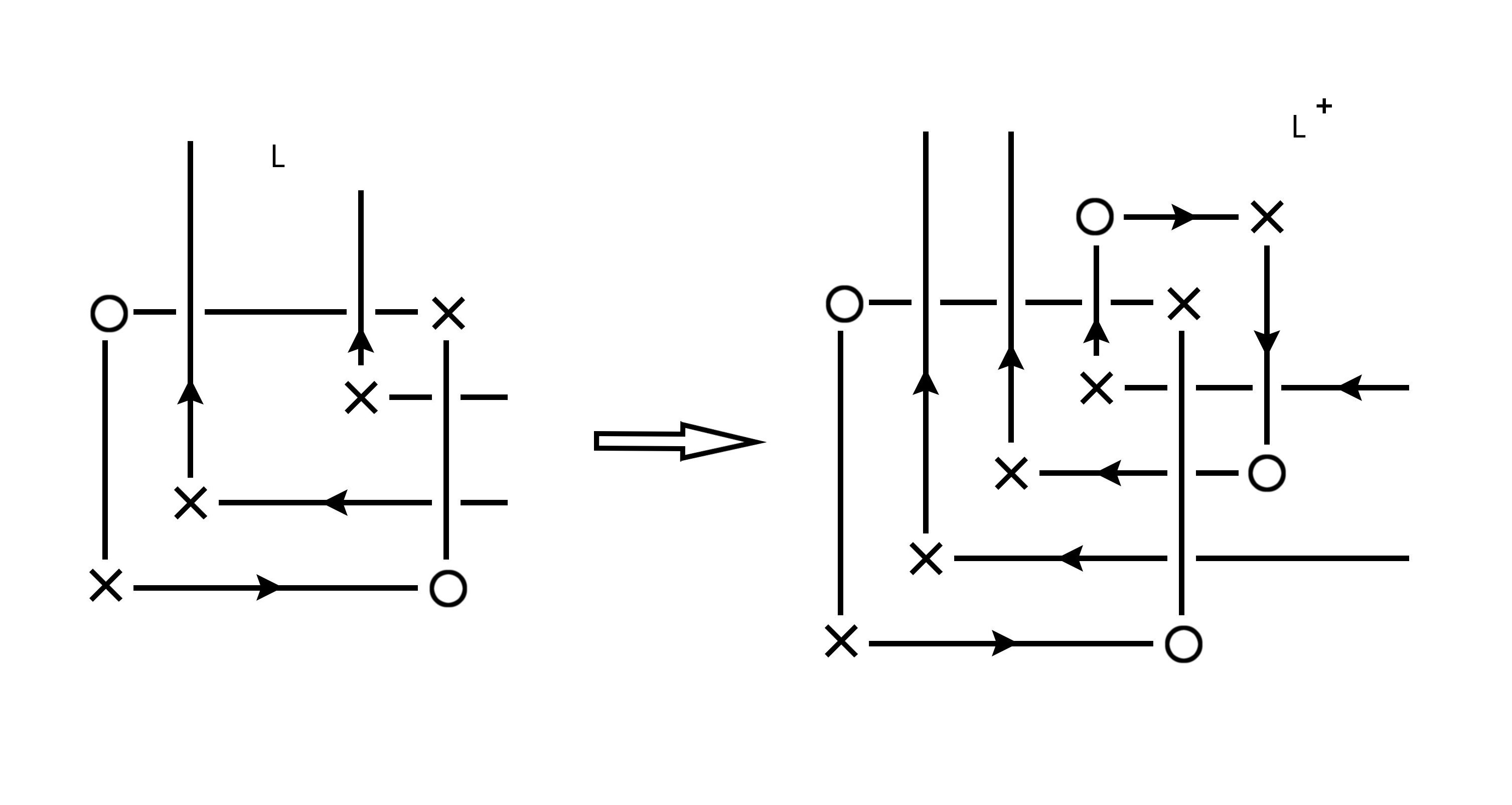}
\caption{Negative stabilization of an annular link}\label{aps}
\end{center}
\end{figure}

We define the $\mathcal{R}$-module maps ${PS}_{-}$ : $ct\mathbf{C}(L^{+}) \rightarrow ct\mathbf{C}(L)$ and ${PS}_{+} : ct\mathbf{C}(L) \rightarrow ct\mathbf{C}(L^{+})$ for a grid state $x \in S(L^{+})$ and $y' \in  S(L)$
respectively in the following way:
\[ {PS}_{-}(x) =\sum_{ y \in S(L)} \sum\limits_{\Pi \in \text{Pent}^\circ_s({x}_+, {y}_-)}  V_{1}^{pO_{1} (\Pi)}V_{2}^{pO_{2} (\Pi)}V_{3}^{(q-p)O_{3}(\Pi)}\cdots V_{n-1}^{(q-p)O_{n-1}(\Pi)}V_{1}^{(q-p)O_{n}(\Pi)}\cdot  y \]

\[ {PS}_{+}(y') =\sum_{ x' \in S(L^{+})} \sum\limits_{\Pi \in \text{Pent}^\circ_t({x}_-, {y}_+)}  V_{1}^{pO_{1} (\Pi)}V_{2}^{pO_{2} (\Pi)}V_{3}^{(q-p)O_{3}(\Pi)}\cdots V_{n-1}^{(q-p)O_{n-1}(\Pi)}V_{1}^{(q-p)O_{n}(\Pi)}\cdot x' \]

For a negative stabilzation $L^{-}$ of $L$, we have the $\mathcal{R}$-module maps ${NS}_{+}$ : $ct\mathbf{C}(L^{-}) \rightarrow ct\mathbf{C}(L)$ and ${NS}_{-} : ct\mathbf{C}(L) \rightarrow ct\mathbf{C}(L^{-})$ for a grid state $x \in S(L^{+})$ and $y' \in  S(L)$ defined as usual.

Now, the following propositions are derived just like the last section.

\begin{prop}
The maps ${NS}_{-}$,  $NS_{+}$, ${PS}_{-}$ and ${PS}_{+}$ are chain maps.
\end{prop}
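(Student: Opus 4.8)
The statement to prove is that the four $\mathcal{R}$-module maps $NS_-$, $NS_+$, $PS_-$, $PS_+$ associated to positive and negative annular stabilizations are chain maps. The plan is to follow exactly the same strategy that was used for the crossing-change maps $c_\pm$ in Proposition \ref{gencross} and for the positive/negative braid stabilization maps in Section \ref{refinesection}: exhibit each of these pentagon-counting maps as the map induced on an appropriate quotient of a lift defined on the unblocked complex $GC^-$. First I would define, for a stabilization diagram $D'$ obtained from $D$, the $\mathbb{F}_2[V_1,\dots,V_n]$-module lift $\mathbf{PS}_-$ counting the same empty pentagons but weighted by $V_1^{O_1(\Pi)}\cdots V_n^{O_n(\Pi)}$ (with no exponent modification), and similarly lifts of $PS_+$, $NS_-$, $NS_+$. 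These lifts are exactly the grid-stabilization pentagon maps studied in Chapter 13 of \cite{Grid Homology for Knots and Links}, and they are known to be chain maps there.

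Next I would observe that $t\mathbf{C}(D)$ arises from $GC^-(D)$ by the change of variables and passage to a quotient described in the proof that $\partial_t\circ\partial_t=0$: namely $GC^-$ with $V_i\mapsto W_i^{p(q-p)}$, then quotient by $W_1^{q-p}-W_n^p$, then identify $V_1 = W_1^{q-p}=W_n^p$, $V_2=W_2^{q-p}$, $V_i=W_i^p$ for $i>2$; and $ct\mathbf{C}$ is the further quotient identifying the $V_{i_k}$ in a common component. The key point is that each pentagon-map lift $\mathbf{PS}_\pm$, $\mathbf{NS}_\pm$ is a $\mathbb{F}_2[V_1,\dots,V_n]$-module map whose defining sum is natural in the $O$-variables, so it descends through each of these quotient operations, and the monomial weight it acquires after descent is precisely $V_1^{pO_1(\Pi)}V_2^{pO_2(\Pi)}V_3^{(q-p)O_3(\Pi)}\cdots V_{n-1}^{(q-p)O_{n-1}(\Pi)}V_1^{(q-p)O_n(\Pi)}$, i.e. the formula defining $PS_-$, etc. Since a quotient of a chain map by a chain-complex-compatible relation is again a chain map, $\partial_t\circ PS_- + PS_-\circ\partial_t = 0$ follows immediately from $\mathbf{\partial}^-\circ\mathbf{PS}_- + \mathbf{PS}_-\circ\mathbf{\partial}^- = 0$, and likewise for the other three maps.

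The one genuinely new bookkeeping item — the only step I would expect to require care rather than citation — is verifying that the weight exponents really do transform as claimed under the composite change of variables, in particular that the marking $O_n$ (whose variable $V_n$ does not appear in the final list and is instead absorbed into $V_1$) picks up exponent $(q-p)$ rather than $p$, and that the two unknot markings $O_1,O_2$ pick up $p$. This is the same computation already carried out for $\partial_t$, so it is routine, but because the stabilization introduces a new $O$-marking one must first renumber and check the new marking lands in the expected class (it belongs to the stabilized component of $L$, so it contributes weight $q-p$). After that, invariance of the ``chain map'' property under quotients finishes the proof. In short: \emph{lift to $GC^-$, cite that the lift is a chain map, and descend through the same quotients that define $t\mathbf{C}$ and $ct\mathbf{C}$}; the main (mild) obstacle is tracking the $O$-weights through the variable substitution for the newly introduced marking.
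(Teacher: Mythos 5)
Your proposal is correct and follows essentially the same route as the paper, which proves these propositions exactly by the quotient argument you describe: lift the pentagon-counting maps to the unblocked complex (via the $W$-variable change of variables used to show $\partial_t\circ\partial_t=0$), invoke the standard fact that such maps are chain maps there, and descend through the quotients defining $t\mathbf{C}$ and $ct\mathbf{C}$. One minor correction: the annular/braid stabilizations here are realized as cross-commutations of two columns (one belonging to $U$, one to $L$), so the lifted maps are the cross-commutation pentagon maps of Chapter 5--6 of \cite{Grid Homology for Knots and Links} rather than grid-stabilization maps, but this does not affect the validity of your argument.
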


\begin{prop}
The maps ${PS}_{-}\circ{PS}_{+}$ and ${PS}_{+}\circ{PS}_{-}$ are both homotopic to $V^{q-p}$. The maps ${NS}_{-}\circ{NS}_{+}$ and ${NS}_{+}\circ{NS}_{-}$ are both homotopic to $V^{p}$.

\end{prop}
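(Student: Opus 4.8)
The plan is to run the standard ``stabilization/commutation'' argument of \cite{Grid Homology for Knots and Links} one level up, in the unblocked complex, and then descend to $ct\mathbf{C}$, exactly as was done for $\partial_{t}\circ\partial_{t}=0$ and for the crossing--change composites. Recall that $t\mathbf{C}$, and hence $ct\mathbf{C}$, is obtained from $GC^{-}(U\cup L)$ by the change of variables $V_{i}\mapsto W_{i}^{p(q-p)}$ followed by the collapse $W_{1}^{q-p}=W_{n}^{p}$ (and by identifying the $L$--variables); by construction all four maps $PS_{\pm},NS_{\pm}$ are the maps induced on this quotient by the pentagon--counting maps $\mathbf{PS}_{\pm},\mathbf{NS}_{\pm}$ on $GC^{-}$ defined verbatim as in Section 4 but carrying the full monomial weight $V_{1}^{O_{1}(\Pi)}\cdots V_{n}^{O_{n}(\Pi)}$. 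Hence it suffices to prove the two homotopy identities in $GC^{-}$ and then read off which power of $V$ the surviving distinguished variable becomes in the quotient.

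First I would introduce the hexagon operators $H_{+}\colon GC^{-}(L^{+})\to GC^{-}(L^{+})$ and $H_{-}\colon GC^{-}(L)\to GC^{-}(L)$ counting empty hexagons with two consecutive corners at the two intersection points $s,t$ of the curved vertical circles $\beta_{i}\cup\gamma_{i}$, with the full monomial weight attached --- these are precisely the operators already used in Proposition \ref{cross} and in the positive--stabilization discussion of Section 4. Following the lines of the proof of Proposition 6.1.1 of \cite{Grid Homology for Knots and Links} (and of the stabilization invariance proofs there), one analyses the juxtapositions of pentagons occurring in $\partial\circ H_{\pm}+H_{\pm}\circ\partial$ and in the composites $\mathbf{PS}_{+}\circ\mathbf{PS}_{-}$, $\mathbf{PS}_{-}\circ\mathbf{PS}_{+}$ (resp.\ the $\mathbf{NS}$ composites): every composite domain that is not one of the two thin vertical annuli cut out by $\beta_{i}\cup\gamma_{i}$ arises in exactly two ways and cancels, and of the two thin annuli exactly one is empty. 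That surviving annulus passes through a single $O$--marking, so one obtains
\begin{align*}
\partial\circ H_{+}+H_{+}\circ\partial &=\mathbf{PS}_{+}\circ\mathbf{PS}_{-}+V_{O},\\
\partial\circ H_{-}+H_{-}\circ\partial &=\mathbf{PS}_{-}\circ\mathbf{PS}_{+}+V_{O},
\end{align*}
with $O$ the distinguished $O$--marking in the stabilization region, and the same two identities with $\mathbf{NS}$ and the distinguished marking $O'$ of a negative stabilization.

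It remains to identify $O$ and $O'$ and to track them through the quotient. One checks from the positive--stabilization picture (the picture of Figure \ref{ns} generalized, as in Section 5) that the surviving annulus runs through an $O$--marking lying in the $L$--component; its weight exponent in $t\mathbf{C}$ is $q-p$, so under the change of variables and the collapse $V_{O}$ descends to $V^{q-p}$ in $ct\mathbf{C}$. For a negative stabilization the surviving annulus runs through an $O$--marking of the unknot $U$, whose weight exponent is $p$, so $V_{O'}$ descends to $V^{p}$. Passing the two displayed identities to $ct\mathbf{C}$ therefore yields chain homotopies
\begin{align*}
\partial_{t}\circ H_{+}+H_{+}\circ\partial_{t}&=PS_{+}\circ PS_{-}+V^{q-p},\\
\partial_{t}\circ H_{-}+H_{-}\circ\partial_{t}&=PS_{-}\circ PS_{+}+V^{q-p},
\end{align*}
and the identical computation with $p$ in place of $q-p$ for the $NS$ maps; this is exactly the assertion, and one verifies along the way that the $\mathscr{F}_{t}$--degree bookkeeping is consistent with the degrees of $PS_{\pm}$ and of $C_{\pm}$ already recorded.

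The one genuinely delicate point --- everything else being a verbatim repeat of the corresponding step in Section 4 and in \cite{Grid Homology for Knots and Links} --- is the bookkeeping of the last paragraph: that for a positive stabilization the distinguished annulus meets an $O$--marking of $L$ while for a negative stabilization it meets an $O$--marking of $U$, and that the nonstandard weight convention of $t\mathbf{C}$ (the unknot markings $O_{1},O_{2}$ carrying exponent $p$, the $L$--markings carrying exponent $q-p$, and $O_{n}$ sharing the variable $V_{1}$) turns the resulting $V_{O}$ into the asserted power of $V$ after the collapse. I expect this to be the main obstacle, though it is a finite check that can be carried out interval by interval exactly as in Proposition \ref{crossgradechange}.
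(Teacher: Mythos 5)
Your argument is essentially the paper's own proof: the paper derives these homotopies ``just like the last section,'' i.e.\ by the same hexagon-counting homotopy operators, lifted to the fully weighted unblocked complex and then descended to the quotient $ct\mathbf{C}$, which is exactly the route you take. The only nontrivial bookkeeping—that the surviving degenerate domain meets an $O$-marking of $L$ (weight $q-p$) for $PS_{\pm}$ and an $O$-marking of $U$ (weight $p$) for $NS_{\pm}$—is correct and is consistent with (indeed forced by) the $\mathscr{F}_{t}$-degree computations of Proposition \ref{starchange}, so the proposal is fine.
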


Now we will compute grading shifts of these maps that will be key for understanding behavior of the invariant under positive/negative stabilizations.

\begin{figure}[!tbph]
\begin{center}
\includegraphics[scale=0.4]{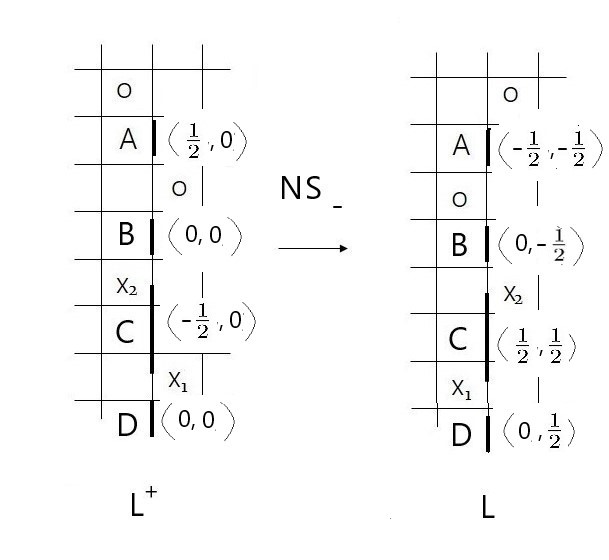}
\caption{Change in local values of $(A, A_{U})$ gradings}\label{newintervals}
\end{center}
\end{figure}

\begin{prop}\label{starchange}
The maps ${NS}_{-}$ shifts $\mathscr{F}_{t}$  by $\frac{1}{2}(1-t)$  and $NS_{+}$ shifts $\mathscr{F}_{t}$  by $-\frac{1}{2}$ . The maps ${PS}_{-}$ shifts $\mathscr{F}_{t}$ by $-\frac{1}{2}$ and $PS_{+}$ shifts $\mathscr{F}_{t}$  by $\frac{t-1}{2}$ .

\end{prop}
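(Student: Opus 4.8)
The plan is to compute the $\mathscr{F}_{t}$-grading shift of each stabilization map by reducing to a local computation at the four intervals $A,B,C,D$, exactly as in the analysis of $PS_{\pm}$ and $c_{\pm}$ carried out earlier. Recall $\mathscr{F}_{t}(x)=\frac{pA_{U}(x)+(q-p)A_{L}(x)}{q}$, so writing $t=2p/q$ we have $\mathscr{F}_{t}=\frac{t}{2}A_{U}+(1-\frac{t}{2})A_{L}$ on grid states. Thus a shift of $\Delta A_{U}$ in the unknot filtration and $\Delta A_{L}$ in the link filtration produces an $\mathscr{F}_{t}$-shift of $\frac{t}{2}\Delta A_{U}+(1-\frac{t}{2})\Delta A_{L}$. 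The strategy is: for each of the four maps $NS_{-}, NS_{+}, PS_{-}, PS_{+}$ and each interval type of the terminal (resp. initial) generator, pass from the pentagon to its associated rectangle $\phi^{\pm}$ as in Proposition \ref{crossgradechange}, split the grading change as $(A(x)-A(\phi^{-}(y)))+(A(\phi^{-}(y))-A(y))$ where the first summand is read off from extra $\mathbb{O}$-markings in the associated rectangle and the second is the purely local change between a state and its image under the vertical-circle identification, and then take the maximum over interval types (since the map is a sum, the filtered degree is the largest shift that occurs).

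First I would record the local changes in $(A_{U},A_{L})$ for the negative-stabilization picture (Figure \ref{aps}, cf. Figure \ref{newintervals}): here the stabilization adds a linked negative crossing, so the new $X$- and $O$-markings belong partly to $U$ and partly to $L$. Running the four (or fewer) interval cases for $NS_{-}$ one finds the worst case contributes $\Delta A_{U}=0$, $\Delta A_{L}=1$, giving $\mathscr{F}_{t}$-shift $1-\frac{t}{2}=\frac{1}{2}(2-t)$; but one must be careful — the claimed answer is $\frac{1}{2}(1-t)$, so the relevant extremal interval must instead yield $\Delta A_{U}=-1,\Delta A_{L}=1$ (an extra unknot $O$-marking compensated by an extra braid $X$-marking in the associated rectangle), giving $\frac{t}{2}(-1)+(1-\frac{t}{2})(1)=\frac{1}{2}-t\cdot\frac{1}{2}\cdot 2 $... this is exactly the bookkeeping I need to nail down carefully. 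For $NS_{+}$ the associated rectangle runs the other way and has one fewer $O$-marking of the appropriate component in every case, forcing the maximal shift $-\frac{1}{2}$. For $PS_{\pm}$, the computation is essentially the one already done in Section \ref{refinesection} for the braid complex, now tracked in the combined $\mathscr{F}_{t}$-grading rather than $\mathcal{F}_{U}$ alone: $PS_{-}$ gives shift $-\frac{1}{2}$ and $PS_{+}$ gives shift $\frac{t-1}{2}$, matching the $\mathcal{F}_{U}$-degree $\frac{1}{2}$ computation of the earlier proposition once the $A_{L}$-contribution (which changes by $1$ across the stabilized crossing) is weighted by $(1-\frac{t}{2})$.

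I would organize the write-up as four short paragraphs, one per map, each citing Figures \ref{aps}, \ref{newintervals}, \ref{explainC}, \ref{explainplus} and invoking the associated-rectangle identity from Proposition \ref{crossgradechange}; in each I list the interval cases, give the $(\Delta A_{U},\Delta A_{L})$ pair, convert to $\mathscr{F}_{t}$, and take the max. The main obstacle will be purely combinatorial rather than conceptual: correctly enumerating which interval types can actually occur as the terminal corner of a left pentagon versus the initial corner of a right pentagon in each of the negative- and positive-stabilization configurations, and keeping consistent track of which of the two new $O$-markings and which new $X$-marking belongs to $U$ and which to $L$ — a sign/component bookkeeping error there is exactly what would turn $\frac{1}{2}(1-t)$ into $\frac{1}{2}(2-t)$ or flip the sign of $\frac{t-1}{2}$. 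Once the local data is tabulated correctly, the conclusion is immediate, so I would present the interval analysis compactly and defer the routine winding-number verifications (via Equation \ref{windingformula}) to the reader, as was done for the earlier stabilization propositions.
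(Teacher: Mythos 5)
Your overall strategy is the paper's: reduce each of $NS_{\pm}$, $PS_{\pm}$ to a local analysis over the interval types $A,B,C,D$, pass from a pentagon to its associated rectangle as in Proposition \ref{crossgradechange}, and add the local change to the correction coming from extra markings in the associated rectangle. But the actual content of this proposition is precisely the numerical bookkeeping, and that is where your proposal fails. The one place you attempt it, for $NS_{-}$, you posit integer local changes, first $(\Delta A_{U},\Delta A_{L})=(0,1)$ giving $1-\tfrac{t}{2}$, then $(-1,1)$ giving $\tfrac{t}{2}(-1)+(1-\tfrac{t}{2})(1)=1-t$; neither equals the claimed $\tfrac{1-t}{2}$, and you explicitly leave the discrepancy unresolved. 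The correct local data (Figure \ref{newintervals}) consists of \emph{half-integer} changes — e.g.\ in the type $B$ case for $NS_{-}$ the local change is $\Delta A_{L}=\tfrac12$, $\Delta A_{U}=-\tfrac12$, giving $\tfrac12(1-\tfrac{t}{2})-\tfrac12\cdot\tfrac{t}{2}=\tfrac{1-t}{2}$, and in the type $C$ case the local change $(\tfrac12,\tfrac12)$ is corrected by an extra unknot $X$-marking in the associated rectangle, again giving $\tfrac{1-t}{2}$ — half-integers arise because the stabilization changes the set of markings and the component structure, so Alexander gradings of corresponding states differ by half-integers. Without this tabulation (and the analogous four cases for $NS_{+}$ and the right-pentagon cases), no proof of the stated values exists in your write-up; reverse-engineering from the answer, as you do, is not a substitute.

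A second, smaller issue: you propose to finish by "taking the maximum over interval types," which would only establish that each map is $\mathscr{F}_{t}$-\emph{filtered} of the stated degree. The proposition asserts that the maps \emph{shift} $\mathscr{F}_{t}$ by these amounts, i.e.\ are homogeneous, and this is what the paper's case analysis delivers: every interval type produces exactly the same shift. Homogeneity is not cosmetic — in Proposition \ref{impstabineq} the image of a non-torsion class must have a known grading (not merely an upper bound on its filtration level) to produce the two-sided inequalities for $\mathcal{A}_{L^{\pm}}(t)$. So you should replace "take the max" by "verify that all cases agree," which the completed local computation does.
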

\begin{proof}

 We again make local computations for each of the intervals and keep track of local change in both $A=A_{L}+A_{U}$ and $A_{U}$ gradings [See Figure \ref{newintervals}]. We know how $A$ grading changes locally from Lemmma 6.2.1 of \cite{Grid Homology for Knots and Links} and we use the winding number formula (Proposition 11.2.6. in \cite{Grid Homology for Knots and Links}) for computing the $A_{U}$ grading change. If $y$ is a term appearing in ${NS}_{-}(x)$ and there is an empty left pentagon $p$ (pentagon to the the left of vertical circle $\beta_{i}$ or $\gamma_{i}$) from $x$ to $y$ and from Lemma 6.2.1 of \cite{Grid Homology for Knots and Links} that the terminal generator $y$ is either of type $B$ or $C$. So we can use associated left rectangles for the left pentagons to compute grading change.

\textbf{Case 1: $y$ is type B}  Since there are no extra markings in the associated rectangles we have, \[\mathscr{F}_{t}(y)-\mathscr{F}_{t}(x)= \text{ Local change }= \frac{1}{2}(1-\frac{t}{2})+(-\frac{1}{2})\frac{t}{2}=\frac{1-t}{2}. \]

\textbf{Case 2: $y$ is type C} In this case, we have an extra X marking belonging to the unknot in the associated rectangle. So, \[\mathscr{F}_{t}(y)-\mathscr{F}_{t}(x)=\text{ Local change }  - \frac{t}{2}=\frac{1}{2}\cdot\frac{t}{2}+\frac{1}{2}(1-\frac{t}{2})-\frac{t}{2}=\frac{1-t}{2}. \]

Similarly for a right pentagon (pentagon to the the right of vertical circle $\beta_{i}$ or $\gamma_{i}$), we compare it with a right rectangle. Here the initial corner is either of type $B$ or type $C$. In each case, we get grading change $=\frac{1-t}{2}$.\\

Now, let $y'$ be a term appearing in ${NS}_{+}(x')$.\\ 

\textbf{Case 1: $y'$ is type B}
From Figure \ref{explainplus}, we observe that the associated rectangle has one extra X and O markings belonging to $L$ and one less O marking  belonging to the unknot. So we have, \[\mathscr{F}_{t}(y')-\mathscr{F}_{t}(x')=\text{ Local change }- \frac{t}{2}= -\frac{1-t}{2}- \frac{t}{2}=-\frac{1}{2}.\]

\textbf{Case 2: $y'$ is type C}
There are no additional markings. So we have, \[\mathscr{F}_{t}(y')-\mathscr{F}_{t}(x')=\text{ Local change }= -\frac{1}{2}(1-\frac{t}{2}) - \frac{t}{2}\cdot\frac{1}{2}=-\frac{1}{2}.\]

\textbf{Case 3: $y'$ is type D}
There is an additional X marking belonging to $L$. Therefore,
\[\mathscr{F}_{t}(y')-\mathscr{F}_{t}(x')=\text{ Local change } - (1-\frac{t}{2})= \frac{1}{2}(1-\frac{t}{2}) - \frac{1}{2} \cdot \frac{t}{2} - (1-\frac{t}{2})= -\frac{1}{2}.\]

\textbf{Case 4: $y'$ is type A}
Now, the associated rectangle has one extra X marking belonging to $L$ and one less O marking  belonging to the unknot. So we have, 
\[\mathscr{F}_{t}(y')-\mathscr{F}_{t}(x')=\text{ Local change }-(1-\frac{t}{2})-\frac{t}{2}=\frac{1}{2}(1-\frac{t}{2}) + \frac{1}{2} \cdot \frac{t}{2} - 1 = -\frac{1}{2}.\]

Hence, the map $NS_{+}$ is $\mathscr{F}_{t}$ graded of degree $-\frac{1}{2}$. Similarly, we compute the  $PS_{-}$ and $PS_{+}$ shifts.  
\end{proof}

\subsection{Inequalities involving the annular concordance invariant}

\begin{prop}\label{imppropcross}
If the annular links $L_+$ and $L_-$ differ in a crossing change, then for $t \in [0, 2]$ 
\begin{align*}
    \mathcal{A}_{L_-} (t) \leq \mathcal{A}_{L_+} (t) \leq \mathcal{A}_{L_-}(t) + (1-\frac{t}{2})
\end{align*}
and

\begin{align*}
\mathscr{A}_{L_-} (t) \leq \mathscr{A}_{L_+} (t) \leq \mathscr{A}_{L_-}(t) + (1-\frac{t}{2}).
\end{align*}

\end{prop}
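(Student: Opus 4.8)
The plan is to deduce the crossing-change inequality from the maps $C_-$ and $C_+$ constructed in Proposition~\ref{gencross}, together with the identification $\mathcal{A}_L(t) = -\mathscr{F}_t([\alpha])$ of Theorem~\ref{theorem4}, by tracking what the induced maps do to the distinguished non-torsion tower in $ctH(\mathbf{C})$. First I would recall that $[x^+]$ (equivalently $[x^-]$) generates the non-torsion part $\mathbb{F}_2[V]^{2^{l-1}}$ up to torsion, and that $\mathcal{A}_L(t)$ is, by the discussion preceding the proof of Theorem~\ref{theorem4}, $-\mathscr{F}_t$ of the top-level generator $[\alpha]$ of the tower containing $[x^+]$ — i.e. it records the maximal $\mathscr{F}_t$-grading at which the non-torsion tower is ``visible''. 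So it suffices to understand how $C_\pm$ interact with these towers and their filtration levels.

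The key steps, in order: (1) Observe that $c_-$ and $c_+$ send the distinguished states $x^+$ (or $x^-$) in $D_+$ to the corresponding distinguished states in $D_-$ and vice versa, up to lower-order terms — this is checked by the same local pentagon analysis used in Proposition~\ref{crossgradechange}, noting the distinguished corners of the $X$-markings are untouched by the crossing-change commutation. Hence $C_-$ and $C_+$ carry the non-torsion tower of $L_+$ to that of $L_-$ and conversely. (2) Since $C_- \circ C_+ = C_+ \circ C_- = $ multiplication by $V^{q-p}$ (Proposition~\ref{gencross}) and $V$ acts injectively on the free part, neither $C_-$ nor $C_+$ can kill the tower; so the image of the top generator of one tower is, modulo torsion, $V^m$ times the top generator of the other for some $m \ge 0$, with the exponents on the two sides summing to $q-p$. (3) Now feed in the $\mathscr{F}_t$-grading shifts from Proposition~\ref{gencross}: $C_-$ preserves $\mathscr{F}_t$, while $C_+$ shifts it by $-(1-\tfrac{t}{2})$, and multiplication by $V^{q-p}$ shifts $\mathscr{F}_t$ by $-(q-p)/q = -(1-t/2)\cdot 0$... more precisely, since $\mathscr{F}_t(V_i) = -1/q$ and the relevant power works out (after the $p,q$ bookkeeping of Section~5.1, where $1-t/2 = (q-p)/q$), $V^{q-p}$ drops $\mathscr{F}_t$ by exactly $1-\tfrac t2$. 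Combining: applying $C_-$ shows the top $\mathscr{F}_t$-level of the $L_-$ tower is at least that of the $L_+$ tower, i.e. $-\mathscr{F}_t([\alpha_-]) \le -\mathscr{F}_t([\alpha_+])$, giving $\mathcal{A}_{L_-}(t) \le \mathcal{A}_{L_+}(t)$; applying $C_+$ (which drops grading by $1-t/2$) gives $\mathcal{A}_{L_+}(t) \le \mathcal{A}_{L_-}(t) + (1-\tfrac t2)$. (4) Finally, the inequality for $\mathscr{A}_{L}(t)$ follows from the one for $\mathcal{A}_L(t)$ by the mirror/orientation symmetries already recorded (the proposition $\mathscr{A}_L(t) = -\mathcal{A}_{m(L)}(t)$, noting that a crossing change on $L$ is a crossing change on $m(L)$ with the sign reversed, so the two-sided bound is symmetric), or alternatively by running the identical argument with $H_0(\widehat{\mathcal{GC}})$ in place of the non-torsion tower and the crossing-change maps at the hat level.

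I expect the main obstacle to be Step~(1)–(2): pinning down precisely that $C_-$ and $C_+$ map the distinguished non-torsion class of one complex \emph{onto} the non-torsion tower of the other (not into torsion), and controlling the two $V$-exponents $m$ and $q-p-m$. The grading shifts alone do not force this; one needs the fact that $x^+$ (or $x^-$) is literally sent to $x^+$ (or $x^-$) plus torsion under the pentagon-counting maps, which is where the careful local picture (Figures~\ref{cross}, \ref{explain}, \ref{explainC}) does the real work, exactly as in the analogous LOSS/GRID arguments. Once that is in hand, the filtration bookkeeping via Proposition~\ref{gencross} and Theorem~\ref{theorem4} is routine. The passage to $\mathscr{A}_L(t)$ is then essentially formal.
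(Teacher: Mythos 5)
Your proposal is correct and follows essentially the same route as the paper: take the top non-torsion element of the $[x^{+}]$ tower, push it through $C_{-}$ (grading-preserving) and $C_{+}$ (shift $-(1-\tfrac{t}{2})$), use $C_{+}\circ C_{-}\simeq C_{-}\circ C_{+}\simeq V^{q-p}$ to keep it non-torsion, invoke Theorem~\ref{theorem4}, and pass to $\mathscr{A}_{L}(t)$ by mirroring. The extra care you take in steps (1)--(2) to see that the image stays in the distinguished tower is exactly the point the paper asserts with ``since it is in the tower of the distinguished class,'' so your write-up is, if anything, slightly more detailed than the paper's.
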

\begin{proof}
Consider a non-torsion element $\xi \in ctH(\mathbf{C})(L_-)$ that has grading $-\mathcal{A}_{L_-}(t)$. Since $C_- \circ C_+$ and $C_+ \circ C_-$ are both the multiplication by $V^{q-p}$, $C_+(\xi)$ is non-torsion. The $\mathcal{F}_{t}$ grading of $C_{+}(\xi)$ is $-\mathcal{A}_{L_-} (t)- (1-\frac{t}{2})$. Also since it is in the tower of the distinguished class $[x^+]$, by Theorem \ref{theorem4}  $-\mathcal{A}_{L_+} (t) \geq -\mathcal{A}_{L_-}(t) - (1-\frac{t}{2})$. Similarly, if $\sigma \in ctH(\mathbf{C})(L_+)$ is a non-torsion element with grading $-\mathcal{A}_{L_+}(t)$, then its image $C_-(\eta)$ has grading $-\mathcal{A}_{L_+}(t)$ too. Again since $C_-(\eta)$ is non-torsion and in the tower of the distinguished class, $-\mathcal{A}_{L_+} (t) \leq -\mathcal{A}_{L_-} (t)$. For the second inequality, we use the fact that mirroring takes $\mathcal{A}_{t}$ to $-\mathscr{A}_{t}$.
\end{proof}

\begin{prop}\label{impstabineq}
Let $L^{+}$ and $L^{-}$ denote the positive and negative stabilization of an annular link $L$. Then for $t \in [0, 2]$

\begin{align*}
    \mathcal{A}_{L} (t)- \frac{1}{2}  \leq \mathcal{A}_{L^{-}} (t) \leq \mathcal{A}_{L}(t) - \frac{1-t}{2} 
\end{align*}
and
 
\begin{align*}
    \mathcal{A}_{L} (t)- \frac{1}{2} \leq \mathcal{A}_{L^{+}} (t) \leq \mathcal{A}_{L}(t) + \frac{1-t}{2}
\end{align*}

Also,

\begin{align*}
    \mathscr{A}_{L} (t) - \frac{1-t}{2} \leq \mathscr{A}_{L^{-}} (t) \leq \mathscr{A}_{L}(t) +  \frac{1}{2}
\end{align*}
and

\begin{align*}
    \mathscr{A}_{L} (t) + \frac{1-t}{2}  \leq \mathscr{A}_{L^{+}} (t) \leq \mathscr{A}_{L}(t) +  \frac{1}{2}.
\end{align*}

\end{prop}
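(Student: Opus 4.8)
The plan is to rerun the argument of Proposition~\ref{imppropcross} essentially verbatim, with the stabilization maps $PS_{\pm}$ and $NS_{\pm}$ playing the role of the crossing change maps $C_{\pm}$. The ingredients I would use are: the $\mathscr{F}_t$-grading shifts of $PS_{-}$, $PS_{+}$, $NS_{-}$, $NS_{+}$ recorded in Proposition~\ref{starchange}; the fact that $PS_{-}\circ PS_{+}$ and $PS_{+}\circ PS_{-}$ are homotopic to $V^{q-p}$ while $NS_{-}\circ NS_{+}$ and $NS_{+}\circ NS_{-}$ are homotopic to $V^{p}$, which forces each of these maps to carry non-torsion classes to non-torsion classes; and Theorem~\ref{theorem4}, which identifies $-\mathcal{A}_L(t)$ with the maximal $\mathscr{F}_t$-grading of a non-torsion class in the tower of the distinguished class $[x^{+}]$ inside $ctH(\mathbf{C})(L)$.

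First I would establish the two displays for $\mathcal{A}_{L^{-}}$. For the lower bound, let $[\alpha]$ be the top class of the $[x^{+}]$-tower in $ctH(\mathbf{C})(L^{-})$, so $\mathscr{F}_t([\alpha])=-\mathcal{A}_{L^{-}}(t)$ by Theorem~\ref{theorem4}; applying $NS_{+}\colon ctH(\mathbf{C})(L^{-})\to ctH(\mathbf{C})(L)$, which drops $\mathscr{F}_t$ by $\tfrac12$, yields a non-torsion class in the $[x^{+}]$-tower of $ctH(\mathbf{C})(L)$ of $\mathscr{F}_t$-grading $-\mathcal{A}_{L^{-}}(t)-\tfrac12$, and maximality gives $-\mathcal{A}_{L^{-}}(t)-\tfrac12\leq-\mathcal{A}_L(t)$, i.e.\ $\mathcal{A}_{L^{-}}(t)\geq\mathcal{A}_L(t)-\tfrac12$. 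For the upper bound I would instead push the top class $[\alpha']$ of the $[x^{+}]$-tower of $ctH(\mathbf{C})(L)$ forward by $NS_{-}$, which raises $\mathscr{F}_t$ by $\tfrac{1-t}{2}$, obtaining $-\mathcal{A}_L(t)+\tfrac{1-t}{2}\leq-\mathcal{A}_{L^{-}}(t)$, i.e.\ $\mathcal{A}_{L^{-}}(t)\leq\mathcal{A}_L(t)-\tfrac{1-t}{2}$. The two displays for $\mathcal{A}_{L^{+}}$ come out identically, now using $PS_{-}$ (shift $-\tfrac12$) from $L^{+}$ to $L$ and $PS_{+}$ (shift $\tfrac{t-1}{2}$) from $L$ to $L^{+}$.

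The remaining two displays, for $\mathscr{A}_{L^{\pm}}$, then follow formally. Mirroring exchanges positive and negative stabilization, so $m(L^{-})=(m(L))^{+}$ and $m(L^{+})=(m(L))^{-}$; combining this with the identity $\mathscr{A}_{K}(t)=-\mathcal{A}_{m(K)}(t)$, I would apply the $\mathcal{A}_{L^{+}}$ inequalities to $m(L)$ and negate to get $\mathscr{A}_L(t)-\tfrac{1-t}{2}\leq\mathscr{A}_{L^{-}}(t)\leq\mathscr{A}_L(t)+\tfrac12$, and apply the $\mathcal{A}_{L^{-}}$ inequalities to $m(L)$ and negate to get $\mathscr{A}_L(t)+\tfrac{1-t}{2}\leq\mathscr{A}_{L^{+}}(t)\leq\mathscr{A}_L(t)+\tfrac12$.

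The main obstacle — and the one place where I expect real work to be needed — is justifying that each of $PS_{\pm}$ and $NS_{\pm}$ carries the tower of $[x^{+}]$ into the tower of $[x^{+}]$ on the target side, so that Theorem~\ref{theorem4} applies at both ends of the map. This is the analogue of the statement (already invoked for crossing changes) that $C_{-}$ sends the distinguished cycle to the distinguished cycle, and I would verify it by examining the pentagon-counting maps on the distinguished grid state together with the fact that the maps in question become isomorphisms after inverting $V$. Everything else is bookkeeping — in particular keeping track of $\mathscr{F}_t(V)$ and of the passage between $t\mathbf{C}$ and $ct\mathbf{C}$ — and presents no genuine difficulty.
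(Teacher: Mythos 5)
Your proposal is correct and follows essentially the same route as the paper: the paper's proof simply reruns the crossing-change argument of Proposition~\ref{imppropcross} with $PS_{\pm}$, $NS_{\pm}$ in place of $C_{\pm}$, using the shifts of Proposition~\ref{starchange}, the composites being multiplication by powers of $V$, and Theorem~\ref{theorem4}, then obtains the $\mathscr{A}$ inequalities by mirroring, exactly as you do. The tower-membership point you flag is likewise present (and only asserted) in the paper's crossing-change argument, so your treatment is, if anything, more explicit than the original.
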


\begin{proof}

The proof for $\mathcal{A}_{L}$ inequalities is similar to the last proposition. Instead of using $C_+$ and $C_-$, we use $PS_+$, $PS_-$, $NS_+$
and $NS_-$ to derive the inequalities. Then, we obtain $\mathscr{A}_{L}$ inequalities by taking the mirror.
\end{proof}

Since $\mathscr{A}_{L}(t)= - \mathcal{A}_{m(L)}(t)$, Theorem \ref{theorem4} implies $\mathscr{A}_{L}(t)= \mathscr{F}(t)([\alpha])$ where $[\alpha]$ is the maximum non-torsion class in $[x^+]$ tower in $ct\mathbf{C}(m(L))$. We can use this relationship to derive a slice-Bennequin type inequality. Given a link $L$, a Legendrization of $L$ is a Legendrian link whose topological link type is $L$. A slice-Bennequin type inequality relates classical invariants of the Legendrization with a concordance invariant of the topological link type.

\begin{proof} [Proof of Theorem \ref{theorem5}]
By Theorem \ref{theorem4}, $\mathscr{A}_{\beta}(t) \geq  \mathcal{F}_{t}(x^{+})= \frac{t}{2}A_{U}(x^+) + (1-\frac{t}{2})A_{L}(x^+) = \frac{t}{2} A_{U}(x^+) +(1-\frac{t}{2}) ( A(x^+) -A_{U}(x^+))$.\\
Now, $A_{U}(x^+)=\frac{-1+1+lk(U,L)}{2}= \frac{lk(U,L)}{2}$ using the winding number formula (Equation \ref{windingformula}). Also, $A(x^+)=\frac{ tb(\mathcal{L}\cup U)- rot(\mathcal{L}\cup U) + l+ 1}{2}=\frac{tb(\mathcal{L})- rot(\mathcal{L}) +l+2lk(U,L)}{2}$. Therefore, \[ \mathscr{A}_{L}(t) \geq \frac{lk(U,L)t}{4}+ (1-\frac{t}{2})(\frac{tb(\mathcal{L})- rot(\mathcal{L}) +l+lk(U,L)}{2})\] We also know that $x^-$ is non-torsion in the same tower hence. Hence, $\mathscr{A}_{\beta}(t) \geq  \mathcal{F}_{t}(x^{-})$ and a similar computation shows that \[ \mathscr{A}_{L}(t) \geq \frac{lk(U,L)t}{4}+ (1-\frac{t}{2})(\frac{tb(\mathcal{L})+ rot(\mathcal{L}) +l+lk(U,L)}{2})\].

\end{proof}

The above lower bound is similar in spirit to lower bound given by Plamenevskaya \cite{plam1,plam3} on $\tau$ and Rasmussen's $s$ invariant.

\subsection{ Grid complex of n-cables and  $t\mathscr{C}$ }

 For an annular link $U \cup L$ we will build the $p$-cable by only transform cells in the same row or column of X-markings belonging to $L$.

We will denote the annular $p$-cable generated using this construction by $U_{r} \cup L_{p,q}$. Here, one copy of unknot is replaced by $r$ copies of unknot for any natural number $r$. Let us consider a subset $\mathcal{K}$ of $\mathscr{C}_{U\cup L_{p,q}}$ which is generated by states that contains intermediate north east corners of X in marking inside the block.
\\
\begin{figure}[tbph!]
\begin{center}
\includegraphics[width=.4\textwidth]{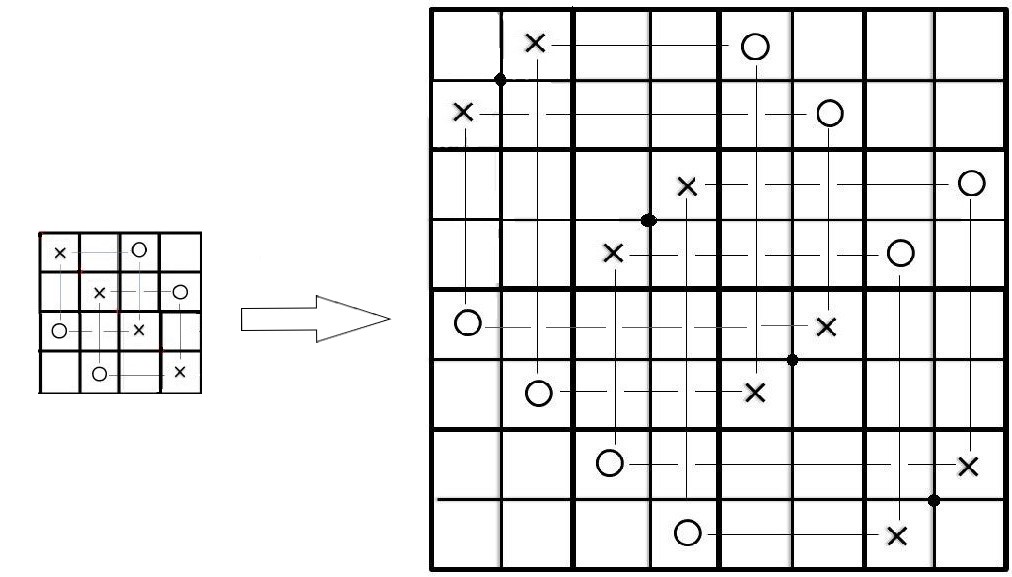}
\end{center}    
\caption{The states in subcomplex $\mathcal{K}$ contains black dots}\label{ablock}
\end{figure}  

The following proposition relates the cable complex with the modified complex.

\begin{prop}
$\mathcal{K}$ of is a  subcomplex of $\mathscr{C}_{U_{r}\cup L_{p,q}}$ and  is isomorphic to $\frac{r}{p}\mathscr{C}$.
\end{prop}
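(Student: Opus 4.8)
The plan is to exhibit an explicit isomorphism of $\mathcal{R}$-modules $\mathcal{K}\to \frac{r}{p}\mathscr{C}$ that intertwines the two differentials, and then check it respects the filtration. First I would set up the combinatorics of the cabling construction: each $X$-marking $X_j$ of the $L$-component in the grid $D$ of $U\cup L$ gets replaced by a $p\times p$ (or $p\times q$, matching the ambient conventions) block of markings, and the $r$ copies of the unknot come from cabling the braid axis. A grid state of $\mathscr{C}_{U_r\cup L_{p,q}}$ lying in $\mathcal{K}$ is, by definition, forced to occupy the ``intermediate north-east corners'' inside every such block; the only remaining freedom is the choice of one point on each of the rows/columns that are \emph{not} internal to a block. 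This data is precisely a grid state of the original diagram $D$ for $U\cup L$ (with $r$-fold axis), so there is a natural bijection $\Psi\colon S_{\mathcal{K}}\to S(D')$ where $D'$ is the grid underlying $\frac{r}{p}\mathscr{C}$. Extend $\Psi$ over the polynomial rings by identifying the $V_i$'s attached to the block $O$-markings of $L$ with the single variable $V$ (exponent bookkeeping: a rectangle crossing $k$ of the $L$-block $O$'s contributes $V^{k}$ on the $\mathcal{K}$ side, and the corresponding rectangle downstairs crosses the single $L$-marking with multiplicity $(q-p)$-weighted appropriately — this is exactly the exponent rule $pO_U(r)+(q-p)O_L(r)$ with $t=2p/q$ replaced by the cabling ratio, which is where the ``$\frac{r}{p}$'' comes from).

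The heart of the argument is that $\mathcal{K}$ is a subcomplex and that $\partial$ restricted to $\mathcal{K}$ corresponds to $\partial_{t}$ on $\frac{r}{p}\mathscr{C}$. For the subcomplex claim: if $x\in\mathcal{K}$ and $r$ is an empty rectangle with $r\cap\mathbb{X}=\phi$ contributing $y$ to $\partial x$, I would show $y\in\mathcal{K}$ by the standard ``a rectangle cannot move a point off an intermediate corner without crossing an $X$ in the block'' argument — any rectangle that disturbs an internal block-point would be forced to contain one of the block's $X$-markings, contradicting $r\cap\mathbb{X}=\phi$. (This is the cable-complex analogue of the computation in \cite{sarkar} and of the $x_4$-generates-the-bottom-filtration statement from \cite{lossbraid} used earlier.) For the identification of differentials: rectangles in $D'$ correspond bijectively to rectangles in the cabled diagram that avoid the block $X$'s and respect $\mathcal{K}$, and under this bijection the $O$-multiplicities match up so that the monomial $V^{pO_U(r)+(q-p)O_L(r)}$ downstairs equals the product of the cable variables $\Psi$ sends to $V$. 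I would verify this interval-by-interval exactly as in the proof of Proposition~\ref{starchange} / Lemma 6.2.1 of \cite{Grid Homology for Knots and Links}.

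Finally I would note the filtration is preserved: $\Psi$ carries $A_U$ and $A_L$ gradings of states in $\mathcal{K}$ to the corresponding gradings in $D'$ up to a global shift (computable via the winding-number formula, Equation~\ref{windingformula}), so $\mathit{F}_{t}$, resp.\ $\mathscr{F}_{t}$, is preserved up to an overall constant, giving the isomorphism of filtered complexes $\mathcal{K}\cong\frac{r}{p}\mathscr{C}$. The main obstacle I anticipate is purely bookkeeping: getting the exponent weights and the ratio $r/p$ exactly right — i.e.\ checking that the $p\times q$ block of $L$-markings together with the $r$-cabled axis produces precisely the weights $pO_U+(q-p)O_L$ with the modified-complex parameter being $2\cdot\frac{?}{?}$ matching $\frac{r}{p}\mathscr{C}$, rather than an off-by-one or a reciprocal. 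Everything else (subcomplex, chain map, bijection of rectangles) is a routine adaptation of arguments already used in the paper.
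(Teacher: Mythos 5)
Your proposal follows essentially the same route as the paper: $\mathcal{K}$ is a subcomplex because an empty rectangle avoiding $\mathbb{X}$ cannot emanate from (or disturb) the distinguished block points, and the isomorphism comes from the natural bijection between $\mathcal{K}$-states and grid states of the original diagram together with matching the $O$-multiplicities to the weighted exponents of $\frac{r}{p}\mathscr{C}$. The paper's proof is just a terser version of this, so your more explicit bookkeeping of rectangles and weights is a fine (indeed fuller) rendering of the same argument.
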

\begin{proof}
There is no rectangle coming out the special points(See Fig ~\ref{ablock}) of $\mathcal{K}$. Therefore, any rectangle coming out of a state in $\mathcal{K}$ must join it with another state in $\mathcal{K}$. Hence, its a subcomplex. We can identify the states of $\mathscr{C}_{U_{r}\cup L_{p,q}}$ with generators of $\frac{r}{p}\mathscr{C}$ . It easily follows from the construction that these two complexes are indeed isomorphic.

\end{proof}

\begin{prop}

There is a chain map $i : \frac{r}{p}\mathscr{C} \rightarrow \mathscr{C}_{U_{r}\cup L_{p,q}}$ such that $i([\alpha])=U^k[\alpha']$ where $k\in \mathbb{N}$ and $[\alpha],[\alpha']$ are non torsion elements in the respective complexes satisfying the described property.
\end{prop}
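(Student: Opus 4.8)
The plan is to exhibit the chain map $i$ directly as an inclusion-type map from the $t$-modified complex $\frac{r}{p}\mathscr{C}$ into the cable complex $\mathscr{C}_{U_{r}\cup L_{p,q}}$, using the identification of $\frac{r}{p}\mathscr{C}$ with the subcomplex $\mathcal{K}$ established in the previous proposition. Concretely, $\mathcal{K} \subseteq \mathscr{C}_{U_{r}\cup L_{p,q}}$ is a subcomplex, and the previous proposition gives an isomorphism $\frac{r}{p}\mathscr{C} \xrightarrow{\ \cong\ } \mathcal{K}$; composing with the inclusion $\mathcal{K} \hookrightarrow \mathscr{C}_{U_{r}\cup L_{p,q}}$ gives the chain map $i$. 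First I would check that this $i$ is genuinely a chain map over the appropriate variable ring (the change-of-variables that sends the $V$'s of the $t$-modified complex to powers of the cable complex's variables is built into the isomorphism of the previous proposition), which is immediate since both constituent maps are chain maps.

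Next I would identify where the distinguished non-torsion classes go. In $\frac{r}{p}\mathscr{C}$ the class $[\alpha]$ is the maximal non-torsion class in the tower of $[x^+]$ (in the sense described before the proof of Theorem \ref{theorem4}), and its image under the projection $\phi\circ\chi$ to $\widetilde{\mathcal{GC}}(-(U\cup L))$ is the unique generator $[x^{NEX}]$ in Maslov grading $1-n$. On the cable side, $\mathscr{C}_{U_r\cup L_{p,q}}$ likewise has a distinguished non-torsion class $[\alpha']$ characterized the same way, whose image is the corresponding top-grading generator. The key point is that the distinguished cycle $x^+$ (north-east corners of the $X$-markings) of the cable diagram, when restricted to the rows and columns of $X$-markings of $L$, consists precisely of the intermediate north-east corners that define $\mathcal{K}$, so $x^+$ (of the cable) lies in $\mathcal{K}$ and corresponds under the isomorphism to $x^+$ of $\frac{r}{p}\mathscr{C}$. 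Tracking through the inclusion $\mathcal{K}\hookrightarrow \mathscr{C}_{U_r\cup L_{p,q}}$, the class $[x^+]$ of the subcomplex maps to the class $[x^+]$ of the ambient complex; since the latter equals $U^{k'}[\alpha'] + (\text{torsion})$ for a maximal $k'$, and the former equals $[\alpha]$ (or a $U$-power thereof) in $\mathcal{K}$, we conclude $i([\alpha]) = U^{k}[\alpha']$ for some $k \in \mathbb{N}$, where $k$ records the difference in how deep $x^+$ sits in the respective towers.

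The main obstacle I anticipate is verifying that the inclusion $\mathcal{K}\hookrightarrow \mathscr{C}_{U_r\cup L_{p,q}}$ does not send $[\alpha]$ to a \emph{torsion} element of the ambient homology — i.e. that non-torsion-ness is preserved. For this I would use the compatibility of the projections $\phi\circ\chi$ on both sides with the inclusion: the image of $i([\alpha])$ in $\widetilde{\mathcal{GC}}(-(U_r\cup L_{p,q}))$ is the corresponding top-Maslov-grading generator, which is non-zero, hence $i([\alpha])$ is non-torsion in $\mathscr{C}_{U_r\cup L_{p,q}}$; this forces it to be a non-negative $U$-power of the maximal non-torsion class $[\alpha']$ in that tower, giving the exponent $k\geq 0$. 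The remaining bookkeeping — that the tower of $[x^+]$ in the cable complex is one-dimensional over $\mathbb{F}_2[V]$ modulo torsion in the relevant Maslov grading, so that $[\alpha']$ is well-defined — follows from Proposition \ref{homorel} applied to the cable. The rest is routine diagram-chasing that I would not grind through here.
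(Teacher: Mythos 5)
Your proposal is correct and follows essentially the same route as the paper: identify $\frac{r}{p}\mathscr{C}$ with the subcomplex $\mathcal{K}$, note that $i$ sends the distinguished state $x^{+}$ to $x^{+}$, and compare the $U$-tower depths of $[x^{+}]$ on the two sides to conclude $i([\alpha])=U^{k}[\alpha']$. The only divergence is your projection-compatibility argument for non-torsionness, which is both unnecessary and not fully justified; the paper's (and the simpler) route is that $U^{n}i([\alpha])=i([x^{+}])=[x^{+}]=U^{m}[\alpha']\neq 0$ already forces $i([\alpha])$ to be non-torsion, and cancelling $U$-powers in the free quotient places it in the $[\alpha']$ tower with $m\geq n$.
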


\begin{proof}

We know that $i$ sends the distinguished state $x^+$ to itself. Since $[x^+] \in  \mathscr{C}_{U_{r}\cup L_{p,q}} = U^m [\alpha']$ for some $m$ and  $[x^+] \in \frac{r}{p}\mathscr{C} = U^n [\alpha]$ for some $n$ , it follows that $i(U^n [\alpha])=i([x^+])= [x^+] = U^m [\alpha'] $. This implies $U^n i([\alpha])= U^m [\alpha']$ . So $m\geq n$ as $[\alpha']$ is top of the non torsion tower in   
$ \mathscr{C}_{U\cup L_{2,q}}$. If $m>n$, then $i([\alpha])=U^k  [\alpha']$ where $k=m-n$ is a natural number. 

\end{proof}

\begin{prop}

$  \tau(L_{p,q} \cup U_{r} ) \geq  p\mathcal{A}_{L}(t) + \frac{(p-1)(p+q-1)}{2} $.

\end{prop}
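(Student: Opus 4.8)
The plan is to combine the two preceding propositions with Theorem~\ref{theorem4}, tracking gradings carefully through the chain map $i \colon \frac{r}{p}\mathscr{C} \to \mathscr{C}_{U_r \cup L_{p,q}}$. First I would recall that $\tau(L_{p,q} \cup U_r)$ is, up to a known grading shift, the maximal Alexander grading of a non-torsion class in $\mathscr{C}_{U_r \cup L_{p,q}}$ (equivalently, $-\tau$ is the minimal filtration level at which homology in the relevant Maslov grading becomes non-trivial, as in Section~2.4). The distinguished cycle $[x^+]$ is non-torsion in both complexes; write $[x^+] = U^n[\alpha]$ in $\frac{r}{p}\mathscr{C}$ with $n$ maximal, and $[x^+] = U^m[\alpha']$ in $\mathscr{C}_{U_r \cup L_{p,q}}$ with $m$ maximal, so that $[\alpha]$ and $[\alpha']$ are the tops of the respective non-torsion towers. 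By the previous proposition, $i([\alpha]) = U^k[\alpha']$ for some $k = m - n \geq 0$.

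Next I would compute the relevant grading in $\mathscr{C}_{U_r \cup L_{p,q}}$. Setting $t = r/p$ in the $t$-modified formalism, Theorem~\ref{theorem4} gives $\mathscr{A}_L(t) = -\mathscr{F}_t([\alpha])$, so $\mathscr{F}_t([\alpha]) = -\mathscr{A}_L(t)$. Since $i$ is a chain map of a definite degree and multiplication by $U$ shifts the grading by $-1$ (in the appropriate normalization of $\frac{r}{p}\mathscr{C}$, multiplication by $V$ shifts $\mathscr{F}_t$ by $-1/q$, so one must scale to the integral Alexander grading used for $\tau$), the identity $i([\alpha]) = U^k[\alpha']$ relates the Alexander grading of $[\alpha']$ in the cable complex to that of $[\alpha]$. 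Concretely, the Alexander grading of $[\alpha']$ equals $p$ times $\mathscr{F}_t([\alpha])$ minus $k$ up to the universal shift coming from the change of variables $V_i \to W_i^{p(q-p)}$ and the grid-number discrepancy between $U_r \cup L_{p,q}$ and $U \cup L$. The maximal Alexander grading of a non-torsion class equals $-\tau$, and since $[\alpha']$ sits at the top of its tower, $-\tau(L_{p,q} \cup U_r) = A([\alpha'])$, which gives $\tau(L_{p,q}\cup U_r) = p\mathscr{A}_L(t) + k + (\text{shift})$. The shift term, computed from the winding-number formula (Equation~\ref{windingformula}) applied to the cabled diagram exactly as in the proof of $A_U(x_{NWO}) = p/2$ for torus braids, works out to $\frac{(p-1)(p+q-1)}{2}$; and since $k \geq 0$ we obtain the inequality.

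The main obstacle I expect is the bookkeeping of the grading shift: one must correctly normalize the $\mathscr{F}_t$-grading on $\frac{r}{p}\mathscr{C}$ against the integral Alexander grading on $\mathscr{C}_{U_r \cup L_{p,q}}$, account for the change of variables in the definition of $t\mathbf{C}$ (the $W_i \mapsto W_i^{p(q-p)}$ substitution), and pin down the additive constant by an explicit computation on the cabled grid diagram. This is the step where an off-by-one or a misplaced factor of $p$ is most likely, so I would verify it against the known case $p = 1$ (where the cable is trivial and the inequality should reduce to $\tau(L \cup U_r) \geq \mathscr{A}_L(t)$, i.e.\ Theorem~\ref{theorem4} itself) and against the torus-braid computation already carried out in Section~3.4. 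The inequality direction (rather than equality) is exactly the content of $k \geq 0$, which is the non-trivial input from the preceding proposition, so no extra work is needed there.
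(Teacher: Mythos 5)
Your proposal is correct and follows essentially the same route as the paper: the inclusion $i$ of the cable subcomplex $\mathcal{K}\cong\frac{r}{p}\mathscr{C}$, the relation $i([\alpha])=U^{k}[\alpha']$ with $k\geq 0$ from the preceding proposition, Theorem \ref{theorem4}, and the identification of $\tau$ with the extremal Alexander grading of a non-torsion class, with the inequality coming precisely from $k\geq 0$. The one step you defer, the shift $\frac{(p-1)(p+q-1)}{2}$, is exactly what the paper computes in one line as the filtered degree of $i$ evaluated at the distinguished state, $A(i(x^{+}))-A_{U}(x^{+})-pA_{L}(x^{+})=\frac{(p-1)(q-1)}{2}+\frac{(p-1)p}{2}$, using the same grading/winding-number formulas you propose.
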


\begin{proof}

We need to compute the filtered degree of the map $i$ , which is $A(i(x^+)) - p\mathscr{F}_{1/p}(x^+)=A(i(x^+))-A_{U}(x^+)-pA_{L}(x^+) $. So the degree is equal to, $\frac{(p-1)(q-1)}{2} + \frac{(p-1)p}{2}= \frac{(p-1)(p+q-1)}{2} $. Therefore, $  \tau(L_{p,q} \cup U) \geq p\mathcal{A}_{L}(t) + \frac{(p-1)(p+q-1)}{2} $.

\end{proof}

\begin{prop}\label{quasi1}
 If $L \cup U$ is a quasi-positive link. then $\mathscr{A}_{L}(t)= -\mathcal{A}_{m(L)}(t)=\mathscr{F}_{t}(x^+)$ where $x^+$ is the distinguished generator in the grid of the annular link $m(L)$.
\end{prop}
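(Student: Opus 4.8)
The plan is to exploit the general philosophy already established in the excerpt: for a quasi-positive link $U \cup L$, the distinguished class $[x^+]$ itself is the top non-torsion element in its tower, so no strictly positive power of $V$ is needed. Since Theorem~\ref{theorem4} (together with the mirror identity $\mathscr{A}_L(t) = -\mathcal{A}_{m(L)}(t)$) gives $\mathscr{A}_L(t) = \mathscr{F}_t([\alpha])$, where $[\alpha]$ is the maximal non-torsion class in the $[x^+]$-tower of $ct\mathbf{C}(m(L))$, it suffices to show that in the quasi-positive case one may take $[\alpha] = [x^+]$, i.e.\ the integer $k$ in the relation $[x^+] = V^k[\alpha] + [\beta]$ ($[\beta]$ torsion) is $0$.

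The first step is to recall that a quasi-positive link is obtained from a trivial braid (or an unknot) by attaching positively-oriented bands, which at the level of grids corresponds to a sequence of \emph{positive} crossing changes and (positive) stabilizations applied to the identity braid, as in the set-up preceding Proposition~\ref{generalquasi}. For the base case I would check directly that for the trivial braid $I_n$ the distinguished cycle $x^+$ generates a free summand, so $k = 0$ there; this is essentially the computation $\mathscr{A}_{I_n}(t) = n/2$ already done, combined with the fact that the unique Maslov-top generator realizing the filtration level is $x^{NWO} = x^+$.

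The inductive step is where the crossing-change and stabilization machinery of Section~5 comes in. By the discussion around Proposition~\ref{imppropcross} and Proposition~\ref{impstabineq}, the maps $C_-$, $C_+$ (resp.\ $PS_\pm$, $NS_\pm$) send the distinguished class to the distinguished class, and their composites are multiplication by a power of $V$ that \emph{exactly matches} the change in $k$ forced by the filtration-degree shifts computed in Proposition~\ref{crossgradechange} and Proposition~\ref{starchange}. So if $[x^+]$ is the top of its tower before the move, tracking the filtration degrees shows it remains the top after the move: a positive crossing change or positive stabilization shifts $\mathscr{F}_t([x^+])$ by precisely the amount predicted for the generator of the free part, leaving no room for an intervening $V$-divisible step. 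Concretely, I would argue that $\mathscr{F}_t([x^+])$ computed via the winding-number formula (Equation~\ref{windingformula}), namely $\frac{t}{2}A_U(x^+) + (1-\frac{t}{2})A_L(x^+)$, already agrees with the lower bound of Theorem~\ref{theorem5}, and equality in that bound forces $k = 0$.

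The main obstacle will be the bookkeeping in the inductive step: one must verify that \emph{every} band in a quasi-positive presentation can be realized by the specific local grid moves for which the maps of Section~5 were constructed, and that the distinguished state $x^+$ (north-east corners of the X-markings) is genuinely carried to $x^+$ under each such map — not merely to something cohomologous. This is the analogue of the ``$c_-$ sends $[x_4]$ to $[x_4]$'' lemma in Section~4, and I expect the bulk of the work is checking the pentagon counts show $x^+ \mapsto x^+$ for the annular $ct\mathbf{C}$ version. Once that is in hand, the equality $\mathscr{A}_L(t) = \mathscr{F}_t(x^+)$ follows by combining the matching upper bound (from $k=0$, via Theorem~\ref{theorem4}) with the lower bound of Theorem~\ref{theorem5}.
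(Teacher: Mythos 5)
Your reduction of the quasi-positive case to the moves of Section 5 does not go through as stated, and this is the heart of the gap. A quasi-positive braid $\prod_j w_j\sigma_{i_j}w_j^{-1}$ is built from the trivial braid by attaching positive \emph{bands}, i.e.\ by saddle moves that insert new crossings; these are not crossing changes (which alter an existing crossing in a fixed diagram) and are not Markov stabilizations (which only append $\sigma_n^{\pm1}$ on a new strand), so the maps $C_\pm$, $PS_\pm$, $NS_\pm$ of Section 5 cannot realize the inductive step you describe. The paper's merge/split maps of Section 3 live on $\widehat{\mathcal{GC}}$ and nothing in the text tracks the distinguished class $x^+$ or the non-torsion tower of $ct\mathbf{C}$ under a saddle, so the claim that ``$x^+$ remains the top of the tower after each move'' would require new machinery (a band-attachment map on $ct\mathbf{C}$ carrying $x^+$ to $x^+$), which is exactly the hard content you are assuming. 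Moreover, even granting such maps, crossing changes and the inequalities of Propositions \ref{imppropcross} and \ref{impstabineq} only bound $\mathscr{A}_L(t)$; they do not show that no intermediate $V$-divisibility appears. Finally, the appeal to Theorem \ref{theorem5} is circular: that theorem \emph{is} the inequality $\mathscr{A}_{L}(t)\geq \mathscr{F}_t(x^+)$ coming from $x^+$ being non-torsion in the tower, and the statement to be proved is precisely that equality holds, which is equivalent to $k=0$; equality cannot be extracted from the bound itself.

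For contrast, the paper proves $k=0$ by a different mechanism: it identifies $t\mathscr{C}$ (hence, up to tensoring with graded vector spaces, $ct\mathbf{C}$) with the subcomplex $\mathcal{K}$ of the ordinary grid complex of an annular cable $U_r\cup L_{p,q}$, notes that cables of quasi-positive links are quasi-positive, invokes Cavallo's theorem \cite{cavallo2} that $[x^+]$ generates the top of the non-torsion tower in $GC^-$ of a quasi-positive link, and then uses the chain map $i$ with $i(x^+)=x^+$ and $i([\alpha])=U^k[\alpha']$ to transfer ``top of the tower'' from the cable complex back to the $t$-modified complex, after which Theorem \ref{theorem4} gives $\mathscr{A}_L(t)=\mathscr{F}_t(x^+)$. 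If you want to pursue your inductive strategy, the missing ingredient is a positive-band cobordism map on $ct\mathbf{C}$ (in the spirit of the naturality of the transverse invariant under positive band attachment) together with a verification that it preserves the distinguished cycle on the nose; without that, the induction has no legs to stand on.
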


\begin{proof}

We will show that if  $L$ is a quasi-positive then $[x^+]$ is the top of the non-torsion tower in $ctH_{*}(\mathbf{C})(U\cup m(L))$. First, notice that if $L \cup U$ is quasi-positive, then the $n$-cable $U_{r} \cup L_{n,q} $ is quasi-positive for $q\geq 0$. It follows $[x^+]$ is the top of the non-torsion tower in $\mathscr{C}_{U_{r}\cup m(L_{n,q})}$ since $\mathscr{C}_{U_{r}\cup m(L_{n,q})}=GC^{-}\otimes W^{\otimes N}$ for some $N$ and  $[x^+]$ is the top of the tower in $GC^{-}(L)$ for a quasi-positive link $L$ \cite{cavallo2}. Now since we have $i(x^+)=x^+$, it follows that $x^+$ is the top of the tower in $ct\mathbf{C}$. Therefore, $\mathcal{A}_{m(L)}(t)=-\mathscr{F}_{t}(x^+)$.
\end{proof}

\pagebreak

\section{Braided cobordisms}

We can define an invariant of a braid $\beta$ by considering annular invariant of the annular link $U \cup \beta $ where $U$ acts like the braid axis.

\subsection{Properties of the annular invariant for braids}

A braided cobordism $\Sigma$ from $\beta_{1} = \Sigma \cap (\mathbb{S}^3 \times \{ 0 \})$ to
$\beta_{2} = \Sigma \cap (\mathbb{S}^3 \times \{ 1 \})$ is braid-orientable if it admits an orientation compatible with the braid-like orientations of $\beta_{1}$ and $\beta_{2}$.

\begin{prop}\label{bcob}
 If $\beta_{1}$ and $\beta_{2}$  are braids , and $\Sigma$
is a braid-orientable braided cobordism from $\beta_{2}$ to $\beta_{1}$ with $s$ split saddles and $d$ deaths then

\[ \mathscr{A}_{\beta_{1}}(t)-\mathscr{A}_{\beta_{2}}(t) \leq  (s-d) (1-\frac{t}{2}).\]
\end{prop}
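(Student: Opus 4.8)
The plan is to decompose the braid-orientable braided cobordism $\Sigma$ into elementary pieces and track the $\mathscr{F}_t$-grading shift induced by each piece on the non-torsion tower containing the distinguished class $[x^+]$, exactly as in the proof of Theorem~\ref{theorem4} and Propositions~\ref{imppropcross} and \ref{impstabineq}. Because $\Sigma$ is braid-orientable, a Morse-theoretic analysis shows it can be built from: (i) braid isotopies (Markov moves of commutation/conjugation type and Reidemeister-type moves), which induce filtered quasi-isomorphisms and hence shift $\mathscr{F}_t$ by $0$; (ii) positive and negative Markov stabilizations, which are handled by Proposition~\ref{impstabineq}; (iii) birth saddles, which in the braided setting add a trivial strand and may be absorbed into a stabilization; (iv) split saddles (the $s$ of them); and (v) death saddles (the $d$ of them). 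I would first state this decomposition, citing the standard movie/Morse description of braided cobordisms, and reduce the problem to computing the grading behaviour of the maps attached to each elementary type.

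Next I would invoke Theorem~\ref{theorem4}, which identifies $\mathscr{A}_L(t)$ with $-\mathscr{F}_t([\alpha])$ where $[\alpha]$ is the top of the non-torsion tower through $[x^+]$ (after passing to the mirror, equivalently $\mathscr{A}_L(t) = \mathscr{F}_t([\alpha])$ in $ct\mathbf{C}(m(L))$). The composite cobordism $\Sigma$ induces a chain map on the $ct\mathbf{C}$-complexes which, being built from the elementary maps above, carries the non-torsion tower of $U\cup\beta_2$ into the non-torsion tower of $U\cup\beta_1$ up to multiplication by a power of $V$; since multiplication by $V$ preserves non-torsion-ness (the relevant relations from Propositions~\ref{imppropcross}, \ref{impstabineq} show each composite $C_\pm C_\mp$, $PS_\pm PS_\mp$, $NS_\pm NS_\mp$ is multiplication by a power of $V$), the image of the top class $[\alpha_2]$ remains in the tower of $[x^+]$ for $\beta_1$. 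Therefore $\mathscr{F}_t$ of that image bounds $-\mathscr{A}_{\beta_1}(t)$ from below, i.e. $\mathscr{A}_{\beta_1}(t) \le \mathscr{F}_t(\text{image of }[\alpha_2])$ after accounting for signs via mirroring. Summing the grading shifts: isotopies contribute $0$; each split saddle contributes $+(1-\frac{t}{2})$ by the crossing/saddle computation (cf. the filtered degree $1-\frac{t}{2}$ of $\Phi_{merge}$ in Section 3 and of $PS_+$-type maps); each death contributes $-(1-\frac{t}{2})$ in the relevant direction; merge saddles and births contribute $0$ in the direction that matters. Collecting these gives the bound $\mathscr{A}_{\beta_1}(t)-\mathscr{A}_{\beta_2}(t) \le (s-d)(1-\frac{t}{2})$.

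The main obstacle I anticipate is bookkeeping the \emph{directions} and \emph{signs} carefully: a cobordism from $\beta_2$ to $\beta_1$ read backwards turns split saddles into merge saddles and deaths into births, and the filtered maps are not symmetric (e.g.\ $PS_-$ has degree $0$ while $PS_+$ has degree $\frac{1}{2}$; merge and split have asymmetric $\mathcal{F}_t$-degrees $0$ and $1-\frac{t}{2}$). I would need to be precise about which elementary map (the ``$+$'' or the ``$-$'' version) is induced by a given saddle when the cobordism is traversed in the stated direction, and confirm that exactly the split saddles contribute $+(1-\frac{t}{2})$ and the deaths contribute $-(1-\frac{t}{2})$ to the grading of the image of the tower generator, while merges and births are free. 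A secondary technical point is ensuring that in the braided/annular setting all these maps are $A_U$-filtered (equivalently $\mathscr{F}_t$-filtered) of the claimed degree — but this is exactly what was verified piecewise in Section~3 (for $\widehat{\mathcal{GC}}$) and Section~5 (for $ct\mathbf{C}$), so I would simply cite Propositions~\ref{crossgradechange}, \ref{starchange}, \ref{imppropcross}, and \ref{impstabineq} rather than redo the local hexagon/pentagon computations.
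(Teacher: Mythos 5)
Your proposal is essentially the paper's argument: the paper proves this proposition in two lines by decomposing the braided cobordism into the elementary annular pieces whose maps were constructed in Section 3 and summing their $\mathcal{F}_{t}$-filtered degrees (splits $1-\frac{t}{2}$, deaths $-1+\frac{t}{2}$, merges, births and isotopies $0$), which is exactly the bookkeeping at the core of your write-up. The detour through Theorem \ref{theorem4} and the $ct\mathbf{C}$ tower, and the listing of Markov stabilizations among the elementary pieces, are unnecessary (a stabilization changes the winding number about the axis, so it cannot occur in such a cobordism, and indeed its $\pm\frac{1}{2}$ shift correctly never enters your final sum); the paper simply applies the commutative-diagram argument of Theorem \ref{theorem1} to the $\widehat{\mathcal{GC}}$ cobordism maps.
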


\begin{proof}
From Section 5.1 we know that split moves have filtered degree $1-\frac{t}{2}$ and death moves have filtered degree $-1+\frac{t}{2}$. We get the inequality by adding the contributions in the cobordism.
\end{proof}

Now we will study the effect of braid stabilization on the annular invariant. 

\begin{prop}
If $\beta$ is a $n$-braid,let $\beta^{+}$ and $\beta^{-}$ represent the $n+1$-braids obtained by positively and negatively stabilizing $\beta$ respectively. Then, we have the following inequalities

\begin{align*}
    \mathscr{A}_{\beta} (t) - \frac{1-t}{2} \leq \mathscr{A}_{\beta^{-}} (t) \leq \mathscr{A}_{\beta}(t) +  \frac{1}{2}
\end{align*}
and

\begin{align*}
    \mathscr{A}_{\beta} (t) + \frac{1-t}{2}  \leq \mathscr{A}_{\beta^{+}} (t) \leq \mathscr{A}_{\beta}(t) +  \frac{1}{2}.
\end{align*}

\end{prop}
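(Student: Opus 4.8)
The plan is to deduce this directly from the already-proven annular stabilization inequalities for general annular links (Proposition \ref{impstabineq}), using the observation that braid stabilization is a special case of annular link stabilization. Indeed, positively (resp. negatively) stabilizing a braid $\beta$ amounts exactly to performing a positive (resp. negative) stabilization of the annular link $U \cup \beta$ in the sense defined just before Figure \ref{aps}: one adds a linked $\pm$ crossing between the braid and a new strand, which is precisely the move $L \mapsto L^{\pm}$ when $L = \beta$. So the braid $\beta^{+}$ (resp. $\beta^{-}$), viewed as an annular link together with the braid axis $U$, is nothing but $(U\cup\beta)^{+}$ (resp. $(U\cup\beta)^{-}$) in the earlier notation.

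First I would make that identification explicit: unwind the definitions so that the reader sees that the grid diagram of $U \cup \beta^{+}$ obtained by the standard braid stabilization coincides (up to the moves already shown to preserve the invariant) with the grid obtained from $U\cup\beta$ by the annular positive stabilization procedure generalizing Figure \ref{ns}, and similarly for the negative case. Since $\mathscr{A}_{\gamma}(t)$ depends only on the annular link $U\cup\gamma$, we then have $\mathscr{A}_{\beta^{+}}(t) = \mathscr{A}_{(U\cup\beta)^{+}}(t)$ in the notation where the outer $U$ is suppressed, i.e. $\mathscr{A}_{\beta^{+}}(t) = \mathscr{A}_{L^{+}}(t)$ with $L = \beta$, and likewise $\mathscr{A}_{\beta^{-}}(t) = \mathscr{A}_{L^{-}}(t)$.

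Second, I would simply quote the last two displayed inequalities of Proposition \ref{impstabineq}, namely
\[
\mathscr{A}_{L}(t) - \tfrac{1-t}{2} \leq \mathscr{A}_{L^{-}}(t) \leq \mathscr{A}_{L}(t) + \tfrac{1}{2}
\qquad\text{and}\qquad
\mathscr{A}_{L}(t) + \tfrac{1-t}{2} \leq \mathscr{A}_{L^{+}}(t) \leq \mathscr{A}_{L}(t) + \tfrac{1}{2},
\]
and substitute $L = \beta$, $L^{\pm} = \beta^{\pm}$ to obtain exactly the claimed bounds. No new computation is needed; the grading-shift bookkeeping for the maps $PS_{\pm}$ and $NS_{\pm}$ was already carried out in Proposition \ref{starchange} and fed into Proposition \ref{impstabineq}.

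The only real point requiring care — and the main (minor) obstacle — is the identification in the first step: one must check that the annular-link stabilization moves of Section 5.6, when applied to $L = \beta$, genuinely reproduce the usual Markov stabilizations of the braid, i.e. that the newly added crossing can be taken linked with $U$ in the way the annular picture demands, and that the auxiliary grid moves used along the way (commutations, destabilizations) are among those already shown to leave $\mathscr{A}$ unchanged. Once this compatibility is spelled out, the proposition is immediate, so I would keep the proof to a short paragraph asserting this reduction and then invoking Proposition \ref{impstabineq}.
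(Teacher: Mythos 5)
Your proposal is correct and follows exactly the paper's argument: the paper's proof simply notes (as remarked in Section 5.5 that braid stabilizations are the special case of the annular stabilizations) that the inequalities follow directly from Proposition \ref{impstabineq}. Your extra care in spelling out the identification $\beta^{\pm} \leftrightarrow (U\cup\beta)^{\pm}$ is a reasonable elaboration of the same reduction, not a different route.
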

\begin{proof}

The above inequalities follow directly from Proposition \ref{impstabineq}.

\end{proof}

\begin{prop}

$\mathscr{A}_{\beta}(2)=\frac{n}{2}$ for any $n$-braid $\beta$.

\end{prop}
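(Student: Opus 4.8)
The plan is to show that at $t=2$ the weighted filtration $\mathcal{F}_2 = \frac{t}{2}A_U + (1-\frac{t}{2})A_L$ collapses entirely onto the unknot grading $A_U$, so that $\mathscr{A}_\beta(2)$ becomes a question purely about the $A_U$-filtration of $\widehat{\mathcal{GC}}(U\cup\beta)$, and then compute the latter directly. First I would recall that by Proposition~\ref{impvecdef} we may work in the $\widetilde{\mathcal{GC}}$ version (or, via Proposition~\ref{collapse}, in any convenient blocked version), and that at $t=2$ the filtration function is simply $\mathcal{F}_2(x) = A_U(x)$. Thus $\mathscr{A}_\beta(2)$ is the minimal $s$ such that $H_0(\mathcal{F}_{A_U}^s(\widehat{\mathcal{GC}})) \to \widehat{\mathcal{GH}}_0$ is nontrivial, i.e. the minimal $A_U$-filtration level supporting the generator of $\widehat{\mathcal{GH}}_0$.

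The key geometric input is that since $U$ is the braid axis, the unknot $U$ is a fibered unknot and $U \cup \beta$ sits inside the fibered complement; more concretely, the $A_U$-filtration of $\widehat{\mathcal{GC}}(U\cup\beta)$ is detected by the winding number of $\beta$ about $U$, which is the constant $n$ along the braid. I would use the winding number formula (Equation~\ref{windingformula}) to pin down $A_U$ on the relevant cycles: for an $n$-braid the strands each wind once about $U$, so the $A_U$-grading of every grid state is governed by how many of the $n$ points of the state lie in the region of nonzero winding number, and the extreme values differ by exactly $n$ (compare the sample computations $A_U(x_{NWO}) = \frac{n}{2}$ for $T_{p,q}$ and for $I_n$). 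In particular I expect that the generator of $\widehat{\mathcal{GH}}_0$ can always be represented by a cycle with $A_U = \frac{n}{2}$, and that no cycle of lower $A_U$ represents it — the top $A_U$-filtration summand of $\widehat{\mathcal{GH}}(U\cup\beta)$ at Maslov grading $0$ is one-dimensional and sits at filtration level $\frac{n}{2}$. An efficient way to see the upper bound $\mathscr{A}_\beta(2)\le \frac{n}{2}$ is to invoke the already-proved computation $\mathscr{A}_{I_n}(2) = \frac{n}{2}$ together with the stabilization inequalities of the preceding proposition at $t=2$: there both bounds read $\mathscr{A}_{\beta^{\pm}}(2) = \mathscr{A}_\beta(2)$ (since $\frac{1-t}{2} = -\frac12$ and $\frac12$ coincide in absolute configuration — more precisely the two displayed inequalities force equality at $t=2$), and since any two $n$-braids are related to a common braid by stabilizations...

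Actually the cleanest route is the following. Since at $t=2$ both stabilization inequalities for $\mathcal{A}$ in Proposition~\ref{impstabineq} become equalities ($\mathcal{A}_{L^\pm}(2) = \mathcal{A}_L(2)$, because $-\frac{1-t}{2} = \frac{1}{2}$ at $t=2$, wait — one must check the arithmetic), the invariant $\mathscr{A}_\beta(2)$ is unchanged under Markov stabilization within the class of $n$-braids... but stabilization changes $n$. So instead I would argue directly: positive and negative destabilization relate any $n$-braid to the identity braid $I_n$ only through braids of the same index via conjugation, under which $\mathscr{A}$ is invariant (Theorem~\ref{theorem2}/conjugacy invariance of the filtered type). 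Hence the real content is that $\mathscr{A}_\beta(2)$ does not depend on the conjugacy class at fixed index $n$; this I would prove by showing both sides equal $\frac{n}{2}$ via Proposition~\ref{imppropcross}: crossing changes shift $\mathscr{A}$ by $1-\frac{t}{2}$, which is $0$ at $t=2$, so $\mathscr{A}_\beta(2) = \mathscr{A}_{\beta'}(2)$ whenever $\beta,\beta'$ differ by crossing changes — and every $n$-braid can be turned into $I_n$ (or into a torus braid $T_{n,q}$, for which the formula gives $\frac{pq-q+l}{2} + \frac{t}{4}(p+q-pq-l)$, evaluating at $p=n$, $t=2$ to... $\frac{n q - q + l}{2} + \frac{1}{2}(n + q - nq - l) = \frac{nq - q + l + n + q - nq - l}{2} = \frac{n}{2}$) by a finite sequence of crossing changes.

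The main obstacle is the bookkeeping in the last step: verifying that crossing changes among $n$-braids act trivially on $\mathscr{A}(2)$ via Proposition~\ref{imppropcross} (immediate, since the shift is $1-\frac{t}{2} = 0$ at $t=2$), and then checking that the orbit of any $n$-braid under crossing changes plus conjugation contains $I_n$ — this is standard (any braid word can be unknotted to the trivial braid by switching crossings), but one must phrase it so that it stays within index-$n$ braids and only uses the moves for which we have established invariance. Granting that, the proposition follows: $\mathscr{A}_\beta(2) = \mathscr{A}_{I_n}(2) = \frac{n}{2}$.
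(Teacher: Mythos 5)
Your reduction has a genuine gap at the final step. A crossing change does not alter the underlying permutation of a braid: the two strands still join the same endpoints, only the over/under data is switched. Consequently the orbit of an $n$-braid under crossing changes (and under conjugation, which only conjugates the permutation) consists of braids with the same (conjugacy class of) permutation, so a non-pure braid can never be brought to $I_{n}$, nor to a fixed torus braid $T_{n,q}$, by these moves. The statement you call standard --- ``any braid word can be unknotted to the trivial braid by switching crossings'' --- is true for knot diagrams but false for braids precisely because of this permutation obstruction, so your identity $\mathscr{A}_{\beta}(2)=\mathscr{A}_{I_{n}}(2)$ is not established. The earlier part of your argument is fine but not load-bearing: Proposition~\ref{imppropcross} does show that $\mathscr{A}(\cdot)(2)$ is invariant under crossing changes, and your evaluation of the torus-braid formula at $t=2$ is correct; what is missing is a braid in the crossing-change orbit of $\beta$ whose value at $t=2$ you can actually compute.

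The paper avoids this issue by using cobordisms rather than crossing changes: it takes a braided cobordism between $\beta$ and $Id_{n}$ (saddles can change the permutation, which crossing changes cannot) and applies the cobordism inequality of Proposition~\ref{bcob}, whose bound $(s-d)\left(1-\frac{t}{2}\right)$ vanishes at $t=2$; combined with the computation $\mathscr{A}_{I_{n}}(2)=\frac{n}{2}$ this gives the claim. If you want to salvage your crossing-change route without cobordisms, do not aim for $I_{n}$: instead change every crossing of $\beta$ to a positive one, obtaining a positive (hence quasi-positive) $n$-braid $\beta'$ with the same permutation, and evaluate Proposition~\ref{generalquasi} at $t=2$, where $\mathscr{A}_{\beta'}(2)=\frac{-wr(\beta')-l+n}{2}+\frac{wr(\beta')+l}{2}=\frac{n}{2}$ independently of the writhe and the number of components; together with the $t=2$ crossing-change invariance this closes the argument (and is not circular, since Proposition~\ref{generalquasi} does not rely on the present proposition).
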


\begin{proof}
To see this, we can consider a strong braided cobordism from $\beta$ to $Id_{n}$. Then, it follows from Proposition \ref{bcob}.

\end{proof}

\begin{prop}

If $\beta$ has $1$ component then, $\mathscr{A}_{\beta}(0)= \tau(\beta)+\frac{n}{2}$.

\end{prop}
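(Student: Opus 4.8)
The plan is to connect the value $\mathscr{A}_{\beta}(0)$ to the classical invariant $\tau(\beta)$ by specializing the $t$-modified machinery to $t=0$. Recall that for $t = 0$ we have $\mathscr{F}_{0}(x) = -A_{U}(x)$ is trivially weighted (the $A_L$ part survives with full weight, the $A_U$ part drops), and the complex $t\mathbf{C}$ reduces at $t=0$ to the braid grid complex $\mathcal{C}_{U\cup\beta}$ of Section 4. More precisely, by Proposition \ref{imphomtc} and the discussion around Proposition \ref{ctrel}, $ct\mathbf{C}$ at $t=0$ computes a module quasi-isomorphic (up to the tensor factors $\mathbf{W}_L^{n-l-2}\otimes\mathbf{W}_U$ whose generators sit in $\mathscr{F}_{t}$-grading $0$ when $t=0$) to the collapsed grid complex $cGC^{-}(\beta)$ of the $1$-component braid closure $\hat\beta$. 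Since $\beta$ has a single component, $l=1$, and the Alexander filtration $A_L$ is just the usual Alexander filtration $A_{\hat\beta}$ of the knot, so the maximal Alexander grading of a non-torsion class in $cGH^{-}(\hat\beta)$ is $-\tau(\hat\beta) = -\tau(\beta)$.

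Concretely, I would proceed as follows. First, evaluate $\mathscr{F}_{0}$ and $\mathit{F}_{0}$ on $t\mathbf{C}(D)$ for a grid $D$ of $U\cup\beta$: at $t = 2\frac{p}{q}$ with $p = 0$, $q = 1$, the differential $\partial_{0}$ counts rectangles avoiding $\mathbb{X}$ with powers $V_i^{O_i(r)}$ only for the $\beta$-markings (the unknot variables get exponent $p=0$), so $t\mathbf{C} = \mathcal{C}_{U\cup\beta}$, and $\mathscr{F}_{0}(x) = A_{L}(x) = A_{\hat\beta}(x)$. Second, invoke Theorem \ref{theorem4}: $\mathscr{A}_{\beta}(0) = -\mathscr{F}_{0}([\alpha])$ where $[\alpha]$ is the top non-torsion class in the $[x^+]$-tower of $ct\mathbf{C}(m(\beta))$, equivalently (after passing through the mirror identity $\mathscr{A}_{\beta}(t) = -\mathcal{A}_{m(\beta)}(t)$) $\mathscr{A}_{\beta}(0) = \mathscr{F}_{0}([\alpha'])$ for the top non-torsion class in $ct\mathbf{C}(m(L))$; in either formulation $\mathscr{F}_{0}$ is literally the Alexander filtration. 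Third, identify the $\mathbf{W}_U$, $\mathbf{W}_L$ tensor factors: at $t=0$ the ``extra'' generator of $\mathbf{W}_U$ has $\mathscr{F}_{0}$-grading $t = 0$ and the extra generator of $\mathbf{W}_L$ has $\mathscr{F}_{0}$-grading $1-t = 1$, so they shift the location of the top non-torsion class by a fixed amount; tracking this shift (as in the proof of Theorem \ref{theorem4}, where the constant is $\frac{t}{2} + (n-l-2)(1-\frac{t}{2})$, which at $t=0$, $l=1$ becomes $n - 3$) must be done carefully and compared against the winding-number computation of $A_U(x^{NWO})$ on the distinguished generator.

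The key remaining step is to verify that the maximal $A_{\hat\beta}$-grading of a non-torsion class in the relevant collapsed complex is exactly $-\tau(\beta)$, and to match the constant offset so that it reads as $+\frac{n}{2}$. For the first point I would appeal to the standard fact, recalled in Section 2.4 of the excerpt, that in $GH^{-}(K) = \mathbb{F}[U]\oplus Tor$ the maximal Alexander grading of a non-torsion element equals $-\tau(K)$, together with the observation that $\mathcal{C}_{U\cup\beta}$ with all $\beta$-variables collapsed computes (after peeling off the acyclic $\mathbf{W}$ factors) the knot Floer complex of $\hat\beta$ with the correct Alexander filtration — here one uses that $U$ acting as a braid axis contributes only through the $A_U$ filtration, which is invisible to $\mathscr{F}_{0}$. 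For the offset, I would compute $\mathscr{F}_{0}$ on the distinguished generators $x^{\pm}$ using Proposition \ref{legcomplemma} and the winding number formulas, exactly as in the proof of Theorem \ref{theorem5}, which gives $A_{L}(x^+) = A(x^+) - A_U(x^+)$ in terms of $tb$ and $rot$, and reconcile this with $\tau$.

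The main obstacle I anticipate is bookkeeping the constant: there are several competing conventions (the $[1-l,0,\dots,0]$ grading shift from the mirror discussion in Section \ref{mirrorgh}, the $\frac{t}{2} + (n-l-2)(1-\frac{t}{2})$ shift from Theorem \ref{theorem4}, and the grid-number dependence of $\widetilde{\mathcal{GH}}$ versus the invariant $\widehat{\mathcal{GH}}$), and I must make sure the net effect is precisely $+\frac{n}{2}$ and not, say, $\frac{n-1}{2}$ or $\frac{n+1}{2}$. A useful sanity check will be the already-proven fact that $\mathscr{A}_{I_n}(t) = \frac{n}{2}$ for the trivial braid, for which $\tau(I_n)$ — viewed as the $n$-component unlink closure — together with the $1$-component convention forces consistency; since the proposition as stated restricts to $1$-component $\beta$, I would instead test against a quasi-positive example using Proposition \ref{generalquasi}, where $\mathscr{A}_{\beta}(0) = \frac{wr(\beta)+l}{2}$, and check this agrees with $\tau(\beta) + \frac{n}{2}$ via the known formula $\tau = \frac{wr - n + 1}{2}$ for quasi-positive braid closures with $l=1$.
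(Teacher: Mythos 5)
Your skeleton parallels the paper's: specialize Theorem \ref{theorem4} to $t=0$, observe that $\mathscr{F}_{0}=A_{L}$ and that $0\mathbf{C}$ is the braid grid complex $\mathcal{C}_{U\cup\beta}$, relate this complex to the knot Floer complex of the closure $\hat\beta$, and use that the maximal Alexander grading of a non-torsion class in $GH^{-}$ is $-\tau$. The genuine gap is at the pivotal middle step. The complex $\mathcal{C}_{U\cup\beta}$ (and its collapsed version $ct\mathbf{C}$ at $t=0$) still lives on the grid of $U\cup\beta$: the axis markings $X_{1},X_{2}$ still block rectangles and the axis rows and columns are still present, so this is \emph{not} the grid complex of $\hat\beta$ alone, and the results you cite do not bridge that difference. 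Proposition \ref{imphomtc} only collapses variables within components of $L$, and Proposition \ref{ctrel} relates $t\mathscr{C}$ to $ct\mathbf{C}\otimes\mathbf{W}_{L}^{n-l-2}\otimes\mathbf{W}_{U}$; neither removes the unknot component from the diagram, so neither yields your claimed quasi-isomorphism with $cGC^{-}(\hat\beta)$ up to acyclic factors (note also that at $t=0$ the extra generator of $\mathbf{W}_{L}$ sits in $\mathscr{F}_{0}$-grading $1-t=1$, not $0$). What is actually needed is exactly what the paper imports at this point from \cite{lossbraid} via Proposition \ref{impiso} ($0\mathbf{C}\cong CFK^{-,2}(\mathcal{H}_{4})$): the inclusion $HFK^{-}(m(\beta))\rightarrow HFK^{-,2}(\mathcal{H}_{4})$ carries the non-torsion tower to the non-torsion tower with Alexander-grading shift $-\frac{n}{2}$ (equivalently, the $A_{L}$-grading on the grid of $U\cup\beta$ differs from the Alexander grading of $\hat\beta$ taken alone by a constant governed by $lk(U,\hat\beta)=n$, the braid index). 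Without that input, or an independent argument destabilizing away the axis and computing this discrepancy, your identification step is unsupported.

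Your plan for fixing the constant does not close the gap either. Evaluating $\mathscr{F}_{0}$ on $x^{\pm}$ via Proposition \ref{legcomplemma} determines $\mathscr{A}_{\beta}(0)$ only when $[x^{+}]$ is itself the top of the non-torsion tower (the quasi-positive case, Proposition \ref{generalquasi}); for a general braid the offset must come from the grading shift of the identification itself, and the check against quasi-positive closures via $\tau=\frac{wr-n+1}{2}$ is a consistency test, not a proof. Also beware that the constant $\frac{t}{2}+(n-l-2)(1-\frac{t}{2})$ appearing in the proof of Theorem \ref{theorem4} involves the grid number, not the braid index, and is internal to that theorem; it is not the source of the $\frac{n}{2}$ in the statement. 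Finally, your opening formula ``$\mathscr{F}_{0}(x)=-A_{U}(x)$'' is a slip; it should be $A_{L}(x)$, as you state correctly later.
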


\begin{proof}

We can think of $-\mathscr{A}_{\beta}(0)$ as the max $A_{\beta}$ grading of the $x^+$ tower in $0\mathbf{C}(m(\beta))\cong HFK^{-,2}(m(\beta))$. Now from the inclusion isomorphism in \cite{LOSS}, it is clear that the non-torsion tower is taken to the non-torsion tower in  $HFK^{-}(m(\beta))$ and the grading shift is $-\frac{n}{2}$. The conclusion follows.  
\end{proof}

\subsection{Bounds on band rank} 

We also get the following lower bound on band rank from the annular invariant.

\begin{theorem}\label{theorem6}
Let $\beta$ be an $n$-braid with $l$ components and ${Id}_{n}$ be the identity $n$-braid. Then
$ \mathscr{A}_{{\beta}}(t)-\mathscr{A}_{{Id}_{n}}(t) \leq \frac{rk_{n}(\beta)+l-n}{2} (1-\frac{t}{2})$.

\end{theorem}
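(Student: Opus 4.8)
The plan is to realize the band rank of $\beta$ geometrically as a braided cobordism and then feed that cobordism into Proposition \ref{bcob}. Recall that $rk_n(\beta)$ is the minimal number of bands needed to express $\beta$ (up to conjugacy, writing $\beta$ as a product of band generators times $\mathrm{Id}_n$), so there is a band presentation realizing this minimum. First I would translate such a minimal band presentation into a braided cobordism $\Sigma$ from $\mathrm{Id}_n$ to $\beta$: each band generator contributes a saddle, and I want to arrange the cobordism so that all of these saddles are \emph{split} saddles (possibly after introducing births/deaths, which I must then account for). Concretely, a band attachment between two strands can be realized as a split saddle provided the relevant strands lie in the same component; when a band merges two previously distinct components one can instead do a birth followed by two split saddles, or more efficiently track the Euler-characteristic bookkeeping directly.

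The key computation is the Euler characteristic / component bookkeeping. A braided cobordism built from $rk_n(\beta)$ bands between the $n$-braid $\beta$ (with $l$ components) and $\mathrm{Id}_n$ (with $n$ components) has a controlled number of split saddles $s$ and deaths $d$; the quantity that appears in Proposition \ref{bcob} is $s-d$. Counting how the number of components changes ($n \rightsquigarrow l$) under the $rk_n(\beta)$ band moves, together with the constraint that the cobordism be braid-orientable, I expect to get $s - d = \frac{rk_n(\beta) + l - n}{2}$. Indeed each saddle changes the component count by $\pm 1$, each birth raises it by $1$, each death lowers it by $1$, and the total number of saddles is $rk_n(\beta)$; solving the resulting linear relation for $s-d$ in terms of $rk_n(\beta)$, $l$ and $n$ yields exactly the stated coefficient. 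Then Proposition \ref{bcob} gives
\[
\mathscr{A}_{\beta}(t) - \mathscr{A}_{\mathrm{Id}_n}(t) \leq (s-d)\left(1 - \tfrac{t}{2}\right) = \frac{rk_n(\beta) + l - n}{2}\left(1 - \tfrac{t}{2}\right),
\]
which is the claim.

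The main obstacle will be the first step: showing that a minimal band presentation of $\beta$ can be promoted to a braid-orientable braided cobordism whose saddles are all split saddles (no merge saddles), since Proposition \ref{bcob} only controls split saddles and deaths. If a band generator would naturally produce a merge saddle, I would need to reroute it — e.g. by composing with a positive stabilization/destabilization pair or by inserting a birth — and verify that this does not increase the effective count $s-d$ beyond $\frac{rk_n(\beta)+l-n}{2}$. This is essentially the same kind of normal-form argument used for strong annular cobordisms in Section 3 (writing a cobordism as a composition of torus cobordisms, birth-then-merge, and split-then-death pieces), so I would adapt that normalization to the braided setting, using that for braids the relevant stabilizations are the standard Markov stabilizations whose effect on $\mathscr{A}$ is already bounded in Proposition \ref{impstabineq}. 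Once the cobordism is in this normal form the inequality is immediate.
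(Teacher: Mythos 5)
Your overall route is the same as the paper's: write $\beta=\prod_{j}\omega_{j}\sigma_{i_{j}}^{\pm 1}\omega_{j}^{-1}$ with $rk_{n}(\beta)$ bands, view this as a braided cobordism from ${Id}_{n}$ to $\beta$ with $rk_{n}(\beta)$ saddles, do the component-count bookkeeping, and feed the result into Proposition \ref{bcob}; your arithmetic ($s+m=rk_{n}(\beta)$, $s-m=l-n$, hence $s=\frac{rk_{n}(\beta)+l-n}{2}$ when there are no births or deaths, and $d=0$) is exactly the paper's. The genuine problem is the step you yourself flag as the ``main obstacle'': you read Proposition \ref{bcob} as applying only to cobordisms without merge saddles and therefore plan to eliminate merges. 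That is a misreading. The proposition applies to any braid-orientable braided cobordism; merge saddles (and births) are filtered maps of degree $0$ (Section 3.5), so they simply do not appear in the bound $(s-d)(1-\tfrac{t}{2})$. The paper keeps the merges: the natural band cobordism has $s$ splits, $m$ merges and no deaths, and the bound is $s(1-\tfrac{t}{2})=\frac{rk_{n}(\beta)+l-n}{2}(1-\tfrac{t}{2})$. No normalization of the cobordism is needed.

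Moreover, the rerouting you sketch would fail if it were needed. Whether a band gives a split or a merge is determined by whether its two attaching strands lie on the same component at that stage of the movie; replacing a merge by ``a birth followed by two split saddles'' changes the component count by $+3$ instead of $-1$, so it cannot produce a cobordism with the same ends, and it inflates the number of saddles beyond $rk_{n}(\beta)$, destroying the relation $s+m=rk_{n}(\beta)$ on which your count of $s-d$ depends (your equality $s-d=\frac{rk_{n}(\beta)+l-n}{2}$ already holds only when $b=d=0$; with extra births or deaths one only gets $\leq$, which is still fine, but only if the total saddle number stays $rk_{n}(\beta)$). Inserting stabilization/destabilization pairs changes the braid and would force you through Proposition \ref{impstabineq}, whose error terms do not obviously cancel. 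Note also that merges are generically unavoidable: the cobordism must contain $m=\frac{rk_{n}(\beta)-l+n}{2}$ of them, which is positive whenever $rk_{n}(\beta)>l-n$ (e.g.\ always for a knot with $n\geq 2$). Delete the merge-elimination step, keep the cobordism as it is, and your argument collapses to the paper's proof.
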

\begin{proof} [Proof of Theorem \ref{theorem6}]

Recall that if a braid $\beta$ has band rank $rk_{n}(\beta)$, then it can be written as

\begin{equation*}
  \beta= \prod_{j=1}^{rk_{n}(\beta)} \omega_{j} \sigma_{i_{j}} ^{\pm 1} \omega_{j}^{-1}
\end{equation*}

Therefore, there is a cobordism from ${Id}_{n}$ to $\beta$ that has $rk_{n}$ number of saddles. Now, $rk_{n}=s+m$, where $s$ is the number of split and $m$ is the number of merge cobordism componenets in that cobordism. Also we have, $s-m=l-n$. Hence, the inequality follows from Proposition \ref{bcob}. 
\end{proof}

$K \subset S$ is called a ribbon knot if it bounds a smoothly embedded disk in $B^4$, Morse, with no interior maxima. Rudolph's theorem (\cite{lrudolph}) tells us, if $K$ is ribbon then it has a closed  $n$-braid representative $\sigma$ with $rk_{n}(\sigma)= n-1$. So if a closed  $n$-braid representative $\beta$ of a some slice knot $K$ satisfies  $| \mathscr{A}_{{\beta}}(t)-\mathscr{A}_{{Id}_{n}}(t) | >  (n-1) (1-\frac{t}{2})$ and this inequality is preserved under (de)stabilization then that will provide a counterexample to slice-ribbon conjecture.\\

Given a braid $\beta$ we can transform it to a Legendrian link (See Figure \ref{LegBraid})which is call Legendrization of the braid. The following inequality establishes an interesting relationship between band rank and classical Legendrian invariants. 

\begin{figure}  
\begin{center}
\includegraphics[width=0.70\textwidth]{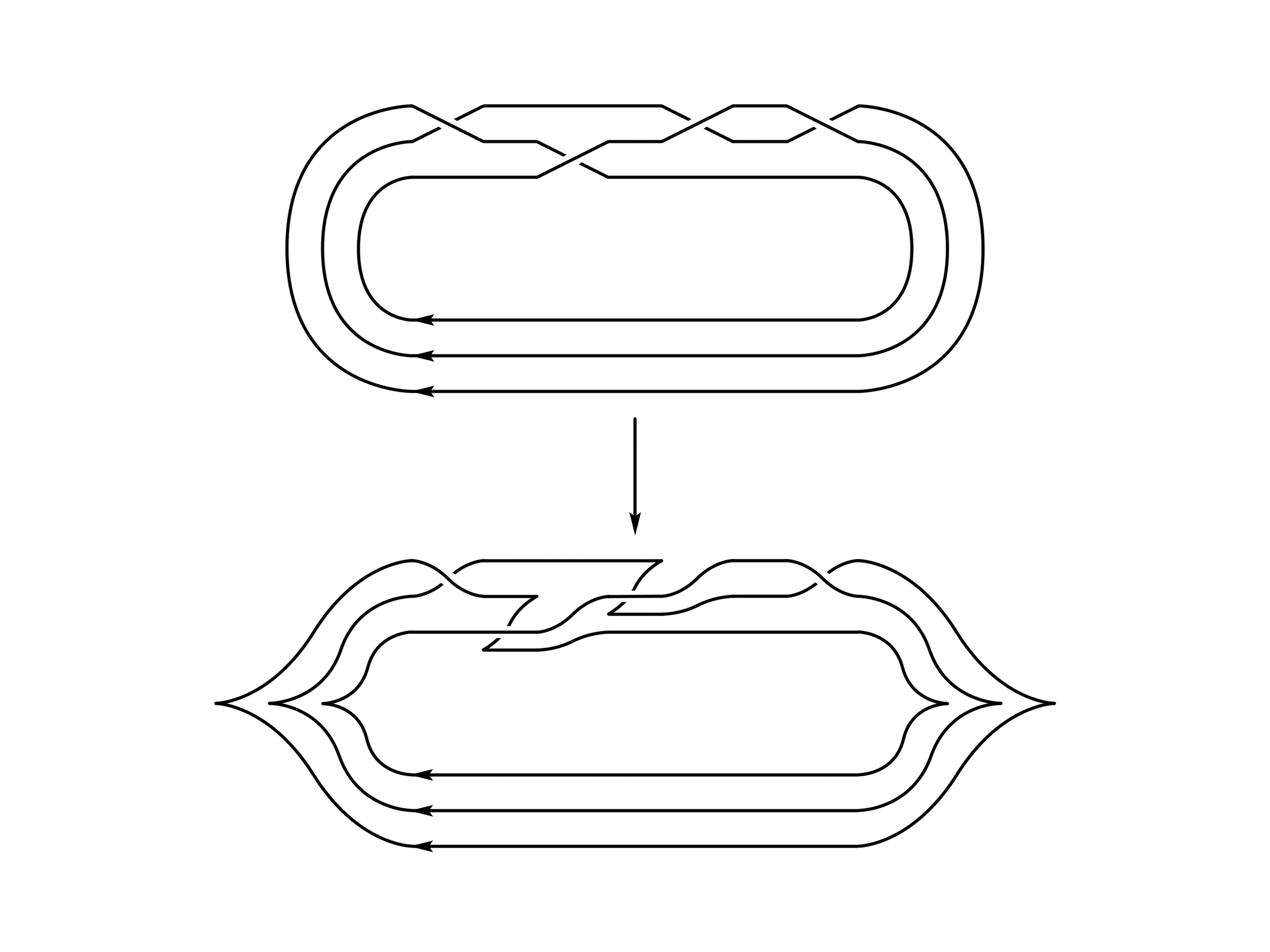}    
\caption{Legendrization of a braid}
\label{LegBraid}
\end{center}
\end{figure}

\begin{proof}  [Proof of Theorem \ref{theorem7}]

For all $t \in [0,2]$, the inequality 
\[ \frac{rk_{n}(\beta)+l-n}{2} (1-\frac{t}{2}) \geq \frac{nt}{4}+ (1-\frac{t}{2})\frac{tb(\mathcal{L})+ |rot(\mathcal{L})|+l+n}{2} - \frac{n}{2} \] 
follows easily by combining Theorem \ref{theorem6} and Theorem \ref{theorem5}. In particular for $t=0$, we get \[ \frac{rk_{n}(\beta)+l-n}{2} \geq \frac{ tb(\mathcal{L})+ |rot(\mathcal{L})|+l}{2} .\] 

\end{proof}

\subsection{Right veering and transverse properties}

The annular braid invariant also has right veering and transverse properties properties analagous to Grigsby-Wehrli-Licata invariant. Before proving those results, let us first compute the invariant for quasi-positive braids.
\begin{prop} \label{generalquasi}
If $\beta$ is a quasi-positive braid of index $n$ with $l$ componenents. Then, $\mathscr{A}_{\beta}(t)=  t \frac{-wr(\beta)-l+n}{4}+\frac{wr(\beta)+l}{2}$.
\end{prop}
\begin{proof}
By Proposition \ref{quasi1}, $\mathscr{A}_{\beta}(t) =  \mathcal{F}_{t}(x^{+})= \frac{t}{2}A_{U}(x^+) + (1-\frac{t}{2})A_{L}(x^+) = (-1+t) A_{U}(x^+) +(1-\frac{t}{2}) A(x^+)$.\\
Now $A_{U}(x^+)=\frac{-1+1+lk(U,L)}{2}= \frac{n}{2}$ and $A(x^+)=\frac{ -(n+1) + (wr(\beta)+2n) + l+ 1}{2} = \frac{wr(\beta)+n+l}{2}$. Hence, $\mathcal{A}_{\beta}(t)= (t-1) \frac{n}{2}+ \frac{wr(\beta)+n+l}{2} (1-\frac{t}{2})$. 
\end{proof}

For quasi-positive braids with one component, we recover the $\tau$ to be $ \frac{sl(\beta)+1}{2}$ since $\mathscr{A}_{\beta}(0)=\tau(\beta)+\frac{n}{2}$.\\

Recall that we defined slope function $m_t(\beta)$ and $y$-value $y_t(\beta)$ associated to $\mathscr{A}_{\beta}(t)$. Let us define a related function.  
\begin{definition}
For a braid $\beta$ and $t \in [0,2]$

\[M_t(\beta):=2m_t(\beta)+y_t(\beta).\]

\end{definition}

Then by Proposition \ref{slopeprop},  $M_t(\beta)=A_{U}(x_{0})$ for some generator $x_{0}$ at each $t \in [0,2]$. The function $M_t$ also has an alternative formulation in the $ct\mathbf{C}$ complex. By Theorem \ref{theorem4}, $\mathcal{A}_{L}(t) = - \mathscr{F}_{t}([\alpha])$ where $\alpha$ is a maximum non-torsion element in $[x^+]$ tower in $ct\mathbf{C}$. Then again by Proposition \ref{slopeprop}, $M_t(\beta)=-A_{U}(\alpha)$.\\ 

Notice that $M_t(\beta)=\frac{n}{2}$ for all $t$ in the case of quasi-positive braids. The following proposition gives a more general criteria in terms of right veering.

\begin{prop}
If $M_t(\beta)=\frac{n}{2}$ for some $0 < t <2$ then $\beta$ is right veering.
\end{prop}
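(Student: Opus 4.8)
The plan is to argue by contraposition: if $\beta$ is not right veering, then I will exhibit a destabilization–stabilization decomposition (or a direct band-word argument) forcing $M_t(\beta)<\tfrac{n}{2}$ for every $t\in(0,2)$. The starting point is the classical fact (Honda–Kazez–Matić, or the braid-theoretic reformulation) that a braid fails to be right veering exactly when, after conjugation, it can be written with a destabilizing $\sigma_i^{-1}$, i.e. $\beta$ is conjugate to a negative stabilization $\gamma_{-stab}$ of some $(n-1)$-braid $\gamma$ (more precisely, right-veering is equivalent to admitting no such reduction; a non-right-veering braid contains a ``bad region'' that one can convert to a genuine negative destabilization of a conjugate representative). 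Since $M_t$ is, by the discussion just before the proposition, equal to $-A_U(\alpha)$ where $[\alpha]$ is the top non-torsion class in the $[x^+]$-tower of $ct\mathbf{C}(m(\beta))$, and since this is a braid conjugacy class invariant, it suffices to compute $M_t$ on the negatively stabilized representative.

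Next I would track the unknot Alexander grading through negative stabilization. Proposition \ref{starchange} records the $\mathscr{F}_t$-grading shifts of $NS_-$ and $NS_+$, but what I actually need is the separate behaviour of the $A_U$-component, which is visible from the local interval computations in that same proof (the winding-number formula for $A_U$ is applied interval by interval there). The key point: under a negative stabilization the maps $NS_\pm$ are quasi-isomorphisms after inverting $V$, they carry the $[x^+]$-tower to the $[x^+]$-tower, and on the representative the top non-torsion class $\alpha$ can be taken supported near the stabilization region, where a direct inspection (as in the proof that $\eta(\beta_{-stab})=\tfrac{-N+1}{2}$) shows its $A_U$-grading is strictly larger than the ``generic'' value $\tfrac{n}{2}$ — equivalently $M_t(\beta)=-A_U(\alpha)<\tfrac{n}{2}$. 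One must check this inequality is strict and independent of $t$ on the open interval $(0,2)$, which is exactly what $M_t$ being realized as a single generator's $A_U$-value (Proposition \ref{slopeprop}) buys us: it is locally constant away from the finitely many singular $t$, so a strict inequality at one $t$ propagates.

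I expect the main obstacle to be the first step: pinning down precisely the combinatorial statement ``$\beta$ not right veering $\Rightarrow$ some conjugate negatively destabilizes in the relevant sense'' in a form usable here, and then verifying that the resulting negative stabilization really does push the top of the tower off the $A_U=\tfrac n2$ level (as opposed to merely not raising it). The cleanest route is probably to avoid a full classification of non-right-veering braids and instead use the Legendrization: for a non-right-veering braid the Legendrian front has a ``zig-zag'' that lets one compare $x^+$ with an explicit state $r$ (as in Figure \ref{nstrivial}) whose rectangles to $x^+$ carry a $V_i^{q-p}$ factor, forcing $[x^+]=V^{\,k}[\alpha]$ with $k\ge 1$ and $A_U(\alpha)>A_U(x^+)=\tfrac n2$. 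Everything after that is the routine grading bookkeeping of Proposition \ref{starchange} and Proposition \ref{slopeprop}, so I would present those computations tersely and concentrate the exposition on the destabilization/state-$r$ construction.
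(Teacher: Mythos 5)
There is a genuine gap, and it sits exactly where you predicted: your opening reduction. You assert as a ``classical fact'' that a non-right-veering braid is, up to conjugation, a genuine negative (de)stabilization of a lower-index braid. No such equivalence is known, and it is not what Honda--Kazez--Mati\'c-type results say: a negative stabilization is always non-right-veering, but a non-right-veering braid merely admits an arc sent strictly to the left, and there is no way in general to convert such a ``bad arc'' into a destabilizing letter $\sigma_{n-1}^{-1}$ of a conjugate representative. (If this reduction were available, the theorem of Plamenevskaya \cite{plam2} that non-right-veering braids have vanishing $\hat{\theta}$ would be an immediate triviality, which it is not.) Your fallback via the Legendrization has the same problem: producing a zig-zag in the front, or a state $r$ as in Figure \ref{nstrivial} with $\partial r = x^{+} + V(\cdots)$, again presupposes that the diagram locally looks like a negative stabilization, i.e.\ the very reduction you have not established. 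Everything downstream (the $A_U$-bookkeeping via Proposition \ref{starchange}, the claim that strictness at one $t$ propagates by Proposition \ref{slopeprop}) is secondary; the second of these is also shaky, since different generators may realize $M_t$ on different subintervals, but it would not be needed if the first step worked for each fixed $t$.

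The route the paper takes avoids any destabilization statement. The only geometric input is precisely Plamenevskaya's theorem: if $\beta$ is not right-veering then $\hat{\theta}$, hence $\tilde{\theta}$, vanishes. The rest is algebra in the $t$-modified complex: the short exact sequence
\begin{equation*}
0 \longrightarrow t\mathscr{C}(\beta) \xrightarrow{\ V\ } t\mathscr{C}(\beta) \longrightarrow \widetilde{GC}(m(\beta)) \longrightarrow 0
\end{equation*}
induces a long exact sequence in which the projection sends $[x^{+}]$ to $\tilde{\theta}$; vanishing of $\tilde{\theta}$ then forces $[x^{+}]$ to lie in the $V$-image, so the top of the non-torsion tower sits strictly above $x^{+}$ and $M_t(\beta) < A_U(x^{+}) = \tfrac{n}{2}$ for the relevant $t$. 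If you want to salvage your write-up, replace your step (1) by this vanishing theorem and the exact-sequence argument; the destabilization picture should only be used where it genuinely holds, namely for actual negative stabilizations (as in the computation of $\eta(\beta_{-stab})$).
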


\begin{proof}

Suppose $\beta$ is non-right veering. Then we know that $\hat{\theta}$ and $\tilde{\theta}$ vanishes.  Consider the short exact sequence \\
\begin{tikzcd}
 0 \arrow{r} & t\mathscr{C}(\beta) \arrow{r}{V} & t\mathscr{C}(\beta) \arrow{r} & \widetilde{GC}(m(\beta)) \arrow{r}{p} & 0 
\end{tikzcd}\\ 
In the induced long exact sequence, $p_{*}$ takes $[x^+]$ to $\tilde{\theta}$. It follows that $[x^+]$ is $V$-image in $t\mathscr{C}$. But this implies $M_t(\beta) < A_{U}(x^+)=\frac{n}{2}$.

\end{proof}

\begin{prop}

If $M_{t_{0}}(\beta)=\frac{n}{2}$ for some $t_{0} \in [0,2)$ then $M_{t}(\beta)=\frac{n}{2}$ for $2 \geq t>t_{0}$.
\end{prop}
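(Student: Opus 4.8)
The plan is to argue by monotonicity of $M_t(\beta)$ in $t$, combined with the upper bound $M_t(\beta) \leq \tfrac{n}{2}$ that holds for every braid. First I would establish that $\tfrac{n}{2}$ is an absolute upper bound: since $x^+$ is a non-torsion cycle in $ct\mathbf{C}(m(\beta))$ sitting in the tower of $[\alpha]$, and $A_U(x^+) = \tfrac{n}{2}$ by the winding-number computation used in the proof of Proposition \ref{generalquasi}, the maximal non-torsion class $[\alpha]$ satisfies $-A_U(\alpha) = M_t(\beta) \leq A_U(x^+) = \tfrac{n}{2}$ for all $t$. (Equivalently, $[x^+] = V^k[\alpha]$ with $k \geq 0$, and multiplication by $V$ lowers $A_U$ by a nonnegative amount.) So it suffices to show $M_t(\beta) \geq \tfrac{n}{2}$ for $t > t_0$ given $M_{t_0}(\beta) = \tfrac{n}{2}$.

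For this I would use the crossing-change / stabilization maps on $ct\mathbf{C}$ from Section 5, together with the fact (Proposition \ref{slopeprop}) that $M_t(\beta)$ is computed by $A_U$ of a fixed generator on each linearity interval of $\mathscr{A}_\beta(t)$. The key point is that $M_t(\beta)$, as a function of $t$, is piecewise constant and its jumps are governed by which generator $x_i$ (equivalently which non-torsion class in the $[x^+]$-tower) achieves the minimum defining $\mathscr{A}_\beta(t) = \min_i G_{x_i}(t)$: at a singular point the new active generator $x_2$ must have $G_{x_2}(t_0) = G_{x_1}(t_0)$ but smaller slope for $t > t_0$, and from the relation $M_t = 2m_t + y_t = A_U(x_0)$ one checks that the value of $M_t$ on the interval to the right of $t_0$ is $A_U(x_2)$. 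Since $A_U$ of any generator in the tower is $\leq \tfrac{n}{2}$, and since $M_{t_0} = \tfrac{n}{2}$ forces the active generator just left of (and at) $t_0$ to have $A_U = \tfrac{n}{2}$, I would argue that once the function reaches the ceiling $\tfrac{n}{2}$ it cannot drop: any generator that becomes active for $t$ slightly larger than $t_0$ and has $A_U < \tfrac{n}{2}$ would have to satisfy, by the coincidence condition $G_{x_1}(t_0) = G_{x_2}(t_0)$, a relation on $A_U$ and $A_L$ values that is incompatible with $x_1$ having $A_U(x_1) = \tfrac{n}{2}$ and $x_2$ having strictly smaller slope — this uses that the slope of $G_x$ is $\tfrac{A_U(x) - A_L(x)}{2}$, so a decrease in slope at $t_0$ forces $A_L(x_2) - A_L(x_1) = t_0 \Delta m_{t_0} < 0$, while $A_U(x_2) < A_U(x_1)$ together with equality of the weighted sums at $t_0$ is self-contradictory for $t_0 > 0$.

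Concretely, the cleanest way to package this is via the stabilization inequalities: $M_t$ being $\tfrac{n}{2}$ is equivalent to $[x^+]$ being the very top of its non-torsion tower in $ct\mathbf{C}$ (i.e. $k = 0$), and I would show that this property, once it holds at $t_0$, propagates to all $t > t_0$ because the $\mathscr{F}_t$-grading of $[x^+]$ relative to the filtration level of $\mathscr{A}_\beta$ varies continuously and the ``gap'' $A_U(x^+) + M_t(\beta)\cdot(\text{something})$ is monotone in $t$ by the sign computations of Proposition \ref{starchange}. The main obstacle I anticipate is making precise the claim that $M_t(\beta)$ is \emph{monotone non-decreasing} in $t$ (rather than merely bounded) — this is not literally one of the stated propositions and requires combining the piecewise-linear structure of $\mathscr{A}_\beta$ with the interpretation $M_t = -A_U(\alpha)$; I would handle it by a direct case analysis on the two generators meeting at a singular point, using $t_0 \Delta m_{t_0} = A_L(x_2) - A_L(x_1)$ from the proof of Proposition \ref{slopeprop} to pin down $A_U(x_2) - A_U(x_1)$ and show it is $\geq 0$ whenever $M_{t_0}$ has reached the ceiling.
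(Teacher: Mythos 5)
Your outline (the ceiling $M_t(\beta)\le \frac n2$ together with a ``once at the ceiling it cannot drop'' statement extracted from Proposition \ref{slopeprop}) is the same as the paper's, which simply asserts $\Delta M_t\ge 0$ for $t>t_0$ and then invokes the ceiling. The problem is the step where you rule out a drop at a singular point: the contradiction you derive there does not actually hold. The signed identity you borrow has the wrong sign: from $G_{x_1}(t_0)=G_{x_2}(t_0)$ one gets $\frac{t_0}{2}\bigl(A_U(x_2)-A_U(x_1)\bigr)=-\bigl(1-\frac{t_0}{2}\bigr)\bigl(A_L(x_2)-A_L(x_1)\bigr)$, i.e.\ $t_0\,\Delta m_{t_0}=A_L(x_1)-A_L(x_2)$ (the signed formula in the paper's proof of Proposition \ref{slopeprop} has the same slip; only the absolute-value statement there is correct). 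With the correct sign, your scenario ``slope decreases at $t_0$ while $A_U(x_2)<A_U(x_1)=\frac n2$'' merely forces $A_L(x_2)\ge A_L(x_1)$, which is perfectly consistent; there is no contradiction, so your case analysis does not exclude a drop of $M_t$.

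More structurally, the mechanism you propose cannot work. Since $\mathscr{A}_\beta$ is continuous and $M_t=\mathscr{A}_\beta(t)+(2-t)m_t$, at a singular point one has $\Delta M_{t_0}=(2-t_0)\,\Delta m_{t_0}=A_U(x_2)-A_U(x_1)$, so the statement you must prove (no drop of $M$) is exactly that the slope does \emph{not} decrease at singular points. Your standing assumption that the newly active generator has smaller slope---which comes from treating $\mathscr{A}_\beta$ as an honest minimum of the finitely many linear functions $G_{x_i}$, hence concave---is therefore precisely the situation you need to rule out, not a tool for ruling it out. Moreover that concave description is not what the definition gives: the $\mathcal{F}_t$-level of a representing cycle is the \emph{maximum} of $G_x$ over its terms, so $\mathscr{A}_\beta$ is a min of maxima and its slope may increase at a kink; and if it really were concave, then since $\mathscr{A}_\beta(2)=\frac n2$ the tangent-line inequality would give $M_t\ge\frac n2$, hence with the ceiling $M_t\equiv\frac n2$ for every braid, contradicting the preceding proposition (membership in $\mathscr{M}_t$ forces right-veering) applied to a non-right-veering braid such as $\sigma_1^{-1}$. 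So the real content, $\Delta m_{t}\ge 0$ at singular points in $(t_0,2)$, is exactly what the paper asserts via Proposition \ref{slopeprop} and what your argument leaves unproved; your fallback sketch through Proposition \ref{starchange} (``the gap is monotone by the sign computations'') is too vague to supply it---one would instead need to control how the top of the $[x^+]$-tower in $ct\mathbf{C}(m(\beta))$, and hence $M_t=-A_U(\alpha)$, varies with $t$.
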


\begin{proof}

By Proposition \ref{slopeprop}, we can easily see that For $t>t_0$ $\Delta M_t \geq 0$. Since $\frac{n}{2}$ is the maximum possible value, it follows that  $M_{t}(\beta)=\frac{n}{2}$ for $2 \geq t>t_{0}$.  
\end{proof}

\begin{prop}
Suppose $\beta_{+}$ and $\beta_{-}$ are obtained from $\beta$ by addition of a positive and negative crossing respectively then
\[ M_{\beta_{-}}(t) \geq M_{\beta}(t) \geq M_{\beta_{+}}(t) .\]

\end{prop}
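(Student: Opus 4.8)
The plan is to read off the inequality directly from the crossing-change maps constructed in Proposition~\ref{gencross} and the $\mathscr{F}_t$-degree computation in the proof of Theorem~\ref{theorem5}. Recall that $M_t(\beta) = -A_U(\alpha)$, where $[\alpha]$ is the maximal non-torsion class in the $[x^+]$-tower of $ct\mathbf{C}(m(\beta))$, and that mirroring interchanges $\beta_+$ and $\beta_-$ while sending the $[x^+]$-tower to the $[x^-]$-tower. So it suffices to track how the maximal non-torsion representative of that tower — together with its $A_U$-grading — changes under a single crossing change. First I would fix the notation: let $L_+ = m(\beta_+)$ and $L_- = m(\beta_-)$ (or the appropriate mirrors), so that $L_+$ and $L_-$ differ by one crossing change in the $L$-component, and apply the maps $C_-\colon ctH(\mathbf{C})(L_+) \to ctH(\mathbf{C})(L_-)$ and $C_+\colon ctH(\mathbf{C})(L_-) \to ctH(\mathbf{C})(L_+)$ with $C_+\circ C_- = C_-\circ C_+ = V^{q-p}$.

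The key steps, in order, are: (1) take $\alpha_+$ a maximal non-torsion class in the $[x^+]$-tower of $ctH(\mathbf{C})(L_+)$; since $C_-\circ C_+$ is multiplication by $V^{q-p}$, both $C_-$ and $C_+$ carry non-torsion classes in the tower to non-torsion classes in the tower, so $C_-(\alpha_+)$ is non-torsion in the $[x^+]$-tower of $ctH(\mathbf{C})(L_-)$; (2) use that $C_-$ is homogeneous and preserves the $\mathscr{F}_t$-grading while $C_+$ drops it by $(1-\tfrac t2)$ (Proposition~\ref{gencross}) to compare the $\mathscr{F}_t$-gradings, hence the $A_U$-gradings, of $\alpha_+$ and its image; (3) combine this with the characterization $M_t = -A_U(\cdot)$ of the maximal tower representative, using that maximality forces the grading of $\alpha_\pm$ to dominate that of any non-torsion tower class, to conclude $M_t(\beta_-) \ge M_t(\beta)$ and $M_t(\beta) \ge M_t(\beta_+)$; (4) chase the two crossing-change directions $\beta \rightsquigarrow \beta_+$ and $\beta_- \rightsquigarrow \beta$ so that both inequalities fall out of the same argument, being careful that mirroring swaps positive and negative crossings and swaps the $[x^+]$- and $[x^-]$-towers, which one checks are both non-torsion in the same $V$-tower (as noted after Proposition~\ref{homorel}).

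The main obstacle I expect is step~(3): making precise the claim that the \emph{maximal} non-torsion grading in the tower varies monotonically, rather than just that \emph{some} non-torsion class does. The subtlety is that $C_-(\alpha_+)$ need not itself be the maximal representative in the target tower — it is merely \emph{a} non-torsion class in the tower, so its grading is a \emph{lower} bound for $-M_t(\beta_-)$; one then has to run the reverse map $C_+$ on the maximal class of the target and use $C_+\circ C_- = V^{q-p}$ (together with the grading shifts) to pin down the inequality in the correct direction. This is exactly the kind of two-sided tower-chasing argument carried out in Proposition~\ref{imppropcross}, and I would model the bookkeeping on that proof, simply replacing the statement "$\mathscr{F}_t$-grading of the tower top" by "$A_U$-component of the tower top" throughout and invoking Proposition~\ref{slopeprop}(ii) to legitimize reading off $A_U$ of a generator.
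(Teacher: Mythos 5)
Your overall route is the paper's: work in $ct\mathbf{C}$ of the mirrors, push the maximal non-torsion class of the $[x^+]$-tower through the crossing maps, use that the composites are multiplication by a power of $V$ to keep non-torsion classes in the tower, and read everything off from $M_t(\beta)=-A_U(\text{top of the tower})$. Note, though, that the two-sided chase you plan in steps (3)--(4) is not needed: each of the two inequalities follows from a single one-sided push, because the image of the top class is a non-torsion class of the target tower whose $A_U$-value is automatically a lower bound for the target's maximal $A_U$; this is exactly the paper's argument, which applies the map once to the top class and never invokes the reverse map or the relation $C_+\circ C_-=V^{q-p}$ for this proposition.

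The genuine gap is in your step (2): you propose to ``compare the $\mathscr{F}_t$-gradings, hence the $A_U$-gradings.'' That inference fails. Since $\mathscr{F}_t=\frac{pA_U+(q-p)A_L}{q}$ is a single weighted grading, knowing that $C_-$ preserves $\mathscr{F}_t$ while $C_+$ drops it by $1-\frac{t}{2}$ does not determine how either map moves $A_U$; a priori the drop of $C_+$ could occur in the $A_U$-direction rather than the $A_L$-direction, which would destroy the monotonicity of $M_t$, and Proposition \ref{slopeprop}(ii) only identifies $M_t$ with $A_U(x_0)$ of some generator, saying nothing about the $A_U$-degree of $c_\pm$. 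What is actually needed, and what the paper uses, is that the crossing maps preserve $A_U$ on the nose; this comes from the separate local computations in Section 5 (in each case the local change of $A_U$ is $0$ and the extra markings in the associated rectangles belong to $L$, so only $A_L$ shifts). With that input your argument closes; without it, it does not. A secondary issue (shared with the paper's own wording) is that $\beta$ versus $\beta_\pm$ is an addition of a crossing, not a crossing change, so Proposition \ref{gencross} does not literally apply to the pairs $(\beta_-,\beta)$ and $(\beta,\beta_+)$: your explicit setup $L_\pm=m(\beta_\pm)$ compares only the two outer terms and cannot produce $M_\beta(t)$ in the middle; the paper instead uses a $c_-$-type map associated to adding a negative crossing to $m(\beta_-)$ (respectively to $m(\beta)$), again with $A_U$-degree $0$, and you would need to do the same.
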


\begin{proof}

It is easy to see that $m(\beta)$ can be obtained from $m(\beta_{-})$ by addition of a negative crossing. So we can consider the crossing change map $c_{-}:ct\mathbf{C}(m(\beta_{-})) \rightarrow ct\mathbf{C}(m(\beta))$. Now let $\alpha \in t\mathbf{C}(m(\beta_{-}))$ non-torsion in $x^+$ tower with $A_{U}(\alpha)= - M_{\beta_{-}}(t)$. Then $c_{-}(\alpha)$ is also non-torsion in the $x^+$ tower with $A_{U}(c_{-}(\alpha))= - M_{\beta_{-}}(t)$. Hence, it follows that $-M_{\beta}(t) \geq - M_{\beta_{-}}(t)$. Similarly, we obtain the other inequality. 

\end{proof}

Let \[ \mathscr{M}_t := \{ \beta \ | \ \beta \text{ has index } n, M_t(\beta)=\frac{n}{2} \} .\]

Let $t_{1}, t_{2} \cdots t_{n}$ be real numbers satisfying $ 0 \leq t_{1} \leq \cdots \leq t_{n} < 2$. Then we clearly have, \[ QP \subseteq \mathscr{M}_{t_{1}} \subseteq \cdots \subseteq \mathscr{M}_{t_{n}} \subseteq RV . \] Where $QP$ and $RV$ denotes the monoids of Quasi-positive and right-veering braids respectively. \\

\begin{proof} [Proof of Theorem \ref{theorem8}]

 Suppose $\beta \in \mathscr{M}_t$, then there is a non-torsion element $\alpha$ in the $x^+$ tower in $ct\mathbf{C}(m(\beta))$ with $A_{U}(\alpha)=-\frac{n}{2}$. Now, we consider the negative stabilization map ${NS}_{-}:ct\mathbf{C}(m(\beta)) \rightarrow ct\mathbf{C}(m(\beta_{+stab}))$. Then, from Proposition \ref{starchange} we have, $A_{U}({NS}_{-} (\alpha))=-\frac{n}{2}-\frac{1}{2}=-\frac{n+1}{2}$. This implies that max non torsion element in $t\mathbf{C}(m(\beta_{+stab}))$ also has $A_{U}$ equal to $-\frac{n+1}{2}$ since its the minimum possible value. Therefore, $\beta_{+stab} \in \mathscr{M}_t$. \\
   
Let us take an index $N$- braid $\beta_1\in \mathscr{M}_t$ and an index $M$- braid $\beta_2\in \mathscr{M}_t$. To prove that $\mathscr{M}_t $ is a monoid, we need to show that $\beta_1\beta_2 \in \mathscr{M}_t$. Firs, we observe that  ${M}_t(\beta_1 \sqcup \beta2)= {M}_t(\beta_1)+{M}_t(\beta_2)=\frac{N}{2}+\frac{M}{2}=\frac{M+N}{2}$. So $\beta_1 \sqcup \beta_2 \in \mathscr{M}_t$. Now Baldwin \cite{Bald1} showed that $\beta_1\beta_2$ is transversely isotopic to $\beta_1 \sqcup \beta_2$ after adding negative crossing. Since $M_t$ is non decreasing for addition of negative crossing and membership in $\mathscr{M}_t$ is a transverse invariant, it follows that $\beta_1\beta_2\in \mathscr{M}_t$.

\end{proof}

\pagebreak

\end{document}